\documentclass[11pt,a4paper]{amsart}

\RequirePackage[OT1]{fontenc}
\RequirePackage{amsthm,amsmath}
\RequirePackage[numbers]{natbib}
\RequirePackage[colorlinks,citecolor=blue,urlcolor=blue]{hyperref}
\RequirePackage{cleveref}

\usepackage{graphicx}
\usepackage{tikz}
\usepackage{enumitem}

\usetikzlibrary {arrows.meta} 

\usepackage{graphicx}
\newtheorem{theorem}{Theorem}
\newtheorem{lemma}[theorem]{Lemma}
\newtheorem{proposition}[theorem]{Proposition}
\newtheorem{corollary}[theorem]{Corollary}
\newtheorem{conjecture}[theorem]{Conjecture}
\newtheorem{definition}[theorem]{Definition}
\newtheorem{remark}[theorem]{Remark}
\newtheorem{assumption}[theorem]{Assumption}
\newtheorem{problem}{Problem}

\usepackage[utf8]{inputenc}
\usepackage[english]{babel}
\usepackage{latexsym}
\usepackage{amssymb}

\newcommand{\N}{\mathbb{N}}
\newcommand{\Q}{\mathbb{Q}}

\newcommand{\Z}{\mathbb{Z}}
\newcommand{\X}{\mathbf{X}}
\newcommand{\R}{\mathbb{R}}
\newcommand{\C}{\mathbb{C}}
\newcommand{\E}{\mathbb{E}}
\newcommand{\T}{\mathbb{T}}
\newcommand{\U}{\mathbb{U}}
\newcommand{\V}{\mathrm{V}}

\renewcommand{\P}{\mathbb{P}}
\newcommand{\Tr}{\mathrm{Tr}}
\renewcommand{\Re}{\mathrm{Re} \,}
\renewcommand{\Im}{\mathrm{Im} \,}
\newcommand{\1}{\mathbf{1}}

\newcommand{\x}{\boldsymbol{\xi}}

\newcommand{\dist}{\operatorname{dist}}

\numberwithin{equation}{section}
\numberwithin{theorem}{section}

\usepackage{fullpage}

\author{Janne Junnila$^1$ \and Gaultier Lambert$^2$  \and Christian Webb$^3$}
\date{ \today \\
$^1$University of Helsinki, Department of Mathematics and Statistics, \texttt{janne@junnila.me}\\%
$^2$University of Zurich, Institute of Mathematics,  \texttt{glambert@kth.se}\\%
$^3$University of Helsinki, Department of Mathematics and Statistics, {\tt christian.webb@helsinki.fi}\\[2ex]%
}

\title{Multiplicative chaos measures from thick points of log-correlated fields}

\begin{document}

\maketitle

\begin{abstract}
We prove that multiplicative chaos measures can be constructed from extreme level sets or \emph{thick points} of the underlying logarithmically correlated field. We develop a method which covers the whole subcritical phase and only requires asymptotics of suitable exponential moments for the field. As an application, we establish that these estimates hold for the logarithm of the absolute value of the characteristic polynomial of a Haar distributed random unitary matrix (CUE), using known asymptotics for Toeplitz determinant with (merging) Fisher-Hartwig singularities. Hence, this proves a conjecture of Fyodorov and Keating concerning the fluctuations of the volume of thick points of the CUE characteristic polynomial.
\end{abstract}


\section{Introduction} \label{sec:intro}

\subsection{Background and motivation}
Log-correlated fields are a class of stochastic processes that have recently appeared in various models of probability theory and mathematical physics (see e.g. \cite{ABBRS,Berestycki,BF,CMN,CN,CFLW,CLZ,FK,HKO,Kenyon,RV} and references therein). Formally, a (Gaussian) log-correlated field $\X$ is a centered Gaussian process on a metric space $\Omega$ with covariance
\begin{equation}\label{eq:cov}
C_{\X}(x,y):=\E \X(x)\X(y)=\log \dist(x,y)^{-1}+h(x,y)
\end{equation}
where $h: \Omega \times\Omega \to \R$ is continuous.
More specifically, we focus on the case where $\Omega\subset\R^d$ is either a bounded open set, in which case $\dist(x,y)=|x-y|$ denotes the Euclidean distance or a $d$-dimensional (smooth) Riemannian manifold, with or without a boundary, in which case $\dist$ denotes the intrinsic metric. 
We allow this generality because we are interested in the 2d Gaussian free field restricted to the unit circle $\T = \R/2\pi \Z$, that is, the Gaussian field with covariance,
\begin{equation}\label{eq:covT}
C_\X(\theta,x)=\log |e^{i\theta}-e^{ix}|^{-1} , \qquad \theta,x \in\T
\end{equation}
and its approximation given by the log characteristic polynomial of the circular unitary ensemble; cf.~Theorem~\ref{thm:cue}. 

Since the covariance \eqref{eq:cov} blows up on the diagonal, $C_\X(x,x)=\infty$, $\X$ must  be understood as a random generalized function ($\X$ is a Gaussian random element in a Sobolev space of negative regularity index; see e.g. \cite[Section 2]{JSW1} for a review of precise definitions).
Despite  log-correlated fields not being honest random functions, some of their geometric properties, such as extrema and extreme level sets can be understood to a degree.  Namely, if $\X_N$ is an approximation of $\X$ with relevant spatial scale $1/N$ (e.g. a discretization on a lattice of mesh $1/N$, a smoothing at scale $1/N$, or possibly a more complicated approximation coming from a model of statistical mechanics), in several instances, it has been proven that e.g. given a compact set $K\subset \Omega$ with non-empty interior, $\max_{x\in K}\X_N(x)=(1+o(1))\sqrt{2d}\log N$ (as $N\to\infty$) and that for $\gamma\in(0,\sqrt{2d})$
\begin{equation}\label{eq:tp}
\frac{\log |\{x\in K:\X_N(x)>\gamma \log N\}|}{\log N}=-(1+o(1))\frac{\gamma^2}{2}.
\end{equation}
This subset is known as the set of $\gamma$-thick points of the field and $|\cdot |$ denotes its Lebesgue measure (or volume form if we are on a manifold). A discussion of such claims in a rather general setting can be found in \cite[Section 3]{CFLW}.
In this context $\gamma = \sqrt{2d}$ is called the critical value and it plays a prominent role in this theory. 

A successful  approach to describe the geometry of log-correlated fields  is through studying multiplicative chaos (GMC) measures associated to $\X$. 
More precisely, under some mild assumptions on the approximations $\X_N$ ($\E_N$ denotes the expectation with respect to the law of $\X_N$), it is known that for $\gamma>0$, 
\begin{equation}\label{eq:GMC}
\mu_{N,\gamma}(x)dx:=\frac{e^{\gamma \X_N(x)}}{\E_N e^{\gamma \X_N(x)}}dx
\end{equation}
converges (with respect to the vague topology on $\Omega$) to a limiting measure $\mu_{\X,\gamma}$ as $N\to \infty$ (either almost surely, in probability, or in distribution -- depending on the type of approximation in question). 
Here $d x$ denotes the Lebesgue measure if $\Omega\subset\R^d$, or the volume from if $\Omega$ is a $d$-dimensional Riemannian manifold. 
We refer to e.g. \cite{Berestycki} and \cite[Section 2]{CFLW} for general convergence statements and further references. 
These measures are intimately related to the geometric properties mentioned above: the fact that a non-trivial limit exists only for $0<\gamma<\sqrt{2d}$ while for $\gamma\geq \sqrt{2d}$ the limit is zero is closely related to the leading order asymptotics of the maximum. Also it is known that in the $N\to \infty$ limit, the random measure $\mu_{N,\gamma}$ concentrates on the set of $\gamma$-thick points \eqref{eq:tp}; see e.g.~\cite[Section 3]{CFLW} for general claims and 
\cite[Theorem 1.3]{ABB17} or \cite[Proposition 1.6.]{Lam19} for statements in case of the circular ensembles.

It is expected that multiplicative chaos measures have a deeper connection to fluctuations of thick points and such results have been obtained in particular cases.
In \cite{GKS,BL,Jego,AHK}, the authors prove such results for respectively, branching Brownian motion, the discrete two-dimensional free field, local times of Brownian motion, and a log-correlated field which is a probabilistic model for the logarithm of the modulus of the Riemann zeta function on the critical line. These works nevertheless rely heavily on the specific properties of these models (e.g.~the branching structure in  branching Brownian motion, the Markov property of the free field, a martingale structure, etc). A general approach to this question, working even in the generality of Gaussian approximations of generic log-correlated fields is lacking, though one expects that a general result holds. Indeed, in the theoretical physics literature, such results have been conjectured to hold universally for any reasonable approximation of a logarithmically correlated random field. As an example, we quote the following conjecture of Fyodorov and Keating \cite[Section 2 (d)]{FK}.

\begin{conjecture}[Fyodorov and Keating (2014)]
\label{con:fk}
Let $U_N$ be a random matrix distributed according to the Haar measure on the group of $N\times N$ unitary matrices (known as the circular unitary ensemble or CUE) and let $p_N(\theta)=\det(1-U_N e^{-i\theta})$ be the characteristic polynomial of $U_N$ restricted to the unit circle
$\{e^{i\theta} : \theta \in \T\}$. 
Then as $N\to\infty$, for any $0<\gamma<1$, the random variable
\begin{equation} \label{ttmass}
\frac{\frac{1}{2\pi}\int_0^{2\pi}\mathbf 1\{ \log |p_N(\theta)|>\gamma\log N\}d\theta}{N^{-\gamma^2}\frac{1}{\sqrt{\pi \log N}}\frac{G(1+\gamma)^2}{2\gamma G(1+2\gamma)}\frac{1}{\Gamma(1-\gamma^2)}}
\end{equation}
converges in distribution to a positive random variable with density 
\[
\mathcal P_\gamma(x)= \gamma^{-2}x^{-1-\gamma^{-2}} e^{-x^{-\gamma^{-2}}}\1\{x>0\}.
\]
Here $G(z), z\in\C$ denotes the Barnes G-function (cf.~Lemma~\ref{le:morris}).
\end{conjecture}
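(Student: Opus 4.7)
The plan is to deduce Conjecture~\ref{con:fk} from the general thick-point / multiplicative chaos correspondence established elsewhere in the paper, applied to the rescaled field $\X_N(\theta):=\sqrt{2}\,\log|p_N(\theta)|$. Since $\log|p_N|$ has asymptotic variance $\tfrac{1}{2}\log N$ and off-diagonal covariance $\tfrac{1}{2}\log|e^{i\theta}-e^{ix}|^{-1}+o(1)$, the field $\X_N$ is a natural approximation at scale $1/N$ of the log-correlated field $\X$ on $\T$ with covariance \eqref{eq:covT}. The CUE threshold $\log|p_N(\theta)|>\gamma\log N$ rewrites as $\X_N(\theta)>\sqrt{2}\gamma\log N$, and the hypothesis $\gamma<1$ corresponds exactly to the abstract subcritical range $\sqrt{2}\gamma<\sqrt{2d}$ with $d=1$.

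The main analytical input is precise asymptotics of joint exponential moments, which for $\X_N$ are Toeplitz determinants with Fisher-Hartwig symbols:
\[
\E\Bigl[\prod_{j=1}^{k}|p_N(\theta_j)|^{2\alpha_j}\Bigr]
= N^{\sum_j\alpha_j^{2}}\prod_{j}\frac{G(1+\alpha_j)^{2}}{G(1+2\alpha_j)}\prod_{i<j}|e^{i\theta_i}-e^{i\theta_j}|^{-2\alpha_i\alpha_j}\bigl(1+o(1)\bigr).
\]
For well-separated insertions these are the Deift-Its-Krasovsky asymptotics, but the thick-point moment method forces pairs $\theta_i,\theta_j$ to coalesce down to the microscopic scale $1/N$. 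The essential technical ingredient is therefore \emph{uniform merging} Fisher-Hartwig asymptotics (from the work of Claeys-Krasovsky, Fahs, and related references), which interpolate smoothly between well-separated and coalesced configurations and feed into the general theorem's exponential-moment hypothesis.

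With these estimates in hand, the general theorem identifies the renormalized thick-point volume with the total subcritical GMC mass $\mu_{\X,\sqrt{2}\gamma}(\T)$. Laplace's method combined with the one-point exponential moment $\E[|p_N(\theta)|^{2\gamma}]\sim N^{\gamma^{2}}G(1+\gamma)^{2}/G(1+2\gamma)$ gives
\[
\P\bigl(\log|p_N(\theta)|>\gamma\log N\bigr)\sim\frac{N^{-\gamma^{2}}}{2\gamma\sqrt{\pi\log N}}\,\frac{G(1+\gamma)^{2}}{G(1+2\gamma)},
\]
and Fubini identifies this with $\E V_N(\gamma)$, where $V_N(\gamma)$ is the numerator of \eqref{ttmass}. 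Hence the denominator in \eqref{ttmass} equals $\E V_N(\gamma)/\Gamma(1-\gamma^{2})$, and the general theorem together with the Fyodorov-Bouchaud formula (proven rigorously by Remy), $\mu_{\X,\sqrt{2}\gamma}(\T)/\E\mu_{\X,\sqrt{2}\gamma}(\T)\stackrel{d}{=}\Gamma(1-\gamma^{2})^{-1}Y^{-\gamma^{2}}$ with $Y\sim\mathrm{Exp}(1)$, yields convergence of the ratio in \eqref{ttmass} to $Y^{-\gamma^{2}}$ in distribution. A direct change of variables confirms that $Y^{-\gamma^{2}}$ has density $\mathcal{P}_\gamma$.

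I expect the principal obstacle to be controlling joint moments of the thick-point volume $V_N(\gamma)$ together with a subcritical regularized GMC approximation uniformly in $N$: this requires both the uniform merging Fisher-Hartwig asymptotics described above and careful truncation arguments to handle the tails of $\log|p_N|$ near the critical threshold. The remaining steps---the matching of the Mills-ratio and Keating-Snaith prefactors, and the identification of the limit law via Fyodorov-Bouchaud/Remy---are essentially bookkeeping once the general theorem applies.
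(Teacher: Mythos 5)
Your proposal is correct and follows essentially the same route as the paper: reduce to the general thick-point/GMC theorem (Theorem~\ref{thm:cue}) via the rescaling $\X_N=\sqrt{2}\log|p_N|$, verify the exponential-moment hypotheses through (merging) Fisher--Hartwig asymptotics, obtain the one-point tail asymptotics to match the denominator of~\eqref{ttmass}, and identify the limit law via the Fyodorov--Bouchaud formula as proved by Remy and Chhaibi--Najnudel. Your bookkeeping (the $\gamma\mapsto\sqrt{2}\gamma$ reparametrization, the $\Gamma(1-\gamma^2)$ factor, and the change of variables showing $Y^{-\gamma^2}$ has density $\mathcal{P}_\gamma$) all check out against the paper's derivation.
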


There is another related conjecture \cite[Section 2.3]{FK} for the centering and fluctuations of  $\max_{\theta \in \T}\log |p_N(\theta)|$ as $N\to\infty$. In particular, the limit corresponds to the sum of two independent Gumbel random variables with a specific mean.
For circular $\beta$-ensembles for general $\beta>0$, the centering term has been obtained  in \cite{CMN} and the fluctuations have been described in \cite{PZ} as the sum of Gumbel and an independent \emph{derivative martingale}.  Identifying the law of this random variable in terms of GMC is still an open problem.

We return to the normalization of \eqref{ttmass}, the origin of this conjecture and the relationship to GMC after stating our main result for the CUE; Theorem~\ref{thm:cue}.
Let us point out that if a random variable $\Xi$ has density $\mathcal P_\gamma$, then  $\Xi^{-1/\gamma^2}$ is exponentially distributed with rate $1$.
Indeed, by making the change of variables $t=x^{-1/\gamma^2}$, one can check that for any bounded continuous $f:(0,\infty)\to \R$, 
\[
\E (f(\Xi^{-1/\gamma^2}))=\int_0^\infty f(x^{-1/\gamma^2}) \mathcal P_\gamma(x)dx=\int_0^\infty f(t) e^{-t}dt . 
\]

\smallskip

The main purpose of this article is to develop a robust machinery which allows to describe the fluctuations of the sets of thick points under some concrete assumptions on the covariance \eqref{eq:cov} of the field $\X$ and the asymptotics of exponential moments of its approximations $\X_N$. 
The relevant notation and assumptions are described in Section~\ref{sec:gen} and our main result valid for a general approximation scheme is presented in Section~\ref{sec:main}. 


\subsection{Main results}

In the setting of the previous section, let us introduce the measures, for $\gamma>0$, 
\begin{equation} \label{def:nu}
\nu_{N,\gamma}(dx) : =\frac{\mathbf 1\{\X_N(x)\geq \gamma \log N+g(x)\}}{\P_N(\X_N(x)\geq \gamma \log N)}dx 
\end{equation}
where $g: \Omega \to \R$ is a tuneable (continuous) function. 
Recall that $\mu_{\X,\gamma}$ denotes the GMC measures associated with  $\X$, that is, the (distributional) limit of the exponential measures~\eqref{eq:GMC}.

 One of our main results is studying this measure $\nu_{N,\gamma}(dx)$ in the setting of the circular unitary ensemble. More precisely, we have the following result.

\begin{theorem} \label{thm:cue}
Let $\X_N(x)=\sqrt{2}\log |\det(I-e^{-i x}U_N)|$ for $x\in\T$, where $U_N$ is a Haar distributed random $N\times N$ unitary matrix.

For any $\gamma\in(0,\sqrt{2})$ and  $g\in \mathcal{C}(\T \to \R)$, 
the random measure $\nu_{N,\gamma}$ converges in distribution as $N\to\infty$ (with respect to the weak topology)   to $e^{-\gamma g}\mu_{\X,\gamma}$. 
\end{theorem}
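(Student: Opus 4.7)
The plan is to deduce Theorem~\ref{thm:cue} from the general criterion to be developed in Section~\ref{sec:main}, by verifying its two standing inputs for the approximation $\X_N(x)=\sqrt 2\log|\det(I-e^{-ix}U_N)|$. The first input is that $\X_N$ is an approximation, in a suitable distributional sense, of the log-correlated field $\X$ on $\T$ with covariance \eqref{eq:covT}, together with convergence of the exponential chaos measures $\mu_{N,\gamma}$ from \eqref{eq:GMC} to the GMC measure $\mu_{\X,\gamma}$ throughout $\gamma\in(0,\sqrt 2)$. This ingredient is by now classical and available from the works on Gaussian multiplicative chaos for characteristic polynomials of Haar-unitary matrices.

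The second, and harder, input is sufficiently precise asymptotics of the joint exponential moments
\[
\E_N \exp\Bigl(\sum_{k=1}^m \alpha_k \X_N(x_k)\Bigr)
= \E_N \prod_{k=1}^m \bigl|\det(I-e^{-ix_k}U_N)\bigr|^{\sqrt 2\,\alpha_k},
\]
uniform in $(x_1,\dots,x_m)\in\T^m$, in particular as some of the points $x_k$ merge. By Heine's identity this quantity is a Toeplitz determinant $D_N(f)$ with symbol $f(\theta)=\prod_k|e^{i\theta}-e^{ix_k}|^{2\beta_k}$ where $\beta_k=\alpha_k/\sqrt 2$. The required asymptotics, of the form
\[
D_N(f) = N^{\sum_k \beta_k^2}\prod_{j<k}|e^{ix_j}-e^{ix_k}|^{-2\beta_j\beta_k}\,\mathcal C(\beta_1,\dots,\beta_m)\bigl(1+o(1)\bigr),
\]
with an explicit constant $\mathcal C$ built from Barnes $G$-functions, go back to Deift--Its--Krasovsky in the fixed-singularity case and to Claeys--Krasovsky (via Painlev\'e~V) and Fahs in the merging case. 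A direct consequence is the one-point probability asymptotics $\P_N(\X_N(x)\geq\gamma\log N)\sim C_\gamma N^{-\gamma^2}/\sqrt{\log N}$ that enters the normalization of $\nu_{N,\gamma}$.

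With both inputs in place, the general criterion reduces the statement to computing joint moments of $\int f\,d\nu_{N,\gamma}$ for continuous test functions $f$ and matching them with those of $\int f e^{-\gamma g}\,d\mu_{\X,\gamma}$. Expanding such a moment produces a probability of the form $\P_N\bigl(\X_N(x_k)\geq \gamma\log N+g(x_k),\ k=1,\dots,m\bigr)$ which, via a Gaussian tilt by $\gamma$ at each $x_k$ and the Fisher-Hartwig moment asymptotics above, is seen to converge to $\prod_k e^{-\gamma g(x_k)}$ times the corresponding $m$-point function of $\mu_{\X,\gamma}$; the extra factor $e^{-\gamma g}$ arises because the shifted threshold $\gamma\log N+g(x)$ multiplies the one-point probability by $e^{-\gamma g(x)+o(1)}$, uniformly in $x$. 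Weak convergence of the random measures then follows from a standard moment/tightness argument, with vague convergence upgradable to weak convergence since $\T$ is compact.

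I expect the main obstacle to lie in controlling the merging Fisher-Hartwig asymptotics uniformly when pairs of the $x_k$ coalesce on the short length scales that dominate the second moment of $\nu_{N,\gamma}(f)$, and when several parameters $\alpha_k$ are clustered at or near $\gamma$. This is precisely where the Painlev\'e~V transition of Claeys--Krasovsky is needed, and it is also what restricts the method to the subcritical phase $\gamma<\sqrt 2$. Every other step—passing from moment asymptotics to distributional convergence of $\nu_{N,\gamma}$ and identifying the limit—is bookkeeping built on the general theorem of Section~\ref{sec:main}.
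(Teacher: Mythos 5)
Your high-level plan — invoke a general criterion, feed it the known GMC convergence for the CUE characteristic polynomial, and verify the needed exponential-moment asymptotics via Fisher--Hartwig/Riemann--Hilbert methods — is indeed the architecture of the paper. But the way you describe the final reduction reveals a gap that would make your argument fail precisely in the regime $\gamma\in[1,\sqrt2)$.

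You write that the general criterion ``reduces the statement to computing joint moments of $\int f\,d\nu_{N,\gamma}$ \ldots and matching them with those of $\int fe^{-\gamma g}\,d\mu_{\X,\gamma}$,'' followed by a standard moment/tightness argument. This is a second-moment method. It works in the $L^2$ phase $\gamma<\sqrt d$ (here $\gamma<1$), but for $\gamma\in[1,\sqrt2)$ the second moment $\E_N[\nu_N(f)^2]$ diverges (the kernel $|x-y|^{-\gamma^2}$ is not integrable when $\gamma^2\geq 1$), so no amount of merging Fisher--Hartwig asymptotics saves the argument. This is not a bookkeeping detail: the paper's Theorem~\ref{thm:main} is not proved by matching moments, but by showing $\E_N\big[|\nu_N(f)-\mu_N(e^{-\gamma g}f)|\big]\to 0$ after inserting a multi-scale \emph{barrier} event $\mathcal B_{\ell,L_N}=\{\X_{N,e^{-k}}\leq(\gamma+\eta)k,\ k\in[\ell,L_N]\}$. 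The barrier kills the non-integrable near-diagonal contribution (Proposition~\ref{prop:meso}) at a cost of $O(e^{-c\ell})$ (Proposition~\ref{prop:L1}), and on the macroscopic scale all three cross-terms $\Theta_{1,N,\ell},\Theta_{2,N,\ell},\Theta_{3,N,\ell}$ converge to the same Gaussian barrier probability (Proposition~\ref{prop:macro}), giving cancellation. This modified second-moment method is the central idea your proposal is missing.

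A second, related omission: because the barrier is phrased in terms of the mesoscopic regularizations $\X_{N,\delta}$, the assumptions one must verify are not just Fisher--Hartwig asymptotics for $\E_N\prod_k|\det(I-e^{-ix_k}U_N)|^{\sqrt2\alpha_k}$. They are asymptotics of the \emph{mixed} moments $\E_N\exp\big(\zeta_1\X_N(x_1)+\zeta_2\X_N(x_2)+\sum_j\xi_j\X_{N,\delta_j}(z_j)\big)$, which translate into Toeplitz determinants with two Fisher--Hartwig singularities \emph{and} a smooth (trigonometric-polynomial) part in the symbol whose degree scales like $N^{1-\eta}$. Verifying Assumption~\ref{ass:meso} requires asymptotics uniform as the singularities approach each other to within $N^{\eta-1}$, and Assumption~\ref{ass:macro} further requires uniformity for $|\Im\zeta_j|$ growing like a power of $\log N$; neither of these is off-the-shelf in \cite{DIK,CK}, which is why Section~\ref{sec:CUE} carries out the differential-identity integration and a fresh matching-parametrix estimate (Lemma~\ref{le:Dest}). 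So your ``main obstacle'' diagnosis is partly right in flagging merging singularities, but the restriction to $\gamma<\sqrt2$ is not a limitation of the Fisher--Hartwig method -- it is the GMC criticality threshold, and the barrier is exactly what lets the method reach it.
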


In relation to Conjecture \ref{con:fk}, let $\X_N =\sqrt{2} \log |p_N|$ so as to fix the critical value to $\sqrt{2}$  (which is the standard value in the literature) instead of 1. 
{It is known (e.g.~\cite[Theorem 7.5.1]{FMN16}) that for any $\gamma>0$, as $N\to\infty$, 
\begin{equation} \label{moddev}
\P_N(\X_N(\theta)\geq \gamma \log N) \sim {\frac{ N^{-\gamma^2/2} G(1+\gamma/\sqrt 2)^2}{\gamma \sqrt{2\pi \log N} G(1+\sqrt 2 \gamma)}}. 
\end{equation}
We offer a proof of this result in Lemma \ref{lem:prob} as our approach involves certain generalizations of these asymptotics. To verify the assumptions of Lemma \ref{lem:prob}, see \eqref{eq:barnesasy} for the appearance of the Barnes functions.}

Hence, as a consequence of Theorem~\ref{thm:cue} with $g=0$, for any $\gamma\in(0,\sqrt{2})$, 
\[
\frac{\frac{1}{2\pi}\int_0^{2\pi}\mathbf 1\{\X_N(\theta)>\gamma\log N\}d\theta}{\frac{N^{-\gamma^2/2} G(1+\gamma/\sqrt 2)^2}{\gamma \sqrt{2\pi \log N} G(1+\sqrt 2 \gamma)}} \to \mu_{\X,\gamma}(\T)
\]
in distribution as $N\to\infty$, where $\X$ is the restriction of the 2d Gaussian free field to  $\T$ (namely has the covariance \eqref{eq:covT}). 
In fact, our method also establishes that the moments of order $\beta \in (0,1]$ also converge; cf.~Theorem~\ref{thm:main} below.
In particular, $\E \mu_{\X,\gamma}(d \theta) = \frac{d \theta}{2\pi}$ with our convention and this should be compared to \eqref{ttmass} (upon multiplying by  $\Gamma(1-\gamma^2/2)$ and replacing $\gamma$ by $\sqrt{2}\gamma$). 
It is yet another conjecture from  \cite{FB} that the probability density function of the random variable $ \Gamma(1-\gamma^2/2) \mu_{\X,\gamma}(\T)$ can be calculated explicitly, and it is given by~$\mathcal P_\gamma$.
This conjecture has been proved independently using different methods in \cite{Remy,CN}. 
Hence, we obtain the following result;

\begin{corollary}
Conjecture \ref{con:fk} is true.
\end{corollary}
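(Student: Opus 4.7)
The plan is to deduce Conjecture~\ref{con:fk} as a direct consequence of Theorem~\ref{thm:cue}, the tail estimate \eqref{moddev}, and the Fyodorov--Bouchaud formula for the law of $\mu_{\X,\gamma}(\T)$ proved in \cite{Remy,CN}. All of the analytic content is already packaged into Theorem~\ref{thm:cue}; what remains is bookkeeping to reconcile the two parametrizations and to match normalizations.

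First I would reconcile conventions. The paper works with $\X_N = \sqrt 2\log|p_N|$ so that the critical exponent is $\sqrt 2$, whereas Conjecture~\ref{con:fk} is phrased in terms of $\log|p_N|$ with critical exponent $1$. Given $\gamma \in (0,1)$ from the conjecture, set $\tilde\gamma := \sqrt 2\gamma \in (0,\sqrt 2)$. Then $\{\log|p_N(\theta)| > \gamma\log N\} = \{\X_N(\theta) > \tilde\gamma\log N\}$, so with $g \equiv 0$ the numerator in \eqref{ttmass} equals $\P_N(\X_N(\theta) \geq \tilde\gamma\log N) \cdot \nu_{N,\tilde\gamma}(\T)$. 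Applying Theorem~\ref{thm:cue} with parameter $\tilde\gamma$ and $g \equiv 0$, together with the fact that $\mu \mapsto \mu(\T)$ is continuous in the weak topology on the compact torus, yields $\nu_{N,\tilde\gamma}(\T) \to \mu_{\X,\tilde\gamma}(\T)$ in distribution.

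Next I would match the Fyodorov--Keating normalization in \eqref{ttmass} with $\P_N(\X_N(\theta) \geq \tilde\gamma\log N)$. Substituting $\tilde\gamma = \sqrt 2\gamma$ into \eqref{moddev} and simplifying via $G(1+\tilde\gamma/\sqrt 2) = G(1+\gamma)$ and $G(1+\sqrt 2\tilde\gamma) = G(1+2\gamma)$, the latter turns out to be asymptotic to the denominator of \eqref{ttmass} multiplied by $\Gamma(1-\gamma^2)^{-1}$. Consequently the random variable in \eqref{ttmass} converges in distribution to $\Gamma(1-\gamma^2)\,\mu_{\X,\tilde\gamma}(\T) = \Gamma(1-\tilde\gamma^2/2)\,\mu_{\X,\tilde\gamma}(\T)$, since $\tilde\gamma^2/2 = \gamma^2$. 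The Fyodorov--Bouchaud formula of \cite{Remy,CN} then gives $\Gamma(1-\tilde\gamma^2/2)\,\mu_{\X,\tilde\gamma}(\T) \stackrel{d}{=} Y^{-\tilde\gamma^2/2}$ with $Y \sim \mathrm{Exp}(1)$, and a direct change of variables identifies the law of $Y^{-\gamma^2}$ with the density $\mathcal{P}_\gamma$. There is no genuine analytic obstacle at this stage; the only thing to verify carefully is that the $\Gamma(1-\gamma^2)$ prefactor in \eqref{ttmass} emerges cleanly from the Barnes-function identities in \eqref{moddev} rather than being absorbed into the law of $\mu_{\X,\tilde\gamma}(\T)$, and the identity $\tilde\gamma^2/2 = \gamma^2$ is precisely what makes the two mechanisms dovetail.
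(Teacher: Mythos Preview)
Your proposal is correct and follows essentially the same route as the paper: apply Theorem~\ref{thm:cue} with $g\equiv 0$ to get $\nu_{N,\tilde\gamma}(\T)\to\mu_{\X,\tilde\gamma}(\T)$, combine with the moderate deviation asymptotics \eqref{moddev} to identify the normalization in \eqref{ttmass}, and invoke the Fyodorov--Bouchaud formula from \cite{Remy,CN} to recognize the limiting law as~$\mathcal P_\gamma$. One small slip: when you write that $\P_N(\X_N\ge\tilde\gamma\log N)$ is asymptotic to the denominator of \eqref{ttmass} times $\Gamma(1-\gamma^2)^{-1}$, the exponent should be $+1$ rather than $-1$ (as your very next sentence implicitly uses, since you correctly land on $\Gamma(1-\gamma^2)\,\mu_{\X,\tilde\gamma}(\T)$ as the limit).
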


It is also of general interest to obtain similar results for smoothing (by convolution) of general Gaussian log-correlated fields.
Let $\Omega\subset \R^d$ be an open set, $h : \Omega\times \Omega\to \R$ be locally $\alpha$-Hölder continuous ($\alpha\in(0,1]$) and let $\X$ be a log-correlated field on $\Omega$ with covariance kernel \eqref{eq:cov}.

\begin{theorem} \label{thm:gauss}
Let  $\rho\in \mathcal{C}_c(\R^d\to\R_+)$ be a probability density function $(\int_{\R^d}\rho(x)dx=1)$ and let  $\rho_\delta=\delta^{-d}\rho(\delta^{-1}\cdot)$ for any $\delta>0$. 
Let $K \subset \Omega$ be a compact set. 
For a small $c=c_{K,\rho}>0$, define a stochastic process on $(0,c]\times K$ by 
\begin{equation}\label{eq:convapp}
\X_\delta := \rho_\delta*\X = \int_\Omega \X(x) \rho_\delta(\cdot-x) dx .  
\end{equation}
For any $\gamma\in (0,\sqrt{2})$ and  $g\in \mathcal{C}(\Omega \to \R)$, 
the random measure $\nu_{N,\gamma}$ $($with $\X_\delta$, $\delta=1/N$ in place of $\X_N$$)$ converges in probability as $N\to\infty$ $($with respect to the vague topology$)$ to $e^{-\gamma g}\mu_{\X,\gamma}$.
\end{theorem}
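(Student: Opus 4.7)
\textbf{Plan for proving Theorem \ref{thm:gauss}.}
The strategy is to apply the general result Theorem~\ref{thm:main} with the specific approximation $\X_N := \X_{1/N} = \rho_{1/N} * \X$. Hence the task reduces to verifying, for this convolution approximation, the two inputs on which Theorem~\ref{thm:main} is built: precise asymptotics of (joint) exponential moments of the field, and convergence of the standard (exponential) GMC approximation $\mu_{N,\gamma}$ defined in~\eqref{eq:GMC} to $\mu_{\X,\gamma}$. The Gaussian nature of $\X$ makes both inputs tractable by explicit computation, even though one has to be careful in the regime where points merge at scale $1/N$.

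I would first analyze the covariance of $\X_\delta$. Since $\X$ is Gaussian, so is $\X_\delta$, with covariance
\[
C_\delta(x,y) = \int_\Omega \int_\Omega \rho_\delta(x-u)\rho_\delta(y-v) \bigl(\log \tfrac{1}{|u-v|} + h(u,v)\bigr)\, du\, dv.
\]
A routine computation using the local $\alpha$-Hölder regularity of $h$ shows that, uniformly for $x,y$ in a compact subset $K'\supset K$ of $\Omega$,
\[
C_\delta(x,y) = \log \tfrac{1}{\max(|x-y|,\delta)} + h_\rho(x,y) + o(1) \qquad (\delta \to 0),
\]
for some continuous function $h_\rho$ that reduces to $h$ when $|x-y|\gg\delta$, and absorbs a $\rho$-dependent constant on the diagonal. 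From this, the required one-point tail asymptotic
\[
\P\bigl(\X_\delta(x)\ge \gamma\log N\bigr) \;\sim\; \frac{e^{-\gamma^2 h_\rho(x,x)/2}}{\gamma\sqrt{2\pi\log N}}\, N^{-\gamma^2/2},
\]
with $\delta=1/N$, follows uniformly in $x\in K$ from the Gaussian Mills-ratio estimate. The joint exponential moment asymptotics required by Theorem~\ref{thm:main} for finitely many merging points $x_1,\dots,x_k$ then reduce, by Gaussianity, to
\[
\E \exp\Bigl(\gamma \sum_{j=1}^k \X_\delta(x_j)\Bigr) = \exp\Bigl(\tfrac{\gamma^2}{2}\sum_{i,j} C_\delta(x_i,x_j)\Bigr),
\]
and the corresponding tail probabilities can be read off by Gaussian conditioning on $\sum_j \X_\delta(x_j)$.

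Next, I would invoke the standard convergence theory for Gaussian multiplicative chaos under mollification (e.g. Kahane's classical result, or Berestycki's $L^1$-approach as reviewed in \cite{Berestycki}): for any $\gamma \in (0,\sqrt{2d})$, the random measure $\mu_{N,\gamma}$ associated with $\X_\delta$ converges in probability (and in $L^p$ for $1\le p <2d/\gamma^2$) in the vague topology to $\mu_{\X,\gamma}$ as $\delta\to 0$. Combined with the exponential moment estimates from the previous step, this verifies the hypotheses of Theorem~\ref{thm:main}, which then delivers the stated convergence of $\nu_{N,\gamma}$ to $e^{-\gamma g}\mu_{\X,\gamma}$ with the extra factor $e^{-\gamma g}$ arising from the $g$-shifted indicator in \eqref{def:nu} via the identity $\P(\X_\delta(x)\ge \gamma\log N + g(x)) \sim e^{-\gamma g(x)-\gamma^2 h_\rho(x,x)/2} \cdot N^{-\gamma^2/2}/(\gamma\sqrt{2\pi\log N})$.

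The hardest part will be the \emph{uniform control near the diagonal}: when $|x-y|$ is comparable to $\delta$, neither $-\log|x-y|$ nor $-\log\delta$ is a good approximation of $C_\delta(x,y)$, and the error terms in both the covariance expansion and in the Gaussian conditional tail estimates must be controlled uniformly in $(x,y)$ with precise rates in order to feed into Theorem~\ref{thm:main}. This is morally the same obstacle as the merging Fisher--Hartwig regime in the CUE case, but simpler here because everything is explicit and Gaussian: the merging asymptotics of the joint Laplace transform follow from a single computation on $C_\delta(x,y)$ plus its Hölder-continuous variation in $(x,y)$. Once this uniformity is in hand, the remaining arguments are fairly standard manipulations of moments of positive measures and weak convergence.
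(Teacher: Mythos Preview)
Your proposal is correct and follows essentially the same route as the paper: verify that the Gaussian convolution approximation satisfies the hypotheses of the general Theorem~\ref{thm:main} (equivalently, Assumptions~\ref{ass:cov}, \ref{ass:meso}, \ref{ass:macro}) by reducing everything to estimates on the smoothed covariance kernels $C_{\X,\delta}$ and $C_{\X,\delta,\epsilon}$, then invoke Berestycki's convergence of $\mu_{N,\gamma}$ to $\mu_{\X,\gamma}$ and apply Corollary~\ref{cor:main}. The paper packages the exponential-moment input via the explicit Gaussian formula for $\Psi_N$ (see \eqref{GaussPsi}) rather than via tail probabilities and conditioning as you sketch, but since both reduce to the same covariance estimates (your $C_\delta(x,y)$ analysis is exactly the content of Lemmas~\ref{lem:cov1}--\ref{lem:cov2}), this is a cosmetic difference; your identification of the merging regime $|x-y|\asymp\delta$ as the only nontrivial point is also what the paper isolates.
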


For $\delta>0$, \eqref{eq:convapp} denotes the convolution of the random Schwartz distribution $\X$ with a continuous compactly supported  test function, so it is almost surely well-defined (the constant $c_{K,\rho}$ depends only on the support of the mollifier $\rho$ and on $\dist(K,\partial\Omega)$).
This provides a natural family of approximations of $\X$ as $\delta\to0$. 

\smallskip

We will prove Theorems~\ref{thm:cue} and~\ref{thm:gauss} together by developing a general framework which guarantees that the random measures $\nu_{N,\gamma}$ and the usual GMC approximation $\mu_{N,\gamma}$ \eqref{eq:GMC} have the same limit (in probability in the subcritical phase $\gamma\in[0,\sqrt{2})$) under general assumptions on the approximation of $\X$; see Theorem~\ref{thm:main} below. 
These assumptions are stated in the next section in terms of  exponential moments of the field $\X_N$. Importantly, this allows us to consider non-Gaussian approximations such as the log characteristic polynomial of a random matrix model. 
We demonstrate the applicability of our method for the CUE in Sections \ref{sec:CUE} and obtain Theorem~\ref{thm:cue}. 
Most of the relevant asymptotic estimates are already available in the literature (see e.g. \cite{CK,DIK,DIK2}) based on the relationship with orthogonal polynomials and Riemann-Hilbert problems with Fisher-Hartwig singularities.  
This method and the appropriate results are reviewed in Section \ref{sec:CUE} and we carefully emphasize the (technical) modifications which are required to derive Theorem~\ref{thm:cue}.

While the CUE is arguably the most basic random matrix ensemble, we believe that our approach can be adapted to a large class of unitary invariant ensembles, such as the one considered in \cite{CFLW} for a related problem, modulo additional technicalities while performing the steepest descent analysis of the  corresponding Riemann-Hilbert problems.

\subsection{Notation and General assumptions} \label{sec:gen}
Throughout this article, the dimension $d\in\N$ is fixed. 
Recall that $\Omega\subset\R^d$ is either a bounded open set (as in Theorem~\ref{thm:gauss} in which case $\dist(x,y)=|x-y|$ denotes the Euclidean distance) or a $d$-dimensional smooth Riemannian manifold (as in Theorem~\ref{thm:cue} with
$d=1$, $\Omega=\T$ and $\dist(\theta,x) = |e^{i\theta}-e^{ix}|$). 
In the latter case, we assume that for any compact $K \subset \Omega$, the metric tensor $G$ satisfies $c_K \mathrm{I}_d \le G \le C_K \mathrm{I}_d $. 
In both cases, since we consider the vague topology of convergence of measures, 
one can always assume (by a partition of unity) that $\Omega\subset\R^d$ is a ball (the previous condition on $G$ guarantees that $\dist$ is locally equivalent to the Euclidean distance). 
Thus, to ease notation, we will now denote $\dist(x,y) = |x-y|$. 

The interpretation of $\X$ being a log-correlated field with covariance structure \eqref{eq:cov} is that for any test  function $f\in\mathcal{C}_c^\infty(\Omega\to\R)$, 
\begin{equation} \label{pair}
\langle \X,f\rangle : = \int f(x)\X(x) dx 
\end{equation}
is a centered Gaussian random variable with variance 
\(
\iint C_{\X}(x,y) f(x)f(y) dx dy
\); cf.~\eqref{eq:cov}. 
One can extend \eqref{pair} to more general functions (by density). For instance,  this quantity is (almost surely) well-defined for any $f\in L_c^\infty(\Omega)$ (see \eqref{eq:Lc} for a definition of this space). 

\medskip

In general, we consider two layers of approximation;
\begin{itemize}[leftmargin=5mm]
\item For $N\in\N$,  $\X_N:\Omega\to \R$ is a lower-semicontinuous function such that for any $f\in \mathcal{C}^\infty_c(\Omega)$, 
$\langle \X_N,f\rangle \to \langle \X,f\rangle $ in distribution as $N\to \infty$.
\item Define $\X_{N,\delta}(x) := \langle \X_N, \rho_{\delta,x}\rangle $ for $\delta\in (0,1]$ and $x\in\Omega$, where 
$(\delta,x,y) \in (0,1] \times \Omega \times \Omega \mapsto \rho_{\delta,x}(y)$ is a continuous approximation of the identity ($\rho_{\delta,x}\ge0$ and $\rho_{\delta,x}  \to \boldsymbol\delta_x $ vaguely as $\delta\to0$ where $\boldsymbol\delta_x $ is a Dirac measure at $x\in\Omega$). 
\end{itemize}

\smallskip

One views $\X_N$ as an approximation of the field $\X$ coming e.g.~from a statistical mechanics model on a (microscopic scale) $1/N$, and $\X_{N,\delta}$ as a smoothing of $\X_N$ on a scale $\delta>1/N$. 
In particular, $(\X_N)_{N\in\N}$ need not be defined on the same probability space, while the second layer of approximation $\X_{N,\delta}$ is defined on the same probability space as  $\X_N$ for every $N\in\N$.
We assume that $\X_N$ has mild regularity (e.g.~lower-semicontinuity such as the log characteristic polynomial in the context of Theorem~\ref{thm:cue}) so that the level set of \emph{$\gamma$-thick points} as in \eqref{def:nu} is well-defined. 
Let us also record that in the context of Theorem~\ref{thm:gauss}, one works with only the second level of approximation and we restrict ourself to a basic class of mollifiers,
$\rho_{\delta,x} = \rho_\delta(\cdot-x)$, with $\rho$ as in the statement.
Instead of a continuous approximation, one could consider a discretization of $\X$ on a graph embedded in $\Omega$ with mesh size $1/N$. The methods of this paper can be adapted to this case with similar assumptions, modulo some rather involved notational changes, so we do not pursue such arguments in this paper.

We define the kernels for $x,z \in \Omega$ and $\epsilon, \delta\in (0,1]$, 
\begin{equation} \label{kernel}
\begin{aligned}
C_{\X,\delta}(x,z) & :=  \int C_\X(u,z) \rho_{\delta,x}(u) d u  ,  \\
C_{\X,\delta,\epsilon}(x,z) &:= \int C_\X(u,v) \rho_{\delta,x}(u) \rho_{\epsilon,z}(v)du d v 
\end{aligned}
\end{equation}

In the sequel, we require that $\rho_{\delta,x}$ is a suitable smoothing kernel (depending on the covariance structure of the log-correlated field $\X$) in the following sense;

\begin{assumption}\label{ass:cov}
For any compact set $K\subset \Omega$, there exists $c_K>0$ so that
\[\begin{aligned}
C_{\X,\delta}(x,z)  = - \log\big( |x-z| \vee \delta \big)  +O(1) ,\qquad 
C_{\X,\delta,\epsilon}(x,z)   = - \log\big( |x-z| \vee \delta \big) +  O(1)
\end{aligned}\]
where both error terms are uniform for $(x,y) \in K^2$ and $\{(\epsilon,\delta) \in (0,c_K]^2 : \epsilon\le \delta \}$. 
\end{assumption}

Depending on the model, one is lead to consider different smoothings, which is why we work in an abstract setup and formulate general assumptions.  
However, the reader might keep in mind the following two schemes;

\begin{itemize}[leftmargin=5mm]
\item Let $(\phi_k)_{k\in\N}$ be a orthonormal basis of $L^2(\Omega)$ satisfying the following conditions: 

1) For $N\in\N$
\[
\X_N = \sum_{k\in\N} \x_{N,k} \phi_k  
\quad\text{where $(\x_{N,k})_{k\in\N}$ are (real) random variables such that $\sum_{k\in\N} \E\x_{N,k}^2 <\infty$}
\]
and for $k\in\N$, $\x_{N,k}\to\x_k$ in distribution as $N\to\infty$ where $\x_k\sim \mathcal{N}_{0,\lambda_k}$ are independent. 

2) The kernel \eqref{eq:cov} satisfies
\[
C_{\X}(x,y) = \sum_{k\in\N} \lambda_k \phi_k(x)\phi_k(y) . 
\]
Then, with $\rho_{\delta,x} = \sum_{\delta k \le 1} \overline{\phi_k(\cdot)} \phi_k(x)$, we have
\[
\X_{N,\delta}(x)= \langle \X_N, \rho_{\delta,x}\rangle =  \sum_{k\delta \le 1} \boldsymbol\xi_{N,k} \phi_k(x)
\]
\item  $\rho_{\delta,x}=\delta^{-d}\rho(\delta^{-1}(\cdot-x))$ for $x\in \Omega$ and $\delta\in(0,1]$ in which case we obtain  convolution approximations as in Theorem~\ref{thm:gauss}.  
\end{itemize}

For instance, in the context of Theorem~\ref{thm:cue}, 
$\X_N=\sqrt{2}\log |\det(I-e^{i\cdot}U_N)|$, where $U_N$ is a Haar distributed random $N\times N$ unitary matrix.
We can decompose this field in the Fourier basis,
\[
\X_N(x) =  \Re \bigg(\sum_{k\in\N} \frac{\Tr U_N^k}{k/\sqrt 2} e^{ikx} \bigg) , \qquad x\in\T .
\]
Moreover, $\frac{\Tr U_N^k}{k} \to \mathcal{N}_{k}$ in distribution as $N\to\infty$, 
where $(\mathcal{N}_{k})_{k\in\N}$ are independent complex Gaussians with variance $1/k$; see e.g.~\cite{DE01}. Then, we consider the approximation for $\delta\in (0,1]$, 
\begin{equation} \label{trseries}
\X_{N,\delta}(x) 
=\int_{\T} \sum_{|k|\leq \delta^{-1}}e^{ik(\theta-x)}\X_N(\theta)\frac{d\theta}{2\pi} 
=  \Re \bigg( \sum_{k\delta\le 1} \frac{\Tr U_N^k}{k/\sqrt 2} e^{ikx} \bigg), \qquad x\in\T . 
\end{equation}
These are random trigonometric polynomials. One could also consider other regularization schemes such as a convolution with the Poisson kernel on $\T$, 
$\rho_{\delta}(x) = \sum_{k\in\N}  e^{-\delta k+  ikx}$, in which case one ends up with regularizations of the form 
\[
\sum_{k\in\N} \frac{\Tr U_N^k}{k/\sqrt 2} e^{-\delta k+  ikx} 
=\sqrt{2}\log |\det(I-e^{-\delta+ix}U_N)|. 
\]

Our main assumptions are formulated in terms of exponential moments of $\X_N$ and $\X_{N,\delta} = \langle \X_N, \rho_{\delta,x}\rangle$. Since these assumptions are rather elaborate, we need to introduce further notations. 

For a (large) $R>0$ and a (small) $c>0$, let 
\begin{equation}
\label{eq:Dr}
\mathcal{D}_R :=\big\{  \zeta\in\C : \Re \zeta \in[0,\sqrt{2d}], |\Im \zeta| \le R \big\} \end{equation}
and for a compact\footnote{If $\Omega$ is compact manifold, we just let $K=\Omega$.} $K\subset\Omega$,
\begin{equation}
\label{eq:Ac}
\mathcal{A}_c\subset \big\{(x,y)\in K^2: |x-y| \ge c  \big\}.
\end{equation}
We also write $\mathcal{D}_\infty$ for the infinite strip $[0,\sqrt{2d}]\times \R$.
Note that the value $\sqrt{2d}$ corresponds to the GMC critical point. 

\medskip

Let $\mathrm{f}_{\delta,z} = \sum_{j=1}^q\xi_j \rho_{\delta_j,z_j}$
for $\delta=(\delta_1,...,\delta_q)\in (0,1]^q$, $z=(z_1,...,z_q) \in \Omega^q$ and 
$\xi = (\xi_1, \cdots, \xi_q)\in \R^q$, $q\in\N$. 
Then, for $\zeta_1, \zeta_2\in \mathcal{D}_\infty$, $x_1,x_2\in \Omega$ with $x_1\neq x_2$, we write
\begin{align} \label{mom1}
\E_N\big[ e^{ \zeta_1 \X_N(x_1) + \zeta_2 \X_N(x_2)+ \sum_{j=1}^q\xi_j \X_{N,\delta_j}(z_j)} \big] 
=   \Psi_{N,\delta}^{(\zeta_1,\zeta_2,\xi)}(x_1, x_2;z) \exp\Big( \tfrac{\zeta_1^2+ \zeta_2^2}{2} \log N \Big) \, \\ \notag
\exp\Big(\zeta_1\zeta_2 C_\X(x_1,x_2)+ \zeta_1 \int C_\X(x_1,u) \mathrm{f}_{\delta,z}(u) d u 
+ \zeta_2 \int C_\X(x_2,u) \mathrm{f}_{\delta,z}(u) d u 
+ \frac12\E \langle \X , \mathrm{f}_{\delta,z}  \rangle^2\Big) .
\end{align}
One should interpret this equation as saying that the RHS exists\footnote{
Note that as part of the Assumption~\ref{ass:cov},  $\E \langle \X , \mathrm{f}_{\delta,z}  \rangle^2 <\infty$ for any $z\in\Omega^q$ and $\delta\in(0,1]^q$. In particular, $ \mathrm{f}_{\delta,z} $ are admissible test functions for the field $\X$.} and defining $\Psi_N$ for the relevant choices of parameters. This quantity encodes the joint exponential moments of $\X_N$ and its approximation at different scales $\delta_j$ as  
$\langle \X_N , \mathrm{f}_{\delta,z}  \rangle = \sum_{j=1}^q \xi_j  \X_{N,\delta_j}(z_j) $
where $(\xi_j)_{j=1}^q$ are parameters.
We use the following shorthand notation; 
\[
 \Psi_{N}^{(\zeta_1,\zeta_2)}(x_1,x_2) =  \Psi_{N,\delta}^{(\zeta_1,\zeta_2,0)}(x_1, x_2; z)  \quad \text{if} \quad \xi=0.
 \]
and
\[
 \Psi_{N}^{(\zeta_1)}(x_1) =  \Psi_{N,\delta}^{(\zeta_1,0,0)}(x_1, x_2; z)  \quad \text{if} \quad  \zeta_2=\xi=0 .
 \]

The function $\Psi_N$ controls  the effects coming from any non-trivial (or non-Gaussian) behavior on microscopic scales in the model and we assume it satisfies the following properties; 
\begin{assumption}  \label{ass:meso}
The following assumptions hold for $\Psi_N$; for a fixed $q\in\N_0$.
\begin{itemize}[leftmargin=5mm]
\item[(1)] For $x_1,x_2\in \Omega$, $x_1\neq x_2$, $z\in\Omega^q$, $\xi\in\R^q$ and $\delta\in(0,1]^q$, the function $(\zeta_1,\zeta_2)\mapsto \Psi_{N, \delta}^{(\zeta_1,\zeta_2,\xi)}(x_1,x_2,z)$ is analytic in $\mathcal{D}_\infty^2$. 
\item[(2)]  
There exists a continuous  function
$\Psi : (\zeta,x) \in  \mathcal{D}_\infty \times \Omega \mapsto \Psi(\zeta,x)$ 
such that for any compact set $K\subset \Omega$,
$\Psi(0,x)=1$ and $ \Psi (\gamma,x) > 0$ for 
$(\gamma,x) \in  [0,\sqrt{2d}] \times K$. 
It holds for any (small) $\eta>0$ and (large) $R>0$,  and  any fixed $\xi\in \R^q$
\begin{equation} \label{PsiN}
\Psi_{N,\delta}^{(\zeta_1,\zeta_2,\xi)}(x_1, x_2;z)  = \Psi(\zeta_1,x_1) \Psi(\zeta_2,x_2) \big(1+\underset{N\to\infty}{o(1)} \big)
\end{equation}
uniformly for $\zeta_1,\zeta_2 \in \mathcal{D}_R$, $x_1,x_2 \in K$ with $|x_1-x_2| \ge N^{\eta-1}$, $z\in K^q$ and $\delta_1,\cdots, \delta_q \ge N^{-1+\eta}$. 
\end{itemize}
\end{assumption}

In case $\zeta_2 = 0$, \eqref{PsiN} holds locally uniformly for $(x_1,\zeta_1) \in \Omega \times  \mathcal{D}_\infty$; in particular it follows that the limiting function $\Psi$ is continuous and also analytic for $\zeta\in \mathcal{D}_\infty$. 
However, the fact that $\Psi$ does not vanish in  $[0,\sqrt{2d}] \times K$ is a non-trivial hypothesis.
From Assumption~\eqref{ass:meso}, we also recover that $\X_N$ is indeed a suitable approximation of $\X$; see e.g.~Lemma~\ref{lem:approx} below.

Let us further comment on this assumption. 
On the one hand,  if $\zeta_1=\zeta_2=0$, this implies that for $z\in \Omega^q$, $\xi\in\C^q$ and $\delta_1,\cdots, \delta_q \ge N^{-1+\eta}$, 
\[
\Psi_{N,\delta}^{(0,0,\xi)}(z) =  \E_N\big[ e^{\sum_{j=1}^q \xi_j \X_{N,\delta}(z_j)} \big] \sim \exp\big(\tfrac12\E \langle \X , \mathrm{f}_{\delta,z}  \rangle^2\big) .
\]
This means that on arbitrary mesoscopic scales $\delta\geq N^{-1+\eta}$, the regularized process  $\X_{N,\delta}$ is essentially Gaussian with covariance kernel 
as in \eqref{kernel}. 
On the other-hand, if $\zeta_2=\xi=0$, then  it holds locally uniformly for $(x,\zeta) \in \Omega \times  \mathcal{D}_\infty$,
\begin{equation} \label{W1}
\Psi_{N}^{(\zeta)}(x)  =  \E_N\big[ e^{ \zeta \X_N(x)} \big] \exp\big(- \tfrac{\zeta^2}{2} \log N\big) \sim  \Psi(\zeta,x).
\end{equation}

Note that it follows from this that  $\Psi(0,x)=1$  for all $x\in\Omega$. In particular, making this assumption in Assumption \ref{ass:meso} was not actually needed. {The interpretation we suggest the reader keeps in mind is that}  this function $\Psi$ encodes the non-trivial microscopic structure of $\X_N$.
Then, \eqref{mom1}-- \eqref{PsiN} imply that to leading order, $\X_N$ has variance $\log N$, correlation kernel $C_\X(x_1,x_2)$ for  $x_1\neq x_2$ and that this microscopic information decorrelates at any mesoscopic distances $N^{-1+\eta}$.

For instance,  in the context of Theorem~\ref{thm:cue}, by Lemma~\ref{le:morris},  the CUE characteristic polynomials satisfy for $(\zeta,\theta) \in  \mathcal{D}_\infty \times \T$, as $N\to\infty$, 
\begin{equation}\label{eq:barnesasy}
\E\big[ e^{\zeta\X_N(\theta)} \big]
= \E\big[ |p_N(\theta)|^{\sqrt{2}\zeta} \big]
\sim \Psi(\zeta,\theta) N^{\frac{\zeta^2}{2}} , \qquad
\Psi(\zeta,\theta) = \frac{G(1+\zeta/\sqrt 2)^2}{G(1+\sqrt2\zeta)},
\end{equation}
where $G(z), z\in\C$ denotes the Barnes G-function ($G(x)> 0$ if $x>0$.)
In the CUE case, these quantities are independent of $\theta\in\T$ because of the (distributional) rotational invariance of the eigenvalues of $U_N$. 
Moreover, the function $\Psi$ plays a key role in the \emph{moderate deviations} of the field $\X_N$ as reflected by \eqref{moddev}. 

\smallskip

Our method requires a slightly stronger version of this assumption when the singularities $(x_1,x_2)$ are macroscopically separated. 
In this regime, we must allow the imaginary parts of $\zeta_1,\zeta_2$ to grow like a (small) power of $\E \X_N^2 \sim \log N$.

\begin{assumption}\label{ass:macro}
The following assumptions hold for $\Psi_N$; for a fixed $q\in\N_0$.  
\begin{itemize}[leftmargin=5mm]
\item[(1)] 
For any (small) $c,\eta>0$, let $R_N=(\log N)^\eta$ and recall \eqref{eq:Dr}--\eqref{eq:Ac}. 
For fixed $\xi \in \R^q$, $\delta \in (0,1]^q$, $z\in \Omega^q$,
we have uniformly in $\zeta_1,\zeta_2\in \mathcal D_{R_N}$ and $(x,y)\in \mathcal{A}_c$, 
\[
\Psi_{N,\delta}^{(\zeta_1,\zeta_2,\xi)}(x_1,x_2;\xi) =\Psi(\zeta_1,x_1)\Psi(\zeta_2,x_2)\big(1+\underset{N\to\infty}{o(1)}\big) .
\]
\item[(2)]

For any compact $K\subset \Omega$, there exist constants $C=C_K$ and $\varkappa=\varkappa_K$ such that
\[
|\Psi(\zeta,x)|\leq C e^{ |\zeta|^\varkappa} , \qquad\text{$\zeta \in \mathcal{D}_\infty,\,  x\in K$.}
\]
\end{itemize}
\end{assumption}


\subsection{General results} \label{sec:main}
Recall that in terms of the (log-correlated)  field $\X_N$, we define the random measures
\[
\mu_N(dx)=\frac{e^{\gamma \X_N(x)}}{\E_N e^{\gamma \X_N(x)}}dx
\qquad\text{and}\qquad
\nu_N(dx)=\frac{\mathbf 1\{\X_N(x)\geq \gamma \log N+g(x)\}}{\P_N(\X_N(x)\geq \gamma \log N)}dx.
\]
Note that we have dropped the subscript $\gamma$ and $g\in \mathcal{C}(\Omega \to \R)$ is a tunable function. 
Our main statement is; 
\begin{theorem} \label{thm:main}
Fix $\gamma \in (0,\sqrt{2d})$ and $g\in \mathcal{C}(\Omega \to \R)$. 
Under Assumption~\ref{ass:cov}, \ref{ass:meso} and \ref{ass:macro},
for any $f\in\mathcal{C}_c^\infty(\Omega)$, 
\[
\lim_{N\to\infty}\E_N\big[| \nu_N(f) - \mu_N(e^{-\gamma g}f)| \big]=0.
\]
\end{theorem}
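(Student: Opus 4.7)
My plan is to prove $\E_N[|\nu_N(f) - \mu_N(e^{-\gamma g}f)|] \to 0$ by showing that both random variables share the same $L^2$ limit, via first and joint second moment matching. The central device is the Bromwich-type contour representation
\[
\1\{y \geq 0\} = \frac{1}{2\pi i}\,\mathrm{p.v.}\!\int_{\gamma - i\infty}^{\gamma + i\infty} \frac{e^{sy}}{s}\,ds,
\]
applied with $y = \X_N(x) - \gamma \log N - g(x)$. After exchanging integrals,
\[
\nu_N(f) = \frac{1}{2\pi i}\int_{\gamma - i\infty}^{\gamma + i\infty} \frac{N^{-s\gamma}}{s} \int f(x)\, \frac{e^{-s g(x)}\,e^{s \X_N(x)}}{\P_N(\X_N(x)\geq \gamma\log N)}\,dx\,ds,
\]
to which the hypotheses on $\Psi_N$ apply directly. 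Parametrizing $s = \gamma + it$ and using $\E_N \X_N(x)^2 \sim \log N$ produces a Gaussian damping $N^{-t^2/2}$ that localizes the $s$-integral to $|t| \lesssim 1/\sqrt{\log N}$, a regime in which $\zeta = \gamma + it$ is a small perturbation of $\gamma$. This is morally why $\nu_N(f)$ should reduce to $\mu_N(e^{-\gamma g}f)$ in the limit.

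For the first moment, Lemma~\ref{lem:prob} applied pointwise with the shifted threshold $\gamma \log N + g(x)$ gives
\[
\frac{\P_N(\X_N(x) \geq \gamma \log N + g(x))}{\P_N(\X_N(x) \geq \gamma \log N)} \longrightarrow e^{-\gamma g(x)}
\]
uniformly on compacts, hence by dominated convergence $\E_N[\nu_N(f)] \to \int f(x) e^{-\gamma g(x)}\,dx = \E_N[\mu_N(e^{-\gamma g}f)]$. For the second moment,
\[
\E_N[\nu_N(f)^2] = \iint f(x)f(y)\, \frac{\P_N(\X_N(x)\geq \gamma\log N + g(x),\, \X_N(y)\geq \gamma\log N + g(y))}{\P_N(\X_N(x)\geq \gamma\log N)\,\P_N(\X_N(y)\geq \gamma\log N)}\,dx\,dy,
\]
where the joint tail probability in the numerator is expressed via a double contour integral in $s_j = \gamma + it_j$, $j=1,2$. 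For $(x,y) \in \mathcal{A}_c$, Assumption~\ref{ass:macro}~(1) provides the asymptotics of $\Psi_N^{(s_1,s_2)}(x,y)$ uniformly over the strip $\mathcal{D}_{R_N}$ with $R_N = (\log N)^\eta$; after rescaling $u_j = t_j\sqrt{\log N}$, the relevant range $|u_j| \lesssim (\log N)^{\eta/2}$ is exactly what the strip accommodates. The cross factor $e^{s_1 s_2 C_\X(x,y)}$ yields the GMC covariance $e^{\gamma^2 C_\X(x,y)}$ at $(s_1,s_2) = (\gamma,\gamma)$, and the Gaussian integrals in $u_1, u_2$ evaluate to unity, producing
\[
\E_N[\nu_N(f)^2] \longrightarrow \iint f(x)f(y)\, e^{-\gamma(g(x)+g(y))}\, e^{\gamma^2 C_\X(x,y)}\,dx\,dy,
\]
which matches the limits of $\E_N[\mu_N(e^{-\gamma g}f)^2]$ and $\E_N[\nu_N(f)\mu_N(e^{-\gamma g}f)]$, both computed by the same contour-integral method.

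The main obstacle is twofold: (i) the short-distance contribution $|x-y| \lesssim 1$ to the second moment, where $e^{\gamma^2 C_\X(x,y)} \asymp |x-y|^{-\gamma^2}$ is non-integrable once $\gamma^2 \geq d$; and (ii) rigorous justification of the contour manipulation at large $|\Im s|$, which requires the polynomial bound $|\Psi(\zeta,x)| \leq C e^{|\zeta|^\varkappa}$ from Assumption~\ref{ass:macro}~(2) combined with the $N^{-t^2/2}$ damping. For (i), the standard remedy is a mesoscopic barrier truncation: restrict both $\nu_N$ and $\mu_N$ to points where $\X_{N,\delta}(x) \leq \gamma \log \delta^{-1} + M$ across a geometric sequence of scales $\delta \in (N^{-1},1)$. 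On this good event the $L^2$ computation above converges in the full subcritical range $\gamma \in (0,\sqrt{2d})$; the discarded mass vanishes as $M \to \infty$ uniformly in $N$ by applying Assumption~\ref{ass:meso} to the fields $\X_{N,\delta_j}$.
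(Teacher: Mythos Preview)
Your overall strategy---modified second moment method with a mesoscopic barrier---is exactly the paper's, and the Bromwich/Laplace inversion you propose is closely related to the Feller-type tail approximation the paper develops in Appendix~\ref{sec:Fourier}. The first-moment matching and the macroscopic two-point computation (for $(x,y)\in\mathcal A_c$, without barrier) are correct and recover the paper's formula~\eqref{VN}.

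The genuine gap is in your last paragraph. Once the barrier $\mathcal B$ is in place, the relevant object is $\Theta_1+\Theta_2-2\Theta_3$ with $\Theta_j=\E_N[\1_{\mathcal B(x)}\1_{\mathcal B(y)}\cdots]$, and you must show this \emph{difference} tends to $0$; it is not true that each $\Theta_j$ converges to $\iint ff\,e^{-\gamma g-\gamma g}e^{\gamma^2 C_\X}$ (that integral diverges for $\gamma^2\ge d$). The paper's mechanism is that on $\{|x-y|>e^{-\ell}\}$ each $\Theta_j$ converges to a \emph{common} Gaussian barrier probability times $e^{\gamma^2 C_\X-\gamma g-\gamma g}$, so the three cancel. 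Proving that common factor is the crux: for $\Theta_2$ one must show the barrier vector $(\X_{N,e^{-k}}(x),\X_{N,e^{-k}}(y))_k$ has the \emph{same} Gaussian limit under the conditional law $\P_N(\,\cdot\mid \X_N(x)\ge\gamma\log N+g(x),\X_N(y)\ge\gamma\log N+g(y))$ as under the exponential tilt $e^{\gamma\X_N(x)+\gamma\X_N(y)}d\P_N$. Your contour representation of $\1\{\X_N(x)\ge\cdots\}$ does not combine with the barrier indicators $\1_{\mathcal B}$, and Assumption~\ref{ass:macro} gives no direct handle on $\E_N[\1_{\mathcal B}\,e^{s_1\X_N(x)+s_2\X_N(y)}]$. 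The paper instead passes to the conditional measure and proves a two-dimensional local limit theorem (Proposition~\ref{prop:approx}) for $(\X_N(x),\X_N(y))$ near $(\gamma\log N,\gamma\log N)$---this is precisely where the strip width $(\log N)^\eta$ and the growth bound in Assumption~\ref{ass:macro}(2) are used, and your sketch does not supply an analogue. You also omit the near-diagonal bound $\Theta_j(x,y)\le C|x-y|^{-(\gamma+\eta)^2/2+\eta^2/2}$ (Proposition~\ref{prop:meso}), which requires choosing the single barrier scale $\delta\approx|x-y|$ and a careful Markov/Fourier argument; asserting ``the $L^2$ computation converges on the good event'' skips this.
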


Our result shows that, under general hypothesis and in the whole subcritical phase, the fluctuations of the volume of the set of $\gamma$-thick points  and the exponential measure \eqref{eq:GMC} have the same limit in probability as $N\to\infty$.
If it is already known that the exponential measure converges to GMC\footnote{We emphasize that  Theorem~\ref{thm:main} applies even if the limit of the exponential measures are not GMC.} as $N\to\infty$, then we obtain the following immediate consequence; 

\begin{corollary} \label{cor:main}
Under Assumptions~\ref{ass:cov}, \ref{ass:meso} and \ref{ass:macro}, if $\mu_N \to \mu_{\X,\gamma}$ in distribution $($or in probability$)$ for $\gamma \in (0,\sqrt{2d})$  as $N\to\infty$, with respect to the vague topology, then $\nu_N\to e^{-\gamma g}\mu_{\X,\gamma}$ in the same sense. 
\end{corollary}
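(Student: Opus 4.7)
The plan is to prove the $L^1$ convergence by expressing both $\nu_N(f)$ and $\mu_N(e^{-\gamma g}f)$ in terms of exponential moments of $\X_N$ controlled by Assumptions~\ref{ass:meso}--\ref{ass:macro}, and then matching them in first and second moment. The key device is the Mellin--Barnes representation of the indicator,
\[
\mathbf 1\{u\ge 0\}=\frac{1}{2\pi i}\int_{\gamma-i\infty}^{\gamma+i\infty}\frac{e^{\zeta u}}{\zeta}\,d\zeta,
\]
applied with $u=\X_N(x)-\gamma\log N-g(x)$. Taking expectation and invoking \eqref{W1}, the numerator in the definition of $\nu_N$ becomes a contour integral of $\Psi(\zeta,x)e^{-\zeta g(x)}N^{\zeta^2/2-\gamma\zeta}/\zeta$. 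The saddle sits at $\zeta=\gamma$ with width $1/\sqrt{\log N}$, and the Gaussian contour deformation $\zeta=\gamma+i\tau/\sqrt{\log N}$ yields $\P_N(\X_N(x)\ge\gamma\log N+g(x))\sim e^{-\gamma g(x)}\P_N(\X_N(x)\ge\gamma\log N)$, from which
\[
\E_N[\nu_N(f)]\longrightarrow\int f(x)e^{-\gamma g(x)}\,dx=\E_N[\mu_N(e^{-\gamma g}f)].
\]

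Next I would upgrade this to an $L^2$ comparison on macroscopically separated pairs. For a fixed small constant $c>0$, I compute the three second moments $\E_N[\nu_N(f)^2]$, $\E_N[\mu_N(e^{-\gamma g}f)^2]$ and the mixed moment $\E_N[\nu_N(f)\mu_N(e^{-\gamma g}f)]$, restricting the resulting $(x,y)$-integration to $\{|x-y|\ge c\}$. Writing each indicator in $\nu_N$ as a contour integral and combining with the real exponential weight in $\mu_N$, each of these moments takes the form of an iterated contour integral in $(\zeta_1,\zeta_2)$. Assumption~\ref{ass:macro}(1) provides, uniformly over the polylog strip $|\Im\zeta_j|\le(\log N)^\eta$ which comfortably contains the order-$1/\sqrt{\log N}$ saddle window, the factorisation $\Psi_{N,\delta}^{(\zeta_1,\zeta_2,\xi)}(x,y;z)=\Psi(\zeta_1,x)\Psi(\zeta_2,y)(1+o(1))$. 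A two-variable saddle-point at $(\gamma,\gamma)$ then identifies, for all three moments, the common limit
\[
\iint_{|x-y|\ge c}f(x)f(y)\,e^{-\gamma(g(x)+g(y))+\gamma^2 C_\X(x,y)}\,dx\,dy,
\]
so $\nu_N(f)-\mu_N(e^{-\gamma g}f)$ converges to $0$ in $L^2$ modulo diagonal contributions.

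The main obstacle, and the technical heart of the argument, is the diagonal region $\{|x-y|<c\}$: since only $\gamma^2<2d$ is assumed, the second moments of either measure may blow up when $\gamma^2\ge d$, and one cannot close the $L^2$ argument globally. The remedy is a uniform-in-$N$ smallness of the diagonal mass in a weaker norm. Introducing the mesoscopic regularisation $\X_{N,\delta_N}$ at scale $\delta_N=N^{-1+\eta}$, Assumption~\ref{ass:meso}(2) with $\zeta_1=\zeta_2=0$ makes the smoothed field essentially Gaussian with covariance $C_{\X,\delta_N,\delta_N}(x,y)=-\log(|x-y|\vee\delta_N)+O(1)$; a standard Kahane-type fractional moment of order $\beta\in(0,1)$ chosen so that $\beta\gamma^2<d$ then yields, uniformly in $N$,
\[
\E_N\bigl[\mu_N(e^{-\gamma g}f\mathbf 1_B)^\beta\bigr]\le C\,|B|^\alpha,\qquad \alpha>0,
\]
for every small ball $B$, and a parallel estimate for $\nu_N$ follows from the Gaussian moderate-deviation analysis of $\X_N-\X_{N,\delta_N}$ together with the Mills-ratio asymptotics used in the first step. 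Letting $N\to\infty$ so that the macroscopic part converges in $L^2$, then $c\to 0$ so that the diagonal contribution vanishes in $L^1$, gives $\E_N[|\nu_N(f)-\mu_N(e^{-\gamma g}f)|]\to 0$. The most delicate point is ensuring that the errors in the two-variable saddle-point analysis remain controlled as the imaginary parts range over the polylog strip, for which Assumption~\ref{ass:macro}(2)'s bound $|\Psi(\zeta,x)|\le Ce^{|\zeta|^{\varkappa}}$ is essential.
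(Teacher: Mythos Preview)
In the paper, Corollary~\ref{cor:main} is an immediate consequence of Theorem~\ref{thm:main}: once $\E_N|\nu_N(f)-\mu_N(e^{-\gamma g}f)|\to 0$ is known, convergence of $\mu_N$ transfers to $\nu_N$ by a triangle-inequality/Slutsky argument. Your proposal is therefore really an attempt to prove Theorem~\ref{thm:main}, and should be judged as such.

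Your off-diagonal scheme is in the right spirit and close to what the paper does: the Mellin--Barnes contour plus a two-variable saddle at $(\gamma,\gamma)$ is essentially the characteristic-function method of Appendix~\ref{sec:Fourier} (Proposition~\ref{prop:approx} and Lemma~\ref{lem:1}), and Assumption~\ref{ass:macro} is used exactly to make that analysis uniform on the growing strip. Note, though, that making the contour representation rigorous requires smoothing the indicator (the $1/\zeta$ kernel is only conditionally integrable, so Fubini fails), which is precisely the content of the paper's Appendix~\ref{sec:2dapprox}; the matching of $\E_N[\nu_N(x)\nu_N(y)]$ with $e^{\gamma^2 C_\X(x,y)-\gamma g(x)-\gamma g(y)}$ (your target limit) is Proposition~\ref{prop:T2lim} and is the single most delicate step in the paper.

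The genuine gap is the diagonal. You correctly observe that for $\gamma^2\ge d$ the raw second moments diverge, but the proposed fix does not close the argument. The fractional-moment bound $\E_N[\mu_N(f\mathbf 1_B)^\beta]\le C|B|^\alpha$ is a one-point statement about the mass in a small ball $B\subset\Omega$; it does not control the near-diagonal region $\{|x-y|<c\}\subset\Omega^2$ in the expansion of $\E_N[(\nu_N(f)-\mu_N(e^{-\gamma g}f))^2]$. There is no decomposition of the random variable $\nu_N(f)-\mu_N(e^{-\gamma g}f)$ into a ``macroscopic part'' and a ``diagonal part'' to which one could apply $L^2$ and $L^1$ (or $L^\beta$) bounds separately; the split you describe lives only at the level of the second-moment integral, where the diagonal integrand blows up and no small-ball estimate saves it. The claimed ``parallel estimate for $\nu_N$'' via moderate deviations of $\X_N-\X_{N,\delta_N}$ is also not standard and would itself need a substantial argument.

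The paper resolves this by a different mechanism: it introduces the barrier event $\mathcal B_{\ell,L_N}(x)=\{\X_{N,e^{-k}}(x)\le(\gamma+\eta)k:\ell\le k\le L_N\}$ and works with the truncated measures $\mu_N(\cdot\,\mathbf 1_{\mathcal B})$, $\nu_N(\cdot\,\mathbf 1_{\mathcal B})$. The barrier is shown to be typical in $L^1$ (Proposition~\ref{prop:L1}), and crucially, on the barrier the near-diagonal second-moment integrand improves from $|x-y|^{-\gamma^2}$ to $|x-y|^{-((\gamma+\eta)^2-\eta^2)/2}$ (Proposition~\ref{prop:meso}), which is integrable for all $\gamma<\sqrt{2d}$. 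This is the modified second moment method, and it is what is missing from your sketch.
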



\subsection{Organization of the paper.} 
In Section~\ref{sec:strat}, we present the general strategy of the proof of Theorem~\ref{thm:main} which is based on the \emph{modified second moment method} by introducing a \emph{barrier} and the new ideas underlying this result. 
In Section~\ref{sec:proof}, we provide the details of the proofs, which are largely based on technical estimates using characteristic functions and Fourier analysis. 
In Section~\ref{sec:gauss}, we verify that a convolution approximation to a Gaussian log-correlated field satisfies the relevant assumptions.
This allows us to obtain Theorem~\ref{thm:gauss} by applying Corollary~\ref{cor:main}. 
Finally in Section~\ref{sec:CUE}, we verify that the log characteristic polynomial of the circular unitary ensembles satisfies the assumptions from Section~\ref{sec:gen}, thus we obtain Theorem~\ref{thm:cue}.  
In particular, the (exponential) moments conditions from Assumptions~\ref{ass:meso} and~\ref{ass:macro} are checked based on the determinantal structure of the CUE and the relationship to the Riemann-Hilbert problem for orthogonal polynomials on the unit circle. 
Finally, in the Appendix~\ref{sec:Fourier}, we provide further background on the (uniform) Gaussian approximation of a distribution function in terms of the asymptotics of its characteristic function. 

\smallskip

In the sequel, we use the following convention for the Fourier transform: for $n\in\N$ and $f\in\mathcal{C}_c^\infty(\R^n)$, 
\begin{equation*}
\widehat f(\xi) = \int_{\R^n}f(x) e^{-i\xi\cdot x}dx 
\qquad \text{and} \qquad
f(x)=\frac{1}{(2\pi)^n}\int_{\R^n}\widehat f(\xi) e^{i \xi\cdot x}d\xi.
\end{equation*}
We will use that the Fourier transform extends to a linear operator on $L^1(\R^n)$,  $L^2(\R^n)$ and on Schwartz distributions with the usual properties.

Recall that $\T = \R/2\pi\Z$ and for a function $V \in \mathcal{C}(\T\to\C)$, we define its Fourier coefficients,
\[
\widehat f_k = \int_\T f(\theta) e^{-i k\theta} \frac{d\theta}{2\pi}
\qquad \text{so that} \qquad
f(x)= \sum_{k\in\Z} \widehat f_k   e^{ik\theta} . 
\]

We also find it convenient to introduce notation for the space of functions that are compactly supported and essentially bounded:
\begin{equation}\label{eq:Lc}
L_c^\infty(\Omega)=\{f\in L^\infty(\Omega): \mathrm{supp}(f)\subset \Omega \text{ compact}\}.
\end{equation}


\section{Main steps of the proof of Theorem~\ref{thm:main}}  \label{sec:strat}

\subsection{Notation}

We fix the parameter $\gamma \in (0,\sqrt{2d})$, a continuous function $g:\Omega\to\R$ (cf.~Definition \ref{def:nu}) and a small parameter $\eta>0$ (``small" depending only on $\gamma<\sqrt{2d}$). 
We rely on the convention from Section~\ref{sec:gen}.
Let $K\subset \Omega$ be compact and $C=C_{\eta,K,R}$ denote a constant which changes from line to line (this constant also depends on the function $h$ from \eqref{eq:cov} and $g$ from \eqref{def:nu}).
If $C$ depends on another parameter, this will be emphasized. 

\medskip
We define the {\bf barrier} for any $\ell, L\in\N$ with $\ell <L$, 
\[
\mathcal{B}_{\ell , L}= \big\{   \X_{N, e^{-k}}   \le ( \gamma + \eta) k : k\in [\ell, L] \big\} . 
\]
We view $\mathcal{B}_{\ell , L}$ both as an event and a random subset of $\Omega$. When considering it as an event, we write for $x\in \Omega$ 
\[
\mathcal{B}_{\ell , L}(x)= \big\{   \X_{N, e^{-k}}(x)   \le ( \gamma + \eta) k : k\in [\ell, L] \big\}. 
\]
We also write $\mathcal B_{\ell,L}^{\mathsf c}$ for the complementary set (or event).

\medskip

Our proof of Theorem \ref{thm:main} is divided in three steps. 

\subsection{Step 1, Truncation.}
This step aims to show that the barrier is a typical event in the sense that the following bounds hold.

\begin{proposition} \label{prop:L1}
For $\eta\in (0,1\wedge\gamma)$, let $L_{N}(\eta) := \lfloor \log N^{1-\eta}  \rfloor$ and $f\in L^\infty_c(\Omega)$. There exists a constant $C= C_{\eta,g}$ such that  for all $\ell, N\in\N$, 
\[
\E_N[ \mu_N(f\1_{\mathcal{B}_{\ell , L_N}^{\bf c}})] , \E_N[ \nu_N(f\1_{\mathcal{B}_{\ell , L_N}^{\bf c}})] 
\le C\|f\|_\infty e^{-\frac{\eta^2}{2}\ell} . 
\]
\end{proposition}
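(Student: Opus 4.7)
The plan is to bound both $\E_N[\mu_N(f\1_{\mathcal{B}_{\ell,L_N}^{\mathsf c}})]$ and $\E_N[\nu_N(f\1_{\mathcal{B}_{\ell,L_N}^{\mathsf c}})]$ through a common single-scale estimate. Applying the union bound $\1_{\mathcal{B}_{\ell,L_N}^{\mathsf c}}(x)\le\sum_{k=\ell}^{L_N}\1\{\X_{N,e^{-k}}(x)>(\gamma+\eta)k\}$, with $x$ ranging over a compact set containing $\mathrm{supp}(f)$, it suffices to prove that both ratios
\[
\frac{\E_N\bigl[e^{\gamma\X_N(x)}\1\{\X_{N,e^{-k}}(x)>(\gamma+\eta)k\}\bigr]}{\E_N\bigl[e^{\gamma\X_N(x)}\bigr]},\quad \frac{\P_N\bigl(\X_N(x)\ge\gamma\log N+g(x),\,\X_{N,e^{-k}}(x)>(\gamma+\eta)k\bigr)}{\P_N\bigl(\X_N(x)\ge\gamma\log N\bigr)}
\]
are bounded by $C e^{-\eta^2 k/2}$, uniformly in $k\in[\ell,L_N]$ and $x$. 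A geometric summation $\sum_{k\ge\ell}e^{-\eta^2 k/2}\le Ce^{-\eta^2\ell/2}$, followed by integration against $f$ on its compact support, will then deliver the conclusion. The threshold $L_N=\lfloor(1-\eta)\log N\rfloor$ is chosen precisely so that $e^{-k}\ge N^{-1+\eta}$ throughout, putting every scale in the mesoscopic regime of Assumption~\ref{ass:meso}(2).

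The first ratio, controlling $\mu_N$, is handled by exponential tilting. Bounding the indicator by the Chernoff factor $e^{\eta(\X_{N,e^{-k}}(x)-(\gamma+\eta)k)}$ reduces it to $e^{-\eta(\gamma+\eta)k}\cdot\E_N[e^{\gamma\X_N(x)+\eta\X_{N,e^{-k}}(x)}]/\E_N[e^{\gamma\X_N(x)}]$. Reading off \eqref{mom1} with $\zeta_1=\gamma,\zeta_2=0,q=1,\xi_1=\eta,z_1=x,\delta_1=e^{-k}$, the $\Psi$-prefactors of numerator and denominator both tend to $\Psi(\gamma,x)$ (Assumption~\ref{ass:meso}(2) and \eqref{W1}) and cancel in the ratio, while the remaining Gaussian factor equals $\exp\bigl(\gamma\eta\,C_{\X,e^{-k}}(x,x)+(\eta^2/2)\,C_{\X,e^{-k},e^{-k}}(x,x)\bigr)=e^{(\gamma\eta+\eta^2/2)k+O(1)}$ by Assumption~\ref{ass:cov}. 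Combined with the Chernoff factor this gives the desired $Ce^{-\eta^2 k/2}$.

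The second ratio, controlling $\nu_N$, is where the main difficulty lies. The same Markov argument on the numerator yields $\P_N(\cdots)\le C\,\Psi(\gamma,x)e^{-\gamma g(x)}N^{-\gamma^2/2}e^{-\eta^2k/2}$, but the denominator satisfies the sharp moderate deviation $\P_N(\X_N(x)\ge\gamma\log N)\sim\Psi(\gamma,x)N^{-\gamma^2/2}/(\gamma\sqrt{2\pi\log N})$ from \eqref{moddev}, so crude Markov produces a spurious factor $\sqrt{\log N}$ that is too large to sum in $k$. To recover the correct rate I plan to perform a Girsanov-type change of measure by the density $e^{\gamma\X_N(x)}/\E_N[e^{\gamma\X_N(x)}]$. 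Under this tilted law, the joint characteristic function of $(\X_N(x)-\gamma\log N,\X_{N,e^{-k}}(x)-\gamma k)$ is controlled uniformly on imaginary ranges of size $(\log N)^\eta$ by Assumptions~\ref{ass:meso}(1) and~\ref{ass:macro}(1), and matches that of a centered bivariate Gaussian with variances $\log N$, $k$ and off-diagonal $k+O(1)$ coming from Assumption~\ref{ass:cov}. Feeding this into the uniform Gaussian approximation of distribution functions from Appendix~\ref{sec:Fourier}, combined with a further Chernoff factor in the $\X_{N,e^{-k}}(x)$ variable, should yield the refined joint tail bound
\[
\P_N\bigl(\X_N(x)\ge\gamma\log N+g(x),\,\X_{N,e^{-k}}(x)>(\gamma+\eta)k\bigr)\le C\,\Psi(\gamma,x)e^{-\gamma g(x)}\,\frac{N^{-\gamma^2/2}}{\sqrt{\log N}}\,e^{-\eta^2k/2},
\]
which kills the problematic $\sqrt{\log N}$ upon dividing by \eqref{moddev}. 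The hard part is therefore this Fourier-analytic refinement: extracting the sharp $1/\sqrt{\log N}$ prefactor --- equivalently, a joint moderate-deviation principle --- uniformly in $k\in[\ell,L_N]$ and in the parameter $g(x)$, while preserving enough uniformity to let $\ell\to\infty$ at the end of the three-step scheme. Once this estimate is in place, the remaining geometric sum in $k$ and integration against $f$ are routine.
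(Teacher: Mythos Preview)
Your treatment of $\mu_N$ is correct and matches the paper.

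For $\nu_N$ the diagnosis of the spurious $\sqrt{\log N}$ is right, but the remedy has a gap. The bivariate route via Proposition~\ref{prop:approx} needs condition \eqref{ass2}, i.e.\ control of the characteristic function for arguments with imaginary part up to $\epsilon_N^{-\eta}$ in \emph{both} coordinates; for the $\X_{N,e^{-k}}$-coordinate this would mean letting the parameter $\xi$ in \eqref{mom1} grow, but Assumption~\ref{ass:macro}(1) keeps $\xi$ \emph{fixed} and only lets $\zeta_1,\zeta_2$ range over $\mathcal D_{R_N}$, so what you invoke is not among the hypotheses. Moreover the event $\{\X_{N,e^{-k}}(x)>(\gamma+\eta)k\}$ is a genuine large deviation in that coordinate (distance $\eta k$ from the tilted mean), not the small-box regime of Proposition~\ref{prop:approx}. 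If instead you apply the Chernoff factor $e^{\alpha\X_{N,\delta}}$ first and then tilt by $e^{\gamma\X_N(x)}$ as you describe, the combined bias $e^{\gamma\X_N(x)+\alpha\X_{N,\delta}(x)}$ shifts the mean of $\X_N(x)$ to $\gamma\log N+\alpha\,C_{\X,\delta}(x,x)=\gamma\log N+\alpha k+O(1)$; for $k$ comparable to $L_N\sim\log N$ this drift is of order $\log N$, so \eqref{ass1} fails and Lemma~\ref{lem:1} does not apply with $\beta=\gamma$.

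The paper's fix is a $k$-dependent recentering that keeps the problem one-dimensional. Apply Markov in $\X_{N,\delta}$ with parameter $\alpha$, then invoke Lemma~\ref{lem:1} under the tilt $d\Q/d\P_N\propto e^{\alpha\X_{N,\delta}(x)}$ with bias parameter $\beta=\beta_{N,\delta}:=\gamma+\alpha\tfrac{\log\delta}{\log N}$ instead of $\gamma$. This choice exactly cancels the drift, $\beta_{N,\delta}\log N+\alpha\,C_{\X,\delta}(x,x)=\gamma\log N+O(1)$, so \eqref{ass1} holds via Assumption~\ref{ass:meso} alone and the sharp $1/\sqrt{\log N}$ emerges (this is Proposition~\ref{prop:mixed2} with $\zeta=0$). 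Choosing $\alpha=\eta$ and combining with \eqref{mom2} then gives $Ce^{-\eta^2 k/2}$ uniformly in $k\le L_N$. Assumption~\ref{ass:macro} plays no role in this step.
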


The proof of this proposition is given in Section~\ref{sec:L1}. 
It relies on a simple union bound based on our assumptions. 
In particular, it is important to set up the barrier so that $e^{-L_N} = N^{-1+\eta}$ is a mesoscopic scale.

\medskip

By Proposition~\ref{prop:L1} and Cauchy-Schwarz's inequality,  it holds for any $f\in L_c^\infty(\Omega)$ 
\begin{equation} \label{step1}
\E_N\big[| \nu_N(f) - \mu_N(e^{-\gamma g}f)| \big] \le \left( \E_N \big[ | \nu_N(f \1_{\mathcal{B}_{\ell , L_N}}) - \mu_N(e^{-\gamma g}f\1_{\mathcal{B}_{\ell , L_N}})|^2 \big]\right)^{1/2} + O(e^{-\frac{\eta^2}{2}\ell} ), 
\end{equation}
where the implied constant is independent of $N$ (but may depend on $\gamma,\eta,f,g$). Moreover, expanding the square, we can write
\begin{equation} \label{decomp}
\E_N \big[ | \nu_N(f \1_{\mathcal{B}_{\ell , L_N}}) - \mu_N(e^{-\gamma g}f\1_{\mathcal{B}_{\ell , L_N}})|^2  \big]
=  \iint_{\Omega\times\Omega} f(x)f(y) \big(\Theta_{1, N,\ell}+ \Theta_{2, N,\ell} -2\Theta_{3, N,\ell}\big)  dxdy
\end{equation}
where for $(x,y) \in\Omega^2$, 
\begin{equation} \label{Theta}
\begin{aligned}
\Theta_{1,N,\ell}(x,y)  
&=  \E_N\big[\mathbf 1_{\mathcal{B}_{\ell , L_N}(x)} \mathbf 1_{ \mathcal{B}_{\ell , L_N}(y)}  \mu_N(x) \mu_N(y) \big] e^{-\gamma g(x)-\gamma g(y)}  \\
\Theta_{2,N,\ell}(x,y) 
&=  \E_N\big[ \mathbf 1_{\mathcal{B}_{\ell , L_N}(x)} \mathbf 1_{ \mathcal{B}_{\ell , L_N}(y)}  \nu_N(x) \nu_N(y) \big]  \\
\Theta_{3,N,\ell}(x,y) 
&=  \E_N\big[\mathbf 1_{\mathcal{B}_{\ell , L_N}(x)} \mathbf 1_{ \mathcal{B}_{\ell , L_N}(y)} \nu_N(x) \mu_N(y) \big] e^{-\gamma g(y)}. 
\end{aligned}
\end{equation}

The next two steps aim to show that (the integrals of) $\Theta_{j,N,\ell}$ for $j\in\{1,2,3\}$ all have the same limit  as $N\to\infty$ and then $\ell\to\infty$. 

\subsection{Step 2, Excluding the diagonal.}

Let us fix a compact $K\subset\Omega$; e.g.~ $K=\mathrm{supp}(f)$. 
Let us recall a direct consequence of \eqref{mom1} and Assumption~\ref{ass:meso}.
If we drop the barriers in the expression of  $\Theta_1$, then 
\begin{equation} \label{W}
\E_N\big[\mu_N(x) \mu_N(y) \big]
= \frac{ \E_N\big[ e^{ \gamma \X_N(x) + \gamma \X_N(y)}\big]}{ \E_N\big[ e^{ \gamma \X_N(x)}\big]\E_N \big[ e^{\gamma \X_N(y)}\big]}
= \frac{\Psi_N(x,y)}{\Psi_N(x)\Psi_N(y)} e^{\gamma^2C_\X(x,y)}
\end{equation}
so that  for $x,y \in  K$ with $|x-y| \ge N^{-1+\eta}$, 
\begin{equation} \label{W2}
\E_N\big[\mu_N(x) \mu_N(y) \big] =  |x-y|^{-\gamma^2}  e^{\gamma^2 h(x,y)}\big(1+\underset{N\to\infty}{o(1)} \big)   
\end{equation}
where $h$ is as in \eqref{eq:cov}.

This estimate is \emph{sharp} up to microscopic scales: using \eqref{W1} we verify that
$\E_N\big[\mu_N(x)^2\big] = C_{\gamma,x} N^{\gamma^2}$ which is on the same scale as the above estimate if $|x-y|\simeq N^{-1}$.  Moreover, for $\gamma\in[\sqrt{d},\sqrt{2d})$,  the RHS of \eqref{W2}  
is not locally integrable on $\Omega^2$, which is why we introduce the barrier in the first place.
In fact, using the barrier and Markov's inequality, it holds for any $\alpha \ge 0$, 
\[\begin{aligned}
\E_N\big[\mathbf 1_{\mathcal{B}_{\ell , L_N}(x) }\mu_N(x)^2\big]
\le \E_N\big[\mu_N(x)^2\big] \frac{\E_N\big[ e^{2\gamma \X_N(x) -\alpha \X_{N,\delta}(x)} \big]}{\E_N\big[ e^{2\gamma \X_N(x)} \big]}  \delta^{-\alpha(\gamma+\eta)}
\end{aligned}\]
where we choose $\delta = e^{-L_N} = N^{-1+\eta} $. 
Then, according to Assumption~\ref{ass:meso} (with $\zeta_1=2\gamma$, $\zeta_2=0$, $q=1$, $\xi = -\alpha$ (in the numerator) or $\xi= 0$ (in the denominator) and $z=x$), this implies that for $x\in K$‚
\[
\E_N\big[\mathbf 1_{\mathcal{B}_{\ell , L_N}(x) }\mu_N(x)^2\big]
\le  C_{\alpha} N^{\gamma^2}  \delta^{-\tfrac{\alpha^2}{2}+\alpha(\gamma-\eta)} .
\]

Hence, choosing $\alpha=\gamma-\eta$, we obtain the uniform bound for $x\in K$‚
\begin{equation} \label{L2}
\E_N\big[\mathbf 1_{\mathcal{B}_{\ell , L_N}(x) }\mu_N(x)^2\big]
\le  C N^{\gamma^2-(\gamma-\eta)^2\frac{1-\eta}{2} }  .
\end{equation}

Now, by Cauchy-Schwarz's inequality, this estimate implies that for $x,y\in K$, 
\[
\Theta_{1,N,\ell}(x,y)  \le  C  N^{\gamma^2-(\gamma-\eta)^2\frac{1-\eta}{2} } 
\]
so that for $\gamma < \sqrt{2d}$, 
\[
\iint_{K^2} \1_{\{|x-y| \le N^{\eta-1}\}} \Theta_{1,N,\ell}(x,y)dxdy 
\le C N^{\gamma^2-(\gamma-\eta)^2\frac{1-\eta}{2}+d(\eta-1)}.
\]
Thus, by choosing $\eta>0$ small enough, this quantity  converges to 0 as $N\to\infty$.

Similarly, according to Lemma~\ref{lem:prob}, we have for $x\in K$,
\[
\E_N\big[\nu_N(x)^2\big]   =\frac{\P_N[\X_N(x) \ge \gamma \log N+ g(x)] }{\P_N[\X_N(x) \ge \gamma \log N]^2 } \le  C (\log N)^{1/2} N^{\gamma^2/2} . 
\]
Hence, by Cauchy-Schwarz's inequality again (ignoring the barrier for $\nu_N$-terms), we obtain for $j \in \{1,2,3\}$ and $\eta>0$ small enough (depending only on $\gamma<\sqrt{2d}$)
\begin{equation} \label{Thetabd}
\iint_{K^2} \1_{\{|x-y| \le N^{\eta-1}\}} \Theta_{j,N,\ell}(x,y)dxdy 
\le C N^{- \epsilon}
\end{equation}
for some $\epsilon=\epsilon_{\eta,\gamma}>0$. 

\medskip

By using again the barrier, we obtain the following result in Section~\ref{sec:meso}.

\begin{proposition} \label{prop:meso}
Let $0<\eta<\min(\gamma,1)$,  $\ell\in\N$ and $K\subset \Omega$ be a compact. There exists a constant $C=C_{\eta,\gamma,K,g}$ such that for $ j\in\{1,2,3\}$ and for $x,y\in K$ with $N^{\eta-1} \le |x-y| \le e^{-\ell}$, 
\begin{equation} \label{Thetaest}
\Theta_{j,N,\ell}(x,y)  \le C  |x-y| ^{-\frac{(\gamma+\eta)^2-\eta^2}{2}}. 
\end{equation}
\end{proposition}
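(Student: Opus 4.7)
The plan is a \emph{single-scale Markov argument} tuned to the natural cutoff scale $\delta_0 := e^{-k_0}$ with $k_0 := \lfloor -\log|x-y| \rfloor$. Since $N^{\eta-1} \le |x-y| \le e^{-\ell}$, this $k_0$ lies in $[\ell, L_N]$, so the barrier event $\mathcal{B}_{\ell,L_N}(x)$ enforces $\X_{N,\delta_0}(x) \le (\gamma+\eta)k_0$. Markov's inequality then gives, for any $\alpha > 0$,
\[
\mathbf{1}_{\mathcal{B}_{\ell,L_N}(x)} \le \delta_0^{-\alpha(\gamma+\eta)}\,e^{-\alpha\X_{N,\delta_0}(x)}.
\]
Because $|x-y| \asymp \delta_0$, the fields $\X_{N,\delta_0}(x)$ and $\X_{N,\delta_0}(y)$ coincide up to $O(1)$, so it suffices to apply this tilt at a single point; this choice is optimal in that smaller $\delta$ would weaken the Markov bound while larger $\delta$ would fail to separate $x$ from $y$.

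For $\Theta_{1,N,\ell}$, the bound reduces to evaluating the ratio of exponential moments given by \eqref{mom1} with $(\zeta_1,\zeta_2,\xi) = (\gamma,\gamma,-\alpha)$, $q=1$, $(z_1,\delta_1)=(x,\delta_0)$. The $N^{\gamma^2}$ factors in numerator and denominator cancel, Assumption~\ref{ass:meso}(2) (with $R$ fixed) controls the bounded ratio of $\Psi$-factors, and Assumption~\ref{ass:cov} evaluates each of $C_{\X,\delta_0}(x,x)$, $C_{\X,\delta_0}(y,x)$, and $C_{\X,\delta_0,\delta_0}(x,x)$ as $k_0 + O(1)$ (using $|x-y| \vee \delta_0 \asymp \delta_0$). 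The resulting exponent in $|x-y|$ is
\[
-\gamma^2 + \alpha(\gamma-\eta) - \tfrac{\alpha^2}{2},
\]
which is maximal at $\alpha = \gamma-\eta$ with value $-\gamma^2/2 - \gamma\eta + \eta^2/2$. Since $|x-y| < 1$, this is strictly stronger than the target exponent $-\gamma^2/2 - \gamma\eta = -\frac{(\gamma+\eta)^2-\eta^2}{2}$.

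For $\Theta_{2,N,\ell}$ and $\Theta_{3,N,\ell}$, the indicator $\mathbf{1}\{\X_N(x) \ge \gamma\log N + g(x)\}$ replaces the exponential weight and is normalized by $\P_N(\X_N(x) \ge \gamma\log N) \sim c\,N^{-\gamma^2/2}/\sqrt{\log N}$ from Lemma~\ref{lem:prob}. A crude Markov bound $\mathbf{1}\{\X_N(x) \ge a\} \le e^{\gamma(\X_N(x)-a)}$ on the indicator would produce an unwanted factor $\sqrt{\log N}$ per occurrence. To recover an $N$-independent constant, we keep the indicator intact and represent it via its characteristic function (the tools of Appendix~\ref{sec:Fourier}), converting the numerators into oscillatory integrals involving $\Psi_{N,\delta_0}^{(\gamma+it_1,\gamma+it_2,-\alpha)}$. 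A saddle-point evaluation around $t_1 = t_2 = 0$ produces a Gaussian density of order $(\log N)^{-1/2}$ per indicator, which cancels exactly the $(\log N)^{1/2}$ factor in $\P_N^{-1}$. The residual geometric factor is the same as for $\Theta_1$.

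The main obstacle is the Fourier/saddle-point step. It must be carried out uniformly over the full mesoscopic range $N^{\eta-1} \le |x-y| \le e^{-\ell}$, with the saddle in $\alpha$ remaining at $\gamma-\eta$ under the Fourier perturbations. Absolute convergence of the contour integrals uses the polynomial growth bound $|\Psi(\zeta,x)| \le Ce^{|\zeta|^{\varkappa}}$ from Assumption~\ref{ass:macro}(2), while the leading-order asymptotics require the complex-$\zeta$ refinement of Assumption~\ref{ass:meso} provided by Assumption~\ref{ass:macro}(1); matching these two pieces in the mesoscopic regime (where the macroscopic condition $(x,y) \in \mathcal{A}_c$ fails) is the delicate part, handled via the generalized one-point asymptotics already used to prove Lemma~\ref{lem:prob}.
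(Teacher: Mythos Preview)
Your approach for $\Theta_1$ is essentially the paper's: a single-scale Markov tilt at $\delta_0\asymp|x-y|$. The paper takes $\alpha=\gamma$ (tilting at $y$ rather than $x$), which gives exactly the target exponent $-\tfrac{(\gamma+\eta)^2-\eta^2}{2}$; your optimal choice $\alpha=\gamma-\eta$ yields the slightly stronger $|x-y|^{-\gamma^2/2-\gamma\eta+\eta^2/2}$, which of course also suffices.

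For $\Theta_2$ and $\Theta_3$ you correctly identify that the indicators must be handled by characteristic-function methods to cancel the $\sqrt{\log N}$ factors from $\P_N(\X_N\ge\gamma\log N)^{-1}$. But your diagnosis of the ``main obstacle'' is off, and the workaround you propose would not close the gap. You do \emph{not} need Assumption~\ref{ass:macro} here at all. The paper's device is to smooth the half-line indicator by a mollifier $\chi$ whose Fourier transform has \emph{compact} support (Lemma~\ref{lem:fzero} for $\Theta_2$; the one-dimensional Lemma~\ref{lem:1} for $\Theta_3$). Under the tilted measure the random vector $\big(\tfrac{\X_N(x)-\gamma\log N}{\sqrt{\log N}},\tfrac{\X_N(y)-\gamma\log N}{\sqrt{\log N}}\big)$ has characteristic function $F_N(\xi)e^{-\frac12\xi^T\Sigma_N\xi}$, and after mollification the Fourier integral is truncated to $|\xi_j|\le R\sigma_N$, i.e.\ $\zeta_j=\gamma_j+i\xi_j\sigma_N^{-1}\in\mathcal D_R$ for a \emph{fixed} $R$. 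Assumption~\ref{ass:meso}(2), which is already uniform over $|x-y|\ge N^{\eta-1}$ and $\zeta\in\mathcal D_R$, then bounds the ratio of $\Psi$-functions in $F_N$ by a constant on the whole integration domain. No growing-$\Im\zeta$ control is needed, so the failure of $(x,y)\in\mathcal A_c$ is irrelevant; conversely, your proposed substitute (one-point asymptotics of Lemma~\ref{lem:prob}) could not supply the two-point control $\Theta_2$ requires. Once you introduce the compactly-Fourier-supported mollifier, the ``delicate matching'' you describe disappears.
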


The proof of Proposition~\ref{prop:meso} relies on Assumption~\ref{ass:meso} and Fourier analytic arguments. 
In contrast to \eqref{W2}, the RHS of \eqref{Thetaest} is integrable near the diagonal (at least for small enough $\eta$) and the constant $C$ is independent of the parameter $\ell$. 
Hence, by combining the bounds \eqref{Thetabd} and \eqref{Thetaest}, we conclude that if the parameter $\eta$ is small enough, then for any  $\ell, N\in\N$, 
\[
\iint_{K^2} \1_{\{|x-y| \le e^{-\ell}\}} \Theta_{j,N,\ell}(x,y)dxdy 
\le C e^{-\frac{\eta^2}{2}\ell} . 
\]

Going back to \eqref{step1}--\eqref{decomp}, this shows that 
\begin{equation} \label{step2}
\begin{aligned}
&\E_N\big[| \nu_N(f) - \mu_N(e^{-\gamma g}f)| \big] \\
&\le \left(
\iint\1_{\{|x-y| \ge e^{-\ell}\}} f(x)f(y)\big(\Theta_{1, N,\ell}+ \Theta_{2, N,\ell} -2\Theta_{3, N,\ell}\big)(x,y)  dxdy
+ O(e^{-\frac{\eta^2}{2}\ell} ) \right)^{1/2}. 
\end{aligned}
\end{equation}

This completes the second step of the proof. 
The last step consists basically in computing the pointwise limit of $\Theta_{j,N,\ell}(x,y) $ for $j\in\{1,2,3\}$ when the points $(x,y)$ are macroscopically separated. 

\subsection{Step 3, Macroscopic regime}

As we already suggested,
we expect that the limits of $\Theta_{j,N,\ell}$ for $j\in\{1,2,3\}$ are all of the same form. More precisely, we will show that 
\begin{equation} \label{lim}
\lim_{N\to\infty} \Theta_{j, N,\ell}(x,y) 
= \frac{\P_{\mathcal{N}(\mathbf{m}_{x,y}, \Sigma_{x,y})}\big[ X_{2k-1} , X_{2k}   \le ( \gamma + \eta) k : k\in [\ell, c\ell]  \big]}{|x-y|^{\gamma^2} e^{\gamma g(x) +\gamma g(y)}} e^{\gamma^2 h(x,y)} + O(e^{-\frac{\eta^2}{2}\ell} ) . 
\end{equation}
where $\mathcal{N}(\mathbf{m}_{x,y}, \Sigma_{x,y})$ is a multivariate Gaussian measure with mean $\mathbf{m}_{x,y} \in \R^{2c\ell} $, covariance matrix $\Sigma_{x,y} \in \R^{2c\ell \times 2c\ell} $ and $c\in\N$ is a large enough constant (depending only on $(\gamma,\eta)$). 
If these limits were to hold uniformly over $\{|x-y| \ge e^{-\ell}\}$, 
by taking first $N\to\infty$ and then $\ell\to\infty$, we could conclude that
\[
\lim_{N\to\infty} \E_N\big[| \nu_N(f) - \mu_N(e^{-\gamma g}f)| \big]  = 0
\]
which would complete the proof of Theorem~\ref{thm:main}. 

While it would be possible to obtain \eqref{lim} with the required uniformity, we proceed differently since this is technically slightly simpler.  More precisely, in Section~\ref{sec:macro}, we prove the following results. 

\begin{proposition} \label{prop:macro}
Fix $\ell , L \in \N$ with $L>\ell$.
For $j\in\{1,2\}$, it holds pointwise for $x,y \in \Omega$ with $x\neq y$,
\begin{equation} \label{Tmacro}
\limsup_{N\to\infty} \Theta_{j, N,\ell}(x,y) 
\le   \frac{\P_{\mathcal{N}(\mathbf{m}_{x,y}, \Sigma_{x,y})}\big[ X_{2k-1} , X_{2k}   \le ( \gamma + \eta) k : k\in [\ell, L]  \big]}{e^{\gamma^2 C_\X(x,y)} e^{\gamma g(x) +\gamma g(y)}} .
\end{equation}
The functions $(x,y)  \mapsto \mathbf{m}_{x,y} \in \R^{2L}$ and $(x,y)  \mapsto \Sigma_{x,y} \in \R^{2L\times 2L}$ taking values in positive-definite matrices are both continuous functions on $\Omega\times\Omega$ and given by Definition~\ref{def:mS}. 

Moreover, for any $D>0$, if $L = R \ell$ with $R$ sufficiently large (depending on $\eta,\gamma, D$), then 
\begin{equation*} \label{Upsilon0}
\Theta_{3,N,\ell} \ge  \Upsilon_{1,N,\ell}   -  \Upsilon_{2,N,\ell} 
\end{equation*}
where 
\begin{equation} \label{Upsilon2}
\limsup_{N\to\infty} \sup\big\{ \Upsilon_{2,N,\ell}(x,y) : x,y \in K ;  |x-y|  \ge e^{-\ell} \big\}  \le   C_{\eta,K,R} \, e^{-D\ell}
\end{equation}
and it holds pointwise for any $x,y\in \Omega$ with $x\neq y$, 
\begin{equation} \label{T3macro}
\lim_{N\to\infty}  \Upsilon_{1,N,\ell}(x,y)
=   \frac{\P_{\mathcal{N}(\mathbf{m}_{x,y}, \Sigma_{x,y})}\big[ X_{2k-1} , X_{2k}   \le ( \gamma + \eta) k : k\in [\ell, L]  \big]}{e^{\gamma^2 C_\X(x,y)} e^{\gamma g(x)+\gamma g(y)}}.  
\end{equation}
\end{proposition}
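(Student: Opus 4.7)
The plan is to reduce to a finite barrier window and then handle the exponential factors by an exponential tilt of $\P_N$ and the indicator factors by a Fourier-analytic inversion exploiting the analyticity of $\Psi_N$ from Assumptions~\ref{ass:meso}(1) and \ref{ass:macro}(1). Since $\mathcal B_{\ell,L_N}\subseteq \mathcal B_{\ell,L}$ for $L\leq L_N$, truncating the barrier at the fixed level $L$ gives an upper bound on $\Theta_{j,N,\ell}$, so in what follows we may replace $\mathcal B_{\ell,L_N}$ by $\mathcal B_{\ell,L}$ throughout.

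For $\Theta_{1,N,\ell}$, rewrite $\mu_N(x)\mu_N(y)$ as an exponential tilt of $\P_N$ by $e^{\gamma(\X_N(x)+\X_N(y))}$. Using \eqref{mom1} with $\zeta_1=\zeta_2=\gamma$, the joint moment generating function of the vector $(\X_{N,e^{-k}}(x),\X_{N,e^{-k}}(y))_{k\in[\ell,L]}$ under the tilted law is a ratio $\Psi_N^{(\gamma,\gamma,\xi)}/\Psi_N^{(\gamma,\gamma)}$ times an explicit Gaussian factor read off from the covariance kernels \eqref{kernel} and from the linear-in-$\xi$ terms in \eqref{mom1}. Assumption~\ref{ass:meso}(2) forces this ratio to $1$, so the MGF converges pointwise on an open neighbourhood of the origin to a Gaussian MGF with some mean $\mathbf m_{x,y}$ and covariance $\Sigma_{x,y}$; this yields weak convergence of the tilted law to $\mathcal N(\mathbf m_{x,y},\Sigma_{x,y})$, and the barrier probability passes to the limit because the limiting law has a density. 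Combined with the elementary limit $\E_N[\mu_N(x)\mu_N(y)]\to e^{\gamma^2 C_\X(x,y)}$ (Assumption~\ref{ass:meso}(2) again), this gives \eqref{Tmacro} for $j=1$.

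For $\Theta_{2,N,\ell}$ and $\Theta_{3,N,\ell}$, the indicators in $\nu_N$ are handled by Fourier inversion of the joint characteristic function in the $\X_N(x)$ and $\X_N(y)$ directions, followed by a contour shift from $\Re\zeta_j=0$ to $\Re\zeta_j=\gamma$ inside $\mathcal D_\infty$ (justified by Assumption~\ref{ass:meso}(1)). This shift extracts the factor $N^{-\gamma^2}e^{-\gamma g(x)-\gamma g(y)}$ and reduces the remaining integrand to the same barrier MGF as in the previous paragraph, with a residual oscillatory factor which integrates against the Gaussian profile $e^{-\zeta_1^2\log N/2-\zeta_2^2\log N/2}$ to reproduce $\mathcal N(\mathbf m_{x,y},\Sigma_{x,y})$ in the limit. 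The moderate-deviation probabilities $\P_N[\X_N\geq \gamma\log N]$ in the denominator of $\nu_N$ are evaluated by the same one-dimensional contour shift (Lemma~\ref{lem:prob}) and cancel the matching pre-factors, giving \eqref{Tmacro} for $j=2$. For the lower bound on $\Theta_{3,N,\ell}$, truncate the Fourier contour at $|\Im\zeta|=R_N=(\log N)^\eta$: then $\Upsilon_{1,N,\ell}$ is the truncated Fourier integral, converging to \eqref{T3macro} by the same MGF argument, while $\Upsilon_{2,N,\ell}$ is the contour-tail error. The polynomial growth of $\Psi$ from Assumption~\ref{ass:macro}(2), played against the super-polynomial Gaussian decay $e^{-(\Im\zeta)^2\log N/2}$ at $|\Im\zeta|=R_N$, combined with the second-moment bound \eqref{L2}, yields \eqref{Upsilon2} provided $L=R\ell$ with $R$ chosen large enough relative to $(\eta,\gamma,D)$.

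The chief technical obstacle is the uniform control of $\Upsilon_{2,N,\ell}$, which must decay like $e^{-D\ell}$ for an arbitrary prescribed $D>0$, uniformly over $|x-y|\geq e^{-\ell}$ with $x,y\in K$. This forces a delicate balance between the Fourier-truncation scale $R_N$, the barrier width $L=R\ell$, and the constants $\varkappa_K$ of Assumption~\ref{ass:macro}(2): the Gaussian decay in $|\Im\zeta|$ must beat both the polynomial growth of $\Psi$ and the near-diagonal blow-up of $\mu_N(x)\nu_N(y)$, while the barrier must extend well beyond the mesoscopic separation scale $e^{-\ell}$ in order for the probability of $\mathcal B_{\ell,L}(x)\cap \mathcal B_{\ell,L}(y)$ to supply the required $e^{-D\ell}$ factor.
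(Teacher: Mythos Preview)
Your treatment of $\Theta_{1,N,\ell}$ is essentially the paper's argument: tilt by $e^{\gamma\X_N(x)+\gamma\X_N(y)}$, use Assumption~\ref{ass:meso} to get convergence of the Laplace transform of $\mathbf Y=(\X_{N,e^{-k}}(x),\X_{N,e^{-k}}(y))_{k\le L}$ to that of $\mathcal N(\mathbf m_{x,y},\Sigma_{x,y})$, and combine with $\E_N[\mu_N(x)\mu_N(y)]\to e^{\gamma^2 C_\X(x,y)}$. For $\Theta_{2,N,\ell}$ your contour-shift heuristic is in the right spirit; the paper implements it as a two-dimensional Feller-type approximation of the distribution function of $(\X_N(x),\X_N(y))$ under the tilt (its Appendix~\ref{sec:2dapprox}), which is where Assumption~\ref{ass:macro} with growing $R_N$ is actually needed.

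The genuine gap is in your handling of $\Theta_{3,N,\ell}$. Your opening sentence ``we may replace $\mathcal B_{\ell,L_N}$ by $\mathcal B_{\ell,L}$ throughout'' is only valid for \emph{upper} bounds; for $\Theta_3$ you need a \emph{lower} bound, and since $\mathcal B_{\ell,L_N}\subseteq\mathcal B_{\ell,L}$, truncating the barrier goes the wrong way. In the paper, $\Upsilon_{1,N,\ell}$ is \emph{defined} as $\E_N[\mathbf 1\{\mathcal B_{\ell,L}(x),\mathcal B_{\ell,L}(y)\}\nu_N(x)\mu_N(y)]e^{-\gamma g(y)}$, and $\Upsilon_{2,N,\ell}$ is not a Fourier-tail error at all: it is the union-bound correction
\[
\Upsilon_{2,N,\ell}=e^{-\gamma g(y)}\sum_{L<k\le L_N}e^{-\alpha(\gamma+\eta)k}\big(\Upsilon_{2,N}^{1,e^{-k}}+\Upsilon_{2,N}^{2,e^{-k}}\big),
\]
coming from the barrier levels $k\in(L,L_N]$ that distinguish $\mathcal B_{\ell,L_N}$ from $\mathcal B_{\ell,L}$, each term controlled by Markov's inequality and mixed exponential-moment estimates (the paper's Propositions~\ref{prop:mixed2} and~\ref{prop:mixed1}). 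The $e^{-D\ell}$ decay then arises because each summand is $O(e^{-\eta^2 k/2})$ (after choosing $\alpha=\eta$), the sum starts at $k=L=R\ell$, and the near-diagonal blow-up contributes at most $e^{2\gamma^2\ell}$; choosing $R$ large beats the latter. Your mechanism---Gaussian decay $e^{-(\Im\zeta)^2\log N/2}$ on a contour at height $R_N$---would drive a Fourier tail to zero as $N\to\infty$ irrespective of $\ell$, and does not explain why $L=R\ell$ with $R$ depending on $(\eta,\gamma,D)$ is the right choice, nor does it produce the required inequality $\Theta_{3,N,\ell}\ge\Upsilon_{1,N,\ell}-\Upsilon_{2,N,\ell}$.
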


The proof of Proposition~\ref{prop:macro} relies on Assumption~\ref{ass:macro} and it is the most involved probabilistic argument of this paper. 
While proving Proposition~\ref{prop:macro}, we also obtain the following bounds: for $j=1,2,$ and $\ell\in\N$, 
\begin{equation} \label{macrobd}
\sup\big\{ \Theta_{j,N,\ell}(x,y)  : \ x,y \in K ;  |x-y|  \ge e^{-\ell} \big\}   \le C_{\ell,K} . 
\end{equation}

By \eqref{step2} and using the bound \eqref{Upsilon2}, we obtain
\[
\begin{aligned}
&\E_N\big[| \nu_N(f) - \mu_N(e^{-\gamma g}f)| \big] \\
&\le \left(
\iint\1_{\{|x-y| \ge e^{-\ell}\}} f(x)f(y)\big(\Theta_{1, N,\ell}+ \Theta_{2, N,\ell} -2\Upsilon_{1,N,\ell}\big)(x,y)  dxdy
+ O(e^{-\frac{\eta^2}{2}\ell} ) \right)^{1/2}. 
\end{aligned}
\]
Moreover,
combining  \eqref{Tmacro} and \eqref{T3macro} shows that the pointwise limit (when $N\to \infty$) of the integrand above  is $\le 0$. 
Since it is uniformly bounded from above on the set $\{|x-y| \ge e^{-\ell}\}$, cf.~\eqref{macrobd},   by the (reverse) Fatou's lemma, we conclude that
\[
\limsup_{N\to\infty} \E_N\big[| \nu_N(f) - \mu_N(e^{-\gamma g}f)| \big] =  O(e^{-\frac{\eta^2}{4}\ell} )  . 
\]
The LHS is independent of the parameter $\ell$ so this quantity converges to 0 and it completes the overall argument. 
In the next section, we provide the details of the proof.


\section{Proof of Theorem \ref{thm:main}}  \label{sec:proof}

In this section, we give the complete proof of Theorem \ref{thm:main} following the  strategy explained in Section \ref{sec:strat}. 
We rely on the setting from Section~\ref{sec:gen} as well as on the notation from Section \ref{sec:strat}; in particular, on \eqref{def:nu}, \eqref{mom1}, \eqref{Theta} and the Assumptions~\ref{ass:cov}, \ref{ass:meso} and \ref{ass:macro}.

\subsection{Proof of Proposition~\ref{prop:L1}.}
\label{sec:L1}
 As a size estimate suffices for us here, we can simply bound $|f(x)|\leq \|f\|_\infty \mathbf 1_K(x)$ where $K=\mathrm{supp}(f)$, and by scaling, we can assume that $\|f\|_\infty=1$, so we will focus on the setting $f(x)=\mathbf 1_K(x)$ for some compact $K\subset \Omega$.  
Let us begin by bounding $\E_N[ \mu_N(\1_K\1_{\mathcal{B}_{\ell , L_N}^{\bf c}})]$ in terms of $\ell$.
By definition of the barrier and a union bound,
\[
\mu_N(\1_K\1_{\mathcal{B}_{\ell , L_N}^{\bf c}}) \le   \sum_{k=\ell}^{L_N}
\int_K \1\{\X_{N, e^{-k}}(x)   \ge ( \gamma + \eta) k\}
\mu_N(x) dx
\]

According to \eqref{mom1} and \eqref{kernel}, for $\beta, \gamma \in [0,\sqrt{2d}]$, $\alpha\in\R$ and $x\in\Omega$, 
\[
\frac{\E_N\big[e^{\beta \X_N(x)+\alpha \X_{N,\delta}(x)}\big]}{\E_N\big[e^{\gamma \X_N(x)}\big]}
= \frac{\Psi_{N,\delta}^{(\beta,0,\alpha)}(x,0;x)}{\Psi_N^{(\gamma)}(x)}
N^{\frac{\beta^2-\gamma^2}{2}} \exp\Big( \beta\alpha C_{\X,\delta}(x,x)+ \frac{\alpha^2}2 C_{\X,\delta,\delta}(x,x) \Big) 
\]
Then, as a consequence of the Assumptions~\ref{ass:cov} and \ref{ass:meso},
there is a constant $C= C_{K,\eta}$ so that for $\delta\ge N^{\eta-1}$, 
$\beta, \gamma , \alpha \in [0,\sqrt{2d}]$, 
\begin{equation} \label{mom2}
\frac{\E_N\big[e^{\beta \X_N(x)+\alpha \X_{N,\delta}(x)}\big]}{\E_N\big[e^{\gamma \X_N(x)}\big]}
\le C N^{\frac{\beta^2-\gamma^2}{2}} \delta^{-\beta\alpha-\alpha^2/2}
\end{equation}

By Markov's inequality, this bound with $\beta=\gamma$ implies that for any $\alpha>0$
\[\begin{aligned}
\E_N\big[ \mu_N(\1_K\1_{\mathcal{B}_{\ell , L_N}^{\bf c}}) \big] 
& \le \sum_{k=\ell}^{L_N} e^{-\alpha(\gamma+\eta)k} \int_K\frac{\E_N\big[ e^{ \gamma \X_N(x) +\alpha \X_{N,e^{-k}}(x)} \big]}{\E_N\big[ e^{ \gamma \X_N(x)} \big]}  dx \\
&\le C_\alpha  \sum_{k=\ell}^{L_N} e^{(\frac{\alpha^2}{2}-\alpha\eta)k} .
\end{aligned}\]
Choosing e.g.~$\alpha=\eta$, this sum converges and we obtain that for any $N, \ell\in\N$, 
\[
\E_N\big[ \mu_N(f\1_{\mathcal{B}_{\ell , L_N}^{\bf c}}) \big] \le C  e^{-\frac{\eta^2}{2}\ell} .
\]
This proves the first claim. We can proceed similarly for the second claim:
\begin{equation} \label{L1}
\E_N[ \nu_N(\1_K\1_{\mathcal{B}_{\ell , L_N}^{\bf c}})]
\le  \sum_{k=\ell}^{L_N}  e^{-\alpha(\gamma+\eta)k} \int_K
\frac{\E_N\big[\mathbf 1\{\X_N(x) \ge \gamma \log N +g(x) \}  e^{\alpha \X_{N,e^{-k}}(x)}
\big]}{\P_N[\X_N(x) \ge \gamma \log N]}  dx . 
\end{equation}

We can use the method from the Appendix~\ref{sec:1dapprox} to compute every term in this sum. 
For future purposes, we record a more general proposition. 
Recall that $\mathcal{A}_c:=\big\{ (x,y) \in K^2 : |x-y| \ge c  \big\}$ for a small $c>0$. 

\begin{proposition} \label{prop:mixed2}
For $\gamma,\zeta\in (0, \sqrt{2d})$,  $\alpha\in[0,\gamma]$, there exists a constant $C = C_{K,\eta,g}$ such that for any $c>0$ there is $N_c\in\N$ so that it holds for $(x,y) \in \mathcal{A}_c$, $N\ge N_c$ and for $\delta \in [N^{\eta-1},1]$,
\[
\frac{\E_N\big[ \mathbf 1\{ \X_N(x) \ge \gamma \log N +g(x)\} e^{\zeta \X_N(y) +\alpha \X_{N,\delta}(x)} \big]}{\P_N[\X_N(x) \ge \gamma \log N]  \E_N[e^{\zeta \X_N(y)}] }  \\
\le C \delta^{-\alpha\gamma}  \frac{\E_N\big[e^{\beta_{N,\delta} \X_N(x)+\zeta \X_N(y)+\alpha \X_{N,\delta}(x)}\big]}{\E_N\big[e^{\gamma \X_N(x)}\big] \E_N[e^{\zeta \X_N(y)}]} . 
\]
where $\beta_{N,\delta} =   \gamma + \alpha \frac{\log \delta}{\log N}$.

In case $\zeta=0$, the previous bound holds for all $N\in\N$ and $x\in K$.  
\end{proposition}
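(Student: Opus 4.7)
The plan is to represent the indicator via a Laplace inversion on a vertical contour at $\Re(z) = \beta_{N,\delta} \in (0, \gamma]$, apply the joint exponential moment asymptotics \eqref{mom1} together with Assumption~\ref{ass:meso} or Assumption~\ref{ass:macro}, and compare the result with the moderate deviation estimate from Lemma~\ref{lem:prob}. The specific tilt $\beta_{N,\delta} = \gamma + \alpha \log\delta /\log N$ is dictated by the requirement that the bounded quantity $C_{\X,\delta}(x,x) = -\log\delta + O(1)$ from Assumption~\ref{ass:cov} cancels the oscillatory phase of the integrand, leaving only a Gaussian damping factor in $\xi$; it is precisely this cancellation that produces the clean $\delta^{-\alpha\gamma}$ gain in the statement.

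Concretely, using $\mathbf{1}\{s \ge t\} = \frac{1}{2\pi i}\int_{\beta - i\infty}^{\beta + i\infty} z^{-1} e^{z(s-t)} dz$ (as a principal value, valid for $\beta > 0$), taking expectation and parameterizing $z = \beta_{N,\delta} + i\xi$ leads to
\[
\E_N\big[\mathbf{1}\{\X_N(x) \ge \gamma\log N + g(x)\} e^{\zeta \X_N(y) + \alpha \X_{N,\delta}(x)}\big]
= \frac{e^{-\beta_{N,\delta}(\gamma\log N + g(x))}}{2\pi} \int_\R \frac{e^{-i\xi(\gamma\log N + g(x))} M_N(\xi)}{\beta_{N,\delta} + i\xi}\, d\xi,
\]
with $M_N(\xi) := \E_N[e^{(\beta_{N,\delta} + i\xi) \X_N(x) + \zeta \X_N(y) + \alpha \X_{N,\delta}(x)}]$. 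The prefactor equals exactly $N^{-\gamma^2}\delta^{-\alpha\gamma} e^{-\beta_{N,\delta} g(x)}$. Substituting the Gaussian structure of $M_N(\xi)$ via \eqref{mom1} and using Assumption~\ref{ass:cov}, the $\xi$-dependence of the exponent simplifies to $-\tfrac12 \xi^2 \log N + i\xi \omega_N(x,y)$, where $\omega_N$ is uniformly bounded on $\mathcal A_c$.

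The resulting integral is estimated by a Gaussian approximation of the type developed in Appendix~\ref{sec:Fourier}, splitting $\R$ into the central window $|\xi| \le (\log N)^\eta$ and its complement. On the central window, Assumption~\ref{ass:meso} (if $\zeta = 0$) or Assumption~\ref{ass:macro}(1) (if $\zeta > 0$, which requires $|x-y| \ge c$) replaces the residual $\Psi_N$-factor in $M_N$ by $\Psi(\beta_{N,\delta} + i\xi,x)\Psi(\zeta,y)(1+o(1))$; performing the Gaussian integral against $e^{-\xi^2\log N/2}$ then yields a contribution of order $1/\sqrt{\log N}$ multiplied by $\E_N[e^{\beta_{N,\delta} \X_N(x) + \zeta \X_N(y) + \alpha \X_{N,\delta}(x)}]$. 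On the tail $|\xi| \ge (\log N)^\eta$, the bound $|\Psi(\zeta,x)| \le Ce^{|\zeta|^\varkappa}$ from Assumption~\ref{ass:macro}(2) is dominated by the Gaussian damping, so that contribution is negligible. Combining with $\P_N[\X_N(x) \ge \gamma\log N] \sim \Psi(\gamma,x) N^{-\gamma^2/2}/(\gamma\sqrt{2\pi\log N})$ from Lemma~\ref{lem:prob} and $\E_N[e^{\gamma\X_N(x)}] \sim \Psi(\gamma,x) N^{\gamma^2/2}$ from Assumption~\ref{ass:meso}, the $\sqrt{\log N}$ factors cancel exactly, producing the uniform bound $C\delta^{-\alpha\gamma}$.

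For the special case $\zeta = 0$, only the one-point version of Assumption~\ref{ass:meso} (which holds locally uniformly on $\Omega$ without any lower bound on point separation) is required, so the bound holds for all $N \in \N$. For $\zeta > 0$, one takes $N_c$ large enough that $N_c^{-1+\eta} \le c$ and the two-point approximation \eqref{PsiN} applies. The main technical obstacle is the uniform control of the Fourier integral and, in particular, balancing the growth of $|\Psi(\beta_{N,\delta} + i\xi,x)|$ on vertical lines against the Gaussian damping from $e^{-\xi^2 \log N/2}$, uniformly in $(x,y) \in \mathcal A_c$ and $\delta \in [N^{-1+\eta}, 1]$.
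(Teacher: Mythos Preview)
Your overall strategy matches the paper's: tilt by $e^{\beta_{N,\delta}\X_N(x)}$ with $\beta_{N,\delta}=\gamma+\alpha\log\delta/\log N$ chosen so that the phase $\alpha C_{\X,\delta}(x,x)-(\gamma-\beta)\log N$ cancels (this is exactly the computation the paper does), and then estimate a Fourier-type integral. The gap is in how you control that integral.

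Your direct Laplace inversion produces an integral over all of $\R$, and on the tail $|\xi|\ge(\log N)^\eta$ you invoke the growth bound $|\Psi(\zeta,x)|\le Ce^{|\zeta|^\varkappa}$ from Assumption~\ref{ass:macro}(2). But the integrand carries $\Psi_N$, not $\Psi$, and the assumptions give no a priori bound on $|\Psi_N^{(\beta+i\xi,\zeta,\alpha)}|$ for large $|\xi|$; the only trivial bound $|M_N(\xi)|\le M_N(0)$ exactly cancels the Gaussian factor $e^{-\xi^2\log N/2}$ and leaves a non-integrable $|\beta+i\xi|^{-1}$. On the central window you appeal to Assumption~\ref{ass:macro}(1), but that assumption is stated for \emph{fixed} $\delta\in(0,1]^q$, whereas here $\delta$ must range over $[N^{\eta-1},1]$. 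The assumption that does allow $\delta$ to vary with $N$ is Assumption~\ref{ass:meso}(2), and that one restricts $\zeta_1$ to a fixed strip $\mathcal D_R$, i.e.\ $|\xi|\le R$.

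The paper avoids both issues by recasting the problem as a moderate-deviation estimate under the change of measure $d\Q_{N,x,y}\propto e^{\zeta\X_N(y)+\alpha\X_{N,\delta}(x)}\,d\P_N$ and invoking Lemma~\ref{lem:1}, which is built on Feller's smoothing inequality~\eqref{Feller}. That inequality requires the characteristic function only on a window $|\chi|\le R/\epsilon_N$ (equivalently $|\Im\zeta_1|\le R$ for fixed $R$), so Assumption~\ref{ass:meso}(2) alone suffices, uniformly in $\delta\in[N^{\eta-1},1]$ and $(x,y)\in\mathcal A_c$; no global tail bound on $\Psi_N$ and no part of Assumption~\ref{ass:macro} is needed for this proposition. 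If you want to salvage the direct contour-integral route, you would have to introduce a smoothing of the indicator (mollifier with compactly supported Fourier transform) before taking expectation, which is essentially what Feller's inequality packages.
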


\begin{proof}
Let us denote $X_N = \frac{\X_N(x) - \gamma \log N}{\sqrt{\log N}}$, $\sigma_N = \sqrt{\log N}$ and define a new probability measure
\[
\frac{d\Q_{N,x,y}}{d\P_N} : =  \frac{e^{\zeta \X_N(y)+\alpha \X_{N,\delta}(x)}}{\E_N[e^{\zeta\X_N(y)+\alpha \X_{N,\delta}(x)}]}  .
\]

We want to compute the asymptotics of $\Q[\X_N(x) \ge \gamma \log N +g]$ where $\Q=\Q_{N,y,x}$ by using Lemma~\ref{lem:1}. 
The characteristic function of the random variable $X_N$ under the measure $\Q$ biased by $e^{\beta \X_N(x)}$ is given by, 
\[
\frac{\Q\big[ e^{i\chi X_N+ \beta \X_N(x)} \big]}{\Q\big[ e^{\beta  \X_N(x)} \big]} = 
\frac{\E_N\big[e^{\zeta \X_N(y)+ (\beta+ \tfrac{i\chi}{\sigma_N})\X_N(x)+\alpha \X_{N,\delta}(x)}\big]}{\E_N\big[e^{\zeta \X_N(y)+\beta \X_N(x)+\alpha \X_{N,\delta}(x)}\big]}  e^{- i \chi \gamma \sigma_N}, 
\]
for $\chi\in \R$.
Using \eqref{mom1} and \eqref{kernel}, we can rewrite this characteristic function as 
\begin{equation*} 
\frac{\Q\big[ e^{i\chi X_N+ \beta \X_N(x)} \big]}{\Q\big[ e^{\beta  \X_N(x)} \big]} 
= \frac{\Psi_N^{(\beta+ \frac{i\chi}{\sigma_N},\zeta,\alpha)}(x,y;x)}{\Psi_N^{(\beta,\zeta,\alpha)}(x,y;x)}
e^{-\tfrac{\chi^2}{2}+i \zeta\tfrac{\chi}{\sigma_N}C_\X(x,y)+i\alpha\tfrac{\chi}{\sigma_N}C_{\X,\delta}(x,x)-i\chi(\gamma-\beta)\sigma_N}
\end{equation*}
where we used that $((\beta+ \tfrac{i\chi}{\sigma_N})^2-\beta^2) \log N -2i \chi \gamma \sigma_N =-\chi^2 -2 i\chi(\gamma-\beta) \sigma_N$.

Then, it is natural to choose  $\beta =\beta_{N,\delta}=  \gamma + \alpha \frac{\log \delta}{\log N}$ so that $\beta_{N,\delta} \in [{\gamma} \frac\eta2,\gamma]$ for $\alpha\in[0,\gamma]$, $\delta \in[N^{ \eta-1},1]$  (if $N$ is sufficiently large) and 
\[
\tfrac{\alpha}{\sigma_N}C_{\X,\delta}(x,x)-(\gamma-\beta_{N,\delta})\sigma_N = \tfrac{\alpha}{\sigma_N}\big( C_{\X,\delta}(x,x)+ \log \delta\big) =  O(\sigma_N^{-1})
\]
 uniformly for  $(x,y) \in \mathcal{A}_c$, $\delta \in [N^{\eta-1},1]$; cf.~Assumption~\ref{ass:cov}. 
Hence, the characteristic function is of the form
\[
\frac{\Q\big[ e^{i\chi X_N+ \beta_{N,\delta}\X_N(x)} \big]}{\Q\big[ e^{\beta_{N,\delta}  \X_N(x)} \big]} 
= \psi_{N,\delta,x,y}\big(i\tfrac\chi{\sigma_N}\big) e^{-\tfrac{\chi^2}{2}} 
\]
and, in the regime where $\beta_{N,\delta}\to\beta$ as $N\to\infty$, 
\[
\psi_{N,\delta,x,y}(z) \to \frac{\Psi(\beta+z,x)}{\Psi(\beta,x)}  e^{\zeta z C_\X(x,y)}
\]
locally uniformly for $z\in \mathcal{S} = \{z \in\C  :  |\Re z| \le\mathcal W\}$  and uniformly for $(x,y) \in \mathcal{A}_c$; upon choosing $\mathcal W>0$ small enough (cf.~Assumption \ref{ass:meso}). 

Thus, the condition \eqref{ass1} holds (provided that $\beta_{N,\delta}\to\beta$ and $\beta>0$ by assumptions), then by applying Lemma~\ref{lem:1}, we conclude that uniformly for $(x,y) \in \mathcal{A}_c$ and locally uniformly for $g\in\R$, 
\[
\Q[\X_N(x) \ge \gamma \log N + g]
=\frac{\Q[e^{\beta_{N,\delta} \X_N(x)}] N^{-\gamma^2} \delta^{-\alpha\gamma}}{\beta\sqrt{2\pi \log N}}  e^{-\beta g}\big( 1+ \underset{N\to\infty}{o(1)} \big) .
\]

Hence, by definition of  $\Q$, this implies that for  $(x,y) \in \mathcal{A}_c$, there is a constant $C = C_{\eta, K,B}$ so that for $g\in [-B,B]$, $N\ge N_{c,\eta}$ and $\delta \in[N^{ \eta-1},1]$, 
\[
\E_N\big[ \mathbf 1\{ \X_N(x) \ge \gamma \log N +g\} e^{\zeta \X_N(y) +\alpha \X_{N,\delta}(x)} \big] \le C \frac{\E_N\big[e^{\beta_{N,\delta} \X_N(x)+\zeta \X_N(y)+\alpha \X_{N,\delta}(x)}\big]N^{-\gamma^2} \delta^{-\alpha\gamma}}{\sqrt{\log N}}  . 
\]
Note that by a compactness argument, this bound holds for any possible sequence $\delta(N) \in[N^{ \eta-1},1]$, since 
we have seen that $\beta_{N,\delta} \in [{\gamma}\frac\eta2,\gamma]$ and the bound holds along any subsequence such that $\beta_{N,\delta}\to\beta$ as $N\to\infty$. 

Using the asymptotics of Lemma~\ref{lem:prob}, this proves the claim for $\zeta\neq 0$. For $\zeta=0$, the argument is even simpler -- we do not need to worry about the variable $y$, so $c$ plays no role and indeed the estimates hold for all $N$ and $x\in K$. 
\end{proof}

By combining Proposition~\ref{prop:mixed2} with $\zeta=0$ and the asymptotics \eqref{mom2}, we obtain that for $\alpha\in[0,\gamma]$,  it holds uniformly for $x\in K$ and $\delta \ge N^{-1+\eta}$, 
\[\begin{aligned}
\frac{\E_N\big[\mathbf 1\{\X_N(x) \ge \gamma \log N +g(x) \}  e^{\alpha \X_{N,\delta}(x)}
\big]}{\P_N[\X_N(x) \ge \gamma \log N]}
&\le C \frac{\E_N\big[e^{\beta_{N,\delta}\X_N(x)+\alpha \X_{N,\delta}(x)}\big]}{\E_N\big[e^{\gamma \X_N(x)}\big]} \delta^{-\alpha\gamma}  \\
&\le C  N^{(\beta_{N,\delta}^2-\gamma^2)/{2}} \delta^{-\alpha-\beta_{N,\delta}\alpha^2/2-\alpha\gamma}.
\end{aligned}\] 
In particular, 
$(\beta_{N,\delta}^2-\gamma^2)\log N =  (2\alpha\gamma - \alpha^2 \kappa)\log \delta +O(1)$
where $\kappa = \frac{\log \delta^{-1}}{\log N} \in [0,1]$, 
so that for $\alpha\in[0,\gamma]$, any $k\le L_N$ and $x\in K$, 
\[
\frac{\E_N\big[\mathbf 1\{\X_N(x) \ge \gamma \log N +g(x) \}  e^{\alpha \X_{N,e^{-k}}(x)}
\big]}{\P_N[\X_N(x) \ge \gamma \log N]}
\le C e^{k(\alpha^2\frac{1+\kappa}{2}+\alpha\beta_{N,\delta})} 
\le    C e^{k(\frac{\alpha^2}{2}+\gamma\alpha)}  . 
\]

Using this bound and \eqref{L1}, choosing again $\alpha=\eta$,   we conclude that 
\[
\E_N[ \nu_N(\1_K\1_{\mathcal{B}_{\ell , L_N}^{\bf c}})]
\le C  \sum_{k=\ell}^{L_N}  e^{k(\frac{\alpha^2}{2}-\alpha\eta)}  \le Ce^{-\frac{\eta^2}{2}\ell} .
\]
This completes the proof of Proposition~\ref{prop:L1}.

\subsection{Mesoscopic regime: Proof of Proposition~\ref{prop:meso}}
\label{sec:meso}
The proof is divided in three subsections where we obtain the relevant bounds for  $\Theta_{j,N,\ell}$
for $j \in \{1,2,3\}$ respectively. 

\subsubsection{Bound for  $\Theta_{1}$} \label{sec:bd1} Since $g$ is locally uniformly bounded, the bound for $\Theta_{1,N,\ell}$ we are after will follow from bounding  
\[
\E_N [\mathbf 1_{\mathcal{B}_{\ell , L_N}(x)} \mathbf 1_{ \mathcal{B}_{\ell , L_N}(y)}  \mu_N(x) \mu_N(y)].
\]

Our strategy will be to replace the barriers here by a single judiciously chosen bound which we then estimate with Markov's inequality. More precisely, let us take $\delta=e^{-k}$ with $k=\lceil \log |x-y|^{-1}\rceil$ and then note that  
for $x,y\in K$ and any $\alpha>0$,  
\begin{align*}
\E_N [\mathbf 1_{\mathcal{B}_{\ell , L_N}(x)} \mathbf 1_{ \mathcal{B}_{\ell , L_N}(y)}  \mu_N(x) \mu_N(y)]
&\leq \frac{\E_N[\mathbf 1\{\X_{N,e^{-k}}(y)\leq (\gamma+\eta)k\}e^{\gamma \X_N(x)}e^{\gamma \X_N(y)}]}{\E_N [e^{\gamma \X_N(x)}]\E_N [e^{\gamma \X_N(y)}]}\\
&\leq   \frac{\E_N[e^{-\alpha(\X_{N,\delta}(y)-(\gamma+\eta)k)}e^{\gamma \X_N(x)}e^{\gamma \X_N(y)}]}{\E_N [e^{\gamma \X_N(x)}]\E_N [e^{\gamma \X_N(y)}]}\\
&=  \frac{\E_N\big[ e^{\gamma \X_N(x) + \gamma \X_N(y) - \alpha \X_{N, \delta}(y) } \big]}{\E_N\big[ e^{\gamma \X_N(x)}\big] \E_N\big[e^{\gamma \X_N(y}\big]}
\delta^{-\alpha(\gamma+\eta)} . 
\end{align*}

By definitions, we rewrite
\[
\frac{\E_N\big[ e^{\gamma \X_N(x) + \gamma \X_N(y) - \alpha \X_{N, \delta}(y) } \big]}{\E_N\big[ e^{\gamma \X_N(x)}\big] \E_N\big[e^{\gamma \X_N(y)}\big]}= 
\frac{ \Psi_{N,\delta}^{(\gamma,\gamma,\alpha)}(x,y;y) }{ \Psi_{N}^{(\gamma)}(x)\Psi_{N}^{(\gamma)}(y)  }
e^{\gamma^2 C_\X(x,y)+\frac{\alpha^2}{2}C_{\X,\delta,\delta}(y,y)-\alpha\gamma(C_{\X,\delta}(y,y)+C_{\X,\delta}(y,x))}.
\]
From Assumption \ref{ass:meso}, we see that the ratio of the $\Psi$-functions converges to 1 (uniformly in all parameters), while using Assumption~\ref{ass:cov} and the definition of~$k$, 
\begin{equation}\label{covbd}
C_\X(x,y), C_{\X,\delta}(y,y)  , C_{\X,\delta}(y,x) , C_{\X,\delta,\delta}(y,y) = k +O(1)
\end{equation}
with different errors (these errors are  bounded  
functions of $(x,y) \in K$ and the convergence holds uniformly if $\delta\to0$). 
This implies that for $\beta , \gamma , \alpha \in [0,\sqrt{2d}]$, 
\begin{align} \label{supmom}
\frac{\E_N\big[ e^{\beta \X_N(x) + \gamma \X_N(y) - \alpha \X_{N, \delta}(y) } \big]}{\E_N\big[ e^{\gamma \X_N(x)}\big] \E_N\big[e^{\gamma \X_N(y)}\big]}
\le C \delta^{-\gamma\beta-\frac{\alpha^2}{2}+\alpha(\gamma+\beta)} N^{\frac{\beta^2-\gamma^2}{2}} . 
\end{align}
Choosing $\beta=\alpha=\gamma$, we obtain 
\[
\frac{\E_N\big[ e^{\gamma \X_N(x) + \gamma \X_N(y) - \alpha \X_{N, \delta}(y) } \big]}{\E_N\big[ e^{\gamma \X_N(x)}\big] \E_N\big[e^{\gamma \X_N(y)}\big]}
\le C \delta^{\frac{\gamma^2}{2}} , 
\]
then we conclude that for $x,y\in K$ with $ \delta \simeq |x-y|\geq N^{-1+\eta}$, 
\[
\E_N [\mathbf 1_{\mathcal{B}_{\ell , L_N}(x)} \mathbf 1_{ \mathcal{B}_{\ell , L_N}(y)}  \mu_N(x) \mu_N(y)]
\le C \delta^{-\gamma(\gamma+\eta)+\frac{\gamma^2}{2}}
\leq C|x-y|^{-\frac{(\gamma+\eta)^2-\eta^2}{2}}.
\]
This estimate directly implies \eqref{Thetaest} for $\Theta_1$. 


\subsubsection{Bound for  $\Theta_{2}$.} \label{sec:bd2}
Recall that our goal is to bound 
\[
\E_N\left[\mathbf 1_{\mathcal{B}_{\ell , L_N}(x)} \mathbf 1_{ \mathcal{B}_{\ell , L_N}(y)}  \nu_N(x) \nu_N(y)\right]
\qquad\text{where}\quad
\nu_N(x) = \frac{\1\{\X_N(x) \ge \gamma \log N +g(x)\}}{\P_N[\X_N(x) \ge \gamma \log N]}
\]
and $g\in \mathcal{C}(\Omega)$. The strategy is analogous to that used in the previous section; we replace the barriers by $e^{-\gamma(\X_{N,\delta}(x)-(\gamma+\eta)k)}$ with the same choice of $k$ and $\delta=e^{-k}$ 
as in the previous section. The required estimate is then provided by the following result.

\begin{proposition} \label{prop:control2}
For any $\gamma,\eta>0$ and any compact set $K\subset \Omega$, there exists a constant $C=C_{\gamma,\eta,K}$ such that for  $x,y\in K$ with $ |x-y| \ge N^{-1+\eta}$ and $k=\lceil \log |x-y|^{-1}\rceil$, 
\[
\E_N\big[ \nu_N(x) \nu_N(y)  e^{- \gamma  \X_{N, e^{-k}}(x) }  \big] 
\le C |x-y|^{\frac{\gamma^2}{2}} . 
\]
\end{proposition}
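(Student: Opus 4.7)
The plan is to extend the tilting argument of Proposition~\ref{prop:mixed2} to handle two indicators simultaneously via a two-dimensional local Gaussian limit. With $\delta = e^{-k}$ and $k = \lceil\log|x-y|^{-1}\rceil$, all four covariance quantities that will enter \eqref{mom1} are equal to $k + O(1)$, so a symmetric tilt by $\beta = \gamma$ on each of $\X_N(x), \X_N(y)$ combined with the built-in correction $-\gamma\X_{N,\delta}(x)$ should recenter \emph{both} $\X_N(x)$ and $\X_N(y)$ near $\gamma\log N$, turning the double-indicator estimate into a 2D local-CLT computation.

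\textbf{Step 1 (Tilting and partition function).} I will define
\[ Z_N := \E_N\big[e^{\gamma \X_N(x) + \gamma \X_N(y) - \gamma \X_{N,\delta}(x)}\big], \qquad \frac{d\Q}{d\P_N} := Z_N^{-1}\, e^{\gamma \X_N(x) + \gamma \X_N(y) - \gamma \X_{N,\delta}(x)}, \]
and apply \eqref{mom1} with $(\zeta_1,\zeta_2,\xi,z) = (\gamma,\gamma,-\gamma,x)$. Assumption~\ref{ass:cov} makes $C_\X(x,y)$, $C_{\X,\delta}(x,x)$, $C_{\X,\delta}(x,y)$, $C_{\X,\delta,\delta}(x,x)$ all equal to $k + O(1)$, so the exponent of the covariance contribution collapses to $-\gamma^2 k/2 + O(1)$; combined with the uniform bound on $\Psi^{(\gamma,\gamma,-\gamma)}_{N,\delta}(x,y;x)$ from Assumption~\ref{ass:meso}, this gives $Z_N \le C\, N^{\gamma^2}\, |x-y|^{\gamma^2/2}$. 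Writing $U := \X_N(x) - \gamma\log N$, $V := \X_N(y) - \gamma\log N$, $A := \{U\ge g(x)\}$, $B := \{V\ge g(y)\}$, the target rewrites as
\[ \E_N\big[\nu_N(x)\nu_N(y) e^{-\gamma\X_{N,\delta}(x)}\big] = \frac{Z_N\, e^{-2\gamma^2\log N}}{\P_N[A]\,\P_N[B]}\, \E_\Q\big[\1_A \1_B\, e^{-\gamma(U+V)}\big]. \]

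\textbf{Step 2 (Two-dimensional local CLT).} The key step is to expand the joint characteristic function under $\Q$ using \eqref{mom1}:
\[ \E_\Q\big[e^{i(\chi_1 U + \chi_2 V)}\big] = \frac{\Psi^{(\gamma + i\chi_1,\,\gamma + i\chi_2,\,-\gamma)}_{N,\delta}(x,y;x)}{\Psi^{(\gamma,\gamma,-\gamma)}_{N,\delta}(x,y;x)}\, \exp\!\big(i \chi^T m_N - \tfrac{1}{2}\chi^T \Sigma_N \chi\big), \]
where the symmetric tilt kills the $\gamma\log N$ shift, leaving $m_N = O(1)$ componentwise, and $\Sigma_N = (\log N)\, I + k\,(J - I) + O(1)$ with $J$ the $2\times 2$ all-ones matrix. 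The constraint $|x-y| \ge N^{-1+\eta}$ forces $k \le (1-\eta)\log N$, so $\det\Sigma_N \ge \eta(\log N)^2$ and the covariance is uniformly non-degenerate. I will then repeat the Fourier inversion of Lemma~\ref{lem:1} in two dimensions — controlling the $\Psi$-ratio for bounded $\chi$ via Assumption~\ref{ass:meso} and extending to $|\chi| \lesssim (\log N)^\eta$ via Assumption~\ref{ass:macro}, with Gaussian tails closing the integral — to obtain density bounds $p_{U,V}(u,v) \le C/\log N$ on bounded sets. Integrating the exponentially decaying weight yields
\[ \E_\Q\big[\1_A \1_B\, e^{-\gamma(U+V)}\big] \le \frac{C\, e^{-\gamma(g(x)+g(y))}}{\log N}. \]
Combining with $\P_N[A]\,\P_N[B] \ge c\, N^{-\gamma^2}/\log N$ from Lemma~\ref{lem:prob} then recovers the claimed bound $C|x-y|^{\gamma^2/2}$.

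\textbf{Main obstacle.} The heart of the argument is the 2D local CLT, uniform across the entire mesoscopic window $|x-y|\in [N^{-1+\eta}, e^{-\ell}]$. The covariance matrix $\Sigma_N$ becomes nearly singular as $|x-y|\to N^{-1}$, and the restriction $|x-y|\ge N^{-1+\eta}$ is precisely what preserves $\det\Sigma_N \gtrsim \eta(\log N)^2$ and makes Fourier inversion effective. Uniform control of the $\Psi$-ratio along complex contours as $x,y$ vary and as the normalized correlation $k/\log N$ approaches $1-\eta$ is the delicate point: Assumption~\ref{ass:macro} is essential to extend the Fourier integration to imaginary parts of order $(\log N)^\eta$, beyond which the Gaussian tails control the remainder. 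Once this 2D analog of Lemma~\ref{lem:1} is in place, the rest is bookkeeping.
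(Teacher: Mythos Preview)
Your Step~1 (the tilting and the partition-function estimate $Z_N\le CN^{\gamma^2}|x-y|^{\gamma^2/2}$) is correct and is exactly the setup the paper uses, with the cosmetic difference that the paper absorbs $g$ into the tilt parameters $\gamma_j=\gamma+g(\cdot)/\log N$. The gap is in Step~2.

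You propose to close the Fourier integral by invoking Assumption~\ref{ass:macro} for $|\chi|\lesssim(\log N)^\eta$, but Assumption~\ref{ass:macro} does not apply in this regime: it is stated for $(x,y)\in\mathcal A_c$ with $c>0$ \emph{fixed} and for \emph{fixed} $\delta\in(0,1]^q$, whereas here $|x-y|$ ranges down to $N^{-1+\eta}$ and $\delta=e^{-k}\asymp|x-y|$ is $N$-dependent. With only Assumption~\ref{ass:meso} at your disposal, the $\Psi$-ratio is controlled merely on $\mathcal D_R$ (that is, $|\chi_j|\le R$ fixed), and beyond that you have nothing better than $|\E_\Q[e^{i\chi\cdot(U,V)}]|\le 1$, which gives no integrability. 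The Gaussian factor $e^{-\frac12\chi^{\top}\Sigma_N\chi}$ is part of the \emph{representation} of the characteristic function valid where the $\Psi$-asymptotics hold; it is not an a~priori bound outside that region. Hence the pointwise density bound $p_{U,V}\le C/\log N$ cannot be justified from the stated assumptions.

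The paper sidesteps this via Lemma~\ref{lem:fzero}: rather than a density bound, one replaces $u\mapsto e^{-\gamma_j u}\mathbf 1_{[0,\infty)}(u)$ by $\phi_j=(e^{-\gamma_j\cdot}\mathbf 1_{\R_+})\star\chi$ with $\operatorname{supp}(\widehat\chi)\subset[-R,R]$. Plancherel then confines the Fourier integral to $|\xi_j|\le R\sigma_N$, precisely the region where Assumption~\ref{ass:meso} bounds $F_N$ uniformly, so Assumption~\ref{ass:macro} is never needed. A simple positivity argument (the running integral $\rho_j(u)=\int_{-\infty}^u e^{\gamma_j t}\chi(t)\,dt$ is bounded below on $[0,\infty)$) recovers the original integral up to a constant factor, yielding $\sigma_N^2\int_{[0,\infty)^2}e^{-\gamma\sigma_N u_1-\gamma\sigma_N u_2}\,\mathrm f_N(du)\le C$ uniformly over the whole mesoscopic window. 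This is exactly your target $\E_\Q[\mathbf 1_A\mathbf 1_B\,e^{-\gamma(U+V)}]\le C/\log N$, and no local CLT is required.
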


Proposition~\ref{prop:control2} provides control of  $\Theta_{2}$ on mesoscopic scales.  
Just as in the previous section, we deduce from this proposition the estimate \eqref{Thetaest} for $\Theta_2$;
for $x,y\in K$ with $e^{-\ell} \geq  |x-y|  \geq N^{-1+\eta}$, 
\[ \begin{aligned}
\Theta_{2,N,\ell}(x,y) 
& \le C k^{\gamma(\gamma+\eta)}  \E_N\big[ \nu_N(x) \nu_N(y)  e^{- \gamma  \X_{N, e^{-k}}(x) }  \big]  \\
& \le C  |x-y| ^{-\frac{\gamma^2}{2}-\eta\gamma} .
\end{aligned}\]

In order to prove Proposition~\ref{prop:control2}, we rely on the following result.

\begin{lemma} \label{lem:fzero}
Assume that $\widehat{\mathrm{f}_N}$ is the Fourier transform of some probability measure $\mathrm f_N$ on $\R^2$: $\widehat{\mathrm  f_N}(\xi)=\int_{\R^2} e^{-i\xi\cdot u}\mathrm f_N(du)$, and can be written in the form
\[
\widehat{\mathrm{f}_N}(\xi_1,  \xi_2) = F_N(\xi) \exp\big( - \tfrac{\xi_1^2+ \xi_2^2}{2} - \xi_1\xi_2 \kappa_N \big)  
\]
for some $\kappa_N\in [0,1]$.   
We assume further that $\kappa:=\limsup_{N\to\infty} \kappa_N<1$. 

Let $R>0$ and $\chi \in L^1(\R \to\R)$ be a function with $\operatorname{supp}(\widehat\chi) \subset [-R,R]$. Moreover,  let  $\gamma_1,\gamma_2>0$ (possibly depending on $N$) be such that $\gamma_1,\gamma_2>\gamma$ for some fixed $\gamma>0$ and set $\phi_j = (e^{- \gamma_j \cdot}\1_{\R_+})\star \chi$. Then we have the following bound: for any sequence of strictly positive finite numbers $\sigma_N$, we have
\[
\limsup_{N\to\infty}\left|\int_{\R^2} \sigma_N^2\phi_1(\sigma_N u_1) \phi_2(\sigma_N u_2) \mathrm f_N(du)\right|\leq \frac{1}{2\pi \gamma^2}\|\widehat \chi\|_\infty^2 \frac{1}{\sqrt{1-\kappa^2}} \sup_{N\geq 1}\|F_N\|_{\infty,[-R\sigma_N,R\sigma_N]^2} . 
\]
\end{lemma}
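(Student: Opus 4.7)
The plan is to use Fourier duality to rewrite the pairing of $g(u) = \sigma_N^2 \phi_1(\sigma_N u_1)\phi_2(\sigma_N u_2)$ against $\mathrm{f}_N$ as an integral of $\widehat g$ against $\widehat{\mathrm f_N}$, and then to exploit the Gaussian factor in $\widehat{\mathrm f_N}$ for decay in $\sigma_N$.

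First I would compute $\widehat{\phi_j}(\xi) = \widehat\chi(\xi)/(\gamma_j + i\xi)$, using that $\widehat{e^{-\gamma_j\cdot}\1_{\R_+}}(\xi) = (\gamma_j+i\xi)^{-1}$. Since $\operatorname{supp}(\widehat\chi)\subset[-R,R]$, also $\operatorname{supp}(\widehat\phi_j)\subset[-R,R]$ and $|\widehat\phi_j(\eta)| \le \|\widehat\chi\|_\infty/\gamma_j \le \|\widehat\chi\|_\infty/\gamma$. By the scaling $\widehat g(\xi_1,\xi_2) = \widehat\phi_1(\xi_1/\sigma_N)\widehat\phi_2(\xi_2/\sigma_N)$, so $\widehat g$ is supported in $[-R\sigma_N,R\sigma_N]^2$ and lies in $L^1$; hence Fubini gives
\[
\int_{\R^2} g\, d\mathrm{f}_N \,=\, \frac{1}{(2\pi)^2}\int_{\R^2} \widehat g(\xi)\,\widehat{\mathrm f_N}(-\xi)\, d\xi.
\]

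Next, I would substitute the assumed form of $\widehat{\mathrm f_N}$ and change variables $\xi = \sigma_N\eta$ to turn the right-hand side into
\[
\frac{\sigma_N^2}{(2\pi)^2}\int_{[-R,R]^2}\widehat\phi_1(\eta_1)\widehat\phi_2(\eta_2)\, F_N(-\sigma_N\eta)\,\exp\bigl(-\sigma_N^2 Q_{\kappa_N}(\eta)\bigr)\, d\eta,
\]
where $Q_{\kappa}(\eta) := \tfrac12(\eta_1^2+\eta_2^2)+\kappa\eta_1\eta_2$ is positive definite whenever $|\kappa|<1$. Bounding $|F_N(-\sigma_N\eta)|$ by $\|F_N\|_{\infty,[-R\sigma_N,R\sigma_N]^2}$ and $|\widehat\phi_j(\eta_j)|$ by $\|\widehat\chi\|_\infty/\gamma$, then enlarging the remaining Gaussian integral to all of $\R^2$ (allowed since the integrand is positive) and using the standard identity $\int_{\R^2}e^{-\sigma_N^2 Q_{\kappa_N}(\eta)}\,d\eta = 2\pi\sigma_N^{-2}(1-\kappa_N^2)^{-1/2}$, one gets the clean bound
\[
\bigl|\textstyle\int g\, d\mathrm f_N\bigr| \,\le\, \frac{\|\widehat\chi\|_\infty^2}{2\pi\gamma_1\gamma_2\sqrt{1-\kappa_N^2}}\,\|F_N\|_{\infty,[-R\sigma_N,R\sigma_N]^2}.
\]

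Finally, taking $\limsup_{N\to\infty}$ and using $\gamma_j > \gamma$ together with the continuity of $t\mapsto (1-t^2)^{-1/2}$ at $t=\kappa<1$ (so that $\limsup_N(1-\kappa_N^2)^{-1/2} \le (1-\kappa^2)^{-1/2}$) yields precisely the inequality in the statement. No step looks genuinely delicate: the only things to verify carefully are the applicability of Fubini (automatic from the compact support of $\widehat g$) and the legitimacy of enlarging the $\eta$-domain to all of $\R^2$ (trivial by positivity). The whole argument is essentially a Parseval-type manipulation combined with a Gaussian tail computation.
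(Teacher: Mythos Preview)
Your proof is correct and follows essentially the same route as the paper: both use Plancherel/Parseval to write the integral as $\frac{1}{(2\pi)^2}\int \widehat\chi(\xi_1/\sigma_N)\widehat\chi(\xi_2/\sigma_N)\,\widehat{\mathrm f_N}(\xi)\,\frac{d\xi_1}{\gamma_1+i\xi_1/\sigma_N}\frac{d\xi_2}{\gamma_2+i\xi_2/\sigma_N}$, then bound $|F_N|$ on the support $[-R\sigma_N,R\sigma_N]^2$, bound each $|\gamma_j+i\xi_j/\sigma_N|^{-1}\le 1/\gamma$, and evaluate the remaining Gaussian integral as $2\pi/\sqrt{1-\kappa_N^2}$ before taking the $\limsup$. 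The only cosmetic difference is that you perform the change of variables $\xi=\sigma_N\eta$ explicitly, whereas the paper works directly in the $\xi$ variable.
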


\begin{proof}
Fourier transforming, we find by Plancherel's formula, that the integral in question equals  
\[
I_N =  \frac{1}{(2\pi)^2} \int_{\R^2}  \widehat\chi(\xi_1\sigma_N^{-1})  \widehat\chi(\xi_2\sigma_N^{-1})  \widehat{\mathrm{f}_N}(\xi_1,  \xi_2) \frac{d \xi_1}{\gamma_1 + i\xi_1 \sigma_N^{-1}} \frac{d \xi_2}{ \gamma_2+ i\xi_2 \sigma_N^{-1}} . 
\]
Since $\widehat\chi$ is continuous and has compact support in $[-R,R]$, using our assumption about the form of $\widehat{\mathrm{f}_N}$, we find
\[
|I_N| \leq \frac{1}{\gamma^2(2\pi)^2}\|\widehat \chi\|_\infty^2 \|F_N\|_{\infty,[-R\sigma_N,R\sigma_N]^2}\int_{\R^2} e^{-\frac{\xi_1^2+\xi_2^2}{2}-\xi_1\xi_2\kappa_N}d\xi_1d\xi_2.  
\]
The integrand on the RHS is the unnormalized density of a centered Gaussian distribution with covariance matrix $\Sigma = \left( \begin{smallmatrix}1 &\kappa_N \\ \kappa_N &1  \end{smallmatrix}\right)^{-1}$, so the integral equals $2\pi \sqrt{\det \Sigma}=2\pi /\sqrt{1-\kappa_N^2}$. We thus obtain the bound 
\[
\limsup_{N\to\infty}|I_N|\leq \frac{1}{2\pi \gamma^2}\|\widehat \chi\|_\infty^2 \frac{1}{\sqrt{1-\kappa^2}}\sup_{N\geq 1} \|F_N\|_{\infty,[-R\sigma_N,R\sigma_N]^2},
\]
which concludes the proof.
\end{proof}

\begin{proof}[Proof of Proposition~\ref{prop:control2}]
Let  $\gamma_1= \gamma + \frac{g(x)}{\log N}$, $\gamma_2= \gamma + \frac{g(y)}{\log N}$.
We assume that $N$ is sufficiently large so that $\gamma_1 , \gamma_2 \in (c,\sqrt{2d})$ for $x,y\in K$ where $c>0$ is a fixed small constant. 
Then, we define a new probability measure $\Q_{N,x,y}$ by
\[
\frac{d\Q_{N,x,y}}{d\P_N} :=  \frac{ e^{ \gamma_1  \X_N(x) +\gamma_2  \X_N(y) - \gamma  \X_{N,\delta}(x)}}{ \E_N\big[ e^{ \gamma_1  \X_N(x) +\gamma_2  \X_N(y) - \gamma  \X_{N, \delta}(x)} \big]} ,\qquad
\]
where $k$ and $\delta=e^{-k}$ are as in the statement of the proposition.

For convenience, we also denote $\sigma_N = \sqrt{\log N}$ and $(X_1,X_2) = \big(
\frac{ \X_N(x) - \gamma_1 \sigma_N^2}{\sigma_N} ,
\frac{ \X_N(y) - \gamma_2 \sigma_N^2}{\sigma_N}\big)$. 
Then, we can rewrite
\[\begin{aligned}
J_{N,x,y} :&=  \E_N\big[ \mathbf 1\{\X_N(x) \ge \gamma \log N +g(x) , \X_N(y) \ge \gamma \log N + g(y) \} e^{- \gamma  \X_{N, \delta}(x) } \big]   \\
&=   \E_N\big[ e^{ \gamma_1  \X_N(x) +\gamma_2  \X_N(y) - \gamma  \X_{N, \delta}(x)} \big]
\Q_{N,x,y} \big[ \{X_1 \ge 0 , X_2 \ge 0 \}  e^{- \gamma_1  \X_N(x) -\gamma_2  \X_N(y)}  \big] \\
&=   \E_N\big[ e^{ \gamma_1  \X_N(x) +\gamma_2  \X_N(y) - \gamma  \X_{N, \delta}(x)} \big]e^{-(\gamma_1^2 + \gamma_2^2)  \sigma_N^2 } \int_{[0,\infty]^2} \hspace{-.3cm} e^{-  \gamma_1 \sigma_N u_1 -  \gamma_2 \sigma_N u_2 } \mathrm{f}_N(du) , 
\end{aligned}\]
where $ \mathrm{f}_N$ is the law of $(X_1,X_2)$ under $\Q_{N,x,y}$.

Using \eqref{mom1}, Assumption \ref{ass:meso} and \eqref{covbd} as in Section \ref{sec:bd1} (recall also that  $k=\lceil \log |x-y|^{-1}\rceil$ and $\delta=e^{-k}$), we find that for $x,y\in K$ with $|x-y|\geq N^{-1+\eta}$, 
\begin{align*}
\E_N\big[ e^{ \gamma_1  \X_N(x) +\gamma_2  \X_N(y) - \gamma  \X_{N, \delta}(x)} \big]&\leq C e^{\frac{\gamma_1^2+\gamma_2^2}{2}\log N} |x-y|^{-\gamma_1\gamma_2} \delta^{-\frac{\gamma^2}{2}+\gamma(\gamma_1+\gamma_2)}\\
& \leq C e^{\frac{\gamma_1^2+\gamma_2^2}{2}\sigma_N^2}  |x-y|^{\frac{\gamma^2}{2}} , 
\end{align*}
since $\gamma(\gamma_1+\gamma_2)-\gamma_1\gamma_2 = \gamma^2 +O(\sigma_N^{-2}) $. 
Thus, we obtain
\begin{equation} \label{prob1}
J_{N,x,y} \le C    e^{-\frac{\gamma_1^2 + \gamma_2^2}{2}  \sigma_N^2} |x-y|^{\frac{\gamma^2}{2}} 
\int_{[0,\infty]^2} \hspace{-.3cm} e^{- \gamma_1 \sigma_N u_1 -  \gamma_2\sigma_N u_2 } \mathrm{f}_N(du)  . 
\end{equation}

Now, we would like to apply Lemma \ref{lem:fzero} in order to estimate this quantity. This requires the  asymptotics of the characteristic function
\[
\widehat{\mathrm{f}_N}(\xi_1,  \xi_2) =  \Q_{N,x,y}\big[ e^{-i(\xi_1 X_1+ \xi_2 X_2)}\big] , \qquad  \xi\in\R^2. 
\]
Namely, with $\zeta_j = \gamma_j - i \xi_j \sigma_N^{-1}$, we find from \eqref{mom1} and \eqref{kernel} that
\[ \begin{aligned}
\widehat{\mathrm{f}_N}(\xi_1,  \xi_2)
&= \frac{e^{ i \sigma_N(\gamma_1\xi_1+ \gamma_2\xi_2)}\E_N\big[ e^{ \zeta_1 \X_N(x) + \zeta_2 \X_N(y) - \gamma  \X_{N, \delta}(x) } \big]}{\E_N\big[ e^{ \gamma_1  \X_N(x) +\gamma_2  \X_N(y) - \gamma  \X_{N, \delta}(x)} \big]} \\
& = \frac{ \Psi_{N,\delta}^{(\zeta_1,\zeta_2,-\gamma)}(x,y;x)}{ \Psi_N^{(\gamma_1,\gamma_2,-\gamma)}(x,y;x)} \exp\left(\frac{1}{2}(\zeta_1^2-\gamma_1^2+\zeta_2^2-\gamma_2^2)\log N+ i \sigma_N(\gamma_1\xi_1+ \gamma_2\xi_2)\right)\\
&\quad \times \exp\big((\zeta_1\zeta_2-\gamma_1\gamma_2)C_\X(x,y)-\gamma(\zeta_1-\gamma_1)C_{\X,\delta}(x,x)-\gamma(\zeta_2-\gamma_2)C_{\X,\delta}(x,y)\big) .\\
& =  F_{N}(\xi_1,\xi_2) \exp\big( - \tfrac{\xi_1^2+ \xi_2^2}{2} - \xi_1\xi_2 \kappa_N \big)  
\end{aligned}\]
where $\kappa_N = \tfrac{k}{\sigma_N^2}$  and $F_{N}$ is defined implicitly by the previous equation. 

In particular, using \eqref{covbd}, we have 
\[
F_{N}(\xi_1,\xi_2) = \frac{ \Psi_{N,\delta}^{(\zeta_1,\zeta_2,-\gamma)}(x,y;x)}{ \Psi_N^{(\gamma_1,\gamma_2,-\gamma)}(x,y;x)}  \exp\big(O((1+\tfrac{|\xi_1|}{\sigma_N})(1+\tfrac{|\xi_2|}{\sigma_N}))\big) 
\]
where  the ratio of the $\Psi$-functions is uniformly bounded for $\xi_1,\xi_2 \in [-R\sigma_N,R\sigma_N]$ and $x,y\in K$, $|x-y|\geq N^{-1+\eta}$ (cf~Assumption~\ref{ass:meso})  and the implied constant is uniform in $\xi_1,\xi_2$.

This implies that 
\[ \begin{aligned}
\sup_{x,y\in K,|x-y|\geq N^{-1+\eta} }\sup_{\xi_1,\xi_2 \in [-R\sigma_N,R\sigma_N]} \big|F_{N}(\xi_1,\xi_2) \big| \le C , 
\end{aligned}\]
that is, $\mathrm{f}_N$ satisfies all the assumptions of Lemma~\ref{lem:fzero} (note that $0<\kappa_N \le 1-\eta$).

Now, let  $\chi:\R\to \R_+$ be an even mollifier such that $\int_{\R} \chi=1$ and $\widehat\chi$  has compact support. 
As in Lemma~\ref{lem:fzero}, set $\phi_j = (e^{- \gamma_j\cdot}\1_{\R_+})\star \chi$ for  $j=1,2$ and let
\[
I_N = \int_{\R^2} \phi_1(\sigma_N u_1)  \phi_2(\sigma_N u_2)  \mathrm{f}_N(du) 
\qquad\text{and}\qquad
\rho_j(u) = e^{\gamma_j u}\phi_j(u) =\int^u_{-\infty} e^{\gamma_j t} \chi(t) d t . 
\]
Then, observe that
\[ \begin{aligned}
\int_{[0,\infty]^2} \hspace{-.3cm} e^{- \gamma_1 \sigma_N u_1 - \gamma_2\sigma_N u_2 }\rho_1(\sigma_N u_1)\rho_2(\sigma_N u_2) \mathrm{f}_N(du)  \le I_N . 
\end{aligned}\] 
Since $\chi$ is even, we have 
$\displaystyle 
\rho_j(x)  \ge  \int_{0}^\infty e^{-\gamma_j t} \chi(t) d t $ for $x\ge 0$. Upon rescaling $\chi$, we can assume that this integral is at least 1/3 (recall that we assume that $\gamma_j>c>0$).   
These trivial bounds imply that 
\[
\int_{[0,\infty]^2} \hspace{-.3cm} e^{- \gamma_1 \sigma_N u_1 - \gamma_2 \sigma_N u_2 } \mathrm{f}_N(du) \le 9 I_N . 
\]
Hence, by Lemma~\ref{lem:fzero}, we conclude that 
\[
\limsup_{N\to\infty} \sigma_N^2 
\int_{[0,\infty]^2} \hspace{-.3cm} e^{- \gamma_1 \sigma_N u_1 - \gamma_2 \sigma_N u_2 } \mathrm{f}_N(du)  \le C .
\]
Going back to the estimate~\eqref{prob1}, replacing $\sigma_N =\sqrt{\log N}$ and  $\gamma_1= \gamma + \frac{g(x)}{\log N}$, $\gamma_2= \gamma + \frac{g(y)}{\log N}$,  this shows that
\[
J_{N,x,y} = \E_N\big[\mathbf 1 \{\X_N(x) \ge \gamma \log N +g(x) , \X_N(y) \ge \gamma \log N + g(y) \} e^{- \gamma  \X_{N, \delta}(x) } \big] 
\le C\frac{N^{-\gamma^2} |x-y|^{\frac{\gamma^2}{2}}}{\log N} . 
\]
Using the asymptotics from Lemma~\ref{lem:prob}, we conclude that
\[
\E_N\big[ \nu_N(x) \nu_N(y)  e^{- \gamma  \X_{N, \delta}(x) }  \big] 
=  \frac{J_{N,x,y} }{\P_N[\X_N(x) \ge \gamma \log N]   \P_N[\X_N(y) \ge \gamma \log N]  } 
=  \frac{O(J_{N,x,y}) }{N^{-\gamma^2}/\log N} 
\]
and the claim follows from our estimate for $J_{N,x,y}$.
\end{proof}


\subsubsection{Bound for  $\Theta_{3}$.} \label{sec:bd3}
Proceeding like in Subsection~\ref{sec:bd1}, in order to prove \eqref{Thetaest} for $\Theta_3$, by \eqref{Theta} and Markov's inequality, it suffices to obtain the following estimates;

\begin{proposition} \label{prop:control3}
For any $\gamma,\eta>0$ and any compact set $K\subset \Omega$, there exists a constant $C=C_{\gamma,\eta,K}$ such that for  $x,y\in K$ with $|x-y| \ge N^{-1+\eta}$ and $k=\lceil \log|x-y|^{-1}\rceil$, 
\[
\E_N\big[ \nu_N(x) \mu_N(y)  e^{- \gamma  \X_{N, e^{-k}}(y) }  \big] 
\le C_{\gamma,\eta,K} |x-y|^{\frac{\gamma^2}{2}}  .  
\]
\end{proposition}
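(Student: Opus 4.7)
The plan is to mirror the strategy of Propositions~\ref{prop:L1}, \ref{prop:mixed2}, and \ref{prop:control2}: introduce a tilted measure that absorbs the $\mu_N(y)e^{-\gamma\X_{N,e^{-k}}(y)}$ factor, then estimate the resulting probability for $\X_N(x)$ by the Gaussian-approximation machinery of Lemma~\ref{lem:1} in the appendix. The quantity we must control is
\[
\E_N\big[\nu_N(x)\mu_N(y)e^{-\gamma\X_{N,\delta}(y)}\big]
=\frac{\E_N\big[\1\{\X_N(x)\ge\gamma\log N+g(x)\}e^{\gamma\X_N(y)-\gamma\X_{N,\delta}(y)}\big]}{\P_N[\X_N(x)\ge\gamma\log N]\,\E_N[e^{\gamma\X_N(y)}]},
\]
where $\delta=e^{-k}\asymp|x-y|$. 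Write the numerator as $\E_N[e^{\gamma\X_N(y)-\gamma\X_{N,\delta}(y)}]\cdot\Q_{N,x,y}[\X_N(x)\ge\gamma\log N+g(x)]$, where
\(
\frac{d\Q_{N,x,y}}{d\P_N}\propto e^{\gamma\X_N(y)-\gamma\X_{N,\delta}(y)}.
\)

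The main step is to compute the characteristic function of $X_N=(\X_N(x)-\gamma\log N)/\sigma_N$ under $\Q_{N,x,y}$ biased by $e^{\beta\X_N(x)}$. Using \eqref{mom1} and \eqref{kernel}, this ratio equals
\[
\frac{\Psi_{N,\delta}^{(\beta+i\chi/\sigma_N,\gamma,-\gamma)}(x,y;y)}{\Psi_{N,\delta}^{(\beta,\gamma,-\gamma)}(x,y;y)}\,
\exp\!\Big(\!-\tfrac{\chi^2}{2}+i\chi\sigma_N(\beta-\gamma)+i\tfrac{\chi\gamma}{\sigma_N}\bigl(C_\X(x,y)-C_{\X,\delta}(x,y)\bigr)\Big).
\]
The key observation is that because $|x-y|\asymp\delta$, we have $|x-y|\vee\delta\asymp|x-y|$, so Assumption~\ref{ass:cov} gives $C_\X(x,y)-C_{\X,\delta}(x,y)=O(1)$. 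Hence the natural choice is simply $\beta=\gamma$ (no logarithmic correction is needed, in contrast with Proposition~\ref{prop:mixed2}), and the phase becomes $O(\chi/\sigma_N)$. By Assumption~\ref{ass:meso} the $\Psi$-ratio converges to $\Psi(\gamma+z,x)/\Psi(\gamma,x)$ locally uniformly for $z$ in a vertical strip and uniformly for $(x,y)\in K^2$ with $|x-y|\ge N^{\eta-1}$, which is precisely the hypothesis \eqref{ass1}. Lemma~\ref{lem:1} then yields, uniformly in this regime,
\[
\Q_{N,x,y}[\X_N(x)\ge\gamma\log N+g(x)]
\sim\frac{\Q_{N,x,y}[e^{\gamma\X_N(x)}]\,N^{-\gamma^2}e^{-\gamma g(x)}}{\gamma\sqrt{2\pi\log N}}.
\]

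Finally, I would perform the size estimate. Expanding via \eqref{mom1} and using Assumption~\ref{ass:cov} with $|x-y|\vee\delta\asymp|x-y|\asymp e^{-k}$,
\[
\gamma^2 C_\X(x,y)-\gamma^2 C_{\X,\delta}(x,y)-\gamma^2 C_{\X,\delta}(y,y)+\tfrac{\gamma^2}{2}C_{\X,\delta,\delta}(y,y)=-\tfrac{\gamma^2}{2}k+O(1),
\]
so $\E_N[e^{\gamma\X_N(x)+\gamma\X_N(y)-\gamma\X_{N,\delta}(y)}]\le C\,N^{\gamma^2}|x-y|^{\gamma^2/2}$. Combining this with the asymptotics $\P_N[\X_N(x)\ge\gamma\log N]\asymp(\log N)^{-1/2}N^{-\gamma^2/2}$ from Lemma~\ref{lem:prob} and $\E_N[e^{\gamma\X_N(y)}]\asymp N^{\gamma^2/2}$ from \eqref{W1}, the various powers of $N$ and $\log N$ cancel and we obtain the claimed bound $C|x-y|^{\gamma^2/2}$.

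The main obstacle is checking the two cancellations simultaneously. First, the phase cancellation, which is only available because $\delta$ was chosen so that $|x-y|\asymp\delta$ (equivalently $|x-y|\vee\delta=|x-y|$); otherwise the exponent of $\delta$ in the mean would not match and a nontrivial $\beta_{N,\delta}$ would be required, potentially pushing outside the analyticity strip $\mathcal{D}_\infty$. Second, uniformity in the pair $(x,y)$ ranging over mesoscopic separations $|x-y|\ge N^{\eta-1}$; this is precisely the content of the uniform convergence in Assumption~\ref{ass:meso}, which must be invoked in the form needed by the appendix lemma rather than just pointwise.
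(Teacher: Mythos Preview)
Your proposal is correct and follows essentially the same route as the paper: tilt by $e^{\gamma(\X_N(y)-\X_{N,\delta}(y))}$, apply Lemma~\ref{lem:1} to the tilted tail probability, and then bound the resulting exponential moment via \eqref{mom1} and the covariance estimates \eqref{covbd}. The only cosmetic difference is that the paper absorbs the bounded phase $C_\X(x,y)-C_{\X,\delta}(y,x)=O(1)$ by shifting $\beta$ to $\gamma+O(\sigma_N^{-2})$, whereas you keep $\beta=\gamma$ and absorb that term into $\psi_N$; both are valid since the term is $O(1)$ uniformly, and the final size computation is identical.
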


\begin{proof} 
For $x,y \in K$, we define a new measure by  
\[
\frac{d \Q_{N,x,y}}{d\P_N} 
= \frac{e^{\gamma(\X_N(y) - \X_{N, \delta}(y)) } }{\E_N\big[e^{\gamma(\X_N(y) - \X_{N, \delta}(y))}\big]}, 
\]
where $k$ is as in the statement of the proposition and set $\delta=e^{-k}$ (in particular, the dependence on $x$ of this measure comes through $\delta$). Then, we can rewrite 
\begin{equation}\label{mixed3}
\E_N\big[ \nu_N(x) \mu_N(y)  e^{- \gamma  \X_{N, \delta}(y) }  \big]  
= \frac{\Q_{N,x,y}[ \X_N(x) \ge \gamma \log N + g(x)]}{\P_N[\X_N(x) \ge \gamma \log N]}\frac{\E_N\big[e^{\gamma(\X_N(y) - \X_{N, \delta}(y))}\big]}{\E_N[e^{\gamma \X_N(y)}]} . 
\end{equation} 

We use the method described in the Appendix~\ref{sec:1dapprox} to compute the first factor on the RHS of \eqref{mixed3}. 
To this end,
we let $X_N = \frac{\X_N(x)-\gamma\log N}{\sqrt{\log N}}$ and obtain the asymptotics of the characteristic function of the random variable $X_N$ under the measure $\Q_{N,x,y}$ biased by $e^{\beta \X_N(x)}$, that is, for $\chi\in\R$, 
\[
\frac{\Q_{N,x,y}\big[e^{i\chi X_N+\beta \X_N(x)}\big]}{\Q_{N,x,y}\big[e^{\beta \X_N(x)}\big]}
= \frac{\E_N\big[e^{(\beta+ i\chi/\sigma_N)\X_N(x)+\gamma(\X_N(y) - \X_{N, \delta}(y))}\big]}{\E_N\big[e^{\beta \X_N(x)+ \gamma(\X_N(y) -  \X_{N, \delta}(y))}\big]} 
e^{-i\chi\gamma\sigma_N}
\]
where $\sigma_N = \sqrt{\log N}$. 
This ratio of exponential moments can again be computed using \eqref{mom1},
\[
\frac{\Q_{N,x,y}\big[e^{i\chi X_N+\beta \X_N(x)}\big]}{\Q_{N,x,y}\big[e^{\beta \X_N(x)}\big]}
= \frac{\Psi_{N,\delta}^{(\beta+ i\chi/\sigma_N,\gamma,-\gamma)}(x,y;y)}{\Psi_{N,\delta}^{(\beta,\gamma,-\gamma)}(x,y;y)}
\exp\big(-\tfrac{\chi^2}2 +i\chi (\beta-\gamma)\sigma_N+i\gamma\chi \tfrac{C_\X(x,y) -C_{\X,\delta}(y,x)}{\sigma_N} \big) .
\]
We choose $\beta = \gamma + \frac{C_\X(x,y) -C_{\X,\delta}(y,x)}{\sigma_N^2} = \gamma + O(\sigma_N^{-2})$ since the log singularity cancels (cf.~Assumption~\ref{ass:cov} with $\delta= e^{-\lceil \log|x-y|^{-1}\rceil}$). 
Then, by Assumption \ref{ass:meso}, we have for small enough $\mathcal W>0$, $\zeta\in \mathcal{S} = \{z \in\C  :  |\Re z| < \mathcal W\} $, 
\[
\frac{\Psi_{N,\delta}^{(\beta+\zeta,\gamma,-\gamma)}(x,y;y)}{\Psi_{N,\delta}^{(\beta,\gamma,-\gamma)}(x,y;y)} \sim \frac{\Psi(\gamma +\zeta,x)}{\Psi(\gamma,x)}\qquad\text{as $N\to\infty$.}
\]
Hence, the characteristic function of $X_N$ under $\Q_{N,x,y}$ satisfies the condition \eqref{ass1} with $\epsilon_N =1/\sigma_N$ and limit $ \psi_{x,y}(z) = \frac{ \Psi^{(\gamma+z)}(x)}{\Psi^{(\gamma)}(x) }$. 
We emphasize that this limit is locally uniform in $\{ (x,y) \in K^2 : |x-y| \ge N^{-1+\eta} \}$ and $\{z \in\C  :  |\Re z| \le \mathcal W\}$ for a small $\mathcal W>0$. 
Hence, we can apply Lemma~\ref{lem:1} with $\beta=\gamma$; we obtain
\[
\Q_{N,x,y}[ \X_N(x) \ge \gamma \log N + g(x)]
=\frac{\Q_{N,x,y}\big[e^{\beta\X_N(x)}\big]}{\gamma \sqrt{2\pi \log N}} N^{-\gamma\beta} \big( e^{-\gamma g(x)}+ \underset{N\to\infty}{o(1)} \big) . 
\]
Using also formula \eqref{prob3}, by \eqref{mixed3} and the definition of the measure $\Q_{N,x,y}$, this shows that
\[ \begin{aligned}
\E_N\big[ \nu_N(x) \mu_N(y)  e^{- \gamma  \X_{N, \delta}(y) }  \big]  
& =  \underset{N\to\infty}{O(1)}  N^{\gamma(\gamma-\beta)}   \frac{\Q_{N,x,y}\big[e^{\beta\X_N(x)}\big]}{\E_N\big[e^{\gamma\X_N(x)}\big]} \frac{\E_N\big[e^{\gamma(\X_N(y) - \X_{N, \delta}(y))}\big]}{\E_N[e^{\gamma \X_N(y)}]} \\
& =  \underset{N\to\infty}{O(1)} N^{\gamma(\gamma-\beta)}  \frac{\E_N\big[ e^{\beta \X_N(x) + \gamma \X_N(y) - \gamma  \X_{N, \delta}(y) } \big]}{\E_N\big[ e^{\gamma \X_N(x)}\big] \E_N\big[e^{\gamma \X_N(y)}\big]} . 
\end{aligned}\]
Finally, using \eqref{supmom} with $\beta=\gamma + O(\sigma_N^{-2})$ and $\alpha=\gamma$, one obtains
\[
\E_N\big[ \nu_N(x) \mu_N(y)  e^{- \gamma  \X_{N, \delta}(y) }  \big]   \le C \delta^{\frac{\gamma^2}{2}+ O(\sigma_N^{-2})} N^{\frac{(\beta-\gamma)^2}{2}}
\le C  \delta^{\frac{\gamma^2}{2}}
\]
where we used that $\delta= e^{-\lceil \log|x-y|^{-1}\rceil}$ and  $|x-y| \ge N^{-1+\eta}$.   
This yields the claim.
\end{proof}


\subsection{Macroscopic regime: Proof of Proposition~\ref{prop:macro}} 
\label{sec:macro}

We turn to Step 3 of the proof which is the study of the limits of $\Theta_{j, N,\ell}$ for $j\in\{1,2,3\}$.
The proof of Proposition~\ref{prop:macro} is divided in four steps where we control the relevant asymptotics  of $\Theta_{j,N,\ell}$ and $ \Upsilon_{j,N}$ for $j\in\{1,2\}$ respectively.
The definitions of $ \Upsilon_{j,N}$ and the choice of $L$ are made in Subsection~\ref{sec:T3}.

\subsubsection{Upper-bound for $\Theta_{1}$} \label{sec:T1}
The goal of this subsection is to obtain \eqref{Tmacro} for $\Theta_{1}$. 
Let us denote by $\mathbb{P}^1_{N,x,y}$ the probability measure with density proportional to  $ e^{\gamma \X_N(x)+\gamma \X_N(y)}$ with respect to $\P_N$ and observe that by \eqref{Theta} and \eqref{W2}, it holds for $N$ sufficiently large, 
\begin{equation} \label{UB1}
\begin{aligned}
\Theta_{1,N,\ell} (x,y)  &=  \mathbb{P}^1_{N,x,y}\big[\mathcal{B}_{\ell , L_N}(x),\mathcal{B}_{\ell , L_N}(y)\big] \E_N\big[\mu_N(x) \mu_N(y) \big] e^{-\gamma g(x) -\gamma g(y)}    \\
&\le  \mathbb{P}^1_{N,x,y}\big[\mathcal{B}_{\ell,L}(x),\mathcal{B}_{\ell ,L}(y)\big]    e^{\gamma^2C_\X(x,y)-\gamma g(x) -\gamma g(y)}  \big(1+\underset{N\to\infty}{o(1)} \big) 
\end{aligned}
\end{equation}
for any fixed $\ell,L \in\N$ with $L>\ell$.

This already yields the bound \eqref{macrobd}; for $x,y\in K$ with $|x-y| \ge e^{-\ell}$, 
\begin{equation*}
\Theta_{1,N,\ell} (x,y)  \le C_{\ell}. 
\end{equation*}

The next proposition provides the required pointwise limit of the RHS of \eqref{UB1}. 
To state the result, we need the following definitions; 

\begin{definition} \label{def:mS}
Let $L\in\N$ and set $(z_{2j-1},\delta_{2j-1}) = (x,e^{-j})$, $(z_{2j},\delta_{2j}) = (y,e^{-j})$ for $j\in [L]$.
Let $\mathrm{f}_{\delta,z} := \sum_{j=1}^q\xi_j \rho_{\delta_j,z_j}$ for $\xi \in \R^q$ with $q=2L$.
For $x,y\in\Omega$ with $x\neq y$, we define a positive-definite matrix $\Sigma_{x,y}\in \R^{q\times q}$ by 
\[
\E \langle \X , \mathrm{f}_{\delta,z}  \rangle^2 = 
\langle \xi ,\Sigma_{x,y}\xi \rangle .
\]
Similarly we define $\mathbf{m}_{x,y} \in \R^q$ by 
\[
\gamma\int \big( C_\X(x,u) +  C_\X(y,u) \big)   \mathrm{f}_{\delta,z}(u) d u =
\langle \xi ,\mathbf{m}_{x,y}\rangle
\]
Note that $(x,y)\mapsto \Sigma_{x,y}, \mathbf{m}_{x,y} $ are both continuous functions on $\Omega\times\Omega$ which can be written explicitly in terms of the kernels \eqref{kernel}.
\end{definition}

We will use this notation throughout this section and we let, for $x,y\in\Omega$ with $x\neq y$, 
\begin{equation} \label{def:Y}
\mathbf{Y} = \big( \X_{N, e^{-k}}(x) , \X_{N, e^{-k}}(y) \big)_{k=1}^{L} . 
\end{equation}
We have the following result.

\begin{proposition} \label{prop:T1lim}
For $x,y\in\Omega$ with $x\neq y$, under $\mathbb{P}^1_{N,x,y}$, the random vector $\mathbf{Y}$ converges in distribution to a multivariate Gaussian law
$\mathcal{N}(\mathbf{m}_{x,y}, \Sigma_{x,y})$. 
\end{proposition}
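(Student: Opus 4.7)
The plan is to prove convergence in distribution via the joint Laplace transform. For $\xi\in\R^{2L}$, I would compute
\[
\Lambda_N(\xi):=\mathbb{E}^1_{N,x,y}\bigl[e^{\langle \xi, \mathbf{Y}\rangle}\bigr]
\]
and show pointwise convergence to the MGF of $\mathcal{N}(\mathbf{m}_{x,y},\Sigma_{x,y})$. Since that MGF is finite on all of $\R^{2L}$, this suffices by Curtiss's theorem to upgrade the MGF convergence to the claimed weak convergence.

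With the indexing of Definition~\ref{def:mS}, set $\mathrm{f}_{\delta,z}=\sum_{j=1}^{2L}\xi_j\rho_{\delta_j,z_j}$ so that $\langle \X_N,\mathrm{f}_{\delta,z}\rangle=\langle \xi,\mathbf{Y}\rangle$ by construction. Writing $\Lambda_N(\xi)$ as a ratio of two moments of $\X_N$ and applying the moment identity \eqref{mom1} separately to the numerator (with $\zeta_1=\zeta_2=\gamma\in[0,\sqrt{2d}]$ and test function $\mathrm{f}_{\delta,z}$) and the denominator (same parameters but $\xi=0$), the factors $e^{\gamma^2\log N}$ and $e^{\gamma^2 C_\X(x,y)}$ cancel exactly, leaving
\[
\Lambda_N(\xi)=\frac{\Psi_{N,\delta}^{(\gamma,\gamma,\xi)}(x,y;z)}{\Psi_{N}^{(\gamma,\gamma)}(x,y)}\,\exp\bigl(\langle \xi,\mathbf{m}_{x,y}\rangle+\tfrac{1}{2}\langle \xi,\Sigma_{x,y}\xi\rangle\bigr),
\]
where the identification of the exponential factor is exactly Definition~\ref{def:mS}. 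The remaining step is to show that the ratio of $\Psi$'s tends to~$1$.

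Because $x\neq y$ is fixed, $|x-y|$ is bounded below by a positive constant, and for fixed $L$ each scale $\delta_j=e^{-k}$ with $k\le L$ satisfies $\delta_j\ge e^{-L}\ge N^{\eta-1}$ once $N$ is large; so Assumption~\ref{ass:meso}(2) applies at the fixed $\xi$, giving $\Psi_{N,\delta}^{(\gamma,\gamma,\xi)}(x,y;z),\,\Psi_{N}^{(\gamma,\gamma)}(x,y)\to\Psi(\gamma,x)\Psi(\gamma,y)>0$, so the ratio converges to $1$. Combining, $\Lambda_N(\xi)\to\exp(\langle \xi,\mathbf{m}_{x,y}\rangle+\tfrac12\langle \xi,\Sigma_{x,y}\xi\rangle)$ pointwise on $\R^{2L}$, and Curtiss's theorem closes the argument. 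There is no serious obstacle: the proposition is essentially a bookkeeping consequence of \eqref{mom1} combined with Assumption~\ref{ass:meso}(2), which was designed exactly for the regime where $(x,y)$ is macroscopically separated while the scales $\delta_j$ remain mesoscopic. The only auxiliary point is positive-definiteness of $\Sigma_{x,y}$ (needed to identify the limit as a nondegenerate Gaussian), which is inherited from the non-degeneracy of $\X$ as a generalized Gaussian process evaluated on the linearly independent test functions $\rho_{\delta_j,z_j}$.
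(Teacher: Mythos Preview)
Your proposal is correct and follows essentially the same approach as the paper: both compute the Laplace transform of $\mathbf{Y}$ under $\mathbb{P}^1_{N,x,y}$ as a ratio of exponential moments, use \eqref{mom1} to reduce it to a ratio of $\Psi$-functions times the Gaussian MGF, and then invoke Assumption~\ref{ass:meso}(2) to show the ratio tends to $1$. Your write-up is in fact slightly more explicit than the paper's, in that you verify the scales $\delta_j=e^{-k}$ fall in the admissible mesoscopic range for large $N$ and name Curtiss's theorem for the final step.
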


\begin{proof}
By definitions, the Laplace transform of $\mathbf{Y}$ under the measure $\mathbb{P}^1$ is given by 
\[
\mathbb{P}^1_{N,x,y}\big[ e^{\xi \cdot  \mathbf{Y} } \big] = 
\frac{\E_N\big[  e^{\gamma \X_N(x)+\gamma \X_N(y) +\langle \X_N , \mathrm{f}_{\delta,z}  \rangle} \big]}{\E_N[e^{\gamma \X_N(x)+\gamma \X_N(y)}]} , \qquad \xi\in\R^{q} . 
\]
where $q=2L$, $\delta\in(0,1)^q$ and $z\in\Omega^q$ are given according to Definition~\ref{def:mS}.
Moreover, according to \eqref{mom1}, we can rewrite 
\[
\mathbb{P}^1_{N,x,y}\big[ e^{\xi \cdot  \mathbf{Y} } \big] = 
\frac{\Psi_{N,\delta}^{(\gamma,\gamma,\xi)}(x,y;z)}{\Psi_N^{(\gamma,\gamma)}(x,y)} 
\exp\Big(\gamma \langle \xi ,\mathbf{m}_{x,y}\rangle+ \langle \xi ,\Sigma_{x,y}\xi \rangle/2 \Big)
\]
From Assumption \ref{ass:meso}, we see that this ratio of the $\Psi$-functions converges to 1 (for any fixed $x,y\in\Omega$ with $x\neq y$ and $\xi\in\R^{q}$).
Hence, this implies that for any $\xi\in\R^{q} $
\begin{equation} \label{limitL}
\lim_{N\to\infty}\mathbb{P}^1_{N,x,y}\big[ e^{\xi \cdot  \mathbf{Y} } \big] =  
\E_{\mathcal{N}(\mathbf{m}_{x,y}, \Sigma_{x,y})}\big[ e^{\xi \cdot X } \big] . 
\end{equation}
Since the pointwise convergence of the Laplace transform implies convergence in distribution, this completes the proof. 
\end{proof}

By Proposition \ref{prop:T1lim}, we see that  for any fixed $L, \ell \in\N$ and $x,y\in\Omega$ with $x\neq y$, as $N\to\infty$, 
\[\begin{aligned}
\mathbb{P}^1_{N,x,y}\big[\mathcal{B}_{\ell , L}(x),\mathcal{B}_{\ell , L}(y)\big] 
& = \mathbb{P}^1_{N,x,y} \big[ \mathbf{Y} _{2k-1} , \mathbf{Y} _{2k}   \le ( \gamma + \eta) k : k\in [\ell, L]  \big] \\
& \to   \P_{\mathcal{N}(\mathbf{m}_{x,y}, \Sigma_{x,y})}\big[ X_{2k-1} , X_{2k}   \le ( \gamma + \eta) k : k\in [\ell, L]  \big] .
\end{aligned}\]
Then, with \eqref{UB1}, this completes the proof of \eqref{Tmacro} in case $j=1$. 


\subsubsection{Upper-bound for $\Theta_{2,N,\ell}$} \label{sec:T2}
We proceed as in Section~\ref{sec:T1} and introduce a new probability measure $\mathbb{P}^2_{N,x,y}$ given by for $x,y \in\Omega$, 
\[
\frac{d\mathbb{P}^2_{N,x,y}}{d\P_N} =  \frac{\1\{\X_N(x) \ge \gamma \log N +g(x)\}\1\{\X_N(y) \ge \gamma \log N +g(y)\}}{\P_N\big[ \X_N(x) \ge \gamma \log N +g(x) ,  \X_N(y) \ge \gamma \log N +g(y)  \big]}  .
\]
Using this notation, it holds for any fixed $L\ge \ell$  (and $N$ sufficiently large),
\begin{equation} \label{UB2}
\begin{aligned}
\Theta_{2,N,\ell}(x,y) 
& \le\E_N\big[ \mathbf 1\{\mathcal{B}_{\ell , L}(x), \mathcal{B}_{\ell , L}(y)\}  \nu_N(x) \nu_N(y) \big] \\
& = \mathbb{P}^2_{N,x,y}\big[\mathcal{B}_{\ell , L}(x), \mathcal{B}_{\ell , L}(y) \big]
\E_N\big[ \nu_N(x) \nu_N(y)  \big] . 
\end{aligned}
\end{equation}

Our next proposition provides the asymptotics of the quantity on the RHS of \eqref{UB2}. Recall the notation \eqref{eq:Ac}.

\begin{proposition} \label{prop:T2lim}
For $x\neq y$, under $\mathbb{P}^2_{N,x,y}$, the random vector  \eqref{def:Y} converges in distribution as $N\to\infty$ to a multivariate Gaussian $\mathcal{N}(\mathbf{m}_{x,y}, \Sigma_{x,y})$. Moreover, for any $c>0$, we have uniformly for $(x,y) \in \mathcal{A}_c$, 
\begin{equation} \label{VN}
\lim_{N\to\infty} \E_N\big[ \nu_N(x) \nu_N(y)  \big]  =  e^{\gamma^2 C_\X(x,y)} e^{- \gamma g(x) - \gamma g(y)} . 
\end{equation}
\end{proposition}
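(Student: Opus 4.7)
The plan is to establish both claims in parallel by adapting the tilting strategy of Proposition \ref{prop:T1lim} to the indicator setting. This requires a two-dimensional analog of the Fourier-analytic Gaussian-tail approximation of Lemma \ref{lem:1}. The key intermediate result I would establish first is a two-point moderate-deviation asymptotic: uniformly for $(x,y) \in \mathcal{A}_c$,
\begin{equation*}
\P_N\bigl[\X_N(x) \ge \gamma\log N + g(x),\, \X_N(y) \ge \gamma\log N + g(y)\bigr] \sim \frac{\Psi(\gamma,x)\,\Psi(\gamma,y)\, e^{\gamma^2 C_\X(x,y) - \gamma g(x) - \gamma g(y)}}{2\pi\,\gamma^2\, \log N}\, N^{-\gamma^2}.
\end{equation*}

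Setting $\sigma_N = \sqrt{\log N}$ and $X_N^{(i)} = (\X_N(z_i) - \gamma\log N)/\sigma_N$ with $(z_1,z_2) = (x,y)$, I would tilt by $e^{\gamma\X_N(x) + \gamma\X_N(y)}$ and compute the joint characteristic function of $(X_N^{(1)}, X_N^{(2)})$ under a further bias $e^{\beta\X_N(x) + \beta\X_N(y)}$ with $\beta \to \gamma$. By \eqref{mom1} and Assumption \ref{ass:cov}, this characteristic function is of the form
\begin{equation*}
\frac{\Psi_N^{(\beta + i\chi_1/\sigma_N,\, \beta + i\chi_2/\sigma_N)}(x,y)}{\Psi_N^{(\beta,\beta)}(x,y)}\, \exp\!\Bigl(-\tfrac{\chi_1^2 + \chi_2^2}{2} - \tfrac{\chi_1\chi_2 C_\X(x,y)}{\sigma_N^2} + i(\text{mean shifts})\Bigr),
\end{equation*}
and Assumption \ref{ass:meso} shows the $\Psi_N$-ratio converges locally uniformly to the corresponding ratio of limiting $\Psi$'s. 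Since $C_\X(x,y)/\sigma_N^2 \to 0$ uniformly on $\mathcal{A}_c$, $(X_N^{(1)}, X_N^{(2)})$ is asymptotically a standard two-dimensional Gaussian under the tilt, and a two-dimensional Fourier inversion, extending Lemma \ref{lem:1} by extracting a $1/(2\pi\gamma^2\sigma_N^2)$ factor from the product $\int_0^\infty\!\!\int_0^\infty e^{-\gamma\sigma_N u_1 - \gamma\sigma_N u_2}\phi(du_1,du_2)$ of tilted Gaussian densities, yields the claim; the factor $e^{\gamma^2 C_\X(x,y)}$ appears from the mean shift that the tilt induces on each coordinate.

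For the convergence in distribution, I would compute the Laplace transform of $\mathbf{Y}$ under $\mathbb{P}^2_{N,x,y}$ by rerunning the tilting argument with the additional factor $e^{\xi \cdot \mathbf{Y}} = e^{\langle \X_N, \mathrm{f}_{\delta,z}\rangle}$ included (for fixed $\xi \in \R^q$, $q = 2L$). By \eqref{mom1}, the numerator
\begin{equation*}
\E_N\!\bigl[\1\{\X_N(x) \ge \gamma \log N + g(x)\}\1\{\X_N(y) \ge \gamma \log N + g(y)\}\, e^{\xi \cdot \mathbf{Y}}\bigr]
\end{equation*}
factors into the same two-dimensional tail integral as in the denominator, multiplied by the ratio $\Psi_{N,\delta}^{(\gamma,\gamma,\xi)}(x,y;z)/\Psi_N^{(\gamma,\gamma)}(x,y)$ (which tends to $1$ by Assumption \ref{ass:meso}) and by $\exp(\gamma\langle\xi, \mathbf{m}_{x,y}\rangle + \tfrac12 \langle \xi, \Sigma_{x,y}\xi\rangle)$, with $\mathbf{m}_{x,y}$ and $\Sigma_{x,y}$ as in Definition \ref{def:mS}. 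Dividing by the denominator cancels all common prefactors, so $\mathbb{P}^2_{N,x,y}[e^{\xi \cdot \mathbf{Y}}]$ converges to the Laplace transform of $\mathcal{N}(\mathbf{m}_{x,y}, \Sigma_{x,y})$, which yields convergence in distribution.

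The claim \eqref{VN} follows by dividing the two-point asymptotic by the product of the one-point asymptotics from Lemma \ref{lem:prob}: the $N^{-\gamma^2}$, $(\log N)^{-1}$, $\Psi(\gamma,\cdot)$, and $e^{-\gamma g}$ factors cancel exactly, leaving $e^{\gamma^2 C_\X(x,y) - \gamma g(x) - \gamma g(y)}$, with uniformity on $\mathcal{A}_c$ inherited from that of the moderate-deviation estimate. The main technical obstacle is executing the two-dimensional Fourier inversion with an error uniform for $(x,y) \in \mathcal{A}_c$: this requires the characteristic-function asymptotics to hold on a growing window of imaginary tilts, which is precisely what Assumption \ref{ass:macro} provides in the macroscopic regime, together with careful bookkeeping of the vanishing off-diagonal covariance $C_\X(x,y)/\sigma_N^2$ whose residual contribution delivers the correction factor $e^{\gamma^2 C_\X(x,y)}$.
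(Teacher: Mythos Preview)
Your proposal is correct and follows essentially the same route as the paper: both tilt by $e^{\gamma\X_N(x)+\gamma\X_N(y)+\xi\cdot\mathbf{Y}}$ (the paper's measure $\mathbb{L}_{N,x,y,\xi}$), reduce to a two-dimensional Gaussian tail integral yielding the factor $1/(2\pi\gamma^2\sigma_N^2)$, and identify Assumption~\ref{ass:macro} as the input needed for the uniform Fourier approximation on a growing window. The paper packages the two-dimensional extension of Lemma~\ref{lem:1} as Proposition~\ref{prop:approx} in Appendix~\ref{sec:2dapprox}, but the content and the role it plays are exactly what you describe.
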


By \eqref{UB2}, Proposition~\ref{prop:T2lim} immediately implies that \eqref{Tmacro} in case $j=2$, that is,
\[
\limsup_{N\to\infty} \Theta_{2,N,\ell}(x,y) \le 
\P_{\mathcal{N}(\mathbf{m}_{x,y}, \Sigma_{x,y})}\big[ X_{2k-1} , X_{2k}   \le ( \gamma + \eta) k : k\in [\ell, L]  \big]  e^{\gamma^2 C_\X(x,y)} e^{- \gamma g(x) - \gamma g(y)} . 
\]
Moreover using the uniformity of the limit \eqref{VN}, this also establishes the  bound \eqref{macrobd}  in case $j=2$.
The proof of Proposition~\ref{prop:T2lim} is the only step which requires  the Assumption~\ref{ass:macro} and the results of the  Appendix~\ref{sec:2dapprox}. 
This is the most technically involved step in this paper.

\begin{proof}[Proof of Proposition~\ref{prop:T2lim}]
Let us introduce yet new notation. Let
$\sigma_N =\sqrt{\log N}$  and 
\[
X_x = \frac{ \X_N(x) -\gamma \log N -g(x)}{\sqrt{\log N}} ,\qquad x\in\Omega .
\]
Let $q=2L$.
We also consider another probability measure depending on $\xi\in\R^{q}$, 
\begin{equation} \label{Fmeasure}
\frac{d\mathbb{L}_{N,x,y,\xi}}{d\P_N} := \frac{ e^{\gamma \X_N(x)+ \gamma \X_N(y) +\xi \cdot  \mathbf{Y}}  }{ \E_N[e^{\gamma \X_N(x)+ \gamma \X_N(y)+\xi \cdot  \mathbf{Y}} ]} .
\end{equation}

Our strategy is to compute the Laplace transform of the
random vector $\mathbf{Y}$ under the measure $\mathbb{P}^2$, that is, using the previous notation, for $\xi \in\R^{q}$, 
\begin{equation} \label{LaplaceQ}
\begin{aligned}
\mathbb{P}^2_{N,x,y}\big[ e^{\xi \cdot  \mathbf{Y} } \big]  & =
\frac{\E_N\big[\mathbf 1\{X_x\ge 0, X_y \ge 0\}e^{\xi \cdot  \mathbf{Y}} \big]}{\P_N\big[X_x\ge 0, X_y \ge 0\big]}  \\
& = \frac{\mathbb{L}_{N,x,y,\xi}\big[\mathbf 1\{X_x\ge 0, X_y \ge 0\}  e^{-\gamma \sigma_N X_x- \gamma\sigma_N X_y } \big]}{\mathbb{L}_{N,x,y,0}\big[\mathbf 1\{X_x\ge 0, X_y \ge 0\}  e^{-\gamma \sigma_N X_x- \gamma \sigma_N X_y } \big]} 
\mathbb{P}^1_{N,x,y}\big[ e^{\xi \cdot  \mathbf{Y} } \big] 
\end{aligned}
\end{equation}
where $\mathbb{P}^1_{N,x,y} = \mathbb{L}_{N,x,y,0}$ as in Section~\ref{sec:T1}.  

To compute these quantities, we rely on classical Fourier-analytic arguments based on the asymptotics of Assumption~\ref{ass:macro}. 
These arguments are presented in detail in Subsection~\ref{sec:2dapprox}. 

Integrating by parts and making a change of variables, we find
\[ 
\begin{aligned}
& \mathbb{L}_{N,x,y,\xi}\big[\mathbf 1\{X_x\ge 0, X_y \ge 0\}  e^{-\gamma \sigma_N X_x- \gamma\sigma_N X_y } \big]  \\
&\qquad =  \gamma^2\iint_{[0,\infty)^2} 
e^{- \gamma u_1 -  \gamma u_2} \,
\mathbb{L}_{N,x,y,\xi}\big[X_x \in [0, \sigma_N^{-1}u_1] , X_y \in [0, \sigma_N^{-1}u_2]  \big]  d u_1 du_2 .
\end{aligned}
\end{equation*}
To compare with Subsection~\ref{sec:2dapprox}, we use   $\epsilon_N = 1/\gamma \sigma_N$. Then, choosing $L_N : = \log \sigma_N^{c}$ for a fixed $c>\frac{2}{\gamma}$, we have
\begin{equation}\label{F}
\begin{aligned}
& \mathbb{L}_{N,x,y,\xi}\big[\1\{X_x\ge 0, X_y \ge 0\}  e^{- \gamma\sigma_N X_x- \gamma \sigma_N X_y } \big]  \\
& =  \gamma^2\iint_{[0,L_N]^2} e^{- \gamma u_1 -  \gamma u_2}  \,
\mathbb{L}_{N,x,y,\xi}\big[X_x \in [0, \sigma_N^{-1}u_1] , X_y \in [0, \sigma_N^{-1}u_2]  \big]  d u_1 du_2  + \underset{N\to\infty}{O}(\sigma_N^{- \gamma c}). 
\end{aligned}
\end{equation}

Now, we can use the uniform approximation of Proposition~\ref{prop:approx}
to compute the leading term up to an error of order $o(\sigma_N^{-2})$ if we can verify the assumptions \eqref{ass2}--\eqref{ass3} on the characteristic function of $(X_x , X_y)$ under the measure $\mathbb{L}=\mathbb{L}_{N,x,y,\xi}$. 
Using the notation from Definition~\ref{def:mS}, the characteristic function of this random vector is given by for 
$\chi \in \mathcal \R^2$, 
\[
\mathbb{L}\big[ e^{i\chi_1 X_x + i\chi_2 X_y } \big]
=  \frac{\E_N\big[ e^{\zeta_1 \X_N(x)+ \zeta_2 \X_N(y) + \langle \X_N , \mathrm{f}_{\delta,z}\rangle}]}{ \E_N\big[e^{\gamma \X_N(x)+ \gamma \X_N(y) + \langle \X_N , \mathrm{f}_{\delta,z}\rangle}]} e^{-i \frac{\chi_1}{\sigma_N}(\gamma \sigma_N^2+g(x))-i \frac{\chi_2}{\sigma_N}(\gamma \sigma_N^2+  g(y))} 
\]
where  $\zeta_j = \gamma + \tfrac{i\chi_j}{\sigma_N} $  for $j\in\{1,2\}$. 
By  \eqref{mom1}, we can rewrite
\begin{align*}
\mathbb{L}\big[ e^{i\chi_1 X_x + i\chi_2 X_y } \big] = \frac{ \Psi_{N,\delta}^{(\zeta_1,\zeta_2,\xi)}(x,y;z)}{ \Psi_{N,\delta}^{(\gamma,\gamma,\xi)}(x,y;z)}
\exp\bigg(-  \tfrac{\chi_1^2+ \chi_2^2}{2}- C_\X(x,y) \frac{\chi_1 \chi_2}{\sigma_N^2} + i\frac{\chi_1}{\sigma_N} \varpi_1 + i\frac{\chi_2}{\sigma_N} \varpi_2  \bigg) 
\end{align*}
where we set
\[
\varpi_1 := \gamma C_\X(x,y) + \int C_\X(x,u) \mathrm{f}_{\delta,z}(u) d u  - g(x) ,\qquad
\varpi_2 := \gamma C_\X(x,y) + \int C_\X(y,u) \mathrm{f}_{\delta,z}(u) d u  - g(y) , 
\]
and we have used that
\[
\big( \tfrac{\zeta_1^2+ \zeta_2^2}{2} - \gamma^2  \big) \sigma_N^2 -i \gamma\sigma_N (\chi_1+\chi_2)
=  - \tfrac{\chi_1^2+ \chi_2^2}{2}  \qquad \text{and}\qquad
\gamma^2-\zeta_1\zeta_2 =  \tfrac{\chi_1 \chi_2}{\sigma_N^2} -i \gamma \tfrac{\chi_1+\chi_2}{\sigma_N}. 
\]

We claim that this characteristic function satisfies the assumptions of  Subsection~\ref{sec:2dapprox}  with  $\epsilon_N = \sigma_N^{-1}$ and 
\[
\psi_{N,x,y}(\chi_1,\chi_2)
= \frac{\Psi_{N,\delta}^{(\gamma+\chi_1,\gamma+\chi_2,\xi)}(x,y;z)}{\Psi_{N,\delta}^{(\gamma,\gamma,\xi)}(x,y;z)}  e^{ C_\X(x,y)\chi_1 \chi_2 +\chi_1 \varpi_1 + \chi_2 \varpi_2}  , \qquad \chi \in \mathcal{S}^2 
\]
where $\mathcal{S}^2 = \big\{z \in\C^2  :  |\Re z_j| \le \eta ; j\in\{1,2\} \big\}$.  
In particular, Assumption~\ref{ass:macro} guarantees that  \eqref{ass2} holds for a small ${\eta>0}$ 
 uniformly for $\lambda=(x,y) \in \mathcal{A}_c$; the parameters $\xi\in\R^{q}$ and $\delta\in (0,1)^q$ are fixed here. 
Its limit is given by 
\[
\psi_{x,y}(\chi) = \frac{\Psi(\gamma+\chi_1,x)\Psi(\gamma+\chi_2,y)}{\Psi(\gamma,x) \Psi(\gamma,y)}   e^{C_\X(x,y)\chi_1 \chi_2 +\chi_1 \varpi_1 + \chi_2 \varpi_2}  , \qquad \chi \in \mathcal S^2 , 
\]
so that \eqref{ass3} holds; 
\(
|\psi_{x,y}(i \chi )|  \le c e^{|\chi|^\eta} 
\)
for $\chi \in\R^2$ and $(x,y)\in \mathcal{A}_c$ -- here we used that  $\Psi(\gamma,x)> 0$ for $(\gamma, x) \in [0,\sqrt{2d}] \times K$ and $C_\X, \varpi_1,\varpi_2$ are continuous for $(x,y) \in \mathcal{A}_c$.

Hence, by applying Proposition~\ref{prop:approx}, we have uniformly in $u\in [0,L_N]^2$ and $(x,y) \in \mathcal{A}_c$, 
\[
\mathbb{L}\big[X_x \in [0, \sigma_N^{-1}u_1] , X_y \in [0, \sigma_N^{-1}u_2]  \big]  = \P_{\mathcal{N}_{(0,\mathrm{I})}}\big[X_1 \in [0,\sigma_N^{-1}u_1] , X_2\in [0, \sigma_N^{-1}u_2]  \big] + \underset{N\to\infty}{o(\sigma_N^{-2})} . 
\]
Going back to formula \eqref{F}, this implies that uniformly in $(x,y)\in \mathcal{A}_c$, 
\[ \begin{aligned}
& \mathbb{L}\big[\mathbf 1\{X_x\ge 0, X_y \ge 0\}  e^{-\gamma \sigma_N X_x- \gamma\sigma_N X_y } \big]  \\
&\qquad = \gamma^2 \iint_{[0,\infty)^2} e^{- \gamma u_1 - \gamma u_2} \,
\P_{\mathcal{N}_{(0,\mathrm{I})}}\big[X_1 \in [0,\sigma_N^{-1}u_1] , X_2\in [0, \sigma_N^{-1}u_2]  \big]    d u_1 du_2  + \underset{N\to\infty}{o(\sigma_N^{-2})} . 
\end{aligned}\]
Integrating by parts again, we obtain
\[ \begin{aligned}
\sigma_N^2 \mathbb{L}\big[\{X_x\ge 0, X_y \ge 0\}  e^{- \gamma \sigma_N X_x- \gamma\sigma_N X_y } \big] =  \frac{1}{ 2\pi \gamma^2} \iint_{[0,\infty)^2} e^{- u_1 -  u_2}  e^{-\frac{|u|^2}{2\gamma^2\sigma_N^2}}  d u_1 du_2  + \underset{N\to\infty}{o(1)}.
\end{aligned}\]

By Lebesgue's dominated convergence theorem, we conclude that for any $\xi\in\R^{q}$ and uniformly in $(x,y)\in \mathcal{A}_c$
\begin{equation} \label{limitF}
\lim_{N\to\infty}\sigma_N^2 \mathbb{L}_{N,x,y,\xi}\big[\{X_x\ge 0, X_y \ge 0\}  e^{- \gamma \sigma_N X_x- \gamma\sigma_N X_y } \big] =  \frac{1}{ 2\pi \gamma^2}.
\end{equation}

According to formula \eqref{LaplaceQ} and the asymptotics \eqref{limitL}, this shows that for any $\xi \in \R^{q}$ and for $x,y \in\Omega$ with $x \neq y$, 
\[
\lim_{N\to\infty} \mathbb{P}^2_{N,x,y}\big[ e^{\xi \cdot  \mathbf{Y} } \big]  = \lim_{N\to\infty} \mathbb{P}^1_{N,x,y}\big[ e^{\xi \cdot  \mathbf{Y} } \big]  =  \E_{\mathcal{N}(\mathbf{m}_{x,y}, \Sigma_{x,y})}\big[ e^{\xi \cdot X } \big] 
\]
This yields the first claim concerning the convergence in distribution of $\mathbf{Y}$ under $\mathbb{P}^2_{N,x,y}$. 

For the second claim, we can rewrite in a similar way,
\begin{equation*}
\begin{aligned}
& \E_N\big[ \nu_N(x) \nu_N(y)  \big]    = \frac{\P_N\big[X_x\ge 0, X_y \ge 0\big]}{\P_N[\X_N(x) \ge \gamma \log N]\P_N[\X_N(y) \ge \gamma \log N]} \\
&= \frac{\mathbb{L}_{N,x,y,0}\big[\{X_x\ge 0, X_y \ge 0\}  e^{- \gamma\sigma_N X_x- \gamma\sigma_N X_y } \big]}{\P_N[\X_N(x) \ge \gamma \log N]\P_N[\X_N(y) \ge \gamma \log N]}
\E_N[e^{\gamma \X_N(x)+ \gamma \X_N(y)}]e^{-2 \gamma^2\sigma_N^2 - \gamma g(x) - \gamma g(y)} . 
\end{aligned}
\end{equation*}
Using again the asymptotics \eqref{limitF} and Lemma~\ref{lem:prob}, we obtain uniformly in $(x,y)\in \mathcal{A}_c$,  
\[
\lim_{N\to\infty}\frac{\mathbb{L}_{N,x,y,0}\big[\{X_x\ge 0, X_y \ge 0\}  e^{- \gamma\sigma_N X_x- \gamma\sigma_N X_y } \big]}{\P_N[\X_N(x) \ge \gamma \log N]\P_N[\X_N(y) \ge \gamma \log N]}
e^{-\gamma^2\sigma_N^2} =  \frac{1}{\Psi(\gamma,x)\Psi(\gamma,y)} . 
\]
On the other hand, by Assumption \ref{ass:meso}, we also have uniformly for $(x,y)\in \mathcal{A}_c$, 
\[
\E_N[e^{\gamma \X_N(x)+ \gamma \X_N(y)}\big]  = \Psi(\gamma,x)\Psi(\gamma,y)
\exp\big( \gamma^2\sigma_N^2+ \gamma^2 C_\X(x,y) +\underset{N\to\infty}{o(1)}  \big).
\]
Hence we conclude that with the required uniformity
\[
\lim_{N\to\infty} \E_N\big[ \nu_N(x) \nu_N(y)  \big]   =  e^{\gamma^2 C_\X(x,y)-\gamma g(x)-\gamma g(y)} . \qedhere 
\]
\end{proof}


\subsubsection{Contribution from $\Theta_{3, N,\ell}$} \label{sec:T3}
Our goal is now to obtain the lower bound \eqref{Upsilon0}. 
We choose $L= R \ell$ with $R>1$ and define
\begin{equation} \label{Upsilon1}
\Upsilon_{1,N,\ell}(x,y) : =  \E_N\big[\mathbf 1\{\mathcal{B}_{\ell , L}(x) , \mathcal{B}_{\ell , L}(y)\} \nu_N(x) \mu_N(y) \big] e^{-\gamma g(y)}.
\end{equation}

Recall \eqref{Theta} and that $L_N = \lfloor (1-\eta) \log N \rfloor$. 
Then, by a union bound, we can bound for $x\neq y$, 
\[
\Upsilon_{1,N,\ell}  \le \Theta_{3,N,\ell} 
+  \sum_{\substack{L <k\le L_N \\ z\in \{x,y\}}}
\E_N\big[\mathbf 1\{ \X_{N, e^{-k}}(z) \ge ( \gamma + \eta) k\} \nu_N(x) \mu_N(y) \big] e^{-\gamma g(y)}.
\]

We also define for $\alpha,\delta>0$ and $x,y\in\Omega$, 
\[\begin{aligned}
\Upsilon_{2,N}^{1,\delta}(x,y)  &: =  \E_N\big[\nu_N(x) \mu_N(y) e^{\alpha\X_{N, \delta}(x)} \big]  \\
\Upsilon_{2,N}^{2,\delta}(x,y)  &: =  \E_N\big[\nu_N(x) \mu_N(y) e^{\alpha\X_{N, \delta}(y)} \big] . 
\end{aligned}\]
By Markov's inequality, using this notation, it holds for any fixed $\alpha>0$, 
\begin{equation} \label{def:Ups}
\Upsilon_{1,N,\ell}  \le \Theta_{3,N,\ell} +  \Upsilon_{2,N,L}  , \qquad
\Upsilon_{2,N,L} := e^{-\gamma g(y)}  \sum_{\substack{L <k\le L_N}}
e^{-\alpha( \gamma + \eta) k} \big(  \Upsilon_{2,N}^{1,e^{-k}}+  \Upsilon_{2,N}^{2,e^{-k}} \big) . 
\end{equation}

In Subsection~\ref{sec:mixed1}, we obtain the bound \eqref{Upsilon2} for the quantity $\Upsilon_{2,N,L}$ (provided that $R$ is large enough and $\alpha=\eta$). 
Then, in Subsection~\ref{sec:mixed2}, we compute the pointwise limit of $\Upsilon_{1,N,\ell}$. This last step completes the proof of Proposition~\ref{prop:macro}.
Both subsections are based on the tools presented in Appendix~\ref{sec:1dapprox}. 


\subsubsection{Control of $\Upsilon_{2,N,L}$.} \label{sec:mixed1}
First, by  Proposition~\ref{prop:mixed2} with  $c=e^{-\ell}$, $\zeta =\gamma$ and $\alpha\in[0,\gamma]$, we obtain for  $(x,y) \in \mathcal{A}_c$,  $\delta= e^{-k}$ with  $k\in \{1, \cdots, L_N\}$ and $N\ge N_{\ell}$, 
\begin{equation} \label{Upsbd}
\Upsilon_{2,N}^{1,\delta}(x,y)  \le  C_{\gamma,\eta,g}  e^{\alpha\gamma k}  \frac{\E_N\big[e^{\beta_{N,\delta} \X_N(x)+\gamma \X_N(y)+\alpha \X_{N,\delta}(x)}\big]}{\E_N\big[e^{\gamma \X_N(x)}\big] \E[e^{\gamma \X_N(y)}]} ,\qquad  \beta_{N,\delta} =   \gamma + \alpha \frac{\log \delta}{\log N}. 
\end{equation}

Similarly, we can bound  $\Upsilon_{2,N}^{2,\delta}$ in terms of exponential moments;

\begin{proposition} \label{prop:mixed1}
For  $\alpha \in [0,\gamma]$ and $c>0$, it holds uniformly
for $(x,y)\in \mathcal{A}_c$ and  $\delta \in [N^{-1+\eta},1]$,  
\[
\Upsilon_{2,N}^{2,\delta}(x,y)
= \E_N\big[\nu_N(x) \mu_N(y) e^{\alpha\X_{N, \delta}(y)} \big] 
=   \frac{\E_N\big[e^{\gamma \X_N(x)+\gamma \X_N(y)+\alpha \X_{N,\delta}(y)}\big]}{\E_N[e^{\gamma \X_N(x)}]\E_N\big[e^{\gamma \X_N(y)}\big]} 
e^{-\gamma g(x)}\big( 1+ \underset{N\to\infty}{o(1)} \big) . 
\] 
\end{proposition}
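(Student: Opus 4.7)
The plan is to mirror the strategy of Proposition~\ref{prop:mixed2}: reduce $\Upsilon_{2,N}^{2,\delta}(x,y)$ to a tail probability for $\X_N(x)$ under a tilt and evaluate the tail via the one-dimensional Gaussian approximation of Lemma~\ref{lem:1}. Introduce the tilted measure
\[
\frac{d\Q_{N,y,\delta}}{d\P_N}:=\frac{e^{\gamma\X_N(y)+\alpha\X_{N,\delta}(y)}}{\E_N\bigl[e^{\gamma\X_N(y)+\alpha\X_{N,\delta}(y)}\bigr]},
\]
so that
\[
\Upsilon_{2,N}^{2,\delta}(x,y)=\frac{\E_N\bigl[e^{\gamma\X_N(y)+\alpha\X_{N,\delta}(y)}\bigr]}{\P_N[\X_N(x)\ge\gamma\log N]\,\E_N[e^{\gamma\X_N(y)}]}\,\Q_{N,y,\delta}[\X_N(x)\ge\gamma\log N+g(x)].
\]

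Next, I will compute the characteristic function of $X_N:=(\X_N(x)-\gamma\log N)/\sigma_N$ with $\sigma_N=\sqrt{\log N}$ under $\Q_{N,y,\delta}$ further biased by $e^{\beta\X_N(x)}$. Using \eqref{mom1} and the kernels \eqref{kernel},
\[
\frac{\Q_{N,y,\delta}\bigl[e^{i\chi X_N+\beta\X_N(x)}\bigr]}{\Q_{N,y,\delta}\bigl[e^{\beta\X_N(x)}\bigr]}=\frac{\Psi_{N,\delta}^{(\beta+i\chi/\sigma_N,\gamma,\alpha)}(x,y;y)}{\Psi_{N,\delta}^{(\beta,\gamma,\alpha)}(x,y;y)}\exp\!\Bigl(-\tfrac{\chi^2}{2}+i\chi\bigl[(\beta-\gamma)\sigma_N+\tfrac{\gamma C_\X(x,y)+\alpha C_{\X,\delta}(y,x)}{\sigma_N}\bigr]\Bigr).
\]
The imaginary linear term in $\chi$ vanishes for $\beta=\beta_N:=\gamma-(\gamma C_\X(x,y)+\alpha C_{\X,\delta}(y,x))/\sigma_N^2$. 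Since $(x,y)\in\mathcal A_c$ forces $C_\X(x,y)=O(1)$ and Assumption~\ref{ass:cov} gives $C_{\X,\delta}(y,x)=O(1)$ uniformly for $\delta\le 1$, we have $\beta_N=\gamma+O(1/\log N)$. Assumption~\ref{ass:meso} then ensures that the $\Psi$-ratio converges to $\Psi(\gamma+\zeta,x)/\Psi(\gamma,x)$ on a small complex strip, uniformly for $(x,y)\in\mathcal A_c$ and $\delta\in[N^{-1+\eta},1]$, so the characteristic function has the form required by Lemma~\ref{lem:1} with $\epsilon_N=1/\sigma_N$.

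Applying Lemma~\ref{lem:1} and using $e^{-\beta_N g(x)}=e^{-\gamma g(x)}(1+o(1))$ gives
\[
\Q_{N,y,\delta}[\X_N(x)\ge\gamma\log N+g(x)]=\frac{\Q_{N,y,\delta}\bigl[e^{\beta_N\X_N(x)}\bigr]\,N^{-\gamma\beta_N}\,e^{-\gamma g(x)}}{\gamma\sqrt{2\pi\log N}}(1+o(1)),
\]
uniformly in $(x,y)\in\mathcal A_c$ and $\delta\in[N^{-1+\eta},1]$. Substituting this back, writing $\Q_{N,y,\delta}[e^{\beta_N\X_N(x)}]$ as a ratio of two $\E_N$-moments, and using Lemma~\ref{lem:prob} in the form $\gamma\sqrt{2\pi\log N}\,\P_N[\X_N(x)\ge\gamma\log N]\sim N^{-\gamma^2}\E_N[e^{\gamma\X_N(x)}]$ yields
\[
\Upsilon_{2,N}^{2,\delta}(x,y)\sim\frac{\E_N\bigl[e^{\beta_N\X_N(x)+\gamma\X_N(y)+\alpha\X_{N,\delta}(y)}\bigr]}{\E_N[e^{\gamma\X_N(x)}]\,\E_N[e^{\gamma\X_N(y)}]}\,e^{\gamma(\gamma C_\X(x,y)+\alpha C_{\X,\delta}(y,x))}\,e^{-\gamma g(x)},
\]
where the explicit exponential comes from $N^{-\gamma\beta_N}=N^{-\gamma^2}\exp(\gamma(\gamma C_\X(x,y)+\alpha C_{\X,\delta}(y,x)))$.

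To conclude, I will replace $\beta_N$ by $\gamma$ in the joint moment. A second application of \eqref{mom1}, combined with $\beta_N-\gamma=O(1/\log N)$ and the continuity/convergence of the $\Psi$-ratio from Assumption~\ref{ass:meso}, gives
\[
\E_N\bigl[e^{\beta_N\X_N(x)+\gamma\X_N(y)+\alpha\X_{N,\delta}(y)}\bigr]=\E_N\bigl[e^{\gamma\X_N(x)+\gamma\X_N(y)+\alpha\X_{N,\delta}(y)}\bigr]\,e^{-\gamma(\gamma C_\X(x,y)+\alpha C_{\X,\delta}(y,x))}(1+o(1)),
\]
and this exponential factor cancels exactly the one produced by $N^{-\gamma\beta_N}$, proving the claim. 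The main delicacy lies precisely in this cancellation: both factors are of size $e^{O(1)}$ rather than $1+o(1)$, so one must carefully track the $O(1/\log N)$ deviation of $\beta_N$ from $\gamma$ through every step and verify that the two $\Psi$-ratios, while not identically one, are uniformly close to one by Assumption~\ref{ass:meso}. The uniformity of all $o(1)$ error terms in $(x,y)\in\mathcal A_c$ and $\delta\in[N^{-1+\eta},1]$ then follows from the corresponding uniformity already built into Lemma~\ref{lem:1} and into Assumptions~\ref{ass:cov}--\ref{ass:meso}.
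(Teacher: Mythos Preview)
Your proof is correct and follows essentially the same route as the paper: tilt by $e^{\gamma\X_N(y)+\alpha\X_{N,\delta}(y)}$, compute the biased characteristic function of $X_N=(\X_N(x)-\gamma\log N)/\sigma_N$ via \eqref{mom1}, apply Lemma~\ref{lem:1}, and finish with \eqref{prob3}. The only cosmetic difference is in the choice of the tilt parameter: you set $\beta_N=\gamma-(\gamma C_\X(x,y)+\alpha C_{\X,\delta}(y,x))/\sigma_N^2$ to kill the entire linear drift, whereas the paper takes $\beta=\gamma-\alpha C_{\X,\delta}(y,x)/\sigma_N^2$, cancels only the $\alpha$-term, and absorbs the bounded factor $e^{z\gamma C_\X(x,y)}$ into $\psi_{N,\delta,x,y}$; both choices yield $\beta=\gamma+O(1/\log N)$ on $\mathcal A_c$ and the subsequent cancellation with $N^{-\gamma\beta}$ is the same.
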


\begin{proof} We use the method described in Section~\ref{sec:1dapprox}; let $X_N = \big(\X_N(x) - \gamma \log N \big)\epsilon_N$, $\epsilon_N = 1/\sqrt{\log N}$ and define
\[
\frac{d\Q_{N,y}}{d\P_N} : =  \frac{e^{\gamma \X_N(y)+\alpha \X_{N,\delta}(y)}}{\E_N[e^{\gamma \X_N(y)+\alpha \X_{N,\delta}(y)}]}  .
\]

We want to compute the asymptotics of $\Q[\X_N(x) \ge \gamma \log N +g(x)]$ where $\Q=\Q_{N,y}$. 
The characteristic function of the random variable $X_N$ under the measure $\Q$ biased by $e^{\beta \X_N(x)}$ for $\beta\in[0,\sqrt{2d})$ is given by
\[
\frac{\Q\big[ e^{i\chi  X_N+ \beta  \X_N(x)} \big]}{\Q\big[ e^{\beta  \X_N(x)} \big]} = 
\frac{\E_N\big[e^{\gamma \X_N(y)+\zeta \X_N(x)+\alpha \X_{N,\delta}(y)}\big]}{\E_N\big[e^{\gamma \X_N(y)+\beta\X_N(x)+\alpha \X_{N,\delta}(y)}\big]}  e^{- i \gamma \chi/\epsilon_N}
\]
where $ \zeta= \beta + i\chi \epsilon_N$ and $\chi\in\R$. 
Using \eqref{mom1} and \eqref{kernel}, we can rewrite 
\[
\frac{\Q\big[ e^{i\chi  X_N+ \gamma  \X_N(x)} \big]}{\Q\big[ e^{\gamma  \X_N(x)} \big]} 
= \frac{\Psi_{N,\delta}^{(\zeta,\gamma,\alpha)}(x,y;y)}{\Psi_{N,\delta}^{(\beta,\gamma,\alpha)}(x,y;y)}
\exp\big(- \tfrac{\chi^2}{2} + i  \chi \epsilon_N \gamma C_\X(x,y) + i\chi \epsilon_N \big(\alpha C_{\X,\delta}(y,x) - (\gamma-\beta)\epsilon_N^{-2}\big) \big) ,
\]
using that $(\zeta^2-\beta^2) \log N = 2i \beta \chi / \epsilon_N -\chi^2$.
Hence, choosing $\beta = \gamma - \alpha C_{\X,\delta}(y,x) \epsilon_N^{2}$,
we have $\beta \in (0,\sqrt{2d}-\eta)$ if $\eta$ is sufficiently small and $N \ge N_{\eta,c}$ ($| C_{\X,\delta}(y,x) | \le C_c$ for $(x,y)\in \mathcal{A}_c$ and any $\delta\in(0,1]$; cf.~Assumption~\ref{ass:cov})
and this Laplace function satisfies the condition \eqref{ass1} with 
\[
\psi_{N,\delta,x,y}(z) 
= \frac{\Psi_{N,\delta}^{(\beta+z,\gamma,\alpha)}(x,y;y)}{\Psi_{N,\delta}^{(\beta,\gamma,\alpha)}(x,y;y)} e^{z \gamma C_\X(x,y)} , \qquad 
z\in \mathcal{S} = \{z \in\C  :  |\Re z| < \eta\} . 
\]
By Assumption~\ref{ass:meso}, 
\(
\psi_{N,\delta,x,y}(z) \to  \frac{\Psi(\gamma+z,x)}{\Psi(\gamma,x)} e^{z \gamma C_\X(x,y)}
\)
and uniformly for $(x,y)\in \mathcal{A}_c$, $\delta \in [N^{-1+\eta},1]$ (in particular, we allow both $\delta>0$ fixed and $\delta(N) \to 0$). 
Hence, by applying Lemma~\ref{lem:1}, we conclude that uniformly for $(x,y)\in \mathcal{A}_c$  and  $\delta \in [N^{-1+\eta},1]$,  
\[ \begin{aligned}
\E_N\big[\nu_N(x) \mu_N(y) e^{\alpha\X_{N, \delta}(y)} \big] 
&= \Q[\X_N(x) \ge \gamma \log N + g(x)] \frac{\E_N[e^{\gamma \X_N(y)+\alpha \X_{N,\delta}(y)}]}{\P_N[\X_N(x) \ge \gamma \log N] \E_N[e^{\gamma \X_N(y)}] } \\
& =  N^{-\gamma^2} \frac{\E_N\big[e^{\gamma \X_N(x)+\gamma \X_N(y)+\alpha \X_{N,\delta}(y)}\big]}{\P_N[\X_N(x) \ge \gamma \log N] \E_N[e^{\gamma \X_N(y)}] }\frac{e^{-\gamma g(x)}}{\gamma\sqrt{2\pi \log N}}  \big( 1+ \underset{N\to\infty}{o(1)} \big) . 
\end{aligned}\]
Using the asymptotics \eqref{prob3}, one readily recovers the claim. 
\end{proof}

According to \eqref{mom1} and 
for $(x,y) \in \mathcal{A}_c$, $\alpha,\beta,\gamma \in [0,\sqrt{2d}]$, {there exists a constant $C$ depending only on $\mathcal A_c$ such that for} $\delta>0$, 
\[\begin{aligned}
\frac{\E_N\big[ e^{ \beta \X_N(x) + \gamma \X_N(y)+ \alpha \X_{N,\delta}(x)}\big]}{\E_N\big[ e^{ \beta \X_N(x)+ \alpha \X_{N,\delta}(x)} \big]\E_N\big[e^{\gamma \X_N(y)}\big]} 
&=  \frac{\Psi_{N,\delta}^{(\beta,\gamma,\alpha)}(x,y;{x})}{\Psi_{N,\delta}^{(\gamma,0,\alpha)}(x)\Psi_{N}^{(\gamma)}(y)} 
\exp\Big( \beta\gamma C_\X(x,y) + \gamma \alpha  C_{\X,\delta}(y,x) \Big) \\
&\leq C |x-y|^{-(\beta+\alpha)\gamma}
\end{aligned}\]
By Assumption~\ref{ass:meso},  these {bounds}
hold uniformly in all the parameters if $\delta \in [N^{\eta-1},c]$.

Combining these {bounds} with $c=e^{-\ell}$ and \eqref{mom2},  there exists $N_{\ell} \in \N$ and a constant $C = C_{K,\eta}$  so that for uniformly for  $(x,y) \in \mathcal{A}_c$, $\alpha,\beta,\gamma \in [0,\sqrt{2d}]$, $N\ge N_\ell$ and $\delta \in [N^{\eta-1},c]$, 
\begin{equation}\label{mom3}
\frac{\E_N\big[ e^{ \beta \X_N(x) + \gamma \X_N(y)+ \alpha \X_{N,\delta}(x)}\big]}{\E_N\big[e^{\gamma \X_N(x)}\big]\E_N\big[e^{\gamma \X_N(y)}\big]} \le C N^{\frac{\beta^2-\gamma^2}{2}} \delta^{-\beta\alpha-\alpha^2/2} |x-y|^{-(\beta+\alpha)\gamma } .
\end{equation}

By Proposition~\ref{prop:mixed1} with $c=e^{-\ell}$,
the bound \eqref{mom3} (exchanging $x\leftrightarrow y$ and taking $\beta=\gamma$) implies that for $(x,y)\in \mathcal{A}_c$, $\alpha \in [0,\gamma]$, it holds  for $N\ge N_{\ell}$ and $k\in \{\ell, \cdots, L_N\}$, 
\[\begin{aligned}
\Upsilon_{2,N}^{2,e^{-k}}(x,y) \le C_{K,\eta,g}  |x-y|^{-\gamma^2 - \gamma \alpha} e^{k (\gamma\alpha+ \alpha^2/2)} . 
\end{aligned}\] 
In particular, choosing $\alpha=\eta \leq \gamma$, we conclude that 
\[
e^{-\alpha( \gamma + \eta) k} \Upsilon_{2,N}^{2,e^{-k}}(x,y) \le C_{K,\eta,g}  e^{2\ell
\gamma^2}  e^{-k \eta^2/2} . 
\]

Going back to \eqref{Upsbd}, using \eqref{mom3} with  $\beta=\beta_{N,e^{-k}}$ so that $(\beta-\gamma)k =  -\frac{\alpha k^2}{\log N}$, we obtain for $(x,y)\in \mathcal{A}_c$, $N\ge N_{\ell}$ and  $k\in \{1, \cdots, L_N\}$,
\[\begin{aligned}
\Upsilon_{2,N}^{1,e^{-k}}(x,y)   & \le  C  e^{\alpha(\gamma+\beta_N +\alpha/2)  k}  N^{\frac{\beta_N^2-\gamma^2}{2}} |x-y|^{-(\beta+\alpha)\gamma }  \\
&\le C e^{\alpha(\gamma +\alpha/2)  k  + \ell \gamma(\gamma+\alpha) } 
\end{aligned}\]
where we used that $c=e^{-\ell}$,
$\tfrac{\beta_N^2-\gamma^2}{2}\log N  = -\gamma \alpha k + \tfrac{\alpha^2}{2}\frac{k^2}{\log N}$. 
Hence, choosing $\alpha=\eta$, we also have 
\[
e^{-\alpha( \gamma + \eta) k} \Upsilon_{2,N}^{1,e^{-k}}(x,y) \le C e^{2\ell
\gamma^2}  e^{-k \eta^2/2} . 
\]

Combining these estimates in \eqref{def:Ups} where $L = R\ell$, we conclude that  for $k\in \{1, \cdots, L_N\}$, $(x,y)\in \mathcal{A}_c$  and $N\ge N_{\ell}$,
\[
\Upsilon_{2,N,\ell}(x,y)  \le  C 
\sum_{\substack{L <k\le L_N}}
e^{-\alpha( \gamma + \eta) k} \big( \Upsilon_{2,N}^{1,e^{-k}} +  \Upsilon_{2,N}^{2,e^{-k}} \big)(x,y)
\le C e^{2\ell \gamma^2} \sum_{k> R\ell} e^{- \tfrac{\eta^2 k}{2}} 
\le C_{\gamma, g,R} e^{-D \ell}
\]
upon choosing $R\in \N$ sufficiently large (depending only on the parameters $\eta, D$). 

This completes the proof of \eqref{Upsilon2}.


\subsubsection{Asymptotics of $\Upsilon_{1,N,\ell}$.} \label{sec:mixed2}
We consider a new probability measure with density proportional to $\nu_N(x) \mu_N(y)$ that is, for $x, y \in\Omega$,
\[
\frac{d\mathbb{P}^3_{N,x,y}}{d\P_N}  =  \frac{\1\{\X_N(x) \ge \gamma \log N +g(x)\}e^{\gamma \X_N(y)}}{\E_N\big[\mathbf 1\{\X_N(x) \ge \gamma \log N +g(x)\} e^{\gamma \X_N(y)}\big]} . 
\]
Recall \eqref{def:Y}, according to \eqref{Upsilon1}, we have
\begin{equation} \label{Ups1}
\Upsilon_{1,N,L}(x,y)=
\mathbb{P}^3_{N,x,y}\big[ \mathbf{Y}_{2k-1} , \mathbf{Y}_{2k}  \le ( \gamma + \eta) k : k\in [\ell, L]  \big]   \E_N\big[\nu_N(x) \mu_N(y)\big]    e^{-\gamma g(y)} .  
\end{equation}

The goal of this section is to obtain the following result. 

\begin{proposition} \label{prop:mixed3}
For $x,y\in\Omega$, $x\neq y$, under $\mathbb{P}^3_{N,x,y}$, the random vector  $\mathbf{Y}$ converges in distribution as $N\to\infty$ toward a multivariate Gaussian $\mathcal{N}(\mathbf{m}_{x,y}, \Sigma_{x,y})$. Moreover,
\begin{equation} \label{V1}
\lim_{N\to\infty}  \E_N\big[\nu_N(x) \mu_N(y)\big]  = e^{\gamma^2 C_\X(x,y)} e^{-\gamma g(x)}  . 
\end{equation}
\end{proposition}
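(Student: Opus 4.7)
The strategy mirrors Proposition~\ref{prop:T2lim}, but because only one of the two factors of $\nu_N(x)\mu_N(y)$ carries an indicator, the right analytic tool will be the one-dimensional approximation (Lemma~\ref{lem:1}) rather than its two-dimensional counterpart. For $\xi\in\R^{2L}$, introduce the tilted reference measure
\[
\frac{d\mathbb{L}_{N,x,y,\xi}}{d\P_N} := \frac{e^{\gamma \X_N(y)+\xi\cdot \mathbf{Y}}}{\E_N\!\left[e^{\gamma \X_N(y)+\xi\cdot \mathbf{Y}}\right]},
\]
and decompose the Laplace transform as
\[
\mathbb{P}^3_{N,x,y}\!\left[e^{\xi\cdot \mathbf{Y}}\right] = \frac{\mathbb{L}_{N,x,y,\xi}\!\left[\X_N(x)\ge\gamma\log N+g(x)\right]}{\mathbb{L}_{N,x,y,0}\!\left[\X_N(x)\ge\gamma\log N+g(x)\right]}\cdot\frac{\E_N\!\left[e^{\gamma \X_N(y)+\xi\cdot \mathbf{Y}}\right]}{\E_N\!\left[e^{\gamma \X_N(y)}\right]}.
\]
Using \eqref{mom1}--\eqref{kernel} and Assumption~\ref{ass:meso}, the final ratio converges pointwise (for $x\neq y$) to $\exp(\gamma\langle\xi,\mathbf{m}_{x,y}\rangle+\tfrac12\langle\xi,\Sigma_{x,y}\xi\rangle)$, which is exactly the Laplace transform of $\mathcal{N}(\mathbf{m}_{x,y},\Sigma_{x,y})$ as in the proof of Proposition~\ref{prop:T1lim}.

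It then remains to show that the first ratio tends to $1$. Following the scheme of Propositions~\ref{prop:control3} and~\ref{prop:mixed1}, set $X_N := (\X_N(x)-\gamma\log N)/\sigma_N$ with $\sigma_N = \sqrt{\log N}$ and compute the characteristic function of $X_N$ under $\mathbb{L}_{N,x,y,\xi}$ biased by $e^{\beta\X_N(x)}$. By \eqref{mom1} it takes the form $\psi_{N,\xi,x,y}(i\chi/\sigma_N) e^{-\chi^2/2}$, times a factor $e^{i\chi(\beta-\gamma)\sigma_N+i\chi(\gamma C_\X(x,y)+\int C_\X(x,u)\mathrm{f}_{\delta,z}(u)du)/\sigma_N}$. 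Choosing $\beta=\beta_{N,\xi}=\gamma+O(\sigma_N^{-2})$ to absorb the $O(\sigma_N)$-drift and using Assumption~\ref{ass:meso}, this characteristic function satisfies the hypotheses of Lemma~\ref{lem:1} with limit $\psi_{\xi,x,y}(z) = \Psi(\gamma+z,x)/\Psi(\gamma,x)$ times an explicit $\xi$-dependent exponential prefactor. Applying Lemma~\ref{lem:1} and taking the ratio, the factors involving $N^{-\gamma\beta_N}/\sqrt{2\pi\log N}$, $e^{-\gamma g(x)}$ and $\Psi(\gamma,x)$ cancel; what remains multiplies the second Laplace-ratio above to yield precisely $\exp(\gamma\langle\xi,\mathbf{m}_{x,y}\rangle+\tfrac12\langle\xi,\Sigma_{x,y}\xi\rangle)$. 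L\'evy's continuity theorem gives the Gaussian convergence. For \eqref{V1}, apply the same computation with $\xi=0$, combine with Lemma~\ref{lem:prob} and the asymptotic for $\E_N[e^{\gamma\X_N(x)+\gamma\X_N(y)}]$ from Assumption~\ref{ass:meso}, and observe that all $N$-powers, $\sqrt{\log N}$-factors and $\Psi(\gamma,\cdot)$-factors cancel to leave $e^{\gamma^2 C_\X(x,y)-\gamma g(x)}$.

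The main obstacle is the bookkeeping in choosing $\beta_N$ correctly: because the tilt now involves both $e^{\gamma\X_N(y)}$ and $e^{\xi\cdot\mathbf{Y}}$, the $i\chi\sigma_N$-drift picks up contributions not only from $\gamma C_\X(x,y)$ (as in Proposition~\ref{prop:control3}) but also from $\int C_\X(x,u)\mathrm{f}_{\delta,z}(u)du$, and one must verify the resulting limit $\psi_{\xi,x,y}(z)$ is analytic and non-zero in a small complex neighborhood of $\beta=\gamma$. This is where one exploits both the analyticity part of Assumption~\ref{ass:meso}(1) and the positivity of $\Psi(\gamma,x)$ on compact sets, together with the fact that only pointwise (rather than uniform) convergence in $(x,y)$ is required here, so the mesoscopic separation in Assumption~\ref{ass:meso} is amply sufficient.
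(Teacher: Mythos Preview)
Your overall strategy---tilt by an exponential, apply Lemma~\ref{lem:1} to the one remaining indicator, and reduce to the Laplace computation of Proposition~\ref{prop:T1lim}---is exactly the paper's route. But the bookkeeping in your decomposition is misstated, and the text is internally inconsistent.

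With your tilt $d\mathbb{L}_{N,x,y,\xi}\propto e^{\gamma\X_N(y)+\xi\cdot\mathbf Y}\,d\P_N$ (no $e^{\gamma\X_N(x)}$ factor), the second ratio
\[
\frac{\E_N[e^{\gamma\X_N(y)+\xi\cdot\mathbf Y}]}{\E_N[e^{\gamma\X_N(y)}]}
\ \longrightarrow\ \exp\Big(\gamma\!\int C_\X(y,u)\mathrm f_{\delta,z}(u)\,du+\tfrac12\langle\xi,\Sigma_{x,y}\xi\rangle\Big)
\]
does \emph{not} produce the full mean $\langle\xi,\mathbf m_{x,y}\rangle$, since by Definition~\ref{def:mS} this involves both $\int C_\X(x,\cdot)\mathrm f$ and $\int C_\X(y,\cdot)\mathrm f$. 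Correspondingly, the first ratio does \emph{not} tend to $1$: applying Lemma~\ref{lem:1} to numerator and denominator and dividing, one gets
\[
\frac{\mathbb L_{N,x,y,\xi}[\X_N(x)\ge\cdots]}{\mathbb L_{N,x,y,0}[\X_N(x)\ge\cdots]}
\ \sim\ \frac{\mathbb L_{N,x,y,\xi}[e^{\gamma\X_N(x)}]}{\mathbb L_{N,x,y,0}[e^{\gamma\X_N(x)}]}
\ \longrightarrow\ \exp\Big(\gamma\!\int C_\X(x,u)\mathrm f_{\delta,z}(u)\,du\Big),
\]
which supplies precisely the missing $x$-half of $\mathbf m_{x,y}$. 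Your later sentence (``what remains multiplies the second Laplace-ratio\ldots'') gets this right, but it contradicts your earlier claim that the first ratio tends to $1$ and the second already equals the full Gaussian transform.

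The paper sidesteps this by tilting with the full $e^{\gamma\X_N(x)+\gamma\X_N(y)+\xi\cdot\mathbf Y}$ (the measure \eqref{Fmeasure}); then Lemma~\ref{lem:1} with $\beta=\gamma$ held fixed (the drift $\varphi(x,y)=\gamma C_\X(x,y)+\int C_\X(x,\cdot)\mathrm f$ is absorbed into the analytic $\psi_N$, not into a perturbed $\beta_{N,\xi}$) gives directly
\(
\mathbb P^3_{N,x,y}[e^{\xi\cdot\mathbf Y}]\sim \mathbb P^1_{N,x,y}[e^{\xi\cdot\mathbf Y}],
\)
after which one quotes Proposition~\ref{prop:T1lim}. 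Your argument and the paper's are algebraically equivalent once the factors are tracked correctly; just fix the two misstatements above.
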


By \eqref{Ups1}, Proposition~\ref{prop:mixed3} directly implies that the limit of $\Upsilon_{1,N,L}$ is given by \eqref{T3macro},  concluding the proof of Proposition \ref{prop:macro}.

\begin{proof}
First observe that according to Proposition~\ref{prop:mixed1} with $\alpha=0$, we have for $x,y\in\Omega$, $x\neq y$,
\[
\E_N\big[\nu_N(x) \mu_N(y)\big]= \frac{\E_{N}\big[ e^{\gamma\X_N(x)+\gamma \X_N(y)}\big]}{\E_{N}[e^{\gamma \X_N(x)}]\E_{N}[e^{\gamma \X_N(y)}]}\big( e^{-\gamma g(x)}+ \underset{N\to\infty}{o(1)} \big) . 
\]
Hence, by \eqref{W}--\eqref{W2}, we obtain  \eqref{V1}. 
To prove the first claim, we consider the Laplace transform of the random vector $\mathbf{Y}$ that is, for $\xi\in\R^{q}$, 
\begin{equation} \label{LaplaceQ2}
\mathbb{P}^3_{N,x,y}\big[ e^{\x \cdot  \mathbf{Y} } \big]  = 
\frac{\E_N\big[ \1\{\X_N(x) \ge \gamma \log N +g(x)\} e^{\gamma \X_N(y)+ \langle \X_N , \mathrm{f}_{\delta,z}  \rangle} \big]}{\E_N\big[ \1\{\X_N(x) \ge \gamma \log N +g(x)\}e^{\gamma \X_N(y)}\big] }
\end{equation}
where we used that $\xi\cdot\mathbf{Y} = \langle \X_N , \mathrm{f}_{\delta,z}  \rangle$ 
according to Definition~\ref{def:mS}.

We rely again on the method from the Appendix~\ref{sec:1dapprox} to compute this quantity. 
For this purpose, we consider the Laplace transform of the random variable  $X_N = \big(\X_N(x) - \gamma \log N\big)\epsilon_N$ with  $\epsilon_N  = 1/\sqrt{\log N}$  under the biased measure $\mathbb{L}=\mathbb{L}_{N,x,y,\xi}$, \eqref{Fmeasure}, that is, for $\chi\in\C$, 
\[
\mathbb{L}\big[ e^{i\chi X_N} \big]
=  \frac{\E_N\big[ e^{\zeta \X_N(x) +\gamma\X_N(y)+ \langle \X_N , \mathrm{f}_{\delta,z}  \rangle } \big] }{ \E_N\big[e^{\gamma \X_N(x)+ \gamma \X_N(y)+\langle \X_N , \mathrm{f}_{\delta,z}  \rangle } ]} e^{- i\gamma\chi \epsilon_N^{-1}} 
\qquad\text{where}\qquad  \zeta= \gamma + i\chi \epsilon_N  . 
\]
Using \eqref{mom1}, we can rewrite for $\zeta \in \mathcal{D}_\infty$, 
\[
\mathbb{L}\big[ e^{i\chi X_N} \big]
= \psi_N(i \chi \epsilon_N) e^{-\chi^2/2} , \qquad 
\psi_N(w)
=  \frac{\Psi_{N,\delta}^{(\gamma+w,\gamma,\xi)}(x,y;z)}{\Psi_{N,\delta}^{(\gamma,\gamma,\xi)}(x,y;z)}
\exp\big( w \varphi(x,y) \big) 
\]
where $\varphi(x,y) = \gamma C_\X(x,y)+\int  C_\X(x,u)  \mathrm{f}_{\delta,z}(u) d u$. 
By Assumption~\ref{ass:meso}, this Laplace function satisfies the condition \eqref{ass1}. 
Hence, applying Lemma~\ref{lem:1} with $\beta=\gamma$, we obtain for fixed $\xi\in\R^{q}$ and $x,y\in\Omega$ with $x\neq y$, as $N\to\infty$, 
\[
\frac{\E_N\big[ \1\{\X_N(x) \ge \gamma \log N +g(x)\} e^{\gamma \X_N(y)+ \langle \X_N , \mathrm{f}_{\delta,z}  \rangle} \big]}{\E_N\big[e^{\gamma \X_N(y)+ \gamma \X_N(x) +\langle \X_N , \mathrm{f}_{\delta,z}  \rangle}  ]}
\sim \frac{N^{-\gamma^2} e^{-\gamma g(x)}}{\gamma\sqrt{2\pi \log N}}
\]
Going back to \eqref{LaplaceQ2}, 
 using this twice (once with $\mathrm{f}=0$), implies that for $x,y\in\Omega$ with $x\neq y$, 
\[
\mathbb{P}^3_{N,x,y}\big[ e^{\x \cdot  \mathbf{Y} } \big]  \sim
\frac{ \E_N\big[e^{\gamma \X_N(x)+ \gamma \X_N(y)+\langle \X_N , \mathrm{f}_{\delta,z}  \rangle} ]}{\E_N\big[e^{\gamma \X_N(x)+ \gamma \X_N(y)}\big]}
\]
We already computed the limit of this quantity in the proof of Proposition~\ref{prop:T1lim}, so this completes the proof of Proposition~\ref{prop:mixed3}. 
\end{proof}


\section{Verification of assumptions for Gaussian fields}\label{sec:gauss}

A natural task is to verify that the assumptions from Section~\ref{sec:gen} hold for a large class of convolution approximations for a Gaussian log-correlated field, since this is arguably the most basic way of regularizing  it. 
In addition, this allows us to prove Theorem~\ref{thm:gauss}.

\subsection{Convention and GMC convergence} 
Throughout this section, we assume that $\Omega \subset \R^d$ is an open set and  that $\X$ is a (mean-zero) Gaussian log-correlated field with correlation kernel \eqref{eq:cov} where $h \in  \mathcal{C}\big(\Omega \times\Omega \to \R\big)$ is locally $\alpha$-H\"older continuous for a $\alpha\in(0,1]$. 
In this context, for a compact $K \subset \Omega$, we define a regularization $\big(\X_\delta(x)\big)_{x\in K, \delta\in(0,c]}$ by a convolution with a smooth mollifier $\rho$, see \eqref{eq:convapp}.
We also abuse notation and let
\begin{equation} \label{Gdef}
\X_{N} := \X_{1/N} , \qquad \X_{N,\delta} = \X_\delta \quad\text{for $ \delta\in (N^{-1},c]$} . 
\end{equation}
We recall the following classical result from \cite{Berestycki} concerning the existence of the GMC measures associated with $\X$ in the subcritical phase. 

\begin{theorem}[Berestycki, \cite{Berestycki}] \label{thm:Berestycki}
Let $\rho$ and $(\X_N)_{N\in\N}$ be as in Theorem~\ref{thm:gauss}. 
Then, for any $\gamma \in (0,\sqrt{2d})$, the random measure $\mu_{N,\gamma}$ given by \eqref{eq:GMC} converges in probability as $N\to\infty$ $($with respect to the vague topology$)$ to $\mu_{\X,\gamma}$, which is a Borel measure on $\Omega$ called a GMC measure. 
\end{theorem}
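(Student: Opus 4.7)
The plan is to follow the elementary barrier method of Berestycki~\cite{Berestycki} specialized to the convolution approximation \eqref{eq:convapp}, since the result is attributed to that paper. Fix a non-negative test function $f\in \mathcal C_c(\Omega)$ with support in a compact $K\subset\Omega$. It suffices to prove that $\mu_{N,\gamma}(f)$ is Cauchy in $L^1$: positivity of the approximants together with a diagonal argument over a countable separating family of test functions will then yield vague convergence in probability to a random Borel measure $\mu_{\X,\gamma}$.

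The first step is truncation by a barrier. For $\alpha\in(\gamma,\sqrt{2d})$, I would define
\[
\mu_{N,\gamma}^{\alpha}(dx) := \mathbf 1\{\X_N(x)\le \alpha\log N\}\,\frac{e^{\gamma \X_N(x)}}{\E e^{\gamma\X_N(x)}}\,dx.
\]
A Cameron--Martin shift shows that under the biased measure $\frac{e^{\gamma\X_N(x)}}{\E e^{\gamma\X_N(x)}}d\P$, the variable $\X_N(x)$ is Gaussian with mean $\gamma\operatorname{Var}\X_N(x) = \gamma \log N + O(1)$ and unchanged variance $\log N + O(1)$, both facts following from an elementary verification of Assumption~\ref{ass:cov} for the convolution \eqref{eq:convapp}. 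A Gaussian tail estimate then gives
\[
\E\big[\mu_{N,\gamma}(f)-\mu_{N,\gamma}^{\alpha}(f)\big] \;\le\; C \|f\|_\infty |K|\, N^{-(\alpha-\gamma)^2/2+o(1)},
\]
so the contribution of the barrier complement vanishes as $\alpha\uparrow\sqrt{2d}$, uniformly in $N$.

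The second step is to show that for each fixed $\alpha$, the family $\mu_{N,\gamma}^{\alpha}(f)$ is Cauchy in $L^2$ as $N\to\infty$. Expanding $\E[\mu_{N,\gamma}^{\alpha}(f)\mu_{M,\gamma}^{\alpha}(f)]$ and using joint Gaussianity of $(\X_N,\X_M)$, this reduces to integrating $f(x)f(y)$ against a kernel of the form (shifted barrier probability)$\times e^{\gamma^2 \E\X_N(x)\X_M(y)}$. Off the diagonal, $\E\X_N(x)\X_M(y) \to C_\X(x,y) = -\log|x-y| + h(x,y)$ pointwise and locally uniformly, so the integrand converges to $f(x)f(y)|x-y|^{-\gamma^2}e^{\gamma^2 h(x,y)}$. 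Near the diagonal, the barrier bound $e^{\gamma\X_N(x)}\le N^{\gamma\alpha}$ combines with the Gaussian probability of being on the barrier to produce a bound involving $N^{\gamma\alpha - \gamma^2}$ times the shifted tail, which, integrated over $\{|x-y|\le N^{-1}\}$, remains summable because $\alpha<\sqrt{2d}$. Dominated convergence then delivers the $L^2$ Cauchy property.

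The main obstacle will be the $L^2$-subcritical regime $\gamma\in[\sqrt d,\sqrt{2d})$, where the second moment of the untruncated $\mu_{N,\gamma}(f)$ itself diverges with $N$, so the barrier cannot be dispensed with at the moment level. The two competing rates---the small-deviation cost $N^{-(\alpha-\gamma)^2/2}$ controlling the truncation error in step~1 and the deterministic growth $N^{\gamma\alpha-\gamma^2}$ on the barrier driving the two-point computation in step~2---must be balanced precisely by the choice $\alpha<\sqrt{2d}$, which is what ultimately forces the condition $\gamma<\sqrt{2d}$. Once the $L^2$-limit $\mu^{\alpha}_{\X,\gamma}(f):=\lim_N \mu_{N,\gamma}^{\alpha}(f)$ is constructed for each $\alpha$, the first step shows these limits converge in $L^1$ as $\alpha\uparrow\sqrt{2d}$ to a common limit $\mu_{\X,\gamma}(f)$, and positivity plus the Riesz representation theorem (applied pathwise along a subsequence) identify this as integration of $f$ against a random Borel measure on $\Omega$.
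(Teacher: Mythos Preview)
The paper does not prove this statement at all: Theorem~\ref{thm:Berestycki} is quoted from Berestycki~\cite{Berestycki} as a known input, used only to deduce Theorem~\ref{thm:gauss} from Corollary~\ref{cor:main}. So there is nothing in the paper to compare your proposal against; you have written a sketch of Berestycki's original argument, which is the right thing to do here.

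Your outline is faithful to that reference and the two-step strategy (barrier truncation, then $L^2$ Cauchy for the truncated measures) is correct. Two points in your write-up are imprecise, however. First, the sentence ``the contribution of the barrier complement vanishes as $\alpha\uparrow\sqrt{2d}$, uniformly in $N$'' is not what your bound $C\,N^{-(\alpha-\gamma)^2/2}$ gives: for fixed $\alpha>\gamma$ it vanishes as $N\to\infty$, and that is all you need. The correct order of limits is to first show $\mu_{N,\gamma}^{\alpha}(f)\to\mu^{\alpha}(f)$ in $L^2$ for each $\alpha$, then show the limits $\mu^{\alpha}(f)$ are Cauchy in $L^1$ as $\alpha\uparrow\sqrt{2d}$, and finally combine. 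Second, your near-diagonal estimate in Step~2 is too vague as written: the key computation is that on the barrier $e^{\gamma\X_N(x)}\le N^{\gamma\alpha}$, so the two-point density on $\{|x-y|\lesssim 1/N\}$ is bounded by $N^{\gamma\alpha-\gamma^2/2}$, and integrating over a region of volume $N^{-d}$ gives $N^{\gamma\alpha-\gamma^2/2-d}$; this exponent is negative for all $\alpha<\sqrt{2d}$ precisely because $d/\gamma+\gamma/2>\sqrt{2d}$ when $\gamma<\sqrt{2d}$. You also need to handle the intermediate range $1/N\lesssim|x-y|\lesssim\epsilon$, where the Girsanov-shifted barrier probability provides the extra decay; this is the heart of Berestycki's argument and deserves a line.
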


There are many other results about the existence of GMC measures (and the convergence of different Gaussian approximations): \cite{Berestycki} is very concise, \cite{RV} offers a review of the subject, while  \cite{Shamov} is perhaps the most general treatment.

\smallskip

With the convention \eqref{Gdef}, one has according to \eqref{kernel} and \eqref{mom1},  for any $\zeta_1 , \zeta_2 \in \C^2$, $\xi\in\R^q$, $\delta\in (0,c_K]^q$ and  
$x_1,x_2,z_1,\dots,z_q \in K$, 
\begin{equation} \label{GaussPsi}
\begin{aligned}
\Psi_{N,\delta}^{(\zeta_1,\zeta_2,\xi)}(x_1, x_2;z) 
=  \exp\Big( \tfrac{\zeta_1^2}{2} K_N(x_1)  +  \tfrac{\zeta_2^2}{2} K_N(x_2) + \zeta_1\zeta_2 (C_{\X,\epsilon,\epsilon}(x_1,x_2) -C_\X(x_1,x_2))  \Big) \,  \\
\exp\Big(\sum_{j=1}^q \big[  \zeta_1 \xi_j \big(C_{\X,\delta_j,\epsilon}(z_j,x_1) - C_{\X,\delta_j}(z_j,x_1) \big)+
\zeta_2 \xi_j \big(C_{\X,\delta_j,\epsilon}(z_j,x_2) - C_{\X,\delta_j}(z_j,x_2) \big)\big]  \Big) ,  
\end{aligned}
\end{equation}
where $\epsilon=1/N$ and $K_N(x) = C_{\X,\epsilon,\epsilon}(x,x) - \log \epsilon^{-1}$. This follows from a Gaussian  computation. 

\medskip

{The goal of this section is to verify that this function $\Psi_N$ satisfies both Assumptions~\ref{ass:meso} and~\ref{ass:macro}. 
This boils down to precise estimates for the regularized kernels \eqref{kernel}. 
In particular, we will also obtain Assumption~\ref{ass:cov} at the end of the proof.

\subsection{Estimates for regularized correlation kernels}
Recall that in the context of  Theorem~\ref{thm:gauss}, for $x,y\in K$, 
\[
C_{\X}(x,y) = - \log|x-y|+h(x,y)
\]
where $h$ is $\alpha$-H\"older continuous for some $\alpha\in(0,1]$.
Recall that $\rho(du)$ is a probability measure on $\R^d$ with a continuous density with compact support.
Without loss of generality, we assume that $\operatorname{supp}(\rho) \subset \{u\in\R^d : |u| \le 1\}$. 

We define for $x,z \in K$ and $\epsilon,\delta \in [0,c]$, 
\[
\mathfrak{h}_\delta(z,x)  :=  \int h(u,x) \rho_{\delta,z}(u) d u , \qquad
\mathfrak{g}_{\epsilon,\delta}(x,z) : = 
\int h(v,u) \rho_{\delta,z}(u) \rho_{\epsilon,x}(v) d u dv,
\]
with the convention that $\mathfrak{g}_{0,\delta} = \mathfrak{h}_\delta$ and $\mathfrak{h}_0 = h$. 
We immediately verify that there is a constant $C=C_{K,h}$ so that for any
$x,z ,y \in K$ and $\epsilon,\delta \in [0,c]$, 
\begin{equation} \label{conth}
\big| \mathfrak{h}_{\epsilon}(x,z) - \mathfrak h_\delta(x,y) \big| 
\le  \int \big|  h(x+ \epsilon u,z) - h(x+\delta u,y)\big| \rho(du)
\le C\big( |\epsilon-\delta|^\alpha + |z-y|^\alpha \big)
\end{equation}
and 
\begin{equation} \label{contg}
\big| \mathfrak{g}_{\epsilon,\delta}(x,z)  -   \mathfrak{h}_{\delta}(x,z)  \big|
\le \int  \big| \mathfrak{h}_{\epsilon}(x,u) - h(x,u) \big| \rho_{\delta,z}(u) du
\le C \epsilon^\alpha. 
\end{equation}

\begin{lemma}\label{lem:cov1}
Assume that $\alpha<1$. 
For any $x,z \in K$ and $\delta \in (\epsilon,c]$, 
we have as $\epsilon\to0$, 
\[
C_{\X,\delta,\epsilon}(z,x) =  C_{\X,\delta}(z,x) +O\big( (\epsilon/\delta)^\alpha  \log \delta^{-1} \big) .
\]
\end{lemma}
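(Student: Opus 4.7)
My plan is to first use Fubini's theorem and the fact that $\rho_{\epsilon,x}$ integrates to $1$ to rewrite
\[
C_{\X,\delta,\epsilon}(z,x) - C_{\X,\delta}(z,x) = \int \big[C_{\X,\delta}(z,v) - C_{\X,\delta}(z,x)\big]\, \rho_{\epsilon,x}(v)\, dv,
\]
so the task reduces to showing that $|C_{\X,\delta}(z,v) - C_{\X,\delta}(z,x)| = O((\epsilon/\delta)^\alpha \log\delta^{-1})$ uniformly for $v\in B(x,\epsilon)$. Decomposing $C_\X = -\log|\cdot - \cdot| + h$, the $h$--contribution is bounded by $\int |h(u,v) - h(u,x)| \rho_{\delta,z}(u)\, du = O(|v-x|^\alpha) = O(\epsilon^\alpha)$, using local $\alpha$-H\"older continuity of $h$; since $\delta \le c < 1$, this is absorbed in the target bound.

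The main work is the logarithmic term $I := \int \log\bigl(|u-x|/|u-v|\bigr)\,\rho_{\delta,z}(u)\,du$. I will split the $u$--domain at $|u-x|=2|v-x|$. On the \emph{far region} $\{|u-x| > 2|v-x|\}$, one has $|u-v| \in [|u-x|/2, 3|u-x|/2]$, hence the pointwise bound $|\log(|u-x|/|u-v|)| \le 2|v-x|/|u-x|$. Using $\rho_{\delta,z} \le \|\rho\|_\infty \delta^{-d}\mathbf{1}_{B(z,\delta)}$, I will estimate $\int_{B(z,\delta)\setminus B(x,2|v-x|)} |u-x|^{-1}\,du$ via two sub-cases: when $|z-x|\ge 2\delta$ one uses $|u-x| \asymp |z-x|$ to get $O(\delta^d/|z-x|)$, whereas for $|z-x| < 2\delta$ one has $B(z,\delta) \subset B(x,3\delta)$ and the integral is $O(\delta^{d-1})$ for $d\ge 2$ and $O(\log(\delta/|v-x|))$ for $d=1$. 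In all cases the far-region contribution is $O(\epsilon/\delta)$, possibly with an extra $\log(\delta/\epsilon)$ factor in dimension one.

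On the \emph{near region} $\{|u-x|\le 2|v-x|\}$, I will bound $|\log|u-x||$ and $|\log|u-v||$ separately using $\rho_{\delta,z}\le \|\rho\|_\infty\delta^{-d}$ together with the elementary identity $\int_{B(x,R)} |\log|u-x||\,du = O(R^d(|\log R|+1))$. This produces a contribution of order $(|v-x|/\delta)^d |\log|v-x|| \le (\epsilon/\delta)^d \log \epsilon^{-1}$. Writing $\log\epsilon^{-1} = \log\delta^{-1} + \log(\delta/\epsilon)$, I will combine everything and invoke the assumption $\alpha<1$ to collapse all the error terms to the required form. Specifically, $\epsilon/\delta \le (\epsilon/\delta)^\alpha$, and since $d\ge 1 > \alpha$, the quantity $(\epsilon/\delta)^{d-\alpha}\log(\delta/\epsilon)$ is uniformly bounded on $(0,1]$; this converts the $(\epsilon/\delta)^d\log\epsilon^{-1}$ bound into $O((\epsilon/\delta)^\alpha\log\delta^{-1})$ and likewise absorbs the stray $\log(\delta/\epsilon)$ from the $d=1$ far-region estimate.

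The argument is largely routine once the splitting is set up, but the main technical nuisance is performing the case analysis uniformly in the four regimes ($d=1$ vs.\ $d\ge 2$, and $|z-x|\ge 2\delta$ vs.\ $|z-x|<2\delta$) so that every resulting bound is genuinely dominated by $(\epsilon/\delta)^\alpha \log\delta^{-1}$. The role of the hypothesis $\alpha<1$ is precisely to provide the slack needed to absorb the logarithmic factor $\log(\delta/\epsilon)$ against the power $(\epsilon/\delta)^{1-\alpha}$; if $\alpha=1$ one would not obtain a clean geometric gain and the statement would need to be modified.
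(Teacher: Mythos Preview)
Your proposal is correct and follows essentially the same route as the paper: reduce to bounding $|C_{\X,\delta}(z,v)-C_{\X,\delta}(z,x)|$ for $|v-x|\le\epsilon$, handle the $h$-part by H\"older continuity, and split the logarithmic integral at $|u-x|=2|v-x|$ into near and far pieces. The one minor difference is that on the far region the paper applies the inequality $|\log(1+\theta)|\le C|\theta|^\alpha$ directly, yielding $\int |(v-x)/(u-x)|^\alpha \rho_{\delta,z}(du)=O((\epsilon/\delta)^\alpha)$ in one stroke without any dimension-dependent case analysis or stray $\log(\delta/\epsilon)$ factor; your use of the Lipschitz bound $|\log(1+\theta)|\le C|\theta|$ followed by a downgrade via $\alpha<1$ is a perfectly valid variant that just takes a few extra lines.
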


\begin{proof}
By \eqref{kernel}, for $z,x \in K$ and $\delta \in (0,c]$,
\[
C_{\X,\delta}(z,x)   =  \int  \log |u-x|^{-1}  \rho_{\delta,z}(u) d u
+ \mathfrak{h}_\delta(z,x) .
\]
Since $\rho$ is uniformly bounded, for $x,y,z\in K$ and $r>0$, 
\[
\bigg| \int_{|x- u| \le 2 r} \hspace{-.5cm}  \log|u-y|^{-1}  \rho_{\delta,z}(du) \bigg| 
\le C (r/\delta)^d\log r^{-1}  . 
\]
Thus for $v,x,z \in K$ with $|v-x| \le \varepsilon \le \delta \le c$ we have
\[\Big|\int_{|x-u| < 2|x-v|} (\log |u-v| - \log |u-x|) \rho_{\delta,z}(du) \Big| \le C\frac{|x-v|^d}{\delta^d}\Big(\log \frac{\delta}{|x-v|} + \log \frac{1}{\delta}\Big),\]
which is $O\Big(\frac{|x-v|^{\alpha}}{\delta^{\alpha}} \log \frac{1}{\delta}\Big)$ when $\alpha < 1$. Together with the estimate \eqref{conth} this implies that
\[
\big| C_{\X,\delta}(z,v)  - C_{\X,\delta}(z,x)\big|
 \le \int_{|x- u| \ge 2  |x-v|} \hspace{-.5cm} \big| \log|u-v| - \log|u-x| \big| \rho_{\delta,z}(du) + O\Big(  \frac{|x-v|^\alpha}{\delta^\alpha} \log \frac{1}{\delta} \Big) .
\]
Now, using the bound $\log|1+ \theta| \le C |\theta|^\alpha$ valid for $|\theta|\le 1/2$,  we obtain
\[ \begin{aligned}
\big| C_{\X,\delta}(z,v)  - C_{\X,\delta}(z,x)\big|
& \le C  \int \bigg| \frac{v-x}{u-x} \bigg|^\alpha \rho_{\delta,z}(du) + O\Big(  \frac{|x-v|^\alpha}{\delta^\alpha} \log \frac{1}{\delta} \Big) \\
&= O\Big(  \frac{|x-v|^\alpha}{\delta^\alpha} \log \frac{1}{\delta} \Big)
\end{aligned}\]
where the implied constant depends only on $\alpha$. 
Hence, using that
\[
C_{\X,\delta,\epsilon}(x,z) =  \int C_{\X,\delta}(x,v) \rho_{\epsilon,z}(dv)   
\]
we obtain for $z,x\in K$, 
\[ \begin{aligned}
|C_{\X,\delta,\epsilon}(z,x) - C_{\X,\delta}(z,x)|
&\le   \int  \big| C_{\X,\delta}(z,v)  - C_{\X,\delta}(z,x)\big| \rho_{\epsilon,x}(dv)    \\ 
&\le \frac{C_\alpha \epsilon^\alpha  \log \delta^{-1} }{\delta^\alpha} \int |v|^\alpha \rho(d v) 
\end{aligned}\]
which proves the claim.
\end{proof}

\begin{lemma}\label{lem:cov2}
For $x,y \in K$ with $|x-y| \ge 4 \epsilon$, we have as $\epsilon\to0$, 
\[
 C_{\X,\epsilon,\epsilon}(x,y) = C_\X(x,y) +  O\bigg(  \frac{\epsilon}{|x-z|}  + \epsilon^\alpha \bigg) . 
\]
Let $\kappa(x) = - \int \log\big| v- u \big| \rho(du) \rho(dv) + h(x,x) $. 
Then, uniformly for $x\in K$, as $\epsilon\to0$,
\[
C_{\X,\epsilon,\epsilon}(x,x) =  \log \epsilon^{-1} +\kappa + O(\epsilon^{\alpha}) . 
\]
\end{lemma}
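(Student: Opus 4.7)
\textbf{Proof proposal for Lemma~\ref{lem:cov2}.} The plan is to split using $C_\X(u,v) = -\log|u-v| + h(u,v)$, so that
\[
C_{\X,\epsilon,\epsilon}(x,y) = -\iint \log|u-v|\, \rho_{\epsilon,x}(u)\,\rho_{\epsilon,y}(v)\, du\, dv \; + \; \mathfrak{g}_{\epsilon,\epsilon}(x,y),
\]
and treat the logarithmic singular piece and the Hölder piece independently. The Hölder piece is handled in one line by the already established continuity estimates: chaining \eqref{contg} (with $\delta=\epsilon$) and \eqref{conth} (with $\delta=0$, $z=y$) gives $\mathfrak{g}_{\epsilon,\epsilon}(x,y) = h(x,y) + O(\epsilon^\alpha)$ uniformly for $x,y \in K$, and this is exactly what I need in both claims.

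For the singular piece I would apply the affine substitution $u = x + \epsilon u'$, $v = y + \epsilon v'$ to rewrite
\[
-\iint \log|u-v|\, \rho_{\epsilon,x}(u)\,\rho_{\epsilon,y}(v)\, du\, dv = -\iint \log\bigl|(x-y) + \epsilon(u'-v')\bigr|\, \rho(u')\,\rho(v')\, du'\, dv'.
\]
In the off-diagonal regime $r := |x-y| \ge 4\epsilon$, set $w = (x-y)/r$ and expand
\[
\log\bigl|(x-y) + \epsilon(u'-v')\bigr| = \log r + \tfrac{1}{2}\log\Bigl(1 + 2\tfrac{\epsilon}{r}\langle w, u'-v'\rangle + \tfrac{\epsilon^2}{r^2}|u'-v'|^2\Bigr).
\]
Since $\operatorname{supp}(\rho) \subset \{|u|\le 1\}$ forces $|u'-v'| \le 2$ and $\epsilon/r \le 1/4$, the argument of the inner logarithm stays in a fixed compact subinterval of $(0,\infty)$; on such a set $|\log(1+t)| \le C|t|$, so integration produces an error of size $O(\epsilon/r)$ uniformly for $x,y \in K$. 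Combining with the Hölder piece yields the first claim (reading the ``$z$'' in the statement as the $y$).

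For the diagonal claim $x = y$, the same substitution turns $|u-v|$ into $\epsilon|u'-v'|$, and
\[
-\iint \log|u-v|\, \rho_{\epsilon,x}(u)\,\rho_{\epsilon,x}(v)\, du\, dv = \log\epsilon^{-1} \; - \; \iint \log|u'-v'|\, \rho(u')\,\rho(v')\, du'\, dv'.
\]
The last integral is a finite constant because $\log|\cdot|$ is locally integrable in $\R^d$ and $\rho$ is bounded with compact support; adding the Hölder piece $h(x,x) + O(\epsilon^\alpha)$ reconstitutes $\log\epsilon^{-1} + \kappa(x) + O(\epsilon^\alpha)$ with $\kappa$ as defined in the statement. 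There is no real obstacle beyond bookkeeping; the only point requiring care is uniformity of the $O(\cdot)$ constants in $x,y$, which is automatic since $\rho$ is fixed, $K$ is compact, and $h$ is Hölder on $K\times K$. The hypothesis $|x-y| \ge 4\epsilon$ is precisely what keeps $2(\epsilon/r)|u'-v'| \le 1$, so that the inner logarithm stays safely away from its singularity at $0$; a weaker separation assumption would demand a genuinely finer estimate.
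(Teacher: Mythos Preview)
Your proof is correct and follows essentially the same route as the paper: both split $C_{\X,\epsilon,\epsilon}$ into the logarithmic piece and the H\"older piece $\mathfrak{g}_{\epsilon,\epsilon}$, handle the latter via \eqref{conth}--\eqref{contg}, and treat the former by the change of variables $u=x+\epsilon u'$, $v=y+\epsilon v'$ together with $|\log(1+\theta)|\le C|\theta|$ for $|\theta|\le 1/2$. The only cosmetic difference is that the paper bounds the off-diagonal correction directly by $\int\log\bigl(1+\epsilon|u-v|/|x-z|\bigr)\rho(du)\rho(dv)$, whereas you expand the squared modulus first; the content is identical.
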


\begin{proof}
By definitions \eqref{kernel}, for $x,z\in K$, 
\[
C_{\X,\epsilon,\epsilon}(x,z) =  - \int \log|u-v|  \rho_{\epsilon,x}(du)   \rho_{\epsilon,z}(dv)   
+  \mathfrak{g}_{\epsilon,\epsilon}(x,z) 
\]
where   $\mathfrak{g}_{\epsilon,\epsilon}(x,z)  = h(x,z) +  O(\epsilon^\alpha)$; cf.~\eqref{conth}--\eqref{contg}.
By a change of variables, if $x=z$, this immediately implies the second claim. 
Then, for the first claim, we have for say $|x-z| \ge 4 \epsilon$, 
\[
\big| C_{\X,\epsilon,\epsilon}(x,z)- C_\X(x,z) \big| \le  \int \log\bigg(1+ \epsilon\bigg|\frac{u-v}{x-z} \bigg| \bigg)  \rho(du)   \rho(dv)   
+  O(\epsilon^\alpha)  .
\]
Using the bound $\log|1+ \theta| \le C |\theta|$ valid for $|\theta|\le 1/2$, this completes the proof.
\end{proof}

Let $0<\eta<1$ and assume that $\zeta_1, \zeta_2 \in\C$ with $|\zeta_1|, |\zeta_2| \le N^{\eta/2}$. 
Returning to formula \eqref{GaussPsi},
combining the estimates from Lemmas~\ref{lem:cov1} and~\ref{lem:cov2}, 
 this implies that uniformly as $N\to\infty$, in the regime  where $\delta_k \in [N^{\eta-1}, c]$ and $|x_1-x_2|N^{1-\eta} \to \infty $, it holds for any fixed $\xi\in\R^q$, 
\[\begin{aligned}
\Psi_{N,\delta}^{(\zeta_1,\zeta_2,\xi)}(x_1, x_2;z) 
 &=  \exp\Big( \tfrac{\zeta_1^2}{2} K_N(x_1)  +  \tfrac{\zeta_2^2}{2} K_N(x_2)  +  \underset{N\to\infty}{o(1)}  \Big) \\
 & = \Psi(\zeta_1,x_1) \Psi(\zeta_2,x_2) \big(1+\underset{N\to\infty}{o(1)} \big) 
\end{aligned}\]
where $\Psi(\zeta,x) = \exp\big(\tfrac{\zeta^2}{2} \kappa(x)  \big)$. 
This statement implies both Assumptions~\ref{ass:meso} and~\ref{ass:macro}. 
 
\medskip 
 
Finally, the Assumption~\ref{ass:cov} follows immediately from the following estimates. 

\begin{lemma} 
Let $\rho$ be as in Theorem~\ref{thm:gauss}. 
For $x,z \in K$ and $c \ge \delta \ge \epsilon>0$,
\[\begin{aligned}
C_{\X,\delta}(x,z)  & = - \log\big( |x-z| \vee \delta \big) 
+ \mathfrak{h}_\delta(x,z) 
+ O\big(\big( \tfrac{\delta}{|x-z|+\delta}\big)^2\big)  , \\
C_{\X,\delta,\epsilon}(x,z)  & = - \log\big( |x-z| \vee \delta \big) 
+   \mathfrak{g}_{\epsilon,\delta}(x,z) +   O\big(\big( \tfrac{\delta}{|x-z|+\delta}\big)^2\big) . 
\end{aligned}\]
\end{lemma}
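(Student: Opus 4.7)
The plan is to isolate the singular logarithmic part of the covariance and reduce both estimates to a single Taylor expansion. Writing $C_{\X}(u,v)=-\log|u-v|+h(u,v)$ and using the symmetry of $h$, the $h$-contributions match $\mathfrak{h}_\delta(x,z)$ (resp.\ $\mathfrak{g}_{\epsilon,\delta}(x,z)$, after an innocuous relabelling using the symmetry of $h$), so both claims reduce to proving
\[
L_\delta(x,z)=\log(|x-z|\vee\delta)+O\!\bigl((\delta/(|x-z|+\delta))^2\bigr),
\]
and similarly for $L_{\delta,\epsilon}$, where
\[
L_\delta(x,z):=\int\log|u-z|^{-1}\rho_{\delta,x}(u)\,du,\qquad L_{\delta,\epsilon}(x,z):=\iint\log|u-v|^{-1}\rho_{\delta,x}(u)\rho_{\epsilon,z}(v)\,du\,dv.
\]

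I would split according to whether $|x-z|$ is large or small compared to $\delta$. In the near regime $|x-z|<2\delta$ the error bound $O((\delta/(|x-z|+\delta))^2)$ degenerates to $O(1)$, so it suffices to show $L_\delta=\log\delta^{-1}+O(1)$. The substitution $u=x+\delta v$ yields $L_\delta(x,z)=\log\delta^{-1}-\int\log|v-w|\,\rho(v)\,dv$ with $w:=(z-x)/\delta$ and $|w|<2$; since $\rho$ is bounded and compactly supported and $\log|\cdot|$ is locally integrable, the remaining integral is uniformly bounded in $w$. For $L_{\delta,\epsilon}$ one applies the same bound pointwise in $v\in\operatorname{supp}\rho_{\epsilon,z}$ (where $|x-v|\lesssim\delta$ since $\epsilon\le\delta$) and integrates by Fubini.

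In the main regime $|x-z|\ge 2\delta$, setting $y:=z-x$ and $u=x+\delta v$, I would expand
\[
-\log|u-z|=-\log|y|-\tfrac12\log\!\bigl(1-2\delta\,v\!\cdot\!y/|y|^2+\delta^2|v|^2/|y|^2\bigr).
\]
Since $\delta/|y|\le 1/2$ the argument of the inner logarithm is bounded away from $0$ and $\infty$ uniformly for $v\in\operatorname{supp}\rho$, so $\log(1+t)=t+O(t^2)$ applies with a uniform remainder of order $(\delta/|y|)^2$. Integrating against $\rho(v)\,dv$ kills the $\log|y|$ term at leading order and leaves a first-order contribution $\delta\,V_\rho\!\cdot\! y/|y|^2$ with $V_\rho:=\int v\,\rho(v)\,dv$, together with genuinely quadratic pieces of size $O((\delta/|y|)^2)$. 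For $L_{\delta,\epsilon}$ I would iterate: any $v\in\operatorname{supp}\rho_{\epsilon,z}$ satisfies $|x-v|\ge|y|-\epsilon\ge|y|/2$, so the previous estimate applies to $L_\delta(x,v)$ with uniform error, and a further integration against $\rho_{\epsilon,z}(v)\,dv$ contributes at most an additional $O((\epsilon/|y|)^2)\le O((\delta/|y|)^2)$.

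The main obstacle is precisely the first-order term $\delta\,V_\rho\!\cdot\! y/|y|^2$, which is only of order $\delta/|y|$ rather than $(\delta/|y|)^2$. Matching the quadratic error in the statement requires $V_\rho=0$, i.e.\ that the mollifier is centered; this may be assumed without loss of generality by translating $\rho$, since this only shifts the centre of the regularization and has no effect on the hypotheses of Theorem~\ref{thm:gauss}. Alternatively, the linear contribution can be absorbed into a modification of $\mathfrak{h}_\delta$. Once this point is handled, the remaining steps are routine Taylor estimation and Fubini.
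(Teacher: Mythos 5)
Your proposal takes essentially the same route as the paper: split on whether $|x-z|$ is small or large relative to $\delta$, change variables $u=x+\delta v$, and Taylor-expand the logarithm. The paper assumes $\rho$ rotationally invariant "to simplify the proof" and writes $\int\log|\mathrm{e}_1 + ru|\rho(du)$; you carry out the same expansion without reducing to a distinguished direction, and correctly extract the essential mechanism: the quadratic error bound $O\bigl((\delta/(|x-z|+\delta))^2\bigr)$ is exactly the statement that the first-order contribution $r\,\hat y\cdot V_\rho$ vanishes, i.e.\ that $V_\rho=\int v\,\rho(dv)=0$. This is sharper than the paper's stated hypothesis (rotational invariance is stronger than centeredness), and is a genuine observation.

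The one point that does not hold up is the claim that centeredness may be assumed without loss of generality by translating $\rho$. Setting $\tilde\rho(\cdot)=\rho(\cdot+V_\rho)$ gives $\tilde\rho_{\delta,x}=\rho_{\delta,\,x-\delta V_\rho}$, so the lemma for $\tilde\rho$ at $(x,z)$ is the lemma for $\rho$ at $(x-\delta V_\rho,z)$. Undoing the shift (replacing $x$ by $x+\delta V_\rho$) changes the logarithmic term by
$\log\bigl(|x+\delta V_\rho - z|\vee\delta\bigr)-\log\bigl(|x-z|\vee\delta\bigr)=O\bigl(\delta/(|x-z|+\delta)\bigr)$,
which is exactly the first-order error you were trying to remove. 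So the translation just moves the obstruction from the integral into the $\log$-term, and the $O(r^2)$ bound is genuinely false for non-centered $\rho$. Your alternative suggestion — absorbing the linear contribution into a modified $\mathfrak h_\delta$, or equivalently replacing $\log(|x-z|\vee\delta)$ by $\log(|x+\delta V_\rho - z|\vee\delta)$ — is the correct way to restore a quadratic error, though it alters the statement. Note also that this subtlety is harmless for the paper's purposes: verifying Assumption~\ref{ass:cov} only requires an $O(1)$ error, and the $O(\delta/(|x-z|+\delta))$ bound valid for arbitrary $\rho$ already gives that. With the exception of the spurious WLOG, the rest of your argument (including the near-diagonal regime and the iterated treatment of $C_{\X,\delta,\epsilon}$ via Fubini) is sound and aligns with what the paper does.
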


\begin{proof}
To simplify the proof, we assume that the probability measure $\rho$ is rotationally-invariant, even though it is straightforward to adapt the argument.

By definition of $C_{\X,\delta}$, \eqref{kernel}, we have for $\delta\in(0,c]$ and $x,z \in K$, 
\[ \begin{aligned}
& C_{\X,\delta}(x,z) + \log(|x-z|\vee \delta)
= \int \big(  \log(|x-z|\vee \delta) - \log|x-u|\big) \rho_{\delta,z}(u) d u
+ \mathfrak{h}_\delta(x,z)   \\
&\qquad  = -  \1_{|x-z|> \delta}\int \log\big|\mathrm{e}_1 + \tfrac{\delta u}{|x-z|} \big| \rho(du)  
-  \1_{|x-z| \le \delta} \int \log\big| \tfrac{x-z}{\delta}  + u \big| \rho(du)  + \mathfrak{h}_\delta(x,z) .
\end{aligned}\]
At the second step, we made a change of variable and used that $\rho$ is rotationally invariant; $\mathrm{e}_1$ denotes the first basis vector in $\R^d$. 

Now, we have 
\begin{equation*}
\max_{|v| \le 1} \int \big| \log| v  + u| \big| \rho(du)  <\infty , 
\end{equation*}
and for $0<r<1$,
\[
\int \log\big|\mathrm{e}_1 + r u \big| \rho(du)   =  \frac 12 \int \log\big( (1 + r u_1)^2 + r^2 |u_\perp|^2 \big) \rho(du) 
\le C r^2 , 
\]
where we decompose $u\in\R^d$ as $u=(u_1,u_\perp)$ and the constant $C$ depends only on $\rho$. 

This proves the first claim. The second claim follows by the same argument.
\end{proof}
}

\subsection{Conclusion}
From the results of the previous section, as a consequence of Theorem~\ref{thm:main} 
and Theorem~\ref{thm:Berestycki}, we obtain Theorem~\ref{thm:gauss}. 


\section{Verification of assumptions for the CUE log-characteristic polynomial}\label{sec:CUE}

\subsection{Correlations of the log-characteristic polynomial}
Let us recall that in the setting of Conjecture \ref{con:fk} and Theorem~\ref{thm:cue}, 
\begin{equation} \label{def:XN}
\X_N(x)=\sqrt{2}\log |\det(I-e^{-i x}U_N)| \qquad x\in \T,\, N\in\N , 
\end{equation}
where $U_N$ is a Haar distributed random $N\times N$ unitary matrix and $\T = \R/{(2\pi \Z)}$.
As discussed in Section~\ref{sec:gen}, we consider the approximation kernels
$\rho_{\delta,x}(\theta) = \sum_{|k|\leq \delta^{-1}}e^{ik(x-\theta)}$ for $\delta\in(0,1]$, leading to
\[
\X_{N,\delta}(x)=-\Re \bigg( \sum_{k\delta\le 1} \frac{\Tr U_N^k}{k/\sqrt 2} e^{-ikx} \bigg) , 
\qquad x\in \T , \, \delta\in(0,1] , \, N\in\N . 
\]
and the following estimates;

\begin{lemma} \label{lem:covcue}
Let $\X$ be the free field on $\T$, that is, a generalized Gaussian process with covariance kernel \eqref{eq:covT}. 
Then, with $\rho_{\delta,x}$ as above, we have for $\delta,\epsilon \in (0,1]$ with $\epsilon\le \delta$ and $x,\theta\in\T$, 
\[
C_{\X,\delta,\epsilon}(\theta,x) =C_{\X,\delta}(\theta,x)  
=  \Re \bigg( \sum_{k\delta\le 1} \frac{e^{ik(\theta-x)} }{k} \bigg) .
\]
Moreover, the Assumption~\ref{ass:cov} holds with $K=\T$. 
\end{lemma}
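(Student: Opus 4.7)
The plan is to identify the kernel $\rho_{\delta,\theta}$ as the Dirichlet projector onto frequencies $|k|\le\lfloor\delta^{-1}\rfloor$ on $\T$ (with respect to the normalized measure $\frac{du}{2\pi}$), compute the smoothed covariances by a direct Fourier-series expansion, and then verify the uniform $O(1)$ estimate of Assumption~\ref{ass:cov} via elementary summation by parts.

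First, I would write the classical Fourier expansion
\[
C_\X(\theta,x)=-\log|e^{i\theta}-e^{ix}| = \sum_{k\ge 1}\frac{\cos(k(\theta-x))}{k}= \Re\sum_{k\ge 1}\frac{e^{ik(\theta-x)}}{k},
\]
valid pointwise for $\theta\ne x$ (with the usual Abel/summation-by-parts justification). Plugging this into $C_{\X,\delta}(\theta,x)=\int_\T C_\X(u,x)\rho_{\delta,\theta}(u)\frac{du}{2\pi}$ and using orthogonality $\int_\T e^{i(j-k)u}\frac{du}{2\pi}=\mathbf 1_{j=k}$, only the modes $1\le k\le\delta^{-1}$ survive, giving the first identity $C_{\X,\delta}(\theta,x)=\Re\sum_{k\delta\le 1}\frac{e^{ik(\theta-x)}}{k}$. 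For $C_{\X,\delta,\epsilon}$ with $\epsilon\le\delta$, I would observe that $v\mapsto C_{\X,\delta}(\theta,v)$ is a trigonometric polynomial of degree $\le\lfloor\delta^{-1}\rfloor\le\lfloor\epsilon^{-1}\rfloor$, so convolving against the Dirichlet kernel $\rho_{\epsilon,x}$ reproduces it pointwise: hence $C_{\X,\delta,\epsilon}=C_{\X,\delta}$.

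To verify Assumption~\ref{ass:cov}, set $\phi=\theta-x$, $N=\lfloor\delta^{-1}\rfloor$ and analyze
\[
S_N(\phi):=\sum_{k=1}^{N}\frac{\cos(k\phi)}{k}
\]
in two regimes. When $|\phi|\le\delta$, every index satisfies $k|\phi|\le 1$, and the expansion $\cos(k\phi)=1+O((k\phi)^2)$ yields $S_N(\phi)=\sum_{k=1}^N \tfrac1k+O(N^2\phi^2)=\log N+O(1)=-\log\delta+O(1)$. When $|\phi|>\delta$, I would compare with the full series: Abel summation together with the uniform bound $\bigl|\sum_{k=1}^{M}\cos(k\phi)\bigr|\lesssim 1/|\phi|$ gives $\sum_{k>N}\frac{\cos(k\phi)}{k}=O(1/(N|\phi|))=O(1)$, while $\sum_{k\ge 1}\frac{\cos(k\phi)}{k}=-\log(2|\sin(\phi/2)|)=-\log|\phi|+O(1)$ uniformly for $\phi\in(-\pi,\pi]$. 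Combining the two regimes gives $C_{\X,\delta}(\theta,x)=-\log(|\theta-x|\vee\delta)+O(1)$ uniformly, and the identity $C_{\X,\delta,\epsilon}=C_{\X,\delta}$ immediately transfers the estimate.

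There is no real obstacle here: the argument is standard Fourier analysis on $\T$. The only point requiring some care is ensuring uniformity in $(\theta,x)\in\T^2$ and in $(\epsilon,\delta)\in(0,c]^2$ of the $O(1)$ error; this is automatic from the explicit bounds in both regimes together with the fact that $\dist(\theta,x)=|e^{i\theta}-e^{ix}|=2|\sin((\theta-x)/2)|$ is comparable to the representative $|\theta-x|\in[0,\pi]$ up to universal constants, so the statement phrased with $\dist$ and that phrased with $|\theta-x|$ agree.
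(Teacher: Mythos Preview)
Your proposal is correct and follows essentially the same approach as the paper: Fourier expansion of $C_\X$ to identify the truncated cosine series, then a two-regime analysis (small $|\phi|$ via $\cos(k\phi)=1+O((k\phi)^2)$, large $|\phi|$ via summation by parts on the oscillatory tail). The only cosmetic difference is that in the regime $|\phi|>\delta$ the paper splits the partial sum at $L=\lfloor|\phi|^{-1}\rfloor$ and reduces to the first case, whereas you compare directly to the full series using its closed form $-\log|e^{i\theta}-e^{ix}|$; both arguments hinge on the same Abel summation bound.
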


\begin{proof}
The expressions for $C_{\X,\delta}(x,\theta)  $ and  $C_{\X,\delta,\epsilon}$ follow directly from the fact that the kernel \eqref{eq:covT} can be expressed as 
\begin{equation} \label{kernelFourier}
C_\X(\theta,x) =  \Re\bigg( \sum_{k\in\N}  k^{-1}  e^{ik(\theta-x)} \bigg)
\end{equation}
as a generalized Fourier series. 
Note that $C_{\X,\delta}$ are convolution kernels on $\T$ and 
\[
C_{\X,\delta}(\theta,x) =  \sum_{k\le 1/\delta}  k^{-1} \cos(k\Delta) , \qquad 
\Delta= \dist(x,\theta)
\]
where $\dist(x,\theta) = |x-\theta| \mod 2\pi $ is the distance function on $\T$.

To check the Assumption~\ref{ass:cov},  we separate two cases. 
Recall that 
\(
\log r =  \sum_{k\le r} k^{-1} + O(1) 
\)
as $r\to\infty$, then  if $\Delta \le \delta$, 
\[
C_{\X,\delta}(\theta,x)  - \log \delta^{-1}
=  \sum_{k\le \frac{1}{\delta}}  k^{-1} \big(\cos(k\Delta) -1\big) + \underset{\delta\to0}{O}(1)
\]
and 
\[
\sum_{k\le 1/\delta}  k^{-1} \big| \cos(k\Delta) -1\big| \le \frac{\Delta^2}{2} \sum_{k\le 1/\delta}  k
\le \frac 12 .
\]
This establishes the case where $\dist(x,\theta) \le \delta$.

On the other hand, if $\Delta \ge \delta$, with $L =  \lfloor \Delta^{-1} \rfloor$, 
\begin{align*}
C_{\X,\delta}(\theta,x)  - \log \Delta
&=  C_{\X,\Delta}(\theta,x)-\log \Delta 
+ \Re \bigg(\sum_{L < k\le 1/\delta}  k^{-1} e^{i k\Delta} \bigg)\\ 
& = \underset{\Delta\to0}{O}(1)
+  \Re \bigg(\sum_{L < k\le 1/\delta}  k^{-1} e^{i k\Delta} \bigg)  
\end{align*}
where we used the first case (in particular, the error is independent of $\delta$). 
We can control the oscillatory sum by making a summation by parts, we obtain
\[
\sum_{L< k\le 1/\delta}  k^{-1} e^{i k\Delta} =  \frac{\Theta_L}{L+1}
+   \sum_{L < k\le 1/\delta} \frac{\Theta_k}{k(k+1)}
\]
where $\Theta_k =  \sum_{k< j\le 1/\delta} e^{i j\Delta}$.

Using that $|\Theta_k | \le C/\Delta$ (for some universal constant $C$) for any $\delta>0$ and $k\in\N$ with $k\le 1/\delta$, this implies that 
\[
\bigg|  \sum_{L \le k\le 1/\delta}  k^{-1} e^{i k\Delta} \bigg|  =O(1)
\]
uniformly for $\delta>0$. This completes the proof. 
\end{proof}

\begin{remark}
For $k, n \in\Z$, with $k\neq 0$, 
\[
\E\big[ \Tr U_N^k\Tr U_N^n \big] =  \begin{cases}
{|k|} \wedge N &  \text{if }n=-k \\
0 &\text{else}
\end{cases} .
\]
Combined with \eqref{trseries}, this implies that for $\delta , \epsilon \in [1/N, 1]$, we have exactly for $x,\theta \in \T$, 
\[
\E \X_{N,\delta}(x) \X_{N,\epsilon}(\theta)  = C_{\X,\delta,\epsilon}(x,\theta)  , 
\]
while  
\[
\E \X_{N}(x) \X_{N}(\theta)  = C_{\X,1/N}(x,\theta)  + \sum_{k>N} \frac N{k^2} \cos\big(k\dist(x,\theta)\big) . 
\]
Note that the last sum is continuous on $\T^2$ and uniformly bounded by 1.
\end{remark}

\subsection{Fisher-Hartwig asymptotics}

We now turn to verifying Assumptions~\ref{ass:meso} and \ref{ass:macro}. 
First, the Assumption~\ref{ass:meso} (1) is trivial since both 
$\X_N , \X_{N,\delta} \le C N$  for $\delta \in [1/N,1]$  with a numerical constant $C>0$. Hence the function 
$(\zeta_1,\zeta_2,\xi)\mapsto \Psi_N^{(\zeta_1,\zeta_2,\xi)}(x_1,x_2,z)$ is analytic in $\{\gamma \in \C : \Re\gamma>0\}^2\times \C^q$ for $x_1,x_2,z_1,\dots, z_q\in \T$ with $x_1\neq x_2$. 
Moreover, the positivity requirement from Assumption~\ref{ass:meso} (2) as well as the    growth requirement from Assumption~\ref{ass:macro} (2) is also immediate for $\Psi$ as in Lemma ~\ref{le:morris} below. 
Indeed, the Barnes G-function $z\mapsto G(1+z)$  is analytic and does not vanish in $\{z \in \C : \Re z>-1\}$, so $\Psi(\zeta) >0 $ for $\zeta \ge 0$.

Second, we review the existing literature, in particular the Selberg-Morris integral formula (see e.g. \cite[equation (1.18)]{FW} and set there $a=b=\frac{1}{2}\zeta$, $\gamma=1$); for any $\zeta\in \C$ with $\mathrm{Re}(\zeta)>-1$ and $\theta\in \T$, 
\begin{equation}\label{eq:sm}
\E |\det(e^{i\theta}-U_N)|^\zeta=\frac{1}{N!}\prod_{j=0}^{N-1}\frac{\Gamma(1+\zeta+j)\Gamma(2+j)}{\Gamma(1+j+\frac{\zeta}{2})^2}.
\end{equation}
Note that both sides are independent of $\theta$ by rotational invariance of the Haar measure. 
Similarly, the function $\Psi(\zeta) = \Psi(\zeta,\theta)$ is  independent of $\theta\in\T$ in the CUE case. 
We obtain the following asymptotics for the Laplace transform of the CUE log characteristic polynomial.
\begin{lemma}\label{le:morris}
It holds uniformly for $\mathrm{Re}(\zeta)>-1/\sqrt{2}$ with $\zeta=o(N^{1/3})$ and $\theta\in \T$, 
\[
\E\big[ e^{\zeta\X_N(\theta)} \big]=(1+o(1))N^{\frac{\zeta^2}{2}}\Psi(\zeta) , \qquad  \Psi(\zeta) = \frac{G(1+\zeta/\sqrt 2)^2}{G(1+\sqrt2\zeta)},
\]
where $G$ is the Barnes G-function:
\[
G(1+z)=(2\pi)^ {z/2}\exp\left(-\frac{z+z^2(1+\gamma_{\mathrm{EM}})}{2}\right)\prod_{k=1}^\infty \left[\left(1+\frac{z}{k}\right)^k\exp\left(\frac{z^2}{2k}-z\right)\right],
\]
for $z\in\C$, $\gamma_{\mathrm{EM}}$ being the Euler-Macheroni constant.

Moreover, $\Psi$ satisfies the bound of Assumption \ref{ass:macro} (2); there exist constants $\varkappa>2$ and $C_\varkappa>0$ such that for $\mathrm{Re}(\zeta)\geq 0$, 
\[
|\Psi(\zeta)|\leq Ce^{|\zeta|^\varkappa} .
\]
\end{lemma}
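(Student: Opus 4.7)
Since $|\det(I - e^{-i\theta}U_N)| = |\det(e^{i\theta} - U_N)|$, the definition \eqref{def:XN} gives $\E[e^{\zeta\X_N(\theta)}] = \E[|\det(e^{i\theta}-U_N)|^{\sqrt 2 \zeta}]$, so the plan is to start from the Selberg--Morris formula \eqref{eq:sm} with parameter $\sqrt 2 \zeta$ in place of $\zeta$ and reduce the resulting product of Gamma functions to a ratio of Barnes $G$--functions. Using the functional equation $G(z+1)=\Gamma(z)G(z)$, one has $\prod_{j=0}^{N-1}\Gamma(1+a+j) = G(N+1+a)/G(1+a)$ and $\prod_{j=0}^{N-1}\Gamma(2+j)=G(N+2)=N!\,G(N+1)$, and a short algebraic manipulation yields the exact identity
\[
\E[e^{\zeta\X_N(\theta)}] = \Psi(\zeta)\,\frac{G(N+1)\,G(N+1+\sqrt 2 \zeta)}{G(N+1+\zeta/\sqrt 2)^2}
\]
for every $N\ge 1$ and every $\zeta$ with $\mathrm{Re}(\zeta)>-1/\sqrt 2$, with $\Psi$ as in the statement. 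This reduces the first claim to proving that the $N$--dependent ratio equals $N^{\zeta^2/2}(1+o(1))$ uniformly in the prescribed range.

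For this I would apply the standard large-argument asymptotic
\[
\log G(1+z) = \tfrac{z^2}{2}\log z - \tfrac{3z^2}{4} + \tfrac{z}{2}\log(2\pi) - \tfrac{\log z}{12} + \zeta'(-1) + O(|z|^{-2}),
\]
valid uniformly for $z$ in any sector $|\arg z|\le \pi-\varepsilon$ with $|z|\to\infty$, to each of the three factors $z=N+a$ with $a\in\{0,\,\sqrt 2 \zeta,\,\zeta/\sqrt 2\}$; the sector condition is automatic since $\mathrm{Re}(N+\sqrt 2 \zeta)\ge N-1$ and $|\mathrm{Im}(\zeta)|=o(N^{1/3})$. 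The key arithmetic observation is that with coefficients $(1,1,-2)$ on the three terms one has $\sum c_i = 0$ and $\sum c_i a_i = \sqrt 2 \zeta - 2(\zeta/\sqrt 2)=0$, while $\sum c_i a_i^2 = 2\zeta^2 - \zeta^2 = \zeta^2$. As a result, every contribution that is constant or linear in $a$ cancels across the three summands, so it suffices to keep the quadratic-in-$a$ pieces. Expanding $\log(N+a)=\log N + a/N - a^2/(2N^2)+\cdots$ and $(N+a)^2 = N^2 + 2aN + a^2$, the surviving $\log N$ coefficient works out to $\tfrac{1}{2}\sum c_i a_i^2 = \zeta^2/2$; the residual $O(1)$ quadratic contributions coming from $-\tfrac{3(N+a)^2}{4}$ and from $\tfrac{(N+a)^2}{2}\log(1+a/N)$ cancel against one another, and the next corrections are of size $\zeta^3/N + \zeta^4/N^2 + N^{-2}$, which are $o(1)$ exactly when $\zeta=o(N^{1/3})$.

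The growth bound $|\Psi(\zeta)|\le Ce^{|\zeta|^\varkappa}$ from Assumption \ref{ass:macro} (2) follows from the same Barnes asymptotic: for $\mathrm{Re}(z)\ge 0$ with $|z|$ bounded below, $|\log G(1+z)|\le C|z|^2 \log(1+|z|)$, hence $|\log \Psi(\zeta)|\le C'|\zeta|^2 \log(1+|\zeta|)$ on the right half-plane; absorbing the compact region $|\zeta|\le 1$ into the constant by continuity of $\Psi$ gives $|\Psi(\zeta)|\le Ce^{|\zeta|^\varkappa}$ for any $\varkappa>2$. The main technical hurdle in the whole argument is the uniformity in $\zeta$: while the algebraic cancellations at the levels of the $N^2\log N$, $N^2$, $N\log N$, $N$ and $\log N$ coefficients are transparent, one must carefully track the Taylor expansions of $\log(1+a/N)$ and of the prefactor $(N+a)^2$ to the correct order; the threshold $\zeta = o(N^{1/3})$ emerges from the cubic term $\tfrac{a^3}{6N}$ in $\tfrac{(N+a)^2}{2}\log(1+a/N)$, which contributes a correction of size $\zeta^3/N$ that must still tend to zero.
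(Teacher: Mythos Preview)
Your argument is correct and follows essentially the same route as the paper: both start from the Selberg--Morris product \eqref{eq:sm}, telescope via $G(z+1)=\Gamma(z)G(z)$ to the exact identity $\E[e^{\zeta\X_N(\theta)}]=\Psi(\zeta)\,G(N+1)G(N+1+\sqrt 2\zeta)/G(N+1+\zeta/\sqrt 2)^2$, and then apply the large-argument expansion of $\log G(1+z)$ to extract $\tfrac{\zeta^2}{2}\log N$ with error $O(\zeta^3/N)$, giving the $o(N^{1/3})$ threshold. Your framing of the cancellations via $\sum c_i=\sum c_i a_i=0$, $\sum c_i a_i^2=\zeta^2$ is a cleaner bookkeeping than the paper's line-by-line expansion, but the content is identical, as is the derivation of the growth bound on $\Psi$ from the same asymptotic.
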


\begin{proof}
By definition, 
from \eqref{eq:sm}, and the basic fact that $G(z+1)=\Gamma(z)G(z)$,
we verify that
\[
\E |\det(1-e^{i\theta}U_N)|^\zeta
= G(N+1)\frac{G(\zeta+N+1)}{G(\frac{\zeta}{2}+N+1)^2}\frac{G(1+\frac{\zeta}{2})^2}{G(1+\zeta)}.
\]
In particular, $\zeta\in\C\mapsto G(\zeta+1)$  is entire and without zero in this region  $\Re \zeta >-1$.

This leads us to consider 
\[
L_N(\zeta):=\log G(N+1)+\log G(\zeta+N+1)-2\log G(\tfrac{\zeta}{2}+N+1).
\]
Note that the condition $\mathrm{Re}(\zeta)>-1$ and $\zeta=o(N^{1/3})$ imply that all of the arguments of $G$ are here large (and have real part positive and of order $N$ while the imaginary part is $o(N^{1/3})$). To estimate $L_N$, we use the asymptotic expansion 
\begin{equation}\label{eq:Gasy}
\log G(1+z)=\frac{z^2}{2}\log z-\frac{3z^2}{4}+\frac{z}{2}\log (2\pi)-\frac{1}{12}\log z+\left(\frac{1}{12}-\log A\right)+O(z^{-2}),
\end{equation}
where $A$ is the Glaisher-Kinkelin constant, and the expansion is valid in any sector not containing the negative real axis -- see e.g. \cite[Theorem 1, Theorem 2, and Theorem 3]{FL} where $G$ is called the double gamma function. In particular, we find 
\begin{align*}
L_N(\zeta)&=\tfrac{N^2}{2}\log N -\tfrac{3 N^2}{4}-\tfrac{1}{12}\log N+\tfrac{(\zeta+N)^2}{2}\log(\zeta+N)-\tfrac{3(\zeta+N)^2}{4}-\tfrac{1}{12}\log(\zeta+N)\\
&\qquad -(N+\tfrac{\zeta}{2})^2 \log(N+\tfrac{\zeta}{2})+\tfrac{3(N+\tfrac{\zeta}{2})^2}{2}+\tfrac{1}{6}\log (N+\tfrac{\zeta}{2})+O(N^{-2}),
\end{align*}
and the error is uniform in the domain of $\zeta$ we are considering. 
Using that
\[
\frac{N^2}{2}+\frac{(\zeta+N)^2}{2}-(N+\tfrac{\zeta}{2})^2=\frac{\zeta^2}{4},
\]
we obtain 
\begin{align*}
L_N(\zeta)&=\tfrac{\zeta^2}{4}\log N-\tfrac{3}{8}\zeta^2+\tfrac{(\zeta+N)^2}{2}\log(1+\tfrac{\zeta}{N})-(N+\tfrac{\zeta}{2})^2\log(1+\tfrac{\zeta}{2N})+\tfrac{1}{6}\log(1+\tfrac{\zeta}{2N})\\
&\qquad -\tfrac{1}{12}\log(1+\tfrac{\zeta}{N})+O(N^{-2})\\
&=\tfrac{\zeta^2}{4}\log N+O(\tfrac{\zeta^3}{N})+O(\tfrac{\zeta}{N})+O(N^{-2})
\end{align*}
where the implied constants are universal. The claim follows from these asymptotics using that according to  \eqref{def:XN}, 
\(
\E\big[ e^{\zeta\X_N(\theta)} \big]
=\E |\det(1-e^{i\theta}U_N)|^{\sqrt{2}\zeta}
= \Psi(\zeta) \exp L_N(\sqrt 2\zeta)
\)
for $\mathrm{Re}(\zeta)>-1$. 

Finally, the bound on $\Psi$ follows immediately from \eqref{eq:Gasy};
for any $\varkappa>2$,  there exists a constant $C_\varkappa$ such that for $\mathrm{Re}(z)\geq 0$, 
\[
|G(1+z)|^{\pm1}\leq C_\varkappa\exp (|z|^\varkappa) . \qedhere
\]
\end{proof}

\begin{remark}
Let us point out that from the asymptotics of the proof, the condition that $\zeta=o(N^{1/3})$ seems to be sharp here in that one starts getting order one corrections to the asymptotics when $\zeta$ is on the scale $N^{1/3}$.
\end{remark}

The following consequence of the results of \cite{DIK} (see also \cite{Widom,Ehrhardt,BF} for closely related results) is also directly relevant\footnote{For any  function $V\in \mathcal{C}(\T\to\C)$, 
we denote $ \Tr V(U_N) = \sum_{k=1}^N V(\vartheta_k)$ where $\{e^{i\vartheta_k}\}_{k=1}^N$ denotes the eigenvalues of the random matrix $U_N$.}.

\begin{theorem}[Deift, Its, Krasovsky, \cite{DIK}] \label{thm:DIK}
Let $p\in\N_0$, $V\in \mathcal{C}^\infty(\T\to\C)$ with $\widehat{V}_0 = 0$ and $\Psi$ as in Lemma~\ref{le:morris}. 
Then, it holds locally uniformly for $\zeta_1,\dots,\zeta_p \in \{z\in\C : \Re z>-1\}$
and $x \in \{x\in \T^p  \mathrm{dist}(x_i,x_j) >0 \text{ for }1\le i<j \le p\}$, as $N\to\infty$,
\begin{equation}\label{DIK} 
\begin{aligned}
\E\big[ e^{  \sum_{j=1}^p \zeta_j \X_N(x_j) + \Tr V(U_N)} \big] 
= &\exp\Big(\sum_{1\le j \le p} \big( \frac{\zeta_j^2}{2} \log N
-  \frac{ \zeta_j}{\sqrt{2}} V(x_j) \big)
+ \sum_{k\in\N} k \widehat{V}_k  \widehat{V}_{-k}
\Big) \\
&\times\exp\Big(\sum_{1\le i<j \le p}\zeta_i\zeta_j C_{\X}(x_i,x_j) 
+o(1) \Big)
\prod_{j=1}^p\Psi(\zeta_j) .
\end{aligned}
\end{equation}
\end{theorem}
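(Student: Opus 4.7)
The plan is to reduce the joint exponential moment to a Toeplitz determinant with a Fisher-Hartwig type symbol, and then to invoke the Riemann-Hilbert asymptotics from \cite{DIK}. First, using $|\det(I-e^{-ix_j}U_N)|=\prod_{k=1}^{N}|e^{i\vartheta_k}-e^{ix_j}|$ together with the Weyl integration formula, the left-hand side of \eqref{DIK} equals
\begin{equation*}
\frac{1}{N!}\int_{\T^{N}}\prod_{1\le j<k\le N}|e^{i\vartheta_j}-e^{i\vartheta_k}|^2\prod_{k=1}^{N}f(\vartheta_k)\frac{d\vartheta_k}{2\pi},
\end{equation*}
with symbol
\begin{equation*}
f(\theta)=e^{V(\theta)}\prod_{j=1}^{p}|e^{i\theta}-e^{ix_j}|^{\sqrt{2}\zeta_j}.
\end{equation*}
By Heine's formula, this $N$-fold integral equals the Toeplitz determinant $D_N(f)=\det(\widehat{f}_{j-k})_{0\le j,k\le N-1}$.

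Second, the symbol $f$ is of pure Fisher-Hartwig type with $p$ singularities of exponents $\alpha_j=\zeta_j/\sqrt{2}$ at the distinct points $x_j\in\T$ (no jump components) dressed by the smooth background $e^{V}$. In this setting the main theorem of \cite{DIK}, obtained by a steepest descent analysis of the Riemann-Hilbert problem for the orthogonal polynomials on the unit circle with respect to the weight $f$, yields
\begin{equation*}
D_N(f)=N^{\sum_{j}\alpha_j^2}\,\exp\Bigl(N\widehat{V}_0+\sum_{k\ge 1}k\,\widehat{V}_k\widehat{V}_{-k}\Bigr)\,\prod_{i<j}|e^{ix_i}-e^{ix_j}|^{-2\alpha_i\alpha_j}\,\prod_{j=1}^{p}e^{-\alpha_j V(x_j)}\frac{G(1+\alpha_j)^2}{G(1+2\alpha_j)}\bigl(1+o(1)\bigr),
\end{equation*}
with the error uniform on compact subsets of $\{\Re\zeta_j>-1\}^{p}\times\{x\in\T^p:\dist(x_i,x_j)>0\ \text{for }i\neq j\}$.

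Third, it remains to match factors. The identification $\alpha_j=\zeta_j/\sqrt{2}$ gives $\alpha_j^2=\zeta_j^2/2$ and $\frac{G(1+\alpha_j)^2}{G(1+2\alpha_j)}=\Psi(\zeta_j)$ by Lemma~\ref{le:morris}. The hypothesis $\widehat{V}_0=0$ removes the $N\widehat{V}_0$ factor; the linear term becomes $-\sum_{j}\zeta_j V(x_j)/\sqrt{2}$ as required; and, using $2\alpha_i\alpha_j=\zeta_i\zeta_j$ together with $C_{\X}(x_i,x_j)=-\log|e^{ix_i}-e^{ix_j}|$ from \eqref{eq:covT}, the pairwise factor rewrites as $\exp\bigl(\sum_{i<j}\zeta_i\zeta_j C_{\X}(x_i,x_j)\bigr)$. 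This produces exactly the right-hand side of \eqref{DIK}.

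The main technical obstacle is already resolved in \cite{DIK}: the construction of local parametrices near each Fisher-Hartwig singularity, built from confluent hypergeometric functions, is what generates the Barnes G-function constants upon matching with the outer global parametrix. For the present statement the only additional point is to confirm local uniformity of the $o(1)$ remainder in $(\zeta,x)$; this follows because the jump matrix, the local and global parametrices, and the matching regions all depend analytically on $\zeta_j$ and continuously on $x_j$, and remain uniformly controlled as long as $\Re\alpha_j>-1/2$ and the singularities stay macroscopically separated.
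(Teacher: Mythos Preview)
Your proposal is correct and is precisely the intended derivation: the paper does not prove Theorem~\ref{thm:DIK} itself but cites it as a direct consequence of the main result of \cite{DIK}, and what you have written is exactly the reduction---express the expectation as the Toeplitz determinant $D_N(f)$ via the Heine--Szeg\H{o} identity \eqref{eq:hs}, recognize the symbol as Fisher--Hartwig with $\alpha_j=\zeta_j/\sqrt{2}$ and $\beta_j=0$, apply \cite[Theorem~1.1]{DIK}, and match the constants (in particular $\Psi(\zeta_j)=G(1+\zeta_j/\sqrt{2})^2/G(1+\sqrt{2}\zeta_j)$ and $C_\X(x_i,x_j)=-\log|e^{ix_i}-e^{ix_j}|$). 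Your remark on local uniformity in $(\zeta,x)$ is also the right justification; see \cite[Remark~1.4]{DIK2} for an explicit statement of this uniformity.
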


Note that according to Lemma~\ref{lem:covcue}, we have
$\X_{N,\delta}(x) =- \sqrt2 \Tr\big[ C_{\X,\delta}(U_N,x) \big]$ for $x\in\T$ and $\delta>0$ so that by 
linearity, 
$ \langle \X_N , \mathrm{f}_{\delta,z}  \rangle = \Tr V(U_N)$ 
where 
\begin{equation} \label{def:V}
V(\theta)
=  -\sqrt2  \sum_{j=1}^q \xi_j C_{\X,\delta_j}(\theta,z_j)
=  -\sqrt2\int C_\X(\theta,u) \mathrm{f}_{\delta,z}(u) d u .
\end{equation}
In particular, $V\in \mathcal{C}^\infty(\T\to\R)$ with $\widehat{V}_0 = 0$ and we claim that
\begin{equation}\label{Scov} 
\begin{aligned} 
\sum_{k\in\N} k \widehat{V}_k  \widehat{V}_{-k}
 = \int C_\X(\theta,x) V'(\theta) V'(x)  \frac{d\theta}{2\pi}   \frac{dx}{2\pi} 
= \frac12 \E \langle \X , \mathrm{f}_{\delta,z}  \rangle^2 .
\end{aligned}
\end{equation}

We can view $C_\X$, \eqref{kernelFourier},  as the kernel of a (bounded) integral operator $\mathcal{K}$ on $L^2(\T)$ whose action on the Fourier basis is given by 
$\mathcal{K}(e^{ik\theta}) =  \frac{1}{2|k|}e^{ik\theta}$ for $k\in\Z\setminus\{0\}$
and $\mathcal{K}(1)=0$.
Then, the first identity in \eqref{Scov} is equivalent to
\[
\langle V',   \mathcal{K}V' \rangle_{L^2} = \sum_{k\in\Z} \frac{1}{2|k|} \big|\widehat{V'}_k \big|^2 = \sum_{k\in\N} k \widehat{V}_k  \widehat{V}_{-k}
\]
using that $\widehat{V'}_k  = ik  \widehat{V}_k$ and $\overline{\widehat{V}_k} = \widehat{V}_{-k}$ since $V$ is real-valued. 

By definition,  $V = -\sqrt{2}\, \mathcal{K} \mathrm{f}_{\delta,z} $
so that $V' = -\frac1{\sqrt2} \mathcal{H} \mathrm{f}_{\delta,z}  $ where 
$\mathcal{H}$  is the Hilbert transform\footnote{The Hilbert transform is a (Fourier) integral operator on $L^2(\T)$ whose action on the Fourier basis is given by $\mathcal{H}(e^{ik\theta}) = i \operatorname{sgn}(k)e^{ik\theta}$ with $ \operatorname{sgn}(0)=0$. In particular $\mathcal{H}$ is unitary on the Hilbert space 
$\{f\in L^2(\T) : \widehat{f}_0=0\}$.} 
on $\T$, since $\mathcal{K}'  = \frac12 \mathcal{H}$ as (Fourier) integral operator. 
Hence, the second second identity  in \eqref{Scov} is equivalent to
\[
\langle V',   \mathcal{K}V' \rangle_{L^2}
=  \tfrac1{\sqrt2} \langle V',   \mathcal{K} \mathcal{H} \mathrm{f}_{\delta,z}  \rangle_{L^2}
=  \tfrac1{\sqrt2} \langle \mathcal{H}^* V',   \mathcal{K} \mathrm{f}_{\delta,z}  \rangle_{L^2}
= \tfrac12  \langle  \mathrm{f}_{\delta,z}  , \mathcal{K} \mathrm{f}_{\delta,z}  \rangle_{L^2} , 
\]
where we used that the operators $ \mathcal{K} ,\mathcal{H}$ commute and 
$ \mathcal{H}^* V' = -\frac1{\sqrt2} \mathrm{f}_{\delta,z}  $ since the Hilbert transform is unitary (on the appropriate subspace of $L^2(\T)$). 

\medskip

Thus, taking $V$ as above, $p=2$, and comparing the asymptotics \eqref{DIK} to \eqref{mom1}, we find that for fixed $\delta \in (0,1]^q$ and 
$\xi \in \R^q$, it holds uniformly for $z \in \T^q$ and $\zeta_1, \zeta _2 \in \mathcal{D}_R  $ and $(x_1,x_2) \in \mathcal{A}_c$, see \eqref{eq:Dr}--\eqref{eq:Ac}, 
\[
\Psi_{N,\delta}^{(\zeta_1,\zeta_2,\xi)}(x_1, x_2;z) 
=  \Psi(\zeta_1) \Psi(\zeta_2)(1+o(1)) . 
\]

Hence, in order to obtain the asymptotics from Assumption~\ref{ass:meso} (2) and Assumption~\ref{ass:macro} (1) to hold for the  log characteristic polynomial of CUE, we need to extend Theorem~\ref{thm:DIK} in two crucial ways; for any small $\eta>0$, 
\begin{itemize}[leftmargin=5mm]
\item[(1)] the asymptotics \eqref{DIK} hold for $p=2$ in the merging regime, that is uniformly for $x_1,x_2 \in \T$ with $|x_1-x_2| \ge N^{\eta-1}$ and for a trigonometric polynomial $V =V_N$ with $V_N(\theta) = \Re\big( \sum_{k \le N^{1-\eta}} \widehat{V}_k e^{i k\theta} \big)$ and $|\widehat{V}_k | \le C/k$. 
This case yields Assumption~\ref{ass:meso}. 
\item[(2)] the asymptotics \eqref{DIK} hold for $p=2$ in case $\Im \zeta_j$ is allowed to grow mildly as $N\to\infty$, that is uniformly for $\zeta_1,\zeta_2\in \mathcal D_{R_N}$ 
This case yields Assumption~\ref{ass:macro}. 
\end{itemize}

We note that these extensions are mostly technical work and, to derive them, we will rely on the method from \cite{DIK}.
In Section~\ref{sec:Toep} and Appendix~\ref{sec:OP}, we review the basics of this method which relies on the connection between the circular unitary ensembles, orthogonal polynomials on the unit circle and the associated Riemann-Hilbert problems.
In Sections~\ref{sec:trans}--\ref{sec:local}, we review the steepest descent method for these problems, including the global parametrix and local parametrix (around the singularities).
In Section~\ref{sec:R}, we present the small norm problem and the main differences with the case already treated in \cite{DIK}. 
Based on these asymptotics the proofs of  Assumption~\ref{ass:meso} (2) and Assumption~\ref{ass:macro} (1) are finalized in 
Sections~\ref{sec:Rmeso} and~\ref{sec:Rmacro} respectively.

\subsection{Toeplitz determinants and differential identities} \label{sec:Toep}

A specificity of the CUE (and other unitary invariant ensembles) is their determinantal structure and, in particular, the Heine-Szeg\H{o} identity which relates the Laplace transform of linear statistics of the random matrix $U_N$ to Toeplitz determinants (for a proof, see e.g. \cite[Theorem 1]{BD}).
For $n\in\N$, let $\{e^{i\vartheta_k}\}_{k=1}^n$ be the eigenvalues of $U_n$. 
The statement is as follows: for any $F \in L^1(\T\to\C)$, 
\begin{equation}\label{eq:hs}
\E\big[{\textstyle \prod_{k=1}^n } F(\vartheta_k)\big]=\det\left( \widehat{F}_{j-k}\right)_{j,k=0}^{n-1}=:D_n(F).
\end{equation}
This makes a connection with orthogonal polynomials on $\T$ that we now review.

Note that using the  Heine-Szeg\H{o} formula, the LHS of \eqref{mom1} equals 
\begin{equation} \label{HS}
\E\big[ e^{ \zeta_1 \X_N(x_1) + \zeta_2 \X_N(x_2)+ \sum_{j=1}^q\xi_j \X_{N,\delta_j}(z_j)} \big]   = D_N(F_{1})
\end{equation}
where, with $V$ as in \eqref{def:V}, 
\begin{equation}\label{eq:Vt}
F_{t}(\theta)=|e^{i\theta}-e^{ix_1}|^{\sqrt{2}\zeta_1}|e^{i\theta}-e^{ix_2}|^{\sqrt{2}\zeta_2}  e^{tV(\theta)}, \qquad \theta\in\T ,\, t\in [0,1].
\end{equation}
In particular, $V$ is a trigonometric polynomial of degree $\le N^{\eta-1}$ for $\delta_1,\cdots, \delta_q \ge N^{-1+\eta}$. Moreover, this symbol $F_t$, as well as $Y$ defined below, depend on the auxiliary parameters $\zeta_1,\zeta_2\in \mathcal{D}_{\infty}$, $x_1,x_2\in\T$ with $x_1\neq x_2$, $\delta\in(0,1]^q$ and $z\in\T^q$ which are allowed to vary with the dimension $N\in\N$. 
 
To analyze the asymptotics of $D_N(F_t)$, we rely on the Riemann-Hilbert problem associated with  orthogonal polynomials.

The relevant definitions are collected in the Appendix~\ref{sec:OP} and this problem reads as follows (the original connection between such problems and orthogonal polynomials is due to \cite{FIK});

\begin{problem}\label{pb:Y}
Let $F=F_{t}$ be as in \eqref{eq:Vt}{, let $\U=\{z\in \C: |z|=1\}$ be the unit circle, }
and for $n\in\N$ and $t\in[0,1]$ let $Y = Y_{n,t}$ solve;
\begin{enumerate}
\item  $Y:\C\setminus {\U} \to \C^{2\times 2}$ is analytic.
\item $Y$ has continuous boundary values $Y_+$ $(Y_-)$ from $\{|z|<1\}$ $(\{|z|>1\})$  which satisfy
\begin{equation*}\label{eq:Yjump}
		Y_+(e^{i\theta})=Y_-(e^{i\theta})\begin{pmatrix}
			1 & e^{-i n\theta}F(\theta)\\
			0 & 1
		\end{pmatrix}.
\end{equation*}
\item As $z\to \infty$, 
\begin{equation*}\label{eq:Yasy}
		Y(z)=(I+O(z^{-1}))z^{n\sigma_3} \qquad \text{where} \qquad z^{n\sigma_3}=\begin{pmatrix}
			z^n & 0\\
			0 & z^{-n}
		\end{pmatrix}.
\end{equation*}
\end{enumerate}
\end{problem}

Here, we do not assume that this problem has a solution -- this will follow from our analysis. However, this issue is directly related to the existence of certain (orthogonal) polynomials and, if it exists, this solution is unique and given explicitly by \eqref{eq:Ydef}. 
Regardless of this consideration, there are standard methods to derive the asymptotics of the matrix $Y_{n,t}$ as $n\to\infty$ through a steepest descent analysis pioneered by Deift and Zhou \cite{DZ}. This analysis is based on deforming the jump contour in a suitable neighborhood of ${\U}$ and we review it in the next sections. 
We have introduced the parameter $t\in[0,1]$ to make an interpolation (one could also consider different scheme, e.g. \cite{DIK2}).
This allows us to relate the Toeplitz determinants $D_N(F_{1})$ to $D_N(F_{0})$ (where $V=0$) through the following differential identity; 
 
\begin{lemma}\label{le:di2}
Let $V$ be a trigonometric polynomial  and $F_{t}$ be given by \eqref{eq:Vt}. 
Assume that $D_n(F_{t})\neq 0$ for all $n=1,...,N$. Then, for $t\in[0,1]$, 
\[
\partial_t \log D_N(F_{t})=\frac{1}{2\pi i}\oint_{{\U}}z^{-N}\left(Y_{N,t}(z)^{-1}\partial_z Y_{N,t}(z)\right)_{21}V(z) F_{t}(z)dz,
\]
where $Y= Y_{N,t}$ is the unique solution of Problem~\ref{pb:Y}. 
\end{lemma}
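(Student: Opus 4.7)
The plan is to combine Jacobi's formula for the logarithmic derivative of a determinant with the Christoffel--Darboux reproducing kernel for the orthogonal polynomials associated with the weight $F_t$ on $\T$, and then translate the resulting expression into the language of the Riemann--Hilbert solution $Y$.

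First, since $V$ does not depend on $t$, one has $\partial_t F_t = V F_t$, so Jacobi's formula applied to the Toeplitz matrix $T_N(F_t)=(\widehat{F_t}_{j-k})_{j,k=0}^{N-1}$ gives $\partial_t\log D_N(F_t)=\mathrm{tr}\bigl(T_N(F_t)^{-1}\partial_t T_N(F_t)\bigr)$. Equivalently, using the Heine--Szeg\H{o} representation \eqref{eq:hs} of $D_N(F_t)$ as an unnormalised CUE-type partition function, $\partial_t\log D_N(F_t)$ is the expectation of the linear statistic $\sum_{j=1}^N V(\vartheta_j)$ for the determinantal point process on $\T$ whose correlation kernel is the Christoffel--Darboux kernel $K_{N,t}$ associated to the (non-negative) weight $F_t$. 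The hypothesis that $D_n(F_t)\neq 0$ for $1\le n\le N$ guarantees the existence of all the relevant orthonormal polynomials $\phi_k^{(t)}$ (cf.~Appendix~\ref{sec:OP}), and yields the identity
\[
\partial_t\log D_N(F_t)=\int_0^{2\pi}V(\theta)\,K_{N,t}(e^{i\theta},e^{i\theta})\,F_t(\theta)\,\frac{d\theta}{2\pi}.
\]

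The second step identifies the integrand with the RHP data. Using the explicit representation \eqref{eq:Ydef} of $Y=Y_{N,t}$, whose entries are $\pi_N$, $-2\pi i\chi_{N-1}^2\pi_{N-1}^*$ and their Cauchy transforms, together with $\det Y\equiv 1$ and Cramer's rule for $Y^{-1}$, I would establish the pointwise algebraic identity on $\U$
\[
z^{-N}\bigl(Y(z)^{-1}\partial_z Y(z)\bigr)_{21}\,F_t(z)\;=\;\frac{2\pi}{iz}\,K_{N,t}(e^{i\theta},e^{i\theta})\,F_t(\theta),\qquad z=e^{i\theta}.
\]
Unpacking $Y^{-1}$ reduces this to the diagonal Christoffel--Darboux identity $K_{N,t}(e^{i\theta},e^{i\theta})=\sum_{k=0}^{N-1}|\phi_k^{(t)}(e^{i\theta})|^2$ combined with the standard recurrence relations between $\pi_N$, $\pi_{N-1}^*$ and their derivatives, as well as Sokhotski--Plemelj for the Cauchy transform entries and the jump condition on $\U$. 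Substituting $dz=iz\,d\theta$ then turns the arc-length integral into the contour integral
\[
\partial_t\log D_N(F_t)=\frac{1}{2\pi i}\oint_{\U} z^{-N}\bigl(Y(z)^{-1}\partial_z Y(z)\bigr)_{21} V(z) F_t(z)\,dz,
\]
which is the claimed formula. Integrability at the Fisher--Hartwig singularities $e^{ix_1},e^{ix_2}$ is automatic since $\Re\zeta_j\ge 0$ makes $|e^{i\theta}-e^{ix_j}|^{\sqrt 2\zeta_j}$ locally integrable and $Y$ has at worst mild power-type boundary behaviour at these points.

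The principal obstacle is the algebraic identification in the second step: keeping track of all signs, factors of $2\pi$, and the correct differentiation of the Cauchy-transform entries of $Y$ across $\U$ while inverting $Y$ via $\det Y=1$. Once this pointwise identity is in hand, the remainder of the argument is routine.
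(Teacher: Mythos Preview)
Your approach is essentially the same as the paper's: both reduce $\partial_t\log D_N(F_t)$ to an integral of $V$ against the Christoffel--Darboux kernel times the weight, and then identify that kernel with $z^{-N}(Y^{-1}\partial_z Y)_{21}$. The paper reaches the first step via the telescoping product $D_N=\prod_{j=0}^{N-1}\chi_j^{-2}$ rather than Jacobi's formula, but this is cosmetic.

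Two corrections are worth making. First, the ``principal obstacle'' you flag is easier than you suggest: since $\det Y\equiv 1$, one has $(Y^{-1}\partial_z Y)_{21}=Y_{11}\partial_z Y_{21}-Y_{21}\partial_z Y_{11}$, which involves only the \emph{first column} of $Y$, i.e.\ only the polynomial entries $\chi_N^{-1}p_N(z)$ and $-\chi_{N-1}z^{N-1}q_{N-1}(z^{-1})$; the Cauchy-transform entries and the jump across $\U$ never enter. The identity $z^{-N}(Y^{-1}\partial_z Y)_{21}=z^{-1}\sum_{j=0}^{N-1}p_j(z)q_j(z^{-1})$ is then a purely algebraic consequence of the Christoffel--Darboux formula and the Szeg\H{o} recursion, valid for all $z\neq 0$. (Your stated pointwise identity carries a spurious factor $2\pi/i$; the correct version is $z^{-N}(Y^{-1}\partial_z Y)_{21}=z^{-1}K_{N,t}$.) Second, $F_t$ is in general complex-valued (the $\zeta_j$ lie in $\mathcal D_\infty$), so the determinantal-point-process interpretation and the formula $K_{N,t}=\sum_k|\phi_k^{(t)}|^2$ are not literally available; one must work with the biorthogonal families $p_j,q_j$ as in Appendix~\ref{sec:OP}, where the kernel is $\sum_j p_j(z)q_j(z^{-1})$.
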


Although it is classical, the proof of Lemma~\ref{le:di2} is given in the Appendix~\ref{sec:OP} for completeness. 
This relies on the fact that the matrix $Y = Y_{N,t}$ is built from the orthogonal polynomials with respect to the (complex) weight $F_{t}$. 
\medskip

We can also relate  $D_N(F_{0})[\zeta_1,\zeta_2]$ to $D_N(F_{0})[\zeta_1,0]$ for $\zeta_1,\zeta_2\in \mathcal{D}_{\infty}$ using another differential identity from \cite{DIK2}. 
This is especially useful because $D_N(F_{0})[\zeta_1,0] = \E\big[ e^{\zeta_1\X_N(x_1)} \big]$  is explicitly given by Lemma \ref{le:morris}.

\begin{lemma}[Deift, Its, Krasovsky, \cite{DIK2}]\label{le:di}
Let $F_{0}$ be as in \eqref{eq:Vt} and  assume that $D_n(F_{0})\neq 0$ for all $n=1,...,N+1$. 
Let 
\(
\chi_N^{-2}=\frac{D_{N+1}(F_{0})}{D_N(F_{0})}
\)
for $N\in\N$. 
Then, for $\zeta_1,\zeta_2\in \mathcal{D}_{\infty}$, 
\begin{align} \label{diff2}
&\partial_{\zeta_2}\log D_N(F_{0}) =-2 \partial_{\zeta_2} \log(\chi_N)\left(N+\tfrac{\zeta_1}{{\sqrt{2}}}+\tfrac{\zeta_2}{{\sqrt{2}}}\right)\\
\notag&+ \sum_{j=1,2}\frac{\zeta_j}{{\sqrt{2}}}\left(  \partial_{\zeta_2}(\chi_NY_{11,N}(e^{i\theta_j}))e^{i\theta_j}\chi_N^{-1}Y_{22,N+1}(e^{i\theta_j})-\partial_{\zeta_2}(\chi_N^{-1}Y_{21,N+1}(e^{i\theta_j}))\chi_N Y_{12,N}(e^{i\theta_j})\right)
\end{align}
where the matrix $Y_n=Y_{n,0}$ for $n=N,N+1$  is the unique solution of Problem~\ref{pb:Y} with $t=0$. 
\end{lemma}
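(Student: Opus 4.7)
The plan is to derive this identity via the Jacobi variational formula combined with the Christoffel-Darboux structure of the orthogonal polynomials on the unit circle associated to the weight $F_0$ (cf.\ Appendix~\ref{sec:OP}). Since $D_N(F_0) = \prod_{j=0}^{N-1} \chi_j^{-2}$, where $\chi_j$ are the leading coefficients of the orthonormal polynomials $\phi_j$ with respect to $F_0(\theta)d\theta/(2\pi)$, and the symbol $F_0$ depends on $\zeta_2$ multiplicatively through the Fisher-Hartwig factor $|e^{i\theta}-e^{i\theta_2}|^{\sqrt{2}\zeta_2}$, differentiating the orthogonality relations in $\zeta_2$ yields the standard formula
\[
\partial_{\zeta_2}\log D_N(F_0) = \int_\T \partial_{\zeta_2}\log F_0(\theta)\, K_N(e^{i\theta},e^{i\theta})\,\frac{d\theta}{2\pi},
\]
where $K_N(z,w) = \sum_{j=0}^{N-1}\phi_j(z)\overline{\phi_j(w)}$ is the Christoffel-Darboux kernel and $\partial_{\zeta_2}\log F_0(\theta) = \sqrt{2}\log|e^{i\theta}-e^{i\theta_2}|$.

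Next I would invoke the Christoffel-Darboux identity on the unit circle, which rewrites
\[
K_N(z,w) = \frac{\phi_N^*(z)\overline{\phi_N^*(w)}-\phi_N(z)\overline{\phi_N(w)}}{1-z\bar w},
\]
and then use the explicit formula \eqref{eq:Ydef} expressing $\chi_N \phi_N$ and $\chi_N \phi_N^*$, as well as their Cauchy transforms, in terms of entries of the RHP matrices $Y_{N}$ and $Y_{N+1}$. To handle the $\tfrac{0}{0}$ indeterminacy on the diagonal and to carry out the eventual contour deformation, I would split $\sqrt{2}\log|e^{i\theta}-e^{i\theta_2}| = \tfrac{1}{\sqrt 2}\bigl[\log(1-e^{i(\theta-\theta_2)})+\log(1-e^{-i(\theta-\theta_2)})\bigr]$, where each summand extends holomorphically from $\U$ to one side of the unit circle. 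Substituting this decomposition and the CD formula, the integral over $\T$ becomes a sum of contour integrals over slight deformations of $\U$ of rational expressions in $z$ built from $Y_{N}(z)$, $Y_{N+1}(z)$, with poles only at $z = e^{i\theta_j}$, $j=1,2$ (coming from the Fisher-Hartwig factors in $F_0$ and from the split logarithms).

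The last step is to deform each deformed contour to infinity and collect residues. The residues at $z=e^{i\theta_j}$ produce precisely the four terms of the sum $\sum_{j=1,2}$ in \eqref{diff2}, once the boundary values of $\chi_N\phi_N^*$, $\chi_N\phi_N$, $\chi_N^{-1}\psi_{N+1}^*$, $\chi_N^{-1}\psi_{N+1}$ (where $\psi$ denotes the Cauchy transform) are rewritten as $\chi_N Y_{11,N}(e^{i\theta_j})$, $\chi_N Y_{12,N}(e^{i\theta_j})$, $\chi_N^{-1} Y_{21,N+1}(e^{i\theta_j})$, $\chi_N^{-1} Y_{22,N+1}(e^{i\theta_j})$ via \eqref{eq:Ydef}. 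The contribution at $z = \infty$ accounts for the term $-2\partial_{\zeta_2}\log(\chi_N)(N+(\zeta_1+\zeta_2)/\sqrt 2)$: this arises because the asymptotic expansion of $Y_N(z)$ at infinity normalizes $\phi_N^*$ with leading coefficient $\chi_N$, and the effective ``degree'' of the orthogonal polynomial problem for the Fisher-Hartwig symbol is shifted by $(\zeta_1+\zeta_2)/\sqrt 2$, so differentiating in $\zeta_2$ couples $\partial_{\zeta_2}\chi_N$ with that combination. The main obstacle I anticipate is the careful bookkeeping of the two sides of the deformed contour and the symmetry between $\phi_N^*$ and $\phi_N$ contributions: one must verify that the cross terms combine into exactly the antisymmetric form $\partial_{\zeta_2}(\chi_N Y_{11,N})\cdot \chi_N^{-1}Y_{22,N+1} - \partial_{\zeta_2}(\chi_N^{-1}Y_{21,N+1})\cdot \chi_N Y_{12,N}$ stated in \eqref{diff2}, rather than some different linear combination, and that all potential spurious contributions at $z=\infty$ and at $z=0$ (from the split logarithms) cancel.
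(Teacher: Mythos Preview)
The paper does not actually prove this lemma: it is stated with attribution to \cite{DIK2} and used as a black box (only Lemma~\ref{le:di2} is proved in Appendix~\ref{sec:OP}). So there is no in-paper proof to compare your proposal against, but I can still comment on the viability of your sketch.

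Your starting point---the Jacobi-type variational formula---is essentially the one used in \cite{DIK2}, but as written it is missing the weight: one has
\[
\partial_{\zeta_2}\log D_N(F_0)=\int_\T \big(\partial_{\zeta_2}\log F_0(\theta)\big)\,\Big(\sum_{j=0}^{N-1}p_j(e^{i\theta})q_j(e^{-i\theta})\Big)\,F_0(\theta)\,\frac{d\theta}{2\pi},
\]
i.e.\ the Christoffel--Darboux sum is integrated against $F_0\,d\theta/(2\pi)$, not $d\theta/(2\pi)$. More seriously, the ``deform to infinity and collect residues at $e^{i\theta_j}$'' step does not work as you describe it. After your splitting, the integrand still carries logarithms $\log(1-ze^{\mp i\theta_2})$ (branch cuts, not poles) and the weight $F_0$ has non-integer algebraic singularities $|z-e^{i\theta_j}|^{\sqrt 2\zeta_j}$ at the Fisher--Hartwig points; there are no simple poles at $e^{i\theta_j}$ whose residues could produce the point evaluations in~\eqref{diff2}.

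The mechanism in \cite{DIK2} is different and more algebraic: one differentiates the orthogonality relations \eqref{eq:porto}--\eqref{eq:qorto} in $\zeta_2$, uses the explicit form $\partial_{\zeta_2}\log F_0=\sqrt 2\log|e^{i\theta}-e^{i\theta_2}|$ together with the Christoffel--Darboux/recursion identities for $p_N,q_N$ (as in the proof of Lemma~\ref{le:di2} in Appendix~\ref{sec:OP}), and then telescopes. The point evaluations at $e^{i\theta_j}$ in~\eqref{diff2} arise not from residues but from the fact that the weight $F_0$ vanishes at those points, so that after integration by parts the ``boundary'' contributions localize there; the term $-2\partial_{\zeta_2}\log(\chi_N)(N+\tfrac{\zeta_1+\zeta_2}{\sqrt 2})$ comes from the leading-coefficient bookkeeping in this telescoping, not from a residue at infinity. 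If you want to carry out your program, you would need to replace the residue picture by this integration-by-parts/orthogonality argument.
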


\begin{remark}
In formula \eqref{diff2}, it is relevant to note that for $\mathrm{Re}(\zeta_j)>0$, the entries
$Y_{11}$ and $Y_{21}$ are polynomials
and there is no issue in evaluating $Y_{12}$ and $Y_{22}$ at $e^{ i\theta_j}$
since the zero of $F_0$ at $e^{i\theta_j}$ fixes the non-integrable singularity; cf.~Appendix \ref{sec:OP} and in particular formula \eqref{eq:Ydef}. 
\end{remark}

We now turn to transforming our Riemann-Hilbert problem into a form that will eventually allow an approximate solution.

\subsection{Transforming the Riemann-Hilbert problems} \label{sec:trans}

 The idea of the steepest descent analysis of Deift and Zhou is to perform transformations to the Riemann-Hilbert problem by modifying the jump contours so that the jump matrices are close to the identity matrix and the solution is normalized to be the identity matrix at infinity. Then the problem can be solved asymptotically with a suitable Neumann series. 
In particular, the Problem~\ref{pb:Y} at hand for $n=N, N+1$ has an oscillatory jump matrix on $\{|z|=1\}$. 
Hence, the first step of this transformation procedure consists in moving this contour to the regions $\{|z|<1\}$ and $\{|z|>1\}$ where the jump matrix will be exponentially decaying. This is known as ``opening lenses" in the Riemann-Hilbert literature; we  refer the reader to e.g. \cite{Deift,DIK,DIK2} and references therein for further details.

We are interested in $V$ as in \eqref{def:V}, but we can allow a more generic (real-valued) potential\footnote{The relationship is $V(\theta) = \widehat{V}(e^{i\theta})$ for $\theta \in \T$. In particular, $\widehat{V}_{-k} = \overline{\widehat{V}_k}$ for $k\in\N$ so that $V$ is real-valued on $\U$ and  $\widehat{V}(z)$ is analytic for $z\in\C\setminus\{0\}$.} in \eqref{eq:Vt}, 
\begin{equation} \label{pot}
\widehat{V}(z)=  \sum_{1\le |k| \le \Delta} \widehat{V}_k z^k , \qquad
\text{where $|\widehat{V}_k | \le C/k$}. 
\end{equation}
where
\begin{equation}\label{eq:Delta}
\Delta^{-1} \le \min(|e^{ix_1}-e^{ix_2}|,\min_{j=1,\dots,q}\delta_j) . 
\end{equation} 
In the sequel, $\Delta$ is either $N^{ 1-\eta}$ in case of Assumption~\ref{ass:meso} or fixed in case of Assumption~\ref{ass:macro}. 

\medskip 

Let $U_{\Delta,j}=\{w\in \C: |w-e^{ix_j}|\leq \tfrac{1}{2\Delta}\}$ for $j\in\{1,2\}$ and consider the following domain, 
\begin{align}\label{eq:L}
\widetilde{L}& = \left\lbrace z\in \C:1-\tfrac{\Delta^{-1}}{4}<|z|< 1+\tfrac{\Delta^{-1}}{4}\right\rbrace \setminus \cup_{j=1}^2 U_{\Delta,j} ; 
\end{align}
 see Figure \ref{fig:L2}.
We enlarge this set $\widetilde L$ by connecting it  to the points $e^{ix_j}$ suitably. More precisely, we draw certain contours (specifically defined in Section~\ref{sec:local}) from the points $\{(1\pm \tfrac{\Delta^{-1}}{4} ){\U}\}\cap\{\cup_{j=1}^2 U_{\Delta,j}  \}  $ to the points $e^{ix_j}$, this yields the set $L$.

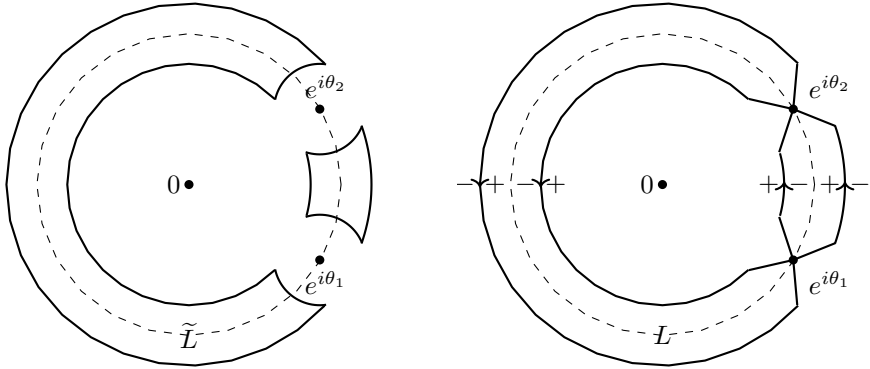
\begin{figure}
\begin{center}
\begin{tikzpicture}[xscale=0.02,yscale=0.02]
\clip (-135,-135) rectangle (170,135); 

\fill [color=black] (0,0) circle (3);

\draw [black,dashed,domain=0:360] plot ({100*cos(\x)}, {100*sin(\x)});

\draw [black,thick,domain=45:315] plot ({80*cos(\x)}, {80*sin(\x)});
\draw [black,thick,domain=42:318] plot ({120*cos(\x)}, {120*sin(\x)});

\draw [black,thick,domain=-15:15] plot ({80*cos(\x)}, {80*sin(\x)});
\draw [black,thick,domain=-19:19] plot ({120*cos(\x)}, {120*sin(\x)});

\fill [color=black] (86,50) circle (3);
\fill [color=black] (86,-50) circle (3);

\node at (0,-100) {\small $\widetilde{L}$};
\node at (-10,0) {\small $0$};
\node at (90,65) {\small $e^{i\theta_2}$};
\node at (90,-65) {\small $e^{i\theta_1}$};

\draw [black,thick,domain=82:168] plot ({86+30*cos(\x)}, {50+ 30*sin(\x)});
\draw [black,thick,domain=192:278] plot ({86+30*cos(\x)}, {-50+ 30*sin(\x)});

\draw [black,thick,domain=252:338] plot ({86+30*cos(\x)}, {50+ 30*sin(\x)});
\draw [black,thick,domain=22:108] plot ({86+30*cos(\x)}, {-50+ 30*sin(\x)});

\end{tikzpicture}
\begin{tikzpicture}[xscale=0.02,yscale=0.02]
\clip (-135,-135) rectangle (170,135); 

\draw [black,dashed,domain=0:360] plot ({100*cos(\x)}, {100*sin(\x)});

\draw [black,thick,domain=45:315] plot ({80*cos(\x)}, {80*sin(\x)});
\draw [black,thick,domain=42:318] plot ({120*cos(\x)}, {120*sin(\x)});

\draw [black,thick,domain=-15:15] plot ({80*cos(\x)}, {80*sin(\x)});
\draw [black,thick,domain=-19:19] plot ({120*cos(\x)}, {120*sin(\x)});

\fill [color=black] (86,50) circle (3);
\fill [color=black] (86,-50) circle (3);
\draw[->,thick] (-80,2) -- (-80,-2);
\draw[->,thick] (-120,2) -- (-120,-2);

\draw[->,thick] (80,-2) -- (80,2);
\draw[->,thick] (120,-2) -- (120,2);

\node at (-70,0) {\small $+$};
\node at (-90,0) {\small $-$};

\node at (-110,0) {\small $+$};
\node at (-130,0) {\small $-$};

\node at (70,0) {\small $+$};
\node at (90,0) {\small $-$};

\node at (110,0) {\small $+$};
\node at (130,0) {\small $-$};

\fill [color=black] (0,0) circle (3);

\node at (0,-100) {\small $L$};
\node at (-10,0) {\small $0$};
\node at (110,65) {\small $e^{i\theta_2}$};
\node at (110,-65) {\small $e^{i\theta_1}$};

\draw [black, thick] ({86+30*cos(193)}, {-50+ 30*sin(193)}) -- (86,-50);
\draw [black, thick] ({86+30*cos(275)}, {-50+ 30*sin(275)}) -- (86,-50);

\draw [black, thick] ({86+30*cos(85)}, {50+ 30*sin(85)}) -- (86,50);
\draw [black, thick] ({86+30*cos(167)}, {50+ 30*sin(167)}) -- (86,50);

\draw [black, thick] ({86+30*cos(22)}, {-50+ 30*sin(22)}) -- (86,-50);
\draw [black, thick] ({86+30*cos(108)}, {-50+ 30*sin(108)}) -- (86,-50);

\draw [black, thick] ({86+30*cos(252)}, {50+ 30*sin(252)}) -- (86,50);
\draw [black, thick] ({86+30*cos(338)}, {50+ 30*sin(338)}) -- (86,50);

\end{tikzpicture}
\end{center}
\vspace{-0.3cm}
\caption{Left: An illustration of the set $\widetilde{L}$. Right: An illustration of the set $L$ along with the orientation of its boundary contour for the $S$-RHP. {In both cases, the dashed circle is the unit circle $\U$.}}
\label{fig:L2}
\end{figure}

In order to ``open lenses", we must analytically continue the symbol \eqref{eq:Vt} to a neighborhood of ${\U}$. 
Let 
$\V(z)=  \sum_{1\le |k| \le \Delta} \widehat{V}_k z^k$, this is a continuation of $V$ in $\C\setminus\{0\}$ as a Laurent polynomial. 
Then, this is a question of analytically continuing the functions $|z-e^{ix_j}|^{\zeta_j}$ into a neighborhood of ${\U}$, excluding some appropriately chosen branch cuts. We follow the construction from \cite[Section 4]{DIK}.
We consider the functions 
\begin{equation}
\begin{aligned}
\label{eq:D}
\mathcal D_{\rm in}(z) & =e^{t\sum_{k=1}^\Delta \widehat{V}_k  z^k}\frac{(z-e^{ix_1})^{\zeta_1/\sqrt{2}}(z-e^{ix_2})^{\zeta_2/\sqrt{2}}}{e^{i\zeta_1x_1/\sqrt{2}}e^{i\pi \zeta_1/\sqrt{2}} e^{i\zeta_2x_2/\sqrt{2}}e^{i\pi\zeta_2/\sqrt{2}}} ,
\\
\mathcal D_{\rm out}^{-1}(z) & =e^{t\sum_{k=1}^\Delta \widehat{V}_{-k}  z^{-k}}\frac{(z-e^{ix_1})^{\zeta_1/\sqrt{2}}(z-e^{ix_2})^{\zeta_2/\sqrt{2}}}{z^{\zeta_1/\sqrt{2}} z^{\zeta_2/\sqrt{2}}} ,
\end{aligned}
\end{equation}
where the branches of the roots are defined as follows: the cut of $(z-e^{ix_j})^{\zeta_j/2}$ is taken to be on the half line $e^{ix_j}\times [1,\infty)$ and the branch is fixed by requiring that $\mathrm{arg}(z-e^{ix_j})=2\pi$ on the half line parallel to the real axis going from $e^{ix_j}$ to the right. For $z^{\zeta_j/\sqrt2}$ we choose the cut to be the half line $e^{ix_j}\times [0,\infty)$ and one fixes the branch by requiring that  $\mathrm{arg}(z)\in (x_j,x_j+2\pi)$. 

With these conventions, the functions \eqref{eq:D} are both analytic on $\C\setminus (e^{ix_1}\times [0,\infty)\cup e^{ix_2}\times [0,\infty))$. 
More specifically, $\mathcal D_{\rm in}$ is analytic in $|z|<1$ and $\mathcal D_{\rm out}$ is analytic and non-zero in $|z|>1$.

Then, by the Sokhotski-Plemelj identity, one has 
\begin{equation}\label{eq:f}
\begin{aligned}
f(z) & = |z-e^{ix_1}|^{\sqrt{2}\zeta_1}|z-e^{ix_2}|^{\sqrt{2}\zeta_2}  e^{t\V(z)} \\
& =\mathcal D_{\rm in}(z)  \mathcal D_{\rm out}^{-1}(z) , 
&z\in\C\setminus \{e^{ix_1}\times [0,\infty)\cup e^{ix_2}\times [0,\infty)\}
\end{aligned}
\end{equation}
This provides the required analytic continuation of $F_{t}$ in a neighborhood of ${\U}$. Note that we do not emphasize that this function (and all other quantities defined in terms of it, such as $S$ below) depends on the parameters $t\in[0,1]$, $\zeta_1,\zeta_2 \in \mathcal{D}_\infty$, $x_1,x_2 \in \T$ with $x_1\neq x_2$, and $\Delta$ (which may also depend on~$N$). 

{
Moreover,  according to \eqref{pot}, $\sum_{1\le |k| \le \Delta} |\widehat{V}_k| (1+\frac{1}{4\Delta})^k \le C \log \Delta $
and $|z-e^{ix_j}|^{\pm |\zeta|} \le \Delta^{R_N}$ for $z\in\partial \widetilde L$ and $\zeta\in \mathcal{D}_{R_N}$. 
So, if $R_N= o(\log N)$, uniformly for $z\in\partial \widetilde L$ (and all other relevant parameters), 
\begin{equation} \label{Dest}
|\mathcal D_{\rm in}(z)|^{\pm 1},|\mathcal D_{\rm out}(z)|^{\pm 1}=\exp O\big((\log N)^2 \big).
\end{equation}
}

In terms of terms of the analytic function $f$ and the set $L$, we consider the following Riemann-Hilbert problem; 
\begin{problem}\label{pb:S}
Let   $\Sigma_S=\partial L\cup {\U}$ be oriented as in Figure \ref{fig:L2}. 
Let $S = S_{n}$ for $n\in\N$ solve;
\begin{enumerate}
\item  $S:\C\setminus \Sigma_S\to \C^{2\times 2}$ is analytic.
\item $S$ has continuous boundary values on $\Sigma_S\setminus \{e^{i\theta_1},e^{i\theta_2}\}$, denoted $S_+,S_-$  which satisfy
\begin{align*}
S_+(z) &=S_-(z)\begin{pmatrix}
0 & f(z)\\
-f(z)^{-1} & 0
\end{pmatrix}, & z\in {\U}\setminus \{e^{ix_1},e^{ix_2}\}, \\
S_+(z) &=S_-(z)\begin{pmatrix}
1 & 0\\
z^nf(z)^{-1} & 1
\end{pmatrix}, & z\in \partial L, \quad |z|<1, \\
S_+(z)&=S_-(z)\begin{pmatrix}
1 & 0\\
z^{-n}f(z)^{-1} & 1
\end{pmatrix}, & z\in \partial L, \quad |z|>1, \\
\end{align*} 

\item $S(z)=I+O(z^{-1})$ as $z\to \infty$.
\item  For $j=1,2,$ as $z\to e^{ix_j}$, 
\begin{equation*}\label{eq:Sloc}
S(z)=\begin{cases}
\begin{pmatrix}O(|z-e^{ix_j}|^{-\sqrt{2}\mathrm{Re}\zeta_j}) & O(1)\\
O(|z-e^{ix_j}|^{-\sqrt{2}\mathrm{Re}\zeta_j}) & O(1)
\end{pmatrix}, & z\in L\setminus {\U}\\
O(1), & z\notin L
\end{cases}.
\end{equation*}
These $O(\cdot)$-terms involve only $z$ and do not require any uniformity in $n$,  $t$, etc.
\end{enumerate}
\end{problem}

It is a standard fact (that the reader will have no difficulty verifying) that 
Problem~\ref{pb:Y} and Problem~\ref{pb:S} are related by the following transformation,
\begin{equation} \label{eq:S}
S(z)=\begin{cases}
Y(z), & z\notin L \quad \text{and} \quad |z|<1\\
Y(z)z^{-n\sigma_3}, & z\notin L \quad \text{and} \quad |z|>1\\
Y(z) \begin{pmatrix}
1 & 0 \\
-z^n f(z)^{-1} & 1
\end{pmatrix}, & z\in L \quad \text{and} \quad |z|<1\\
Y(z)z^{-n\sigma_3} \begin{pmatrix}
1 & 0 \\
z^{-n} f(z)^{-1} & 1
\end{pmatrix}, & z\in L \quad \text{and} \quad |z|>1
\end{cases}.
\end{equation}
In particular, if the solution $Y$ of Problem~\ref{pb:Y} exists, then it is unique and $S$ given by \eqref{eq:S} is the unique solution of Problem~\ref{pb:S}, including the asymptotic conditions  (3) and (4). 
Conversely, these conditions guarantee uniqueness of a solution of Problem~\ref{pb:S} (which would not hold otherwise -- this is common behavior in Riemann-Hilbert problems with singular symbols; for further discussion, see e.g. \cite[Section 5]{Kuijlaars} for the case when the symbol is supported on an interval) and, by solving Problem~\ref{pb:S}, one can recover $Y$. 

\medskip

In the next sections, we explain how to construct a solution of Problem~\ref{pb:S}. 
Note that if $\Delta \ll N$ for $n\in\{N,N+1\}$, by (2), the jumps of $\partial L$ are ``exponentially small" and can be neglected except in neighborhoods of the singularities $\{e^{ix_1},e^{ix_2}\}$.
Then, this construction involves two ingredients;
\begin{itemize}[leftmargin=5mm]
\item a \emph{global parametrix} which models  the jumps over ${\U}$ (cf.~Section~\ref{sec:global}).
\item \emph{local parametrices} to adjust for the jumps in neighborhoods of $e^{ix_1}$ and $e^{ix_2}$ (cf.~Section~\ref{sec:local}).
\end{itemize}
As a final step, one patches together these parametrices into a \emph{small norm problem} (cf.~Section~\ref{sec:R}) to obtain an (asymptotic) solution of Problem~\ref{pb:S}, including conditions  (3) and (4).

\subsection{The global parametrix} \label{sec:global}
Let us ignore the jumps across $\partial L$ in order to find an approximate of Problem~\ref{pb:S}. 
In terms of \eqref{eq:f}, define
\begin{equation}\label{eq:global}
\mathcal N(z)=\begin{cases}
\mathcal D_{\mathrm{in}}(z)^{\sigma_3}\begin{pmatrix}
0 & 1\\
-1 & 0
\end{pmatrix}, & |z|<1\\
\mathcal D_{\mathrm{out}}(z)^{\sigma_3}, & |z|>1
\end{cases}.
\end{equation}
By construction, this function is analytic off of ${\U}$, and since $\lim_{z\to \infty}\mathcal D_{\mathrm{out}}(z)={1}$, we have that  $\mathcal N(z)=I+O(z^{-1})$ as $z\to \infty$. 
Moreover, this function has a jump on ${\U}\setminus\{e^{ix_1},e^{ix_2}\}$, by \eqref{eq:f}, 
\[
\mathcal N_+(z)=\mathcal N_-(z)f(z)^{\sigma_3}\begin{pmatrix} 0 & 1\\
-1 & 0\end{pmatrix} =\mathcal N_-(z)\begin{pmatrix} 0 & f(z)\\
-f(z)^{-1} & 0\end{pmatrix},
\]
which is exactly the same as the jump of $S$ across ${\U}$.

We emphasize again that \eqref{eq:global} only provides a good approximation for $S$ away from the singularities $\{e^{ix_1},e^{ix_2}\}$ and we need  different parametrices there.

\subsection{The local parametrix} \label{sec:local}
We  turn now to the approximations at $e^{ix_j}$. 
There are various equivalent ways to represent the local parametrix. In \cite{MMS}, a solution is constructed in terms of Bessel and Hankel functions, while in \cite{DIK}, one is constructed in terms of confluent hypergeometric functions. In \cite{CK}, there is a slightly different representation in terms of hypergeometric functions. We will follow \cite[Section 4.2]{DIK}, since we will rely heavily on other related results proven in \cite{DIK,DIK2}.

Recall that $U_{\Delta,j}=\{w\in \C: |w-e^{ix_j}|\leq \tfrac{1}{2\Delta}\}$ for $j\in\{1,2\}$. 
Our goal is to construct a function $P:U_{\Delta,j} \to \C^{2\times 2}$ such that $P$ has the same jumps as $S$, 
$z\mapsto S(z)P(z)^{-1}$ is analytic in $U_{\Delta,j}$, and $P(z)\mathcal N(z)^{-1}=I+o(1)$ uniformly in $z\in \partial U_{\Delta,j}$ (and the other relevant parameters -- we will be more precise later on). It is this last part which requires extra care compared to the existing literature; e.g.~\cite{DIK,DIK2}. The point being that in \cite{DIK,DIK2}, the analysis is performed for fixed $\zeta_1,\zeta_2$, while in the context of  Assumption~\ref{ass:macro}, $|\Im\zeta_1|,|\Im\zeta_2|$ are allowed to grow mildly with $N$.

\medskip

The first step is to specify, how we define the set $L$, \eqref{eq:L}, inside $U_{\Delta,j}$. For this purpose, let us define inside $U_{\Delta,j}$ the function $\xi_{N,j}(z) = N\log(z e^{-ix_j})$, where we consider the principal branch of the logarithm. This variable will serve as \emph{local conformal coordinate} in $U_{\Delta,j}$. 
We require that $\partial L$, which consists of four (simple) curves connecting  the points $\{(1\pm \tfrac{\Delta^{-1}}{4} ){\U}\}\cap \{z: |z-e^{ix_j}|=\tfrac{\Delta^{-1}}{2}\}$ to  $e^{ix_j}$, which gets mapped to (parts of) the rays $e^{ik \pi/4}\times(0,\infty)$ (with $k=1,3,5,8$) -- see Figure \ref{fig:local}

\begin{figure}
\begin{center}
\begin{tikzpicture}[xscale=0.02,yscale=0.02]
\clip (-135,-135) rectangle (170,135); 

\fill [color=black] (0,0) circle (3);

\draw [black,dashed,domain=0:360] plot ({100*cos(\x)}, {100*sin(\x)});
\draw [black,thick,domain=-30:30] plot ({-300+300*cos(\x)}, {300*sin(\x)});

\draw [black,dashed,domain=-30:30] plot ({-370+300*cos(\x)}, {300*sin(\x)});
\draw [black,dashed,domain=-30:30] plot ({-230+300*cos(\x)}, {300*sin(\x)});

\draw [black,thick,domain=0:95] plot ({-70+70*cos(\x)}, {64*sin(\x)});
\draw [black,thick,domain=180:99] plot ({70+70*cos(\x)}, {80*sin(\x)});

\draw [black,thick,domain=0:-95] plot ({-70+70*cos(\x)}, {64*sin(\x)});
\draw [black,thick,domain=180:260] plot ({70+70*cos(\x)}, {80*sin(\x)});

\node at (-20,0) {\small $e^{ix_j}$};   
\end{tikzpicture}
\begin{tikzpicture}[xscale=0.02,yscale=0.02]
\clip (-135,-135) rectangle (170,135); 

\fill [color=black] (0,0) circle (3);

\draw[black, thick] (-100,-100) -- (100,100);
\draw[black, thick] (-100,100) --   (100,-100);

 \draw [-{Stealth[length=3mm]}] (-50,50) -- (-50.1,50.1);
 \draw [-{Stealth[length=3mm]}] (50,50) -- (50.1,50.1);
 \draw [-{Stealth[length=3mm]}] (-50.1,-50.1) -- (-50,-50);
 \draw [-{Stealth[length=3mm]}] (50.1,-50.1) -- (50,-50);
 \draw [-{Stealth[length=3mm]}] (0,50) -- (0,50.1);
 \draw [-{Stealth[length=3mm]}] (0,-50.1) -- (0,-50);

\draw[black, thick] (0,150) -- (0,-150);
\node at (-20,0) {\small $0$};   
\end{tikzpicture}
\end{center}
\caption{Left: an illustration of the jump contour of $S$ (and $P$) inside of $U_{\Delta,j}$ (solid) as well as $\partial U_{\Delta,j}$ and the circles $(1\pm \frac{1}{4\Delta})\U$ (dashed). Right: an illustration of the image of this jump contour under the mapping $\xi_{N,j}$.}\label{fig:local}
\end{figure}
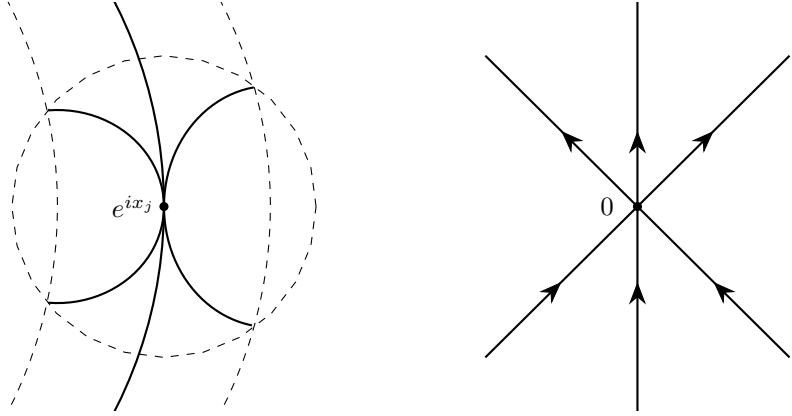

Another important ingredient to define the local parametrix is a function that is in a sense an analytic continuation of $f^{1/2}$ off of the unit circle, but different from the factorization \eqref{eq:f}. More precisely, let us write $I,II,III,IV,V,VI,VII,VIII$ for the octants of the complex $\xi$-plane ordered in the counter clockwise direction, and enumerated such that $I=\{\xi\in \C: \arg(\xi)\in (\pi/2,3\pi/4)\}$ and so on {(see Figure \ref{fig:oct})}.

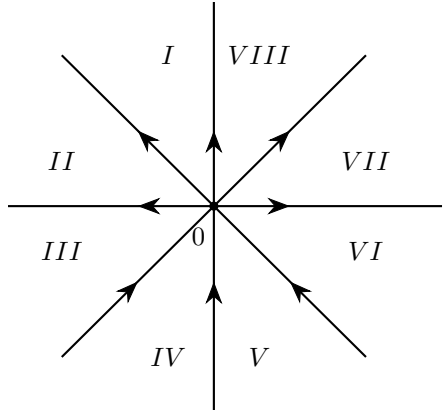
\begin{figure}
\begin{center}
\begin{tikzpicture}[xscale=0.02,yscale=0.02]
\clip (-135,-135) rectangle (170,135); 

\fill [color=black] (0,0) circle (3);

\draw[black, thick] (-100,-100) -- (0,0) -- (100,100);
\draw[black, thick] (-100,100) -- (100,-100);

\draw[black, thick] (-150,0) -- (150,0);

\draw[black, thick] (0,150) -- (0,-150);
\node at (-10,-20) {\small $0$};   

\node at (-30,100) {\small $I$};   
\node at (-100,30) {\small $II$};   
\node at (-100,-30) {\small $III$};   
\node at (-30,-100) {\small $IV$};   

\node at (30,100) {\small $VIII$};   
\node at (100,30) {\small $VII$};   
\node at (100,-30) {\small $VI$};   
\node at (30,-100) {\small $V$};   

 \draw [-{Stealth[length=3mm]}] (-50,50) -- (-50.1,50.1);
 \draw [-{Stealth[length=3mm]}] (50,50) -- (50.1,50.1);
 \draw [-{Stealth[length=3mm]}] (-50.1,-50.1) -- (-50,-50);
 \draw [-{Stealth[length=3mm]}] (50.1,-50.1) -- (50,-50);
 \draw [-{Stealth[length=3mm]}] (0,50) -- (0,50.1);
 \draw [-{Stealth[length=3mm]}] (0,-50.1) -- (0,-50);
 \draw [-{Stealth[length=3mm]}] (50,0) -- (50.1,0);
 \draw [-{Stealth[length=3mm]}] (-50,0) -- (-50.1,0);

\end{tikzpicture}
\end{center}
\caption{The splitting of the complex plane into octants relevant for the function $\Psi$ in the local parametrix as well as the orientation of the corresponding jump contours.}\label{fig:oct}
\end{figure}

 Define for $z\in U_{\Delta,j}$
\begin{equation}\label{eq:F}
F_{N,j}(z)=e^{\frac{t}{2}V(z)}h_{1}(z)h_{2}(z)\times \begin{cases}
e^{-i\pi \zeta_j/{\sqrt{2}}}, & \xi_{N,j}(z)\in I,II,V,VI\\
e^{i\pi \zeta_j/{\sqrt{2}}}, & \xi_{N,j}(z)\in III,IV,VII,VIII
\end{cases},
\end{equation}
where
\begin{equation}
h_{j}(z)=\frac{(z-e^{ix_j})^{\zeta_j/{\sqrt{2}}}}{(z e^{ix_j}e^{i\ell_j(z)})^{\zeta_j/2{\sqrt{2}}}} , \qquad \ell_{j}(z)=\begin{cases}
3\pi, & 0<\mathrm{arg}(z)<x_j\\
\pi, & x_j<\mathrm{arg}(z)<2\pi
\end{cases}.
\end{equation}
with a suitable choice of the cut (see the discussion around \cite[(4.13)]{DIK} for details).

It is then argued in \cite[(4.21) and (4.22)]{DIK} that 
\begin{equation}\label{eq:Fsq}
\begin{aligned}
F_{N,j}(z)^2&=f(z)e^{-\pi i {\sqrt{2}} \zeta_j}, & \xi_{N,j}(z)\in I,II,V,VI ,\\
F_{N,j}(z)^2&=f(z)e^{\pi i{\sqrt{2}} \zeta_j}, & \xi_{N,j}(z)\in III,IV,VII,VIII , 
\end{aligned}
\end{equation}
and $F_{N,j}$ is analytic in $U_{\Delta,j} \setminus  \cup_{k=1}^8\{ \xi_{N,j} \in \Gamma_k\}$ where 
\[
\Gamma_k=e^{\frac{k+1}{8}2\pi i}\times (0,\infty) , \qquad \text{for $k=1,...,8$.}
\]
We orient $\Gamma_4,\Gamma_5,\Gamma_6$ towards the origin, and the others away from it ({see Figure \ref{fig:oct}}). Again, it is argued in \cite[Section 4.2]{DIK} that if we take the $+/-$-side to be the left/right side of the contour then $F_{N,j}$ has continuous boundary values and satisfies the jump conditions; 
\begin{align*}
F_{N,j,+}&=F_{N,j,-}e^{-\pi i  {\sqrt{2}}\zeta_j}, & \xi_{N,j}\in \Gamma_1,\\
F_{N,j,+}&=F_{N,j,-}e^{\pi i {\sqrt{2}}\zeta_j}, & \xi_{N,j}\in \Gamma_5,\\
F_{N,j,+}&=F_{N,j,-}e^{\pi i \zeta_j/{\sqrt{2}}},&   \xi_{N,j}\in \Gamma_3\cup \Gamma_7.
\end{align*}

The next  key ingredient in the construction of the local parametrix is a function built from the confluent hypergeometric function of the second kind (or Tricomi confluent hypergeometric function) --  let us write $\psi=\psi(a,c,\xi)$ for this hypergeometric function (often $\Psi$ or $U$ in the literature). More precisely, we define 
\[
\psi(a,c,\xi)=\frac{\Gamma(1-c)}{\Gamma(a+1-c)}M(a,c,\xi)+\frac{\Gamma(c-1)}{\Gamma(a)}\xi^{1-c}M(a+1-c,2-c,\xi)
\]
where
\[
M(a,c,\xi)=\sum_{n=0}^\infty \frac{a^{(n)}}{c^{(n)}n!}\xi^n, \qquad  \text{with} \qquad a^{(n)}=a(a+1)\cdots (a+n-1),
\]
and the branch of the root $\xi^{1-c}$ is fixed by requiring that $\arg(\xi)\in (0,2\pi)$ (with a cut on the positive real axis). Moreover $a,c$ are complex numbers and we assume that $b$ is not an integer (integer cases can be dealt with by a limiting procedure). See e.g. \cite[Appendix]{IK} for a review of the basic theory of these functions. We mention here nevertheless that $M$ is an entire function of $\xi$ so the only singularity of $\psi$ is the branch cut along the positive real axis.

We then define for $\xi\in I$, 
\begin{align}\label{eq:Psi}
\Psi(\xi)=\begin{pmatrix}
\xi^{\tfrac{\zeta_j}{{\sqrt{2}}}}\psi(\tfrac{\zeta_j}{{\sqrt{2}}},1+{\sqrt{2}}\zeta_j,\xi)e^{i\pi \tfrac{\zeta_j}{{\sqrt{2}}}}e^{-\xi/2} & -\xi^{\tfrac{\zeta_j}{{\sqrt{2}}}}\psi(1+\tfrac{\zeta_j}{{\sqrt{2}}},1+{\sqrt{2}}\zeta_j,-\xi) e^{i\pi \tfrac{\zeta_j}{{\sqrt{2}}}}e^{\xi/2}\tfrac{\zeta_j}{{\sqrt{2}}}\\
-\xi^{-\tfrac{\zeta_j}{{\sqrt{2}}}}\psi(1-\tfrac{\zeta_j}{{\sqrt{2}}},1-{\sqrt{2}}\zeta_j,\xi)e^{-i\pi3 \tfrac{\zeta_j}{{\sqrt{2}}}}e^{-\xi/2}\tfrac{\zeta_j}{{\sqrt{2}}} & \xi^{-\tfrac{\zeta_j}{{\sqrt{2}}}}\psi(-\tfrac{\zeta_j}{{\sqrt{2}}},1-{\sqrt{2}}\zeta_j,-\xi) e^{-i\pi \tfrac{\zeta_j}{{\sqrt{2}}}}e^{\xi/2}
\end{pmatrix}.
\end{align}
In other regions, $\Psi$ is constructed to have prescribed jump conditions. For example, in region $II$, one defines
\begin{align*}
\Psi(\xi)&=\begin{pmatrix}
\xi^{\tfrac{\zeta_j}{{\sqrt{2}}}}\psi(\tfrac{\zeta_j}{{\sqrt{2}}},1+{\sqrt{2}}\zeta_j,\xi)e^{i\pi \tfrac{\zeta_j}{{\sqrt{2}}}}e^{-\xi/2} & -\xi^{\tfrac{\zeta_j}{{\sqrt{2}}}}\psi(1+\tfrac{\zeta_j}{{\sqrt{2}}},1+{\sqrt{2}}\zeta_j,-\xi) e^{i\pi \tfrac{\zeta_j}{{\sqrt{2}}}}e^{\xi/2}\tfrac{\zeta_j}{{\sqrt{2}}}\\
-\xi^{-\tfrac{\zeta_j}{{\sqrt{2}}}}\psi(1-\tfrac{\zeta_j}{{\sqrt{2}}},1-{\sqrt{2}}\zeta_j,\xi)e^{-i\pi3 \tfrac{\zeta_j}{{\sqrt{2}}}}e^{-\xi/2}\tfrac{\zeta_j}{{\sqrt{2}}} & \xi^{-\tfrac{\zeta_j}{{\sqrt{2}}}}\psi(-\tfrac{\zeta_j}{{\sqrt{2}}},1-{\sqrt{2}}\zeta_j,-\xi) e^{-i\pi \tfrac{\zeta_j}{{\sqrt{2}}}}e^{\xi/2}
\end{pmatrix}\\
&\quad \times \begin{pmatrix} 1 & 0\\
e^{-i\pi {\sqrt{2}}\zeta_j} & 1\end{pmatrix}, 
\end{align*}
so on $\Gamma_{2}$, we have the jump condition, 
\[
\Psi_+(\xi)=\Psi_-(\xi)\begin{pmatrix} 1 & 0\\
e^{-i\pi {\sqrt{2}}\zeta_j} & 1\end{pmatrix}.
\]
For further details about the definition of $\Psi(\xi)$ for all $\xi\in \C$ and the Riemann-Hilbert problem satisfied by $\xi$, we refer to \cite[Section 4.2]{DIK}.

We are now in a position to define the local parametrix. Once again, we refer the reader to \cite[Section 4.2]{DIK}, where it is proven that for $z\in U_{\Delta,j}$,  the function 
\begin{equation}\label{eq:local}
P(z)=E(z)\Psi(\xi_{N,j}(z))F_{N,j}(z)^{-\sigma_3}\times \begin{cases}
z^{N\sigma_3/2}, & |z|<1\\
z^{-N\sigma_3/2}, & |z|>1
\end{cases},
\end{equation}
where $z \in U_{\Delta,j}\mapsto E(z)$ is a certain analytic function we will return to shortly. {As a result of our construction, $P$}  has the same jumps as $S$ in $U_{\Delta,j}$, and in fact, $z\mapsto S(z)P(z)^{-1}$ is analytic in $U_{\Delta,j}$ for all $t\in [0,1]$.

The final ingredient, the function $E$ is relevant for the matching condition -- as mentioned, this is the main step where we cannot rely directly on \cite{DIK}, since we allow $\zeta_j$ to grow with $N$. Nevertheless, it varies slowly enough that we have essentially the same asymptotics as in the fixed $\zeta_j$-case. In particular, we will use the same $E$-function as \cite{DIK}. We define 
\begin{align*}
E(z)&=\mathcal N(z)F_{N,j}(z)^{\sigma_3}e^{-iN\frac{x_j}{2}\sigma_3}\begin{pmatrix}
e^{-i\pi \zeta_j/{\sqrt{2}}} & 0\\
0 & e^{i\pi{\sqrt{2}} \zeta_j}
\end{pmatrix}, & \xi_{N,j}(z)\in I,II\\
E(z)&=\mathcal N(z)F_{N,j}(z)^{\sigma_3}e^{-iN\frac{x_j}{2}\sigma_3}\begin{pmatrix}
e^{-i\pi {\sqrt{2}} \zeta_j} & 0\\
0 & e^{i\pi 3\zeta_j/{\sqrt{2}}}
\end{pmatrix}, & \xi_{N,j}(z)\in III,IV\\
E(z)&=\mathcal N(z)F_{N,j}(z)^{\sigma_3}e^{iN\frac{x_j}{2}\sigma_3}\begin{pmatrix}
0 & e^{i\pi 3\zeta_j/{\sqrt{2}}} \\
-e^{-i\pi {\sqrt{2}}\zeta_j} & 0
\end{pmatrix}, & \xi_{N,j}(z)\in V,VI\\
E(z)&=\mathcal N(z)F_{N,j}(z)^{\sigma_3}e^{iN\frac{x_j}{2}\sigma_3}\begin{pmatrix}
0 & e^{i\pi \zeta_j/{\sqrt{2}}} \\
-e^{-i\pi \zeta_j/{\sqrt{2}}} & 0
\end{pmatrix}, & \xi_{N,j}(z)\in V,VI.
\end{align*}

We are now concerned with the matching condition, namely we wish to understand asymptotics of
$P(z)\mathcal N(z)^{-1}$
for $z\in \partial U_{\Delta,j}$, where $\mathcal N$ is as in \eqref{eq:global}. Note that for $z\in \partial U_{\Delta,j}$,
\begin{equation} \label{xiest}
|\xi_{N,j}(z)|\simeq N|z-e^{ix_j}|\ge c N \Delta^{-1}  \ge c N^{\eta}
\end{equation}
for some numerical constant $c$, so we will need to understand large $\xi$ asymptotics of $\Psi(\xi)$. For simplicity, we will only do this in sector $I$ and leave the remaining sectors to the reader (though one must be slightly careful due to the branch cut so the asymptotics are slightly more complicated in regions $VI$ and $VII$). For a reference on the relevant asymptotics, we refer the reader to \cite[Chapter 13.7]{NIST} and \cite[Section 6 -- Section 9]{Olver} (though note that in the latter reference, the results are expressed in terms of Whittaker functions which are readily expressed in terms of $\psi$ -- we leave the details of this to the reader). The upshot is that for $\xi\in I$ and for $|c-2a|=1$ (bounded would work just as well, but for us, $|c-2a|=1$)
\[
\psi(a,c,\xi)=\xi^{-a}\left(1-a(1+a-c)\frac{1}{\xi}\right)+\varepsilon(\xi),
\]
where for say $|\xi|\geq 1$,
\[
|\varepsilon(\xi)|\leq C_1\frac{|a|^4}{|\xi^{a+2}|}\exp\left(C_2\frac{|a|^2}{|\xi|}\right) \qquad \text{and} \qquad |\varepsilon'(\xi)|\leq C_1\frac{|a|^4}{|\xi^{a+2}|}\exp\left(C_2\frac{|a|^2}{|\xi|}\right)
\]
for some universal constants $C_1,C_2>0$.

We thus find that in region $\partial U_{\Delta,j} \cap I$, if we assume that $|\zeta_j|^2=\mathcal O(|\xi_{N,j}(z)|)$ (this is the case for $\zeta_j \in \mathcal{D}_{R_N}$ if $R_N = o(N^\alpha)$ for any $\alpha>0$), then
\begin{align*}
P(z)\mathcal N(z)^{-1}&=\mathcal N(z)F_{N,j}(z)^{\sigma_3} e^{-iN \frac{x_j}{2}\sigma_3}\begin{pmatrix}
e^{-i\pi \zeta_j/{\sqrt{2}}} & 0\\
0 & e^{i\pi {\sqrt{2}}\zeta_j}
\end{pmatrix}\\
&\quad \times \begin{pmatrix}
1+\frac{\zeta_j^2}{{2}\xi_{N,j}(z)}+ O(\tfrac{|\zeta_j|^4}{\xi_{N,j}(z)^2}) & (\frac{\zeta_j}{{\sqrt{2}}\xi_{N,j}(z)}+ O(\tfrac{|\zeta_j|^4}{\xi_{N,j}(z)^2}))e^{2{\sqrt{2}}\pi i \zeta_j}\\
(-\frac{\zeta_j}{{\sqrt{2}}\xi_{N,j}(z)}+ O(\tfrac{|\zeta_j|^4}{\xi_{N,j}(z)^2}))e^{-2{\sqrt{2}}\pi i \zeta_j} & 1-\frac{\zeta_j^2}{{2}\xi_{N,j}(z)}+O(\tfrac{|\zeta_j|^4}{\xi_{N,j}(z)^2})\end{pmatrix}
\\
&\qquad \times e^{-\xi_{N,j}(z)\sigma_3/2}\begin{pmatrix}
e^{i\pi \zeta_j/{\sqrt{2}}} & 0\\
0 & e^{-i\pi {\sqrt{2}}\zeta_j}.
\end{pmatrix}F_{N,j}(z)^{-\sigma_3}z^{N\sigma_3/2}\mathcal N(z)^{-1}\\
&=M(z)\begin{pmatrix}
1+\frac{\zeta_j^2}{{2}\xi_{N,j}(z)}+ O(\tfrac{|\zeta_j|^4}{\xi_{N,j}(z)^2}) & (\frac{\zeta_j}{{\sqrt{2}}\xi_{N,j}(z)}+ O(\tfrac{|\zeta_j|^4}{\xi_{N,j}(z)^2}))e^{2{\sqrt{2}}\pi i \zeta_j}\\
(-\frac{\zeta_j}{{\sqrt{2}}\xi_{N,j}(z)}+ O(\tfrac{|\zeta_j|^4}{\xi_{N,j}(z)^2}))e^{-2{\sqrt{2}}\pi i \zeta_j} & 1-\frac{\zeta_j^2}{{2}\xi_{N,j}(z)}+O(\tfrac{|\zeta_j|^4}{\xi_{N,j}(z)^2})\end{pmatrix} M(z)^{-1},
\end{align*}
where the implied constants are uniform in everything and 
\[
M(z)=\mathcal N(z)F_{N,j}(z)^{\sigma_3}e^{-iN \frac{x_j}{2}\sigma_3}\begin{pmatrix}
e^{-i\pi \zeta_j/{\sqrt{2}}} & 0\\
0  & e^{i\pi {\sqrt{2}}\zeta_j}
\end{pmatrix}.
\]
Recalling that 
\(
\mathcal N(z)=\mathcal D_{\rm in}(z)^{\sigma_3}\begin{pmatrix}0 & 1\\
-1 & 0\end{pmatrix},
\)
for $\xi_{N,j}(z)\in I$, 
we see that 
\begin{align*}
M(z)&=\begin{pmatrix}
0 & \mathcal D_{\rm in}(z)F_{N,j}(z)^{-1}e^{iN\frac{x_j}{2}}e^{i\pi {\sqrt{2}}\zeta_j}\\
-\mathcal D_{\rm in}(z)^{-1}F_{N,j}(z)e^{-iN\frac{x_j}{2}}e^{-i\pi \zeta_j/{\sqrt{2}}} & 0
\end{pmatrix}
\end{align*}
and 
\begin{align*}
&P(z)\mathcal N(z)^{-1}\\
&=\begin{pmatrix}
1-\frac{\zeta_j^2}{{2}\xi_{N,j}(z)}+ O(\tfrac{|\zeta_j|^4}{\xi_{N,j}(z)^2}) & -\frac{\mathcal D_{\rm in}(z)^2}{ F_{N,j}(z)^{2}}e^{iNx_j}e^{-\frac{\pi}{{\sqrt{2}}}i \zeta_j}\left(-\frac{\zeta_j}{{\sqrt{2}}\xi_{N,j}(z)}+ O(\tfrac{|\zeta_j|^4}{\xi_{N,j}(z)^2})\right)\\
-\frac{F_{N,j}(z)^{2}}{\mathcal D_{\rm in}(z)^{2}}e^{-iNx_j}e^{\frac{\pi}{{\sqrt{2}}}i \zeta_j}\left(\frac{\zeta_j}{{\sqrt{2}}\xi_{N,j}(z)}+ O(\tfrac{|\zeta_j|^4}{\xi_{N,j}(z)^2})\right)& 1+\frac{\zeta_j^2}{{2}\xi_{N,j}(z)}+O(\tfrac{|\zeta_j|^4}{\xi_{N,j}(z)^2})\end{pmatrix}.
\end{align*}

Recalling \eqref{eq:f} and \eqref{eq:Fsq}, this can be written as, for $z\in \partial U_{j,\Delta}\cap \xi_{N,j}^{-1}(I)$,
\begin{align}
&P(z)\mathcal N(z)^{-1}\label{eq:match1}\\
&=\begin{pmatrix}
1-\frac{\zeta_j^2}{{2}\xi_{N,j}(z)}+ O(\tfrac{|\zeta_j|^4}{\xi_{N,j}(z)^2}) & -\mathcal D_{\rm in}(z)\mathcal D_{\rm out}(z)e^{iNx_j}e^{\frac{\pi}{{\sqrt{2}}}i \zeta_j}\left(-\frac{\zeta_j}{{\sqrt{2}}\xi_{N,j}(z)}+ O(\tfrac{|\zeta_j|^4}{\xi_{N,j}(z)^2})\right)\\
-\frac{e^{-iNx_j}e^{-\frac{\pi}{{\sqrt{2}}}i \zeta_j}}{\mathcal D_{\rm in}(z)\mathcal D_{\rm out}(z)}\left(\frac{\zeta_j}{{\sqrt{2}}\xi_{N,j}(z)}+ O(\tfrac{|\zeta_j|^4}{\xi_{N,j}(z)^2})\right)& 1+\frac{\zeta_j^2}{{2}\xi_{N,j}(z)}+O(\tfrac{|\zeta_j|^4}{\xi_{N,j}(z)^2})\end{pmatrix},\notag
\end{align}
where the implied constants are universal in the regime $|\zeta_j|^2/|\xi_{N,j}(z)|\leq 1$ 
(we recall that for $z\in \partial U_{j,\Delta}$, $|\xi_{N,j}(z)| \ge cN/\Delta$ for a $c>0$).

We see that the key thing is to estimate $\mathcal D_{\mathrm{in}}(z)\mathcal D_{\mathrm{out}}(z)e^{\frac{\pi}{{\sqrt{2}}}i\zeta_j}$ for $z$ in the appropriate domain. 
In particular, this is where the condition $\zeta_1,\zeta_2=o(\log N)$  comes in play.
We summarize the required fact in the following lemma.

\begin{lemma}\label{le:Dest}
For $z\in \partial U_{j,\Delta}\cap \xi_{N,j}^{-1}(I)$ and $\zeta_1,\zeta_2 \in \mathcal{D}_{R_N}$, \eqref{eq:Dr}, with $R_N = o(\log N)$ as $N\to\infty$, 
\[
|\mathcal D_{\mathrm{in}}(z)\mathcal D_{\mathrm{out}}(z)e^{\frac{\pi}{{\sqrt{2}}}i\zeta_j}|^{\pm 1}= \exp O(R_N)
\]
where the implied constant only depends on $C$ in \eqref{pot}.
\end{lemma}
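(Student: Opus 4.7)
The plan is to compute $\mathcal D_{\mathrm{in}}(z)\mathcal D_{\mathrm{out}}(z)$ directly from the definitions \eqref{eq:D} and observe that the singular factors $(z-e^{ix_k})^{\zeta_k/\sqrt{2}}$ cancel between $\mathcal D_{\mathrm{in}}$ and $\mathcal D_{\mathrm{out}}$. After this cancellation one is left with
\[
\mathcal D_{\mathrm{in}}(z)\mathcal D_{\mathrm{out}}(z) = \exp\!\Big(t\sum_{k=1}^\Delta \widehat V_k z^k - t\sum_{k=1}^\Delta \widehat V_{-k} z^{-k}\Big)\prod_{k=1,2}\frac{z^{\zeta_k/\sqrt{2}}}{e^{i\zeta_k x_k/\sqrt{2}}\,e^{i\pi\zeta_k/\sqrt{2}}},
\]
where the branch of $z^{\zeta_k/\sqrt{2}}$ is the one with cut on $e^{ix_k}\times[0,\infty)$ and $\arg z \in (x_k,x_k+2\pi)$. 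Multiplying by the extra $e^{i\pi\zeta_j/\sqrt{2}}$ from the statement of the lemma cancels the corresponding factor for $k=j$ in the denominator, so only the $k\neq j$ factor of that type survives.

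I would then bound each remaining factor on $z\in\partial U_{j,\Delta}\cap \xi_{N,j}^{-1}(I)$ with $\zeta_1,\zeta_2\in\mathcal D_{R_N}$. For the potential piece $\exp(t\sum_{k=1}^\Delta(\widehat V_k z^k-\widehat V_{-k}z^{-k}))$: from $|z-e^{ix_j}|=\tfrac{1}{2\Delta}$ one obtains $|z|=1+O(\Delta^{-1})$ and thus $|z|^{\pm k}\le e^{k/(2\Delta)}\le e^{1/2}$ for $1\le k\le\Delta$; combined with $|\widehat V_k|\le C/k$ from \eqref{pot}, this gives a bound $\exp O(\log\Delta)$ with implied constant depending only on $C$. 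In the regime of interest for Assumption~\ref{ass:macro} the potential $V$ (hence $\Delta$) is fixed, so this factor is $O(1)$ and is absorbed into $\exp O(R_N)$.

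For the factors $z^{\zeta_k/\sqrt{2}}/e^{i\zeta_k x_k/\sqrt{2}}=e^{(\zeta_k/\sqrt{2})(\log z - ix_k)}$, I expand $\log z - ix_k = \log|z| + i(\arg z - x_k)$ with $\arg z$ in the branch specified by $(x_k,x_k+2\pi)$. When $k=j$, the condition $\xi_{N,j}(z)\in I$ means $\log(ze^{-ix_j})$ has argument in $(\pi/2,3\pi/4)$ and modulus $\simeq \tfrac{1}{2\Delta}$; this both forces $\arg z-x_j$ to be small and positive (so that $z$ lies on the correct side of the cut at $e^{ix_j}\times[0,\infty)$) and gives $|\log z - ix_j|=O(\Delta^{-1})$, so the $k=j$ factor contributes $\exp O(R_N/\Delta)$. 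When $k\neq j$, $\arg z - x_k\in(0,2\pi)$ is bounded and $\log|z|=O(\Delta^{-1})$, so $|\log z - ix_k|\le 2\pi+o(1)$ and the contribution is $\exp O(R_N)$. Finally, the leftover constant $e^{-i\pi\zeta_k/\sqrt{2}}$ for $k\neq j$ contributes $\exp O(R_N)$ since $|\Im\zeta_k|\le R_N$.

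Combining these estimates yields $|\mathcal D_{\mathrm{in}}(z)\mathcal D_{\mathrm{out}}(z)e^{i\pi\zeta_j/\sqrt{2}}|=\exp O(R_N)$; the reciprocal bound follows by the same argument applied to the inverse, since each bound of the form $\exp O(R_N)$ trivially yields the same bound on the reciprocal. The main subtlety I expect is the branch-cut bookkeeping: one must carefully verify that on $\partial U_{j,\Delta}\cap \xi_{N,j}^{-1}(I)$ the argument of $z$ lies on the correct side of \emph{both} radial cuts emanating from $e^{ix_1}$ and $e^{ix_2}$, so that the chosen branches in the definitions of $\mathcal D_{\mathrm{in}}$ and $\mathcal D_{\mathrm{out}}$ are consistent with the claimed estimates (in particular that $\arg z - x_j$ is genuinely small, rather than close to $2\pi$). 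Once this is in place, the lemma reduces to the direct computation above.
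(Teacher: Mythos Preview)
Your approach is the same as the paper's: compute $\mathcal D_{\mathrm{in}}(z)\mathcal D_{\mathrm{out}}(z)$ directly, observe the cancellation of the singular factors $(z-e^{ix_k})^{\zeta_k/\sqrt 2}$, and bound the remaining pieces separately. Your treatment of the $z^{\zeta_k/\sqrt 2}$ and $e^{i\pi\zeta_k/\sqrt 2}$ factors is correct and in fact more explicit than the paper's.

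The one weakness is your estimate for the potential term. You bound $\big|\sum_{k=1}^\Delta(\widehat V_kz^k-\widehat V_{-k}z^{-k})\big|$ by triangle inequality to get $O(\log\Delta)$, and then argue this is harmless because $\Delta$ is fixed in the macroscopic regime. That does not prove the lemma as stated (the implied constant is claimed to depend only on $C$, not on $\Delta$), and it also fails in the mesoscopic regime $\Delta\sim N^{1-\eta}$ where the lemma is again invoked via \eqref{eq:match2}. The paper avoids this by writing $\widehat z=z/|z|$ and splitting
\[
\sum_{k=1}^\Delta(\widehat V_kz^k-\widehat V_{-k}z^{-k})=\sum_{k=1}^\Delta\big[\widehat V_k(z^k-\widehat z^{\,k})-\widehat V_{-k}(z^{-k}-\widehat z^{\,-k})\big]+\sum_{k=1}^\Delta(\widehat V_k\widehat z^{\,k}-\widehat V_{-k}\widehat z^{\,-k}).
\]
Since $V$ is real-valued, $\widehat V_{-k}=\overline{\widehat V_k}$ and $\widehat z^{\,-k}=\overline{\widehat z^{\,k}}$, so the second sum is purely imaginary and contributes nothing to the modulus of the exponential. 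The first sum is then bounded by $C\sum_{k=1}^\Delta k^{-1}(e^{k/\Delta}-1)=O(1)$ uniformly in $\Delta$. With this refinement your argument proves the lemma in full.
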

\begin{proof}
Note that by definition (namely \eqref{eq:D}), for $t\in[0,1]$, 
\[
\mathcal D_{\mathrm{in}}(z)\mathcal D_{\mathrm{out}}(z)e^{\frac{\pi}{{\sqrt{2}}}i\zeta_j}=e^{t\sum_{k=1}^\Delta(\widehat V_k z^k-\widehat V_{-k}z^{-k})} \frac{z^{\zeta_1/\sqrt{2}} z^{\zeta_2/\sqrt{2}}e^{\frac{\pi}{\sqrt{2}}i\zeta_j}}{e^{i\zeta_1 x_1/\sqrt{2}} e^{i \pi \zeta_1 /\sqrt{2}}e^{i\zeta_2 x_2/\sqrt{2}} e^{i \pi \zeta_2 /\sqrt{2}}} .
\]
If $z\in \partial U_{j,\Delta}$, \eqref{eq:Delta}, and $\zeta_1,\zeta_2 \in \mathcal{D}_{R_N}$, we have for some numerical constant
\[
\bigg| \frac{z^{\zeta_1/\sqrt{2}} z^{\zeta_2/\sqrt{2}}e^{\frac{\pi}{\sqrt{2}}i\zeta_j}}{e^{i\zeta_1 x_1/\sqrt{2}} e^{i \pi \zeta_1 /\sqrt{2}}e^{i\zeta_2 x_2/\sqrt{2}} e^{i \pi \zeta_2 /\sqrt{2}}}\bigg|^{\pm1}
=  \exp O(R_N).  
\]

Thus it remains to control the $V$-term. For this, we note that if we write $\widehat z=\frac{z}{|z|}$, then 
\[
\sum_{k=1}^\Delta(\widehat V_k z^k-\widehat V_{-k}z^{-k})=\sum_{k=1}^\Delta [\widehat V_k (z^k-\widehat z^k)-\widehat V_{-k}(z^{-k} -\widehat z^{-k})]+\sum_{k=1}^\Delta (\widehat V_k \widehat z^k-\widehat V_{-k}\widehat z^{-k}).
\]
Since $V$ is real-valued $\widehat V_{-k}=\overline{\widehat V_k}$, and since $\overline{\widehat z}=\widehat{z}^{-1}$, we see that
\[
\sum_{k=1}^\Delta (\widehat V_k \widehat z^k-\widehat V_{-k}\widehat z^{-k})\in i\R.
\]
Thus its contribution to the exponential has size 1. 
We turn to bound for $z\in  U_{\Delta,j}$, 
\[
\sum_{k=1}^\Delta \widehat V_k (z^k-\widehat{z}^k) =  \sum_{k=1}^\Delta\widehat V_k  \widehat{z}^k \big(|z|^k -1\big).
\]
The same argument will work for the sum with $k$ replaced by $-k$.
Then, by \eqref{pot}, using that $|z|^k \le e^{k/\Delta}$ for $z\in \partial U_{\Delta,j}$,
we have
\begin{align*}
\left|\sum_{k=1}^\Delta \widehat V_k (z^k-\widehat{z}^k)\right|
\leq C\sum_{k=1}^\Delta \frac{1}{k}\left( e^{k/\Delta}-1\right) =O(1).
\end{align*}
where the implied constant only depends on $C$ in \eqref{pot}. 
This concludes the proof.
\end{proof}

In particular, by \eqref{eq:match1} and Lemma~\ref{le:Dest}, we conclude that there is a fixed $\alpha>0$ ($\alpha=\eta$ if $R_N=R$ is fixed and $\alpha<1$ if $R_N = o(\log N)$ and $\Delta$ is fixed; see \eqref{eq:Delta})
such that uniformly for $z\in \cup_{j=1}^2 \partial U_{\Delta,j}$, 
\begin{equation}\label{eq:match2}
P(z)\mathcal N(z)^{-1}=I+O(N^{-\alpha})
\end{equation}
uniformly in all the relevant parameters ($t\in [0,1]$, $x_1,x_2\in\T$ with  $|e^{ix_1}-e^{ix_2}|\ge \Delta^{-1}$,  $\zeta_1,\zeta_2 \in  \mathcal{D}_{R_N}$ with $R_N =o(\log N)$ and $\Delta \le  C N^{\eta-1}$).

While we will need the more precise matching condition \eqref{eq:match1} for parts of our argument (in Section~\ref{sec:Rmacro}), this is already sufficient for us to discuss the ``small norm analysis".

\subsection{The small norm problem} \label{sec:R}

We now briefly review some basic facts about the analysis of ``small norm" Riemann-Hilbert problems. For details (which we omit), we refer the reader to e.g. \cite[Section 7.2]{DKMLVZ} and \cite[Theorem 3.1 and Section 9]{Kuijlaars}. There are several key underlying ideas, and we will not go into detail about them. 

\begin{figure}
\begin{center}
\begin{tikzpicture}[xscale=0.02,yscale=0.02]
\clip (-135,-135) rectangle (170,135); 

\fill [color=black] (0,0) circle (3);

\draw [black,thick,domain=45:315] plot ({80*cos(\x)}, {80*sin(\x)});
\draw [black,thick,domain=42:318] plot ({120*cos(\x)}, {120*sin(\x)});

\draw [black,thick,domain=-15:15] plot ({80*cos(\x)}, {80*sin(\x)});
\draw [black,thick,domain=-19:19] plot ({120*cos(\x)}, {120*sin(\x)});

\fill [color=black] (86,50) circle (3);
\fill [color=black] (86,-50) circle (3);

\node at (-10,0) {\small $0$};
\node at (90,65) {\small $e^{i\theta_2}$};
\node at (90,-65) {\small $e^{i\theta_1}$};

\draw [black,thick,domain=0:360] plot ({86+30*cos(\x)}, {50+ 30*sin(\x)});
\draw [black,thick,domain=0:360] plot ({86+30*cos(\x)}, {-50+ 30*sin(\x)});

\draw [-{Stealth[length=3mm]}] (120,0) -- (120,0.1);
\draw [-{Stealth[length=3mm]}] (80,0) -- (80,0.1);
\draw [-{Stealth[length=3mm]}] (-120,0) -- (-120,-0.1);
\draw [-{Stealth[length=3mm]}] (-80,0) -- (-80,-0.1);

\draw [-{Stealth[length=3mm]}] (110,68) -- (110.1,67.9);
\draw [-{Stealth[length=3mm]}] (115,-62.9) -- (115,-63);

\end{tikzpicture}
\end{center}
\vspace{-0.3cm}
\caption{An illustration of the jump contour $\Gamma_\mathcal R$.}
\label{fig:L3}
\end{figure}
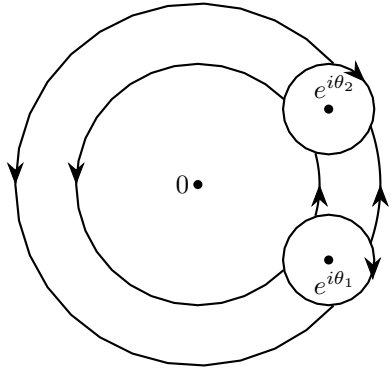

Our starting point is to define 
\begin{equation} \label{eq:R}
\mathcal R(z)=\begin{cases}
S(z)\mathcal N(z)^{-1}, & z \in \C  \setminus  \cup_{j=1}^2 U_{\Delta,j}\\
S(z)P(z)^{-1}, & z\in  \operatorname{int}\big(\cup_{j=1}^2 U_{\Delta,j}\big)
\end{cases} 
\end{equation}
and the contour  
$\Gamma_\mathcal R=\partial \widetilde L \, \cup_{j=1}^2 \partial U_{\Delta,j}$ (see Figure~\ref{fig:L3}),
where both circles are oriented in a clockwise manner. 
By construction, $S$ is a solution of Problem~\ref{pb:S} if and only if $\mathcal R$ satisfies the following Riemann-Hilbert problem; 

\begin{problem}\label{pb:R}
 let  $\Gamma_\mathcal R^\circ$ denote $\Gamma_\mathcal R$ without the self-intersection points. 
Let $\mathcal R = \mathcal R_{n}$ for $n\in\N$ solve;
\begin{enumerate}
\item  $\mathcal R:\C\setminus \Gamma_\mathcal R\to \C^{2\times 2}$ is analytic.
\item $\mathcal R$ has continuous boundary values on $\Gamma_\mathcal R^\circ$, denoted by $\mathcal R_+,\mathcal R_-$  which satisfy
\begin{align*}
\mathcal R_+(z) &= \mathcal R_-(z)
\begin{pmatrix}
1 & 0\\
z^n f(z)^{-1} & 1
\end{pmatrix}, & z\in \partial \widetilde L \cap \{|z|<1\}\setminus \cup_{j=1}^2  \{|z-e^{ix_1}|=\tfrac{1}{2\Delta}\} \\ 
\mathcal R_+(z) &= \mathcal R_-(z) 
\begin{pmatrix}
1 & 0\\
z^{-n} f(z)^{-1} & 1
\end{pmatrix}  , & z\in \partial \widetilde L \cap \{|z|>1\}\setminus \cup_{j=1}^2  \{|z-e^{ix_1}|=\tfrac{1}{2\Delta}\} \\ 
\mathcal R_+(z) &= \mathcal R_-(z) P(z)\mathcal N(z)^{-1} , 
& z\in \cup_{j=1}^2  \{|z-e^{ix_1}|=\tfrac{1}{2\Delta}\}
\end{align*} 
\item $\mathcal R(z)=I+O(z^{-1})$ as $z\to \infty$.
\item  $\mathcal R(z)=O(1)$ as $z\to \Gamma_\mathcal R$.
\end{enumerate}
\end{problem} 

We now argue (mainly by referring to the literature) that this problem can be solved (uniquely) by a Neumann series if $N$ is large enough. 
Importantly, this will establish that Problem~\ref{pb:S} also has a solution for large enough $N$. Thus, this yields that $Y$ exists (without assuming a priori that Problem~\ref{pb:Y} admits a solution) and it provides the asymptotics of this solution as $N\to\infty$ (with the required uniformity). 
Moreover, we recall that this solution is unique and given explicitly by \eqref{eq:Ydef},  although we will not need this fact. 

\medskip 

Since the uniformity is different in cases of Assumptions~\ref{ass:meso} and~\ref{ass:macro}, we have to treat these situations separately. Let us first consider the case of Assumption \ref{ass:meso}. 

\begin{lemma}\label{le:mesoR}
Fix a (small) $\eta>0$ and assume that $\widehat{V}$ satisfies \eqref{pot}. Let also $\alpha>0$ be as in \eqref{eq:match2}.
The (unique) solution $\mathcal R = \mathcal R_{N}$ of Problem~\ref{pb:R} satisfies uniformly for $x_1,x_2\in\T$ with  $|e^{ix_1}-e^{ix_2}|\ge \Delta^{-1}$, 
$\zeta_1,\zeta_2 \in \mathcal{D}_{R_N}$, $t\in [0,1]$ and locally uniformly for  $z\in \big\{z\in \C: \dist(z,\Gamma_\mathcal R)\geq \frac{1}{\Delta}\big\}$, 
\begin{equation*} 
\mathcal R(z)=I+O\big(N^{-\alpha}\big)
\end{equation*}
and 
\begin{equation} \label{bdR2}
\mathcal R'(z)=O\big(N^{-\alpha} \max_{j=1,2}|z-e^{ix_j}|^{-1}\big).
\end{equation}
In particular, there exists a $N_{R,\alpha}\in\N$ such that for $N\geq N_{R,\alpha}$, the solution $Y= Y_N$ of Problem~\ref{pb:Y} exists.
\end{lemma}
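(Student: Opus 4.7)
The plan is to apply the standard small-norm Riemann--Hilbert framework to Problem~\ref{pb:R}: I will show that the jump matrix $J_{\mathcal R} = I + \Delta_{\mathcal R}$ is uniformly close to the identity on $\Gamma_{\mathcal R}$, so that the associated Cauchy singular integral equation on $L^2(\Gamma_{\mathcal R})$ can be solved by a convergent Neumann series. Existence of $\mathcal R$ will then imply existence of $S$, and hence of $Y$, via the explicit transformations \eqref{eq:R} and \eqref{eq:S}, while the Cauchy integral representation will furnish the pointwise bounds on $\mathcal R$ and $\mathcal R'$.

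\textbf{Bounding the jump.} I will split $\Gamma_{\mathcal R}$ into the two small circles $\partial U_{\Delta,1}\cup \partial U_{\Delta,2}$ and the remaining arcs of $\partial \widetilde{L}$. On the circles, the matching estimate \eqref{eq:match2} directly gives $|\Delta_{\mathcal R}| = O(N^{-\alpha})$, uniformly in $\zeta_1,\zeta_2 \in \mathcal{D}_{R_N}$, $t\in[0,1]$, and $x_1,x_2 \in \T$ with $|e^{ix_1}-e^{ix_2}| \ge \Delta^{-1}$. On the lens contour, the jump matrix differs from the identity only in one off-diagonal entry of modulus $|z^{\pm N} f(z)^{-1}|$. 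Since $|z| = 1 \mp \Delta^{-1}/4$ on these arcs, $|z|^{\mp N} \le e^{-N/(5\Delta)}$ for $N$ large, and writing $f = \mathcal D_{\mathrm{in}}\,\mathcal D_{\mathrm{out}}^{-1}$ together with \eqref{Dest} gives $|f(z)|^{\pm 1} \le \exp O((\log N)^2)$ uniformly on $\partial \widetilde{L}$ (here it is essential that $R_N = o(\log N)$ and that the Laurent coefficients of $\widehat{V}$ decay like $1/k$, cf.~\eqref{pot}). Combining these estimates and using $\Delta \le CN^{1-\eta}$, one obtains $|\Delta_{\mathcal R}| = O(e^{-N^\eta/6})$ on $\partial \widetilde L$, which is negligible compared with the circle contribution. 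Hence $\|\Delta_{\mathcal R}\|_{L^\infty(\Gamma_{\mathcal R})\cap L^2(\Gamma_{\mathcal R})} = O(N^{-\alpha})$.

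\textbf{Neumann series and Cauchy estimates.} By standard small-norm theory (e.g.~\cite[\S 7.2]{DKMLVZ}, \cite[\S 9]{Kuijlaars}), the operator $\phi \mapsto C_-(\phi\,\Delta_{\mathcal R})$ on $L^2(\Gamma_{\mathcal R})$ then has norm $O(N^{-\alpha})$, so for $N \ge N_{R,\alpha}$ the equation $(I - C_{\Delta_{\mathcal R}})(\mu - I) = C_-(\Delta_{\mathcal R})$ has a unique solution with $\|\mu - I\|_{L^2} = O(N^{-\alpha})$, delivering existence and uniqueness of $\mathcal R$; undoing \eqref{eq:R} and \eqref{eq:S} produces $Y$. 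The representation
\[
\mathcal R(z) - I = \frac{1}{2\pi i}\int_{\Gamma_{\mathcal R}} \frac{\mu(w)\,\Delta_{\mathcal R}(w)}{w - z}\, dw
\]
combined with Cauchy--Schwarz yields $\mathcal R(z) = I + O(N^{-\alpha})$ whenever $\dist(z,\Gamma_{\mathcal R}) \ge 1/\Delta$. Differentiating under the integral, the dominant contribution to $\mathcal R'(z)$ comes from the circle $\partial U_{\Delta,j}$ closest to $z$ and is bounded by $\|\Delta_{\mathcal R}\|_\infty \cdot |\partial U_{\Delta,j}|/\dist(z,\partial U_{\Delta,j})^2 \lesssim N^{-\alpha}/(\Delta\,|z - e^{ix_j}|^2) \le N^{-\alpha}/|z - e^{ix_j}|$, which holds precisely because $|z - e^{ix_j}| \ge 1/\Delta$; the lens contribution is exponentially small. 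This gives \eqref{bdR2}.

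\textbf{Main obstacle.} The only non-routine ingredient is the uniform $L^\infty$ control of $f^{\pm 1}$ on the lens contour when $\zeta_1,\zeta_2$ are allowed to grow mildly with $N$, which is exactly the content of \eqref{Dest} (and, in the macroscopic regime treated later, Lemma~\ref{le:Dest}); once this input is in hand, the remainder is a standard application of the small-norm framework.
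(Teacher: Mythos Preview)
Your proposal is correct and follows essentially the same route as the paper: bound $J_{\mathcal R}-I$ on the small circles via the matching condition \eqref{eq:match2} and on the lens arcs via the exponential decay of $z^{\pm N}$ together with \eqref{Dest}, then invoke the standard small-norm machinery (Neumann series for the associated singular integral equation) to obtain existence and the pointwise bound on $\mathcal R-I$, and finally recover $Y$ by undoing \eqref{eq:R}--\eqref{eq:S}. The only cosmetic difference is in the derivative estimate: the paper applies Cauchy's integral formula on a small circle of radius $\tfrac{1}{2\Delta}$ around $z$ and feeds in the bound already obtained for $\mathcal R-I$, whereas you differentiate the Cauchy representation directly and estimate the contribution of the nearest $\partial U_{\Delta,j}$; both arguments are standard and lead to the same bound \eqref{bdR2}.
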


\begin{proof}
This is very standard in the Riemann-Hilbert literature, so we will simply refer the reader to the relevant references on various points. As a general reference, see e.g. \cite[Section 7.2]{DKMLVZ}, and for something closer to our setting, see \cite[Section 9]{NSW}.

First of all, the jump condition for $\mathcal R$ can be written as 
\begin{equation}\label{eq:Rjumpdif}
\mathcal R_+(z)-\mathcal R_-(z) =\mathcal R_-(z)(J_\mathcal R(z)-I), \qquad z\in \Gamma_R^\circ , 
\end{equation}
where the jump matrix satisfies, by \eqref{eq:match2}, $J_\mathcal R(z)-I=O(N^{-\alpha})$ for $z\in \cup_{j=1}^2 \partial U_{\Delta,j}$. 
On the remainder of $\Gamma_R^\circ$, by \eqref{eq:L}, $|z|=1\pm \frac{1}{4\Delta}$ so that $|z^{\mp N}|=O(e^{-c N\Delta^{-1}})$ for some universal constants $c>0$.  
Hence, by \eqref{Dest}, one readily checks that $J_{\mathcal R}(z)-I=O(e^{-cN^\eta})$ for a fixed constant $c>0$ on $\partial\widetilde L$.
In particular, this quantity is negligible.

Using the conditions from Problem~\ref{pb:R}, one can use the Sokhtoski-Plemelj identity to solve \eqref{eq:Rjumpdif} in terms of $\mathcal R_-$; for $z\in \C \setminus \Gamma_\mathcal R$,
\begin{equation} \label{Rsol}
\mathcal R(z)=I+\frac{1}{2\pi i}\int_{\Gamma_\mathcal R}\mathcal R_-(w)(J_\mathcal R(w)-I)\frac{dw}{w-z}.
\end{equation}
Taking the boundary values of this equation from the $-$-side, this implies that $\mathcal R_-$ satisfies the singular integral equation
\[
\mathcal R_-=I+ \mathcal C_{-}(\mathcal R_-(J_\mathcal R-I))=:I+ \mathcal C_\Delta \mathcal R_-,
\]
where $\mathcal C_-$ denotes the boundary values from the $-$-side of the Cauchy integral operator associated with $\Gamma_\mathcal R$. We rewrite this as 
\[
(I- \mathcal C_\Delta)(\mathcal R_--I)= \mathcal C_\Delta I.
\]
The main point is that the operator $I- \mathcal C_\Delta$ on $L^2(\Gamma_\mathcal R,\C^{2\times 2})$ is invertible.
This follows from the relationship between $\mathcal C_\Delta$ and the weighted Hilbert transform on $\Gamma_\mathcal R$, so one can control the norm of the operator $\mathcal C_\Delta$ in order to invert $I- \mathcal C_\Delta$ via Neumann-series techniques. To be more specific, $\mathcal C_-$ is a bounded operator on $L^2(\Gamma_\mathcal R,\C^{2\times 2})$ whose norm is uniformly bounded in $\Delta$ (which depends on $N$ and control the other relevant parameters) -- this relies on a celebrated result of David combined with a simple argument allowing for moving contours (see \cite[Lemma 9.2]{NSW} for more details). Since we have established that $J_\mathcal R-I=O(N^{-\alpha})$ on  $\Gamma_\mathcal R^\circ$, this implies that the operator norm of $\| \mathcal C_\Delta\| = O(N^{-\alpha})$ too.
Hence, $I-\mathcal C_\Delta$ is invertible for large enough $N$.

This allows writing on $ \Gamma_\mathcal R$, 
\[
\mathcal R_-=I+((I-\mathcal C_\Delta)^{-1} \mathcal C_\Delta I) ,
\]
which is uniformly bounded by a numerical constant (for $N$ large enough)
and then to bound $\mathcal R$ using formula \eqref{Rsol}; we conclude that for $z\in \C \setminus \Gamma_\mathcal R$,
\begin{equation} \label{Rbound}
\big\| \mathcal R(z) - I \big\|
\le \int_{\Gamma_{\mathcal R}}\frac{1}{|w-z|}\|J_\mathcal R(w)-I\||dw|,
\end{equation}
where $\|\cdot \|$ is any suitable matrix norm, and $|dw|$ means integration with respect to arc-length measure.
Then, we have seen that $\|J_\mathcal R(w)-I\|= O(N^{-\alpha})$ for $w\in\partial U_{j,\Delta}$, 
so that if $\dist(z,\Gamma_\mathcal R)\geq \frac{1}{2\Delta}$, 
\[
\int_{\partial U_{j,\Delta}}\frac{1}{|w-z|}\|J_\mathcal R(w)-I\||dw|
=O\big(N^{-\alpha}\big) . 
\]
On the remainder of $\Gamma_\mathcal R$, we have seen that $\|J_\mathcal R-I\|=O(e^{-cN^\eta})$, so a similar bound shows that the integral on $\partial\widetilde L$ is negligible as $N\to\infty$. This yields the first estimate. 

Then, to prove the second, one can use that $\mathcal R$ is analytic on $\C\setminus \Gamma_\mathcal R$, by Cauchy integral formula, 
\[
\mathcal R'(z) =\int_{\gamma_z}\frac{\mathcal R(w)-I}{(w-z)^2}d w,
\]
where $\gamma_z = \{|w-z| =  \frac{1}{2\Delta} \}$ is a (simple) loop  around $z$ (not intersecting $\Gamma_\mathcal R$ since we assume that $\dist(z,\Gamma_\mathcal R)\geq \frac{1}{\Delta}$). Running a similar argument again (the main contributions coming from $\partial U_{j,\Delta}$), using the previous estimate for $\|\mathcal R-I\|$ instead, this yields uniformly for $\dist(z,\Gamma_\mathcal R)\geq \frac{1}{\Delta}$, 
\[
\mathcal R'(z) =O(\Delta \|\mathcal R-I\|) = O\big(N^{-\alpha} \max_{j=1,2}|z-e^{ix_j}|^{-1}\big) , 
\]
also uniformly in the other relevant parameters.

\medskip

This argument does not only provide the relevant asymptotics, it also shows that Problem~\ref{pb:R} has a unique solution $\mathcal R = \mathcal R_{N}$ if $N\geq N_{R,\alpha}$. 
Hence, by inverting the transformations \eqref{eq:R} and \eqref{eq:S}, one constructs a solution $Y=Y_N$ to the Riemann-Hilbert problem~\ref{pb:Y}.
\end{proof}

In the setting of Assumption \ref{ass:macro}, the corresponding statement is as follows.
Recall that $\mathcal{A}_c\subset \big\{(x,y)\in \T^2: |e^{ix}-e^{iy}| \ge c  \big\}$ for a fixed (small) $c>0$. 

\begin{lemma}\label{le:macroR}
Let $\Delta\ge c^{-1}$ be fixed and assume that $R_N=o(\log N)$ as $N\to\infty$.
For any $\epsilon>0$, the (unique) solution $\mathcal R = \mathcal R_{N}$ of the Problem~\ref{pb:R} satisfies uniformly for $\zeta_1,\zeta_2\in \mathcal D_{R_N}$, $x_1,x_2\in \mathcal{A}_c$, $t\in [0,1]$, and  $z\in\C\setminus \Gamma_R$,
\[
\mathcal R(z)=I+\frac{1}{N}\sum_{j=1}^2\bigg(\frac{A_j}{z-e^{ix_j}}-\Theta_j(z) \1\{|z-e^{ix_j}|<\Delta^{-1}\}\bigg) +O(N^{\epsilon-2})
\]
and 
\[
\mathcal R'(z)
=-\frac{1}{N}\sum_{j=1}^2 \bigg(\frac{A_j}{(z-e^{ix_j})^2}-\Theta_j'(z)\1\{|z-e^{ix_j}|<\Delta^{-1}\} \bigg) +O(N^{\epsilon-2}) 
\]
Here, $\Theta_j$ is a meromorphic function in $U_{j,\Delta}$ with a simple pole at $e^{ix_j}$ (it is explicit and depends on $N$ -- see the proof) and $A_j$ is another explicit quantity (see e.g. \cite[(4.71)]{DIK}).

In particular, there exists a $N_{c}\in\N$ such that for $N\geq N_{c}$, the solution $Y= Y_N$ of Problem~\ref{pb:Y} exists.
\end{lemma}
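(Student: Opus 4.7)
The strategy is to refine the Neumann-series argument from Lemma~\ref{le:mesoR}, now retaining the explicit $O(1/N)$ contribution to $\mathcal{R}-I$ rather than absorbing it into the error. Since $\Delta$ is fixed, the jump across $\partial\widetilde L$ is still of size $e^{-cN}$ and therefore negligible. On $\bigcup_j\partial U_{\Delta,j}$ one has $|\xi_{N,j}(w)|\asymp N$, and the matching condition \eqref{eq:match1} together with Lemma~\ref{le:Dest} provides the refined expansion
\begin{equation*}
J_\mathcal{R}(w)-I = -\frac{1}{N}\Theta_j(w) + O(N^{\epsilon-2}), \qquad w\in\partial U_{\Delta,j},
\end{equation*}
where $\Theta_j$ is the explicit meromorphic function on $U_{\Delta,j}$ obtained by collecting the coefficients of $1/\xi_{N,j}$ in \eqref{eq:match1}; the simple pole at $e^{ix_j}$ comes from $\xi_{N,j}(z)\sim Ne^{-ix_j}(z-e^{ix_j})$. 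The error absorbs the remainder $|\zeta_j|^4/\xi_{N,j}(w)^2$ together with the prefactor $|\mathcal{D}_{\mathrm{in}}\mathcal{D}_{\mathrm{out}}e^{i\pi\zeta_j/\sqrt 2}|^{\pm 1}=e^{O(R_N)}=N^{o(1)}$ supplied by Lemma~\ref{le:Dest}, which is exactly where the hypothesis $R_N=o(\log N)$ and the exponent $N^{\epsilon-2}$ enter.

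I would then run the standard small-norm analysis. The operator norm of $\mathcal{C}_\Delta$ on $L^2(\Gamma_\mathcal{R},\mathbb{C}^{2\times 2})$ is $O(N^{\epsilon-1})$, so $(I-\mathcal{C}_\Delta)^{-1}$ exists for $N\geq N_c$ and $\mathcal{R}_-=I+O(N^{\epsilon-1})$ on $\Gamma_\mathcal{R}$. Substituting into the Sokhotski--Plemelj representation, the negligible portions of the jump contour drop out and one obtains
\begin{equation*}
\mathcal{R}(z) = I - \frac{1}{2\pi i N}\sum_{j=1}^{2}\oint_{\partial U_{\Delta,j}}\frac{\Theta_j(w)}{w-z}\,dw + O(N^{2\epsilon-2})
\end{equation*}
uniformly in the relevant parameters. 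A residue calculation now evaluates each integral: since $\Theta_j$ is meromorphic in $U_{\Delta,j}$ with a single simple pole at $e^{ix_j}$, the clockwise contour picks up only the residue at $e^{ix_j}$ when $z\notin U_{\Delta,j}$, and both residues (at $e^{ix_j}$ and at $z$) when $z\in U_{\Delta,j}$. Setting $A_j:=-\operatorname{Res}_{w=e^{ix_j}}\Theta_j(w)$ (which matches the $A_j$ of \cite[(4.71)]{DIK}) gives precisely the stated structure $A_j/(z-e^{ix_j})-\Theta_j(z)\mathbf{1}_{|z-e^{ix_j}|<\Delta^{-1}}$. Differentiating under the integral sign and applying the same residue computation (now with $-2\pi i\,\Theta_j'(z)$ in the interior case) yields the corresponding expansion for $\mathcal{R}'(z)$. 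The existence of $Y_N$ for large $N$ follows immediately from the invertibility of $I-\mathcal{C}_\Delta$ and the inversion of the transformations \eqref{eq:S} and \eqref{eq:R}.

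The main obstacle is ensuring that the matching error remains of strictly smaller order than the explicit $O(1/N)$ term while $|\Im\zeta_j|$ is allowed to grow. The critical quantity is the $e^{O(R_N)}$ prefactor in the off-diagonal entries of \eqref{eq:match1}: absent the hypothesis $R_N=o(\log N)$, this factor would degrade the matching remainder and prevent clean separation of the leading term from the error. Lemma~\ref{le:Dest} is what controls this, and once that estimate is combined with the bound $|\zeta_j|^4/|\xi_{N,j}(w)|^2=O(R_N^4/N^2)$, the Neumann-series bookkeeping is essentially identical to that in Lemma~\ref{le:mesoR}, only with one extra term kept explicitly.
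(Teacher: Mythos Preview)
Your proposal is correct and follows essentially the same route as the paper: isolate the $O(1/N)$ piece of $J_{\mathcal R}-I$ on $\partial U_{\Delta,j}$ via \eqref{eq:match1} and Lemma~\ref{le:Dest}, run the small-norm argument, and evaluate the resulting Cauchy integrals by residues (picking up the extra residue at $w=z$ inside $U_{\Delta,j}$). The only cosmetic difference is that the paper obtains the $\mathcal R'$ expansion via the Cauchy integral formula rather than differentiating under the integral sign, and writes the error as $O(N^{\epsilon-2})$ rather than $O(N^{2\epsilon-2})$, which is immaterial since $\epsilon>0$ is arbitrary.
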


\begin{proof}
The proof follows along the same lines as that of Lemma \ref{le:mesoR}, so we also give few details. 
The key point is again estimating the jump matrix $J_\mathcal R-I$. 

By \eqref{eq:match1}, using that $|\zeta_j| \le R_N$ and \eqref{xiest} with $
\eta=1$ (here, we assume that $\Delta$ is fixed), for any $\epsilon>0$, it holds for $z\in \partial U_{j,\Delta}$ with say $\xi_{N,j}(z)\in I$,
\begin{align*}
J_{\mathcal R}(z)-I&= \frac 1N \begin{pmatrix}
-\frac{\zeta_j^2}{2N\xi_{N,j}(z)} & \mathcal D_{\rm in}(z)\mathcal D_{\rm out}(z)e^{iNx_j} e^{\frac{\pi}{\sqrt{2}}i\zeta_j}\frac{\zeta_j}{\sqrt{2}N\xi_{N,j}(z)}\\
-\frac{e^{-iNx_j} e^{-\frac{\pi}{\sqrt{2}}i\zeta_j}}{\mathcal D_{\rm in}(z)\mathcal D_{\rm out}(z)}\frac{\zeta_j}{\sqrt{2}N\xi_{N,j}(z)} & \frac{\zeta_j^2}{2N\xi_{N,j}(z)}
\end{pmatrix} +O(N^{\epsilon-2}) .
\end{align*}
Note that we used Lemma~\ref{le:Dest} and the assumption $R_N=o(\log N)$ to control the error term (uniformly in all the relevant parameters).
We denote the main term by $\Theta_j(z)$, it is basically of order 1 and according to \eqref{eq:D}, 
\begin{align*}
\mathcal D_{\rm in}(z)\mathcal D_{\rm out}(z)&=e^{t\sum_{j=1}^\Delta(\widehat{V}_{j}z^j-\widehat{V}_{-j}z^{-j})}(ze^{-ix_1})^{\zeta_1/\sqrt{2}}(ze^{-ix_2})^{\zeta_2/\sqrt{2}}e^{-i\pi \zeta_1/\sqrt{2}}e^{-i\pi\zeta_2/\sqrt{2}},
\end{align*}
so that this quantity is analytic in a neighborhood of $e^{ix_j}$.
Hence, if $\Delta$ is sufficiently small (see \eqref{eq:Delta}), $\Theta_j(z)$ is meromorphic in $U_{j,\Delta}$ with a simple pole at $z=e^{ix_j}$ coming from the local coordinate $\xi_{N,j}$. 

On the remainder of the jump contour, by \eqref{eq:f} and a similar argument, there is a constant  $c>0$ depending on $\Delta$ so that 
\[
\|J_{\mathcal R}(z)-I\|=O(|f(z)^{-1}||z|^{\pm N}) = O(e^{-c N})
\]
with the sign depending on whether $|z|<1$ or $|z|>1$.

Thus, the same argument used to control \eqref{Rbound} applies and we find that if  say $\dist(z,\Gamma_\mathcal R)>\frac{1}{2\Delta}$, 
\[
\mathcal R(z)=I+\sum_{j=1}^2\frac{1}{N}\int_{\partial U_{j,\Delta}}\frac{\Theta_j(w)}{w-z}\frac{dw}{2\pi i} + O(N^{\epsilon-2})
\]
with the required uniformity. 
To compute this integral, one needs to evaluate the residue of $\Theta_j$ at the simple pole $w=e^{ix_j}$. 
This is done in detail in \cite[Section 4.3]{DIK}, leading to the quantity $A_j$ and we omit further details. 
Note that in case  $z\in\mathrm{int}(U_{j,\Delta})$, there is an extra residue at $w=z$ which is given by  $\Theta_j(z)$.
We conclude by mentioning that the estimates can be extended near the contour $ \Gamma_R$ by the standard contour deformation argument (see \cite[Section 7]{DKMLVZ}).
The estimate for the derivative of $\mathcal R$ is obtained using the  Cauchy integral formula and the claim about $Y$ is obtained exactly as in the proof of Lemma~\ref{le:mesoR} with $\Delta$ fixed. This concludes the proof.  
\end{proof}

Now that we have good asymptotics for the function $\mathcal R$, we can proceed to integrate the differential identities of Lemmas~\ref{le:di2} and~\ref{le:di} to verify our assumptions.


{
\subsection{Verification of Assumption \ref{ass:meso} (2)} \label{sec:Rmeso}
The claim is already known if $\xi=0$; this is a combination of \cite[Theorem 1.11]{CK} (which holds if $|e^{ix_1}-e^{ix_2}|<\epsilon$ for some fixed $\epsilon>0$) and \cite[Theorem 1.1]{DIK2} (see \cite[Remark 1.4]{DIK2} for a comment on uniformity). Together, these two results exactly state that with $\Psi$ as in Lemma~\ref{le:morris}, 
\begin{equation} \label{2pt}
\Psi_{N}^{(\zeta_1,\zeta_2)}(x_1, x_2)  = \Psi(\zeta_1) \Psi(\zeta_2) \big(1+\underset{N\to\infty}{o(1)} \big)
\end{equation}
uniformly for $\zeta_1,\zeta_2 \in \mathcal{D}_R$, $x_1,x_2 \in \T$ with $|x_1-x_2| \ge N^{\eta-1}$. 
Note that by  \eqref{mom1} and  \eqref{HS}, the LHS equals
\(
D_{N}(F_0)  \exp\big(-\tfrac{\zeta_1^2+ \zeta_2^2}{2} \log N-\zeta_1\zeta_2 C_\X(x_1,x_2)\big)
\).
\smallskip

We now extend these asymptotics to $\xi \neq 0$ fixed by using \eqref{HS} and integrating the differential identity of Lemma~\ref{le:di2}. 
In this case, the potential $V$ is given by \eqref{def:V} where for $\delta\in(0,1]$,  $e^{i\theta} \mapsto C_{\X,\delta}(\theta,x)$ is a trigonometric polynomial as in Lemma~\ref{lem:covcue}.
In particular, the assumptions \eqref{pot}--\eqref{eq:Delta} hold if $\Delta= C N^{1-\eta}$ for a $C\ge 1$. 
There is a standard analyticity argument that we omit and refer instead to e.g.~\cite[Section 5.1 and Section 5.3]{DIK2}  (see in particular \cite[(5.23)]{DIK2})
which ensures that the condition $D_n(F_{t})\neq 0$ for all $n\in \N$ from  Lemma~\ref{le:di2} holds for all but finitely many $t\in [0,1]$. 
Then, with $Y=Y_{N,t}$ and $F=F_t$, we have
\[
\log \frac{D_{N}(F_1)}{D_{N}(F_0)}=\frac{1}{2\pi i}\int_0^1 \oint_{\U}z^{-N}(Y(z)^{-1} Y'(z))_{21}V(z)F(z)dzdt.
\]
A straightforward calculation using the jump conditions for $Y$ (Problem~\ref{pb:Y}) implies that for $z\in \U$, 
\[
z^{-N}(Y(z)^{-1}Y'(z))_{21}F(z)=(Y(z)^{-1}Y'(z))_{11,+}-(Y(z)^{-1}Y'(z))_{11,-}.
\]
Thus we can deform our integration contour; let $L_\pm$ be two circles (positively oriented) of radius $1\mp1/\Delta$ so that we can use Lemma \ref{le:mesoR}. 
In particular, we obtain
\begin{align*}
\log \frac{D_{N}(F_1)}{D_{N}(F_0)}=\frac{1}{2\pi i}\int_0^1 \bigg( \oint_{L_+}(Y(z)^{-1} Y'(z))_{11}\widehat{V}(z)dz -  \oint_{L_-}(Y(z)^{-1} Y'(z))_{11}\widehat{V}(z)dz \bigg)dt  
\end{align*}
where 
\[
Y=  \begin{cases}
\mathcal R \mathcal N &\text{on $L_+$} \\
\mathcal R \mathcal N z^{N\sigma_3}&\text{on $L_-$}  
\end{cases} , 
\]
$\mathcal N$ is given by \eqref{eq:global}, and  $\mathcal R$  is as in Lemma \ref{le:mesoR}. 
This implies that there exists $\alpha>0$ so that for $z\in L_+$
\[
(Y(z)^{-1} Y'(z))_{11}=\frac{\mathcal D_{\mathrm{in}}'(z)}{\mathcal D_{\mathrm{in}}(z)}+O\big(N^{-\alpha} \max_{j=1,2}|z-e^{ix_j}|^{-1}\big) .
\]
Note that when the derivative hits $\mathcal R$, we are conjugating the matrix $\mathcal R^{-1}\mathcal R'$ by $\mathcal N$ and this does not affect the diagonal entries, which yields the error term. 
By \eqref{eq:D}, we obtain for $z\in L_+$, 
\[
(Y(z)^{-1} Y'(z))_{11}=
t\sum_{k=1}^\Delta k \widehat{V}_k  z^{k-1} +\frac{\zeta_1}{\sqrt{2}}\frac{1}{z-e^{ix_1}}+\frac{\zeta_2}{\sqrt{2}}\frac{1}{z-e^{ix_2}} +O\big(N^{-\alpha} \max_{j=1,2}|z-e^{ix_j}|^{-1}\big) 
\]
uniformly in all relevant parameters, $t\in[0,1]$, $z\in L_+$, $\zeta_1,\zeta_2 \in \mathcal{D}_R$, $x_1,x_2 \in \T$ with $|x_1-x_2| \ge N^{\eta-1}$. 
Now, we use Cauchy's theorem to evaluate the resulting integral, 
\[
\frac{1}{2\pi i} \oint_{L_+}(Y(z)^{-1} Y'(z))_{11}\widehat{V}(z)dz 
= t\sum_{k=1}^\Delta k \widehat{V}_k  \widehat{V}_{-k} +O(N^{-\alpha}).
\]
In particular, the residue at $e^{i x_j}$ do not contribute to the integral over $ L_+$ and the error is integrable.
Similar reasoning shows that for $z\in L_-$, 
\[
(Y(z)^{-1} Y'(z))_{11}=
- t\sum_{k=1}^\Delta k\widehat{V}_{-k}  z^{-k-1} -\frac{\zeta_1}{\sqrt{2}}\frac{1}{z-e^{ix_1}}-\frac{\zeta_2}{\sqrt{2}}\frac{1}{z-e^{ix_2}}+\frac{\zeta_1+\zeta_2}{\sqrt{2}}\frac{1}{z}+\frac{N}{z}+O(N^{-\alpha})
\]
so that since $\widehat{V}_0=0$, 
\[
-\frac{1}{2\pi i} \oint_{L_-}(Y(z)^{-1} Y'(z))_{11}\widehat{V}(z)dz 
= t\sum_{k=1}^\Delta k \widehat{V}_k  \widehat{V}_{-k}
+ \tfrac{\zeta_1}{\sqrt{2}}V(x_1)+\tfrac{\zeta_2}{\sqrt{2}} V(x_2)
 +O(N^{-\alpha}).
\]

Putting everything together, this shows that as $N\to\infty$, 
\[
\log \frac{D_{N}(F_1)}{D_{N}(F_0)}=
 \sum_{k=1}^\Delta k \widehat{V}_k  \widehat{V}_{-k}
+ \tfrac{\zeta_1}{\sqrt{2}}V(x_1)+\tfrac{\zeta_2}{\sqrt{2}} V(x_2)
 +O(N^{-\alpha})
\]
with the required uniformity. 
Note that according to formulae \eqref{def:V}--\eqref{Scov}, this implies that 
\[
D_{N}(F_1) = D_{N}(F_0)
\exp\Big( \int \big( \zeta_1 C_\X(x_1,u)  
+ \zeta_2 C_\X(x_2,u) \big) \mathrm{f}_{\delta,z}(u) d u 
+ \frac{\E \langle \X , \mathrm{f}_{\delta,z}  \rangle^2}{2}  +O(N^{-\alpha})\Big) .
\]

Then, according to \eqref{HS} and \eqref{mom1}, we conclude from these asymptotics that as $N\to\infty$
\[
 \Psi_{N,\delta}^{(\zeta_1,\zeta_2,\xi)}(x_1, x_2;z) 
 = \Psi_{N}^{(\zeta_1,\zeta_2)}(x_1, x_2)  \exp\Big( O(N^{-\alpha}) \Big)  
 \]
uniformly for $\zeta_1,\zeta_2 \in \mathcal{D}_R$, $x_1,x_2 \in \T$ with $|x_1-x_2| \ge N^{\eta-1}$, $z\in \T^q$ and $\delta_1,\cdots, \delta_q \ge N^{-1+\eta}$.  

Combining these asymptotics and \eqref{2pt}, this completes the proof of Assumption \ref{ass:meso} (2). \qed
}

{

\subsection{Verification of Assumption \ref{ass:macro} (1)} \label{sec:Rmacro}
Starting from  Lemma \ref{le:morris} (which states that independently of $x_1\in\T$, $\Psi_{N}^{(\zeta_1)}(x_1)= \Psi(\zeta_1)(1+o(1))$ provided that $\zeta_1=o(N^{1/3})$  as $N\to\infty$), we will integrate the differential identity from Lemma \ref{le:di} to obtain an approximation of 
\begin{equation} \label{psi2pt}
\Psi_{N}^{(\zeta_1,\zeta_2)}(x_1, x_2)  =D_{N}(F_0)  \exp\big(-\tfrac{\zeta_1^2+ \zeta_2^2}{2} \log N-\zeta_1\zeta_2 C_\X(x_1,x_2)\big)
\end{equation}
for $\zeta_1,\zeta_2\in \mathcal D_{R_N}$ and $(x_1,x_2) \in \mathcal{A}_c=\big\{(x,y)\in \T^2: |e^{ix}-e^{iy}| \ge c  \big\}$. 
Here, the parameter $\Delta \ge c^{-1}$ is fixed, \eqref{eq:Delta}, so the arguments are exactly as in  \cite{DIK,DIK2} and we will be rather brief with details, just providing the appropriate references and emphasizing the main differences.  

Lemma \ref{le:macroR}  provides the relevant asymptotics of the matrices $Y_{N}, Y_{N+1}$ (using the relationships \eqref{eq:S} and \eqref{eq:R}) and also of $\chi_N$ (through e.g.~formula \eqref{eq:Ydef}\footnote{
According to the Appendix \ref{sec:OP}, it holds for $z\in\U$, 
\(
\chi_NY_{11,N}(z) = p_N(z),
\)
\(
\chi_N^{-1}Y_{22,N+1}(z) = \int_\T \frac{q_{N}(e^{-i\theta})}{z-e^{i\theta}}F_0(\theta)\frac{d\theta}{2\pi},
\)
\(
\chi_N^{-1}Y_{21,N+1}(z)= z^N q_N(z^{-1}) 
\)
and 
\(
\chi_N Y_{12,N}(z)= \int_\T\frac{e^{-iN\theta}p_N(e^{i\theta})}{1-ze^{-i\theta}}F_0(\theta)\frac{d\theta}{2\pi}.
\)
In particular, there is no issue in evaluating these quantities at $e^{i\theta_j}$ in formula \eqref{diff2}. Namely, the previous integrals converge by definition of $F_0$,~\eqref{eq:Vt}.
}); see e.g.~\cite[the proof of Theorem 1.8 in Section 5]{DIK}. 
Moreover, these quantities are analytic for $\zeta_1, \zeta_2 \in \mathcal{D}_\infty$
so we can apply Cauchy's formula to also obtain asymptotics of their derivatives with respect to $\zeta_2$. 
Note that in this case, it is crucial  that the asymptotics of Lemma \ref{le:macroR}  hold up to $o(N)$ since the main term in  formula~\eqref{diff2} involves $N \partial_{\zeta_2} \log(\chi_N)$. 
Regarding the assumptions of Lemma \ref{le:di}, they can also be dispensed with by an analyticity argument as explained in Section~\ref{sec:Rmeso}. 
In fact, repeating (word for word, noting that $\beta_j=0$ in this case) the arguments in the proof of \cite[Proposition 5.1]{DIK2}, one deduce from \eqref{diff2} that if $R_N=o(\log N)$ as $N\to\infty$, for any $\epsilon>0$, 
\begin{align*}
\partial_{\zeta_2}\log D_N(F_0)&=\zeta_2\left(1+\log N+\sqrt{2}\partial_{\zeta_2}\bigg(\frac{\Gamma(1+ \zeta_2/{\sqrt{2}})}{\Gamma(1+\sqrt{2}\zeta_2)}\bigg)\right)-\zeta_1\log |e^{ix_1}-e^{ix_2}|+O(N^{\epsilon-1}) 
\end{align*}
uniformly for $\zeta_1,\zeta_2\in \mathcal D_{R_N}$ and $(x_1,x_2) \in \mathcal{A}_c$ (with $\epsilon$ as in Lemma~\ref{le:macroR}).

To integrate this formula, we use the identity (see the discussion surrounding \cite[(5.27)]{DIK2})
\[
2z\left(1+\partial_z\frac{\Gamma(1+z)}{\Gamma(1+2z)}\right)  = \partial_z\log \frac{G(1+z)^2}{G(1+2z)} = \sqrt2  \partial_\zeta\big( \log \Psi(\zeta)\big)|_{\zeta=\sqrt 2z}
\]
where $\Psi$ is as in Lemma~\ref{le:morris}. 
According to \eqref{psi2pt} and \eqref{mom1}, this implies that  as $N\to\infty$, 
\[
\log\bigg(\frac{\Psi_{N}^{(\zeta_1,\zeta_2)}(x_1, x_2)}{\Psi_{N}^{(\zeta_1)}(x_1)}\bigg)
= \log\Psi(\zeta_2) +O(N^{\epsilon-1}) .
\]
Together with Lemma~\ref{le:morris}, this yields that the asymptotics \eqref{2pt} also hold uniformly $\zeta_1,\zeta_2\in \mathcal D_{R_N}$ and $(x_1,x_2) \in \mathcal{A}_c$. 

Finally, we can use the arguments\footnote{The proof, in particular  Lemma \ref{le:mesoR} applies if  $\Delta$, $\xi\in\C^q$ are fixed and $R_N= o(\log N)$ as $N\to\infty$.} of Section~\ref{sec:Rmeso} to relate 
$\Psi_{N,\delta}^{(\zeta_1,\zeta_2,\xi)}(x_1, x_2;z)$ to $\Psi_{N}^{(\zeta_1,\zeta_2)}(x_1, x_2)$  (or equivalently $D_{N}(F_1)$ to $D_{N}(F_0)$) by integrating the differential identity from Lemma~\ref{le:di2}. 
This completes the proof of Assumption \ref{ass:macro} (1) -- the error being of order $N^{\epsilon-1}$ in this regime. 
}


\appendix
\section{Approximation of distribution functions} \label{sec:Fourier}

In this appendix we consider some general probabilistic methods for estimating the tail of a probability distribution from information about characteristic functions. In the one dimensional case, this is a classical problem where the basic fact we rely on is due to Feller (see \cite[Chap.~XVI.3]{Feller71}). We will also need a multi-dimensional version of the result. 

We fix a small parameter $\mathcal W>0$ and let 
\[
\mathcal{S} = \{z \in\C  :  |\Re z| < \mathcal W\} . 
\]

\subsection{1d case.} \label{sec:1dapprox}
We consider a sequence of (real-valued) random variables $(X_N)_{N\in\N}$ whose distributions depend on an external set of parameters $\lambda\in{\Lambda}$ where for simplicity, ${\Lambda}\subset \R^m$ is {an open} set. {In our applications to $\Psi_N$ from \eqref{mom1} (though this is the multidimensional case we turn to shortly), the parameter $\lambda$ corresponds to the variables $x_i,\delta,\xi$.}   We assume that the characteristic function of $X_N$ satisfies for $\xi\in\R$,
\[
\E[e^{i \xi X_N}] = \psi_{N,\lambda}(i \xi \epsilon_N) e^{-\xi^2/2}
\]
where $\epsilon_N\to0$, $\psi_{N,\lambda}$ are analytic functions in $\mathcal{S}$ such that  $(\lambda,z) \mapsto \psi_{N,\lambda}(z)$ are {locally} bounded and {there exists compacts $A_N\subset \Lambda$ such that}
\begin{equation} \label{ass1}
\psi_{N, \lambda} =  \psi_{\lambda}+o(1)
\text{ locally uniformly in $\mathcal{S}$ and uniformly for $\lambda\in {A_N}$ as } N\to\infty. 
\end{equation}
Of course due to the local uniform convergence, the limit  $\psi_\lambda$ is also analytic in $\mathcal S$ and $\lambda\mapsto \psi_\lambda$ is {bounded}. 
 
Note that we assume that $\psi_{N, \lambda}$ is analytic in the symmetric strip $\mathcal{S}$ while the Laplace transform considered in Section~\ref{sec:gen} (see e.g.~\eqref{mom1}) is assumed to be analytic in $\mathcal{D}_\infty= [0,\sqrt{2d}]\times \R$.
This is not inconsistent because we always apply the results from this section under a measure which is biased by a $e^{\zeta X_N}$ with $\Re \zeta >0$ fixed.
Moreover, while the distinction between $A_N$ and $\Lambda$ may seem irrelevant, we emphasize that in our applications, we are interested in situations where the parameter space may depend on $N$, e.g. $|x-y|\geq N^{-1+\eta}$ as in Assumption \ref{ass:meso}.

In this setting,  we want to obtain a uniform approximation for the distribution functions $F_N$ of  $X_N$ in terms of 
\[
G_{N,\lambda}(x) = \int_{-\infty}^x (1 + \epsilon_N \psi_{N,\lambda}'(0) u) \frac{e^{-u^2/2}}{\sqrt{2\pi}} d u .
\]
This is a perturbation of the distribution function of a standard Gaussian. 
Note that if $N$ is large enough, for any $\lambda\in {A_N}$, $G_{N,\lambda}$ takes values in $[0, 2]$ and $ \|G_{N,\lambda}'\|_{L^\infty} \le  1$. 

Then, it is proved in \cite[Chap.~XVI.3]{Feller71} that  there exists a universal constant $C$ 
so that for any $\varrho>0$, 
\begin{equation} \label{Feller}
\max_\R |F_N-G_N| \le  \int_{-\varrho}^{\varrho} \bigg| \frac{\E[e^{i \xi X_N}] - \widehat{G_N'}(-\xi)}{\xi} \bigg| d\xi + C\varrho^{-1} . 
\end{equation}
To estimate this, note that 
\[
\widehat{G_{N,\lambda}'}(\xi)=(1-i\xi \epsilon_N \psi_{N,\lambda}'(0)\xi)e^{-\frac{\xi^2}{2}}.
\]
Since $\psi_{N,\lambda}(0) =1$ for any $N\in\N$, using our assumption \eqref{ass1}, we see that by Taylor's theorem, for any $R\geq 1$, there is a constant $C_R>0$ so that if  $\lambda\in {A_N}$ and  $|\xi| \le R\epsilon_N^{-1}$, 
\begin{equation} \label{charass}
\bigg| \frac{\E[e^{i \xi X_N}] - \widehat{G_{N,\lambda}'}(-\xi)}{\xi} \bigg| 
\le C_R \epsilon_N^2  |\xi| e^{-\xi^2/2} 
\end{equation}
By choosing  $\varrho =R\epsilon_N^{-1}$, letting $N \to\infty$ and then $R\to\infty$, we conclude from \eqref{Feller} that uniformly for $\lambda\in {A_N}$,
\begin{equation} \label{uni1}
\max_\R |F_N- G_N| = o(\epsilon_N) . 
\end{equation}

\medskip

This type of approximation is directly relevant in our context if $\X_N$ is an approximately Gaussian random variable with variance $\sqrt{\log N}$ to obtain the asymptotics of probability of the form $\P[\X_N \ge \gamma \log N]$ for $\gamma >0$.
 Such approximations are obtained e.g.~in \cite[Chap.~4]{FMN16}. 
However, the (local) uniformity over the various parameters of the distribution is crucial for our applications, so we formulate a general result.  

Below one should think of $\P_{N,\theta}$ being some parametrized family of probability measures giving rise to a family of approximately Gaussian random variables $\X_N=\X_{N,\theta}$.

\begin{lemma} \label{lem:1}
Let $\P_{N,\theta}$ be a sequence of probability measures depending on $N\in\N$ and $\theta \in \Theta$ for some parameter space $\Theta$.  For $\beta>0$, define a new measure $\P_{N,\theta,\beta}$ by $\frac{d \P_{N,\theta,\beta}}{d \P_{N,\theta}} = \frac{e^{\beta \X_N}}{\E_{N,\theta}[e^{\beta \X_N}]}$ and consider the random variable $X_N  = \epsilon_N(\X_N - \gamma \log N)$ with $\epsilon_N = 1/\sqrt{\log N}$.
If the characteristic function $\xi\in\R \mapsto \E_{N,\theta,\beta}\big[e^{i \xi X_N}\big] = $ satisfies \eqref{ass1} with $\lambda = (\gamma,\beta,\theta) \in {A_N}$, a compact subset of $(0,\infty) \times (0,\infty) \times \Theta$, then locally uniformly in $g\in \R$ and uniformly for  $(\gamma,\beta,\theta) \in {A_N}$, 
\[
\P_{N,\theta}[\X_N \ge \gamma \log N+g]   = \frac{\E_{N,\theta}[e^{\beta \X_N}] N^{-\gamma\beta}}{\beta\sqrt{2\pi \log N}} e^{-\beta g}\big( 1+ \underset{N\to\infty}{o(1)} \big) .
\]
\end{lemma}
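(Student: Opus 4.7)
The natural approach is exponential tilting combined with the uniform Gaussian approximation \eqref{uni1}. Writing $\sigma_N = 1/\epsilon_N = \sqrt{\log N}$ and performing the standard change of measure gives
\[
\P_{N,\theta}[\X_N \ge \gamma \log N + g] = \E_{N,\theta}[e^{\beta \X_N}] \cdot N^{-\beta\gamma} \cdot I_N,
\qquad
I_N := \int_{g/\sigma_N}^{\infty} e^{-\beta \sigma_N u}\, dF_N(u),
\]
where $F_N$ denotes the cumulative distribution function of $X_N = \epsilon_N(\X_N - \gamma \log N)$ under the tilted measure $\P_{N,\theta,\beta}$. Thus the task reduces to proving that $I_N = \frac{e^{-\beta g}}{\beta \sqrt{2\pi \log N}}(1+o(1))$ uniformly in $(\gamma,\beta,\theta)\in A_N$ and locally uniformly in $g$.

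The next step is to invoke the hypothesis on the characteristic function of $X_N$. Because the functions $\psi_{N,\lambda}$ are analytic in the fixed strip $\mathcal S$ with $\psi_{N,\lambda}(0)=1$ and locally bounded, Cauchy's integral formula yields uniform control of their derivatives on any smaller compact subset of $\mathcal S$, uniformly in $N$ and in $\lambda\in A_N$. A Taylor expansion then gives the pointwise bound \eqref{charass} uniformly in $\lambda$, and Feller's estimate \eqref{Feller} with the cutoff $\varrho = R\epsilon_N^{-1}$ upgrades this to the uniform approximation \eqref{uni1}: that is, $\sup_{x\in\R} |F_N(x) - G_{N,\lambda}(x)| = o(\epsilon_N)$ uniformly for $\lambda\in A_N$, where $G_{N,\lambda}$ has the explicit density $g_{N,\lambda}(u) = (1 + \epsilon_N \psi'_{N,\lambda}(0) u)\phi(u)$ with $\phi$ the standard Gaussian density.

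The plan is then to substitute this approximation into $I_N$ via integration by parts. Writing $I_N = \beta \sigma_N \int_{g/\sigma_N}^{\infty} e^{-\beta \sigma_N u}(1 - F_N(u))\, du$, the pointwise error in replacing $F_N$ with $G_{N,\lambda}$ contributes $o(\epsilon_N) \cdot \beta \sigma_N \cdot \int_{g/\sigma_N}^{\infty} e^{-\beta \sigma_N u} du = o(\epsilon_N \cdot e^{-\beta g}) = o(1/\sigma_N)$, which is negligible compared to the expected main term of order $1/\sigma_N$. This reduces the problem to the explicit Gaussian integral
\[
\int_{g/\sigma_N}^{\infty} e^{-\beta \sigma_N u}(1 + \epsilon_N \psi'_{N,\lambda}(0) u)\phi(u)\, du.
\]
After the change of variable $v = \beta \sigma_N u$, the leading contribution is $\frac{1}{\beta \sigma_N \sqrt{2\pi}} \int_{\beta g}^{\infty} e^{-v}\, e^{-v^2/(2\beta^2 \sigma_N^2)}\, dv$, which by dominated convergence converges uniformly (over $(\gamma,\beta,\theta)\in A_N$ and $g$ in a compact set) to $\frac{e^{-\beta g}}{\beta \sigma_N \sqrt{2\pi}}$. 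The correction term involving $\psi'_{N,\lambda}(0)$ carries an extra factor $\epsilon_N \cdot u \sim \epsilon_N/\sigma_N$ and thus contributes $O(\sigma_N^{-3})$, which is negligible.

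The main obstacle is not the asymptotic computation itself but ensuring uniformity throughout. Two aspects require care: (i) the Feller step needs uniform control of $\psi_{N,\lambda}$ near zero, which is handled via analyticity and Cauchy estimates; (ii) the Laplace-type asymptotics via dominated convergence must be uniform in $(\gamma,\beta,\theta,g)$, which follows because $\beta$ is bounded above and away from zero on $A_N$ and $g$ is in a compact set, so the dominating function $e^{-v}$ can be chosen independently of the parameters.
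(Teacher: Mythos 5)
Your overall approach, exponential tilting, Feller's Gaussian approximation \eqref{uni1}, integration by parts, and dominated convergence, is exactly the one the paper uses; the only cosmetic difference is that you treat general $g$ by shifting the lower integration limit to $g/\sigma_N$, while the paper first does $g=0$ and then substitutes $\gamma \leftarrow \gamma + g/\log N$. However, your integration-by-parts identity
\[
I_N = \beta\sigma_N\int_{g/\sigma_N}^{\infty}e^{-\beta\sigma_N u}\bigl(1-F_N(u)\bigr)\,du
\]
is wrong: the correct identity is
\[
I_N = e^{-\beta g}\bigl(1-F_N(g/\sigma_N)\bigr) - \beta\sigma_N\int_{g/\sigma_N}^{\infty}e^{-\beta\sigma_N u}\bigl(1-F_N(u)\bigr)\,du,
\]
with a boundary term and the opposite sign on the integral. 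This is not cosmetic: the right-hand side of your formula is, to leading order, $\tfrac12 e^{-\beta g}(1+o(1))$, an $O(1)$ quantity, while $I_N$ is of order $\sigma_N^{-1}$. The two $O(1)$ terms in the corrected identity nearly cancel, and their difference (after replacing $F_N$ by $G_{N,\lambda}$ in both places, each contributing $o(\epsilon_N e^{-\beta g})$ error) is exactly the Gaussian integral $\int_{g/\sigma_N}^{\infty}e^{-\beta\sigma_N u}G_{N,\lambda}'(u)\,du$ that you then correctly compute. So your conclusion is right, but the identity you wrote does not produce it. The paper's formulation sidesteps the near-cancellation by integrating by parts against the two-sided probability $\P_{N,\theta,\beta}[0\le X_N\le x]$, which is already of order $\epsilon_N$ after averaging against the exponential weight, so there is no boundary term to track; see \eqref{ibp}.
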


\begin{proof}
{We consider first the case $g=0$.} We can rewrite for $\gamma \ge 0$, 
\[
\P_{N,\theta}[\X_N \ge \gamma \log N]    = {\E}_{N,\theta,\beta}\big[\mathbf 1\{ X_N \ge 0\} e^{- \beta \epsilon_N^{-1}X_N } \big] \E_{N,\theta} [e^{\beta \X_N}] N^{-\gamma\beta} . 
\]
Using that the characteristic function of $X_N$ under  $\P_{N,\theta,\beta}$ satisfies  \eqref{ass1},  we can use the uniform approximation \eqref{uni1} so that integrating by parts,
\begin{equation} \label{ibp}
\begin{aligned}
\E_{N,\theta,\beta}\big[\mathbf 1\{ X_N \ge 0\} e^{- \beta \epsilon_N^{-1} X_N } \big]  
& = -  \int_{0}^\infty   \P_{N,\theta,\beta}\big[0\le X_N \le x\big]  d(e^{- \beta \epsilon_N^{-1} x}) \\
& = \int_{0}^\infty  G_N'(x)e^{- \beta \epsilon_N^{-1} x} d x +  \underset{N\to\infty}{o(\epsilon_N)} , 
\end{aligned}
\end{equation}
uniformly  $\lambda\in {A_N}$.

By a change of variable, this implies that for $\beta>0$
\[
\E_{N,\theta,\beta}\big[\mathbf 1\{ X_N \ge 0\} e^{- \beta \epsilon_N^{-1} X_N } \big]    = \beta^{-1}\epsilon_N \bigg( 
\int_{0}^\infty   \left(1 + O(\beta^{-1}\epsilon_N^2 |u|)\right)  \frac{e^{-\epsilon_N^2u^2/2\beta^2}}{\sqrt{2\pi}}  e^{-u}d u +  \underset{N\to\infty}{o(1)}  \bigg) . 
\]
By Lebesgue's dominated convergence theorem, this completes the proof with the required uniformity if $g=0$.
Moreover, since these asymptotics are locally uniform in $\gamma>0$, replacing $\gamma\leftarrow \gamma + g/\log N$, we obtain the claim {for general $g\in \R$.} 
\end{proof}

The asymptotics from Lemma~\ref{lem:1} are instrumental in several arguments of this paper. To close this section, let us make a few comments on the method.

\begin{remark} The upper-bound \eqref{Feller} relies on \cite[Chap.~XVI.3]{Feller71} and the previous argument only requires \eqref{charass} which is weaker than the locally uniform limit \eqref{ass1}.
However, characteristic functions usually have an analytic extension in a small strip in the complex plane, so that the assumption \eqref{ass1} is rather natural to check in applications. 
\end{remark}


\subsection{Multidimensional case.} \label{sec:2dapprox}
We now adapt the previous argument in arbitrary dimension, This requires stronger assumptions on the characteristic function of the random vector $X_N$. 
Fix $n\in\N$ with $n\ge2$. 
We consider a sequence of continuous random $\R^n$-valued vectors $(X_N)_{N\in\N}$ whose probability distributions depend on an external parameter $\lambda\in {\Lambda}$ for an {open} set $\Lambda$. 

We assume that the characteristic function of $X_N$ satisfies for $\xi\in\R^n$,
\begin{equation} \label{cf}
\E[e^{i \xi \cdot X_N}] = \psi_{N,\lambda}(i \xi \epsilon_N) e^{-|\xi|^2/2}
\end{equation}
where $\epsilon_N\to0$, 
$\psi_{N,\lambda}$ are analytic functions in $\mathcal{S}^n$, $\lambda \mapsto \psi_{N,\lambda}$ are {locally bounded} ({uniformly on a compact} set of $\mathcal{S}^n$). 

Now, instead of assuming that $\psi_{N,\lambda}$ {can be approximated locally uniformly} in $\mathcal{S}^n$, we assume that there is a $\eta>0$ and {compact $A_N\subset \Lambda$} so that 
\begin{equation} \label{ass2}
\psi_{N, \lambda}=  \psi_{\lambda} + \underset{N\to\infty}{o(1)} \quad 
\text{ uniformly in $\big\{\xi \in\mathcal S^{n}:   |\Im  \xi_j|  \le \epsilon_N^{-\eta}, \text{ for } j\in[n] \big\} $  and  $\lambda\in {A_N}$}. 
\end{equation}

Moreover, we need an assumption about the growth of $\psi_\lambda$. More precisely, we assume that there exist constants $c>0$ and  $\varkappa \ge 2$) so that for $\xi\in\R^n$ and $\lambda\in {A_N}$, 
\begin{equation} \label{ass3}
|\psi_\lambda(i \xi )|  \le c e^{ |\xi|^\varkappa} . 
\end{equation}

For $x,y\in\R^n$, we write $x\ge y$ if $x_i \ge y_i$ for all $i\in\{1,\dots,n\}$ and $\1 =(1,\cdots,1) \in \R^n$ {(and $\mathbf 2=2\times \1$)}.  

We claim that under these assumptions, we can obtain uniform approximations for the ``\emph{distribution function}"\footnote{It turns out that it is $F_N(x)$ instead of $\mathbb P(X_{N}\leq x)$, $x\in\R^n$ which is more relevant for our purposes.} of $X_N$,
\[
F_N(x)  : =   \P\big[ 0 \le X_{N} \le \epsilon_N x \big] , \qquad x \ge 0 
\]

In a similar way, we let 
\[
G_N(x)   =  \P_{\mathcal{N}(0,\mathrm{I_n})}\big[ 0\le X \le \epsilon_N x \big]  , \qquad x \ge 0 
\] 
be the ``\emph{distribution function}" of a standard Gaussian. Our main result is then as follows.

\begin{proposition} \label{prop:approx}
If $\alpha>0$ is small enough depending the parameters $\varkappa,\eta,n$, then under the  assumptions \eqref{cf}--\eqref{ass3}, one has uniformly for  $\lambda\in {A_N}$, 
\[
\max_{x\in [0,\epsilon_N^{-\alpha}]^n} \big|F_N(x)-G_N(x)\big|  =  \underset{N\to\infty}{o(\epsilon_N^n)}.
\]
\end{proposition}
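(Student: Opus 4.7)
The plan is to extend the one-dimensional Feller-type argument from Section~\ref{sec:1dapprox} to $n$ dimensions via Plancherel's identity. Writing $p_N$ for the density of $X_N$ and $g$ for the standard Gaussian density on $\R^n$, a formal computation gives
\begin{equation*}
F_N(x) - G_N(x) = \int_{[0,\epsilon_N x]} (p_N - g)(u)\, du = \frac{1}{(2\pi)^n} \int_{\R^n} m_x(\xi) \, (\psi_{N,\lambda}(-i\epsilon_N \xi) - 1)\, e^{-|\xi|^2/2}\, d\xi,
\end{equation*}
where $m_x(\xi) := \prod_{j=1}^n (1 - e^{-i\epsilon_N x_j \xi_j})/(i\xi_j)$ is (up to conjugation) the Fourier transform of $\1_{[0,\epsilon_N x]}$. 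The pointwise bound $|m_x(\xi)| \le \prod_j \min(\epsilon_N x_j,\, 2/|\xi_j|)$ together with the global bound $|m_x(\xi)| \le \epsilon_N^n \prod_j x_j \le \epsilon_N^{n(1-\alpha)}$ for $x \in [0,\epsilon_N^{-\alpha}]^n$ will be the key quantitative handles.

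I would split the integration at $R_N := \epsilon_N^{-\eta_0}$ for a small $\eta_0 \in (0, \eta \wedge 1)$, chosen so that $\epsilon_N R_N \ll \epsilon_N^{-\eta}$ and assumption~\eqref{ass2} remains valid throughout $\{|\xi|_\infty \le R_N\}$. In this main region, uniform local boundedness of $\psi_{N,\lambda}$ on $\mathcal{S}^n$ combined with $\psi_{N,\lambda}(0)=1$ (read off from~\eqref{cf} at $\xi=0$) and Cauchy estimates yield $\psi_{N,\lambda}(-i\epsilon_N \xi) - 1 = O(\epsilon_N |\xi|)$ uniformly in $\lambda \in A_N$. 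The integrand is then bounded by $C\, \epsilon_N^{n+1} \prod_j x_j \cdot |\xi|\, e^{-|\xi|^2/2}$, and integration gives a contribution of order $\epsilon_N^{n+1-n\alpha}$. For $\alpha < 1/n$, this is $o(\epsilon_N^n)$; the refinement from $O$ to $o$ is obtained by invoking the uniform limit in~\eqref{ass2} together with dominated convergence.

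On the tail $\{|\xi|_\infty > R_N\}$, Gaussian decay should dominate the growth permitted by~\eqref{ass3}. However, care is required because the unsmoothed integrand need not be $L^1(\R^n)$: the only automatic bound $|\psi_{N,\lambda}(-i\epsilon_N\xi)| \le e^{|\xi|^2/2}$ (from $|\E e^{-i\xi \cdot X_N}|\le 1$) exactly cancels the Gaussian factor. The standard remedy is to pre-convolve $\1_{[0, \epsilon_N x]}$ with a smooth bump $\chi_{\sigma_N}$ at a very small scale $\sigma_N = \epsilon_N^{K}$ with $K$ large. Its Fourier transform $\hat\chi(\sigma_N \xi)$ then supplies rapid decay beyond $|\xi| \sim \sigma_N^{-1}$, legitimizing Plancherel, while the smoothing error (bounded by $\|p_N\|_\infty$ times the volume of a $\sigma_N$-boundary layer of the box) is $O(\sigma_N\, \epsilon_N^{n-1} \prod_{j\le n-1} x_j)$, which is $o(\epsilon_N^n)$ for $K$ large. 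On the smoothed integral, the tail is negligible by Gaussian decay on the portion where~\eqref{ass3} applies and by $\hat\chi(\sigma_N\xi)$-decay beyond.

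The main obstacle is precisely this tail estimate: the four scales $(\epsilon_N,\, R_N,\, \sigma_N,\, \epsilon_N^{-\eta-1})$ must be tuned consistently so that (i) the smoothing error is $o(\epsilon_N^n)$, (ii) the main-region bound forces $\alpha < 1/n$, and (iii) the tail is suppressed under both Gaussian decay within and $\hat\chi$-decay beyond the region of validity of~\eqref{ass2}. The restriction that $\alpha$ be sufficiently small in terms of $n$, $\varkappa$, and $\eta$ is exactly the slack of this three-scale bookkeeping. Once the scales are matched, combining the two-region estimate yields the claimed uniform bound $o(\epsilon_N^n)$ on $x \in [0, \epsilon_N^{-\alpha}]^n$ and $\lambda \in A_N$.
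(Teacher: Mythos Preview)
Your overall strategy---smooth, pass to Fourier, split into main region and tail---is essentially the paper's. The genuine gap is in your control of the smoothing error: you bound it by ``$\|p_N\|_\infty$ times the volume of a $\sigma_N$-boundary layer of the box'', but nothing in assumptions \eqref{cf}--\eqref{ass3} furnishes a uniform bound on $\|p_N\|_\infty$, or even guarantees that $X_N$ has a density. As you yourself note, the only automatic estimate on the characteristic function, $|\psi_{N,\lambda}(i\epsilon_N\xi)|e^{-|\xi|^2/2}\le 1$, exactly cancels the Gaussian factor; and beyond $|\xi|>\epsilon_N^{-1-\eta}$ assumption \eqref{ass2} gives no information at all, so the characteristic function is not known to lie in $L^1(\R^n)$.

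The paper repairs this via a monotonicity sandwich (its Lemma~\ref{lem:Feller}): instead of smoothing the indicator and paying the unknown probability $\P(X_N\in\text{boundary layer})$, one perturbs $X_N$ by an independent $\epsilon_N^{\beta+\kappa}Z$ whose density $\phi$ has \emph{compactly supported} Fourier transform, and uses that $F_N$ is increasing in each coordinate to squeeze $F_N(x)-G_N(x)$ between two smoothed quantities $\Delta_N^\pm(x)$ plus an error that lives entirely on the \emph{Gaussian} side, where the boundary-layer probability is explicitly $O(\epsilon_N^{n+\beta-\alpha(n-1)})$. The compact support of $\widehat\phi$ then truncates the Fourier integral for $\Delta_N^\pm$ to $\{|\xi|\le\epsilon_N^{-\beta-\kappa}\}$, which for $\beta+\kappa<\eta$ sits inside the region of validity of \eqref{ass2}; no separate far-tail estimate via mollifier decay is needed. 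Your outline can be salvaged along exactly these lines---replace the direct density bound by the sandwich, and take the mollifier to have compactly supported Fourier transform rather than merely Schwartz---but as written the smoothing-error step does not go through.
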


\begin{remark}
Note that the quantity  $G_N(x)$ is of order $\epsilon_N^n$ for a fixed $x$, so it is crucial to have an error term $o(\epsilon_N^n)$.  
For our purposes, we could actually replace $\epsilon_N^{-\alpha}$ by $\log \epsilon_N^{-c}$  for a large $c>0$, so it suffices to check \eqref{ass2} for any fixed,  arbitrarily small, $\eta>0$. 
\end{remark}

The basic step in the proof of Proposition~\ref{prop:approx} relies on adapting Feller's Lemma 1 in  \cite[Chap.~XVI.3]{Feller71}.

\begin{lemma} \label{lem:Feller}
Let $\phi:\R^n\to \R_+$ be a Schwartz-function such that $\int_{\R^n}\phi=1$ and its Fourier transform $\widehat\phi$ has compact support.
Let $Z$ be a random vector (taking values in $\R^n$, independent of $X_N$) with probability density function $\phi$.
Let $\beta, \kappa>0$ and define, for $x\in\R^n$, $x\ge 0$, 
\[ \begin{aligned}
\Delta_N^\pm(x)
& =  \P\big[ \mp \epsilon_N^\beta\1 \le \epsilon_N^{-1} X_{N} - \epsilon_N^{\beta+\kappa} Z \le x \pm \epsilon_N^\beta\1 \big]  -\P_{\mathcal{N}(0,\mathrm{I_n})}\big[ \mp \epsilon_N^\beta\1 \le \epsilon_N^{-1} X - \epsilon_N^{\beta+\kappa} Z \le x\pm \epsilon_N^\beta\1 \big] .
\end{aligned}\]
Let $\alpha\in(0,\frac{\beta}{n-1})$, then it holds uniformly for $x\in [0,\epsilon_N^{-\alpha}]^n$,
\[
\Delta_N^-(x)-o(\epsilon_N^n) \le 
F_N(x)  -G_N(x)   \le  \Delta_N^+(x)
+ o(\epsilon_N^n).
\]
\end{lemma}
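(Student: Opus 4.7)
The plan is a straightforward smoothing/inclusion argument that compares the ``distribution functions'' $F_N, G_N$ with the corresponding quantities for the mollified random vectors
\[
W_N := \epsilon_N^{-1}X_N - \epsilon_N^{\beta+\kappa}Z \quad\text{and}\quad  Y := \epsilon_N^{-1}X - \epsilon_N^{\beta+\kappa}Z,
\]
where $X\sim\mathcal N(0,I_n)$ is taken independent of $Z$. The key ``good'' event is
\[
E_N = \bigl\{\|Z\|_\infty \le \epsilon_N^{-\kappa}\bigr\} = \bigl\{\|\epsilon_N^{\beta+\kappa}Z\|_\infty \le \epsilon_N^\beta\bigr\}.
\]
Since $\phi$ is Schwartz, $\P(E_N^c) = O(\epsilon_N^M)$ for every $M \ge 1$, so this error is negligible compared to any $o(\epsilon_N^n)$ for $\kappa>0$.

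For the upper bound, two one-line componentwise inclusions do the work. On $E_N$, each coordinate of $\epsilon_N^{\beta+\kappa}Z$ is at most $\epsilon_N^\beta$ in absolute value, so
\[
\bigl\{\epsilon_N^{-1}X_N \in [0, x]\bigr\} \subset \bigl\{W_N \in [-\epsilon_N^\beta\1, x+\epsilon_N^\beta\1]\bigr\},
\]
yielding $F_N(x) \le \P[W_N \in [-\epsilon_N^\beta\1, x+\epsilon_N^\beta\1]] + \P(E_N^c)$; and
\[
\bigl\{Y \in [-\epsilon_N^\beta\1, x+\epsilon_N^\beta\1]\bigr\} \subset \bigl\{\epsilon_N^{-1}X \in [-2\epsilon_N^\beta\1, x+2\epsilon_N^\beta\1]\bigr\},
\]
yielding $\P[Y \in [-\epsilon_N^\beta\1, x+\epsilon_N^\beta\1]] \le G_N(x) + \P[\epsilon_N^{-1}X \in \mathcal S_x^+] + \P(E_N^c)$, where $\mathcal S_x^+ := [-2\epsilon_N^\beta\1, x+2\epsilon_N^\beta\1] \setminus [0,x]$ is a thin shell around the box $[0,x]$. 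Subtracting the two produces the claimed inequality up to the terms $\P[\epsilon_N^{-1}X \in \mathcal S_x^+]$ and $\P(E_N^c)$. The lower bound is obtained by the mirror argument applied to the shrunken box $[\epsilon_N^\beta\1, x-\epsilon_N^\beta\1]$; when some $x_i < 2\epsilon_N^\beta$ the shrunken box is empty so $\Delta_N^-(x) = 0$, and a direct Gaussian estimate shows $G_N(x) = O(\epsilon_N^{n+\beta - \alpha(n-1)}) = o(\epsilon_N^n)$, which together with $F_N(x) \ge 0$ is sufficient.

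The substantive step is thus the Gaussian shell estimate. The density of $\epsilon_N^{-1}X$ is bounded by $(2\pi)^{-n/2}\epsilon_N^n$, while by telescoping
\[
\operatorname{Vol}(\mathcal S_x^+) = \prod_{i=1}^n (x_i + 4\epsilon_N^\beta) - \prod_{i=1}^n x_i = O\bigl(\epsilon_N^\beta \cdot \epsilon_N^{-\alpha(n-1)}\bigr)
\]
uniformly for $x \in [0, \epsilon_N^{-\alpha}]^n$, using $x_i \le \epsilon_N^{-\alpha}$. Hence $\P[\epsilon_N^{-1}X \in \mathcal S_x^+] = O(\epsilon_N^{n+\beta - \alpha(n-1)})$, which is $o(\epsilon_N^n)$ precisely under the standing hypothesis $\alpha < \beta/(n-1)$.

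The lemma itself presents no real technical obstacle---it is a bookkeeping step that trades $F_N, G_N$ for the convolved quantities appearing in $\Delta_N^\pm$. Its virtue is that the characteristic function of $W_N$ carries the factor $\widehat\phi(\epsilon_N^{\beta+\kappa}\xi)$, whose compact Fourier support will allow, in the subsequent proof of Proposition~\ref{prop:approx}, a Fourier inversion that is safely contained in the regime where the characteristic-function asymptotics \eqref{ass2}--\eqref{ass3} are in force. The delicate balancing of the small exponents $\alpha$, $\beta$, $\kappa$ against the parameter $\eta$ is therefore postponed to Proposition~\ref{prop:approx}; for the present lemma the only active constraint is the Gaussian-shell condition $\alpha(n-1) < \beta$, and the tail of $Z$ is controlled for free by the Schwartz property of $\phi$.
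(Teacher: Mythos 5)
Your proof is correct and reaches the statement by essentially the same mechanism as the paper: a smoothing/inclusion argument reducing everything to the Gaussian shell volume estimate $\P[\epsilon_N^{-1}X\in\mathcal S_x^+]=O(\epsilon_N^{n+\beta-\alpha(n-1)})$, with the tail of $Z$ (Schwartz decay) absorbing all other errors. The only cosmetic difference is in how the mollification is packaged: you condition on the ``good'' event $E_N=\{\|Z\|_\infty\le\epsilon_N^{-\kappa}\}$ and use componentwise set inclusions, whereas the paper first establishes a deterministic inclusion inequality parametrized by $t\in\R^n$ with $|t|\le1$, then substitutes $t\leftarrow\epsilon_N^\kappa t$ and integrates both sides against $\phi(t)$ over $\{|t|\le\epsilon_N^{-\kappa}\}$ to make $Z$ appear. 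These are two presentations of the same calculation, and your explicit handling of the degenerate lower-bound case (some $x_i<2\epsilon_N^\beta$, forcing $\Delta_N^-(x)=0$ and $G_N(x)=o(\epsilon_N^n)$) is a welcome clarification of a point the paper treats implicitly.
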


\begin{proof}
Since $F_N$ is an increasing function (in every coordinate), we have for $t\in\R^n$ with $|t|\le 1$, 
\begin{equation} \label{dfub}
F_N(x)  -G_N(x)   \le  \P\big[ - \epsilon_N^\beta(\1-t) \le \epsilon_N^{-1} X_{N} \le x+ \epsilon_N^\beta(\1+t) \big] 
- \P_{\mathcal{N}(0,\mathrm{I_n})}\big[ 0 \le \epsilon_N^{-1} X \le x\big]  . 
\end{equation}
Observe that since the Gaussian p.d.f. is uniformly bounded on $\R^n$, we have 
\[
\max_{x\in [0,\epsilon_N^{-\alpha}]^n}  \P_{\mathcal{N}(0,\mathrm{I_n})}\big[  \epsilon_N^{-1}X  \in \big( [- \mathbf{2}\epsilon_N^\beta , x+\mathbf{2}\epsilon_N^\beta]  \setminus [0,x] \big) \big] 
\le C \epsilon_N^{n+\beta-\alpha(n-1)}
\]
for a numerical constant $C>0$. 
In particular, if $\beta> \alpha(n-1)$, then the RHS is $o(\epsilon_N^n)$. 
This implies that uniformly for $x\in [0,\epsilon_N^{-\alpha}]^n$, 
\[\begin{aligned}
\Upsilon_N(x)  &= \P_{\mathcal{N}(0,\mathrm{I_n})}\big[ -\epsilon_N^\beta \1  \le \epsilon_N^{-1}X - \epsilon_N^{\beta+\kappa} Z \le x+\1 \epsilon_N^\beta \big]  
- \P_{\mathcal{N}(0,\mathrm{I_n})}\big[ 0 \le \epsilon_N^{-1} X \le x\big]  \\
&\le \P\big[ |Z| > \epsilon_N^{-\kappa }\big] + \P_{\mathcal{N}(0,\mathrm{I_n})}\big[  \epsilon_N^{-1}X  \in \big( [- \mathbf{2}\epsilon_N^\beta , x+\mathbf{2}\epsilon_N^\beta]  \setminus [0,x] \big) \big]  \\
& =o(\epsilon_N^n) 
\end{aligned}\]
where we used that $ \P\big[ |Z| > \epsilon_N^{-\kappa }\big] \le C_k \epsilon_N^k$ for any $k\in\N$, since the p.d.f.~$\phi$ decays faster than any polynomial. 
Using the same argument, replacing $t\leftarrow \epsilon_N^\kappa t$ and integrating both sides of \eqref{dfub} against $\phi(t)$ on $\{ t \in\R^n : |t| \le \epsilon_N^{-\kappa} \}$, we obtain after dividing by $\P\big[  |Z| \le \epsilon_N^{-\kappa } \big]$, 
\[\begin{aligned}
F_N(x)  -G_N(x)  & \le
\P\big[ \big\{ -\epsilon_N^\beta \1  \le \epsilon_N^{-1}X_N - \epsilon_N^{\beta+\kappa} Z \le x+\1 \epsilon_N^\beta \big\} \cap \big\{  |Z| \le \epsilon_N^{-\kappa } \big\} \big]/\P\big[  |Z| \le \epsilon_N^{-\kappa } \big]   \\
&\qquad - \P_{\mathcal{N}(0,\mathrm{I_n})}\big[ -\epsilon_N^\beta \1  \le \epsilon_N^{-1}X - \epsilon_N^{\beta+\kappa} Z \le x+\1 \epsilon_N^\beta \big] {+} \Upsilon_N(x)  \\
& = \Delta_N^+(x)+o(\epsilon_N^n) 
\end{aligned}\]
where the error is controlled uniformly for $x\in [0,\epsilon_N^{-\alpha}]^n$. 

We can use the same strategy to obtain a lower bound, using that for $t\in\R^n$ with $|t|\le 1$, 
\[
F_N(x)  -G_N(x)  \ge   \P\big[  \epsilon_N^\beta(\1 +t) \le \epsilon_N^{-1} X_{N} \le x - \epsilon_N^\beta(\1-t) \big] 
- \P_{\mathcal{N}(0,\mathrm{I_n})}\big[ 0 \le \epsilon_N^{-1} X \le x\big]  . 
\]
The same bounds allow us to conclude that
\(
F_N(x)  -G_N(x)   \ge \Delta_N^-(x)-o(\epsilon_N^n) . 
\)
\end{proof}

We are now ready to give the proof of Proposition~\ref{prop:approx}.

\begin{proof}
Let $\alpha,\beta,\kappa>0$. 
The characteristic function of the random variable $ \epsilon_N^{-1} X_{N} - \epsilon_N^{\beta+\kappa} Z$ is given by, according to \eqref{cf}, 
\[
\xi\in\R^n \mapsto \E\big[e^{i\epsilon_N^{-1} X_{N}  \cdot \xi }\big] \widehat\phi(\epsilon_N^{\beta+\kappa}\xi)
=  \psi_{N,\lambda}(i \xi) e^{-|\xi|^2/2\epsilon_N^2} \widehat\phi(\epsilon_N^{\beta+\kappa}\xi) .
\]
In particular, it has compact support (for $|\xi| \le \epsilon_N^{-\beta-\kappa}$ as we may assume that $\widehat{\phi}(u)$ is supported in $\{u\in\R^n :|u|\le 1\}$) and, by Fourier's inversion formula, the p.d.f.~of this random variable is smooth and given by
\[
u \in\R^n \mapsto
\frac{1}{(2\pi)^n} \int_{\R^n} e^{i \xi \cdot u}  \psi_{N,\lambda}(i \xi) e^{-|\xi|^2/2\epsilon_N^2} \widehat\phi(\epsilon_N^{\beta+\kappa}\xi) d\xi .
\]
We have a similar expression (with $\psi_{N,\lambda}\leftarrow 1$) for the p.d.f.~of $ \epsilon_N^{-1} X - \epsilon_N^{\beta+\kappa} Z$ where $X\sim \mathcal{N}(0,\mathrm{I_n})$ independent of $Z$.

According to the convention of Lemma~\ref{lem:Feller}, this implies that 
\[
\Delta_N^\pm(x)
= \frac{1}{(2\pi)^n} \int_{\R^n} \big( \psi_{N,\lambda}(i \xi)-1 \big) e^{-|\xi|^2/2\epsilon_N^2} \widehat\phi(\epsilon_N^{\beta+\kappa}\xi) \Gamma_N^{\pm}(\xi;x) d\xi .
\]
where 
\[
\Gamma_N^\pm(\xi;x) = 
\int_{\R^n}  \1\{ \mp \epsilon_N^\beta\1 \le u \le x \pm \epsilon_N^\beta\1\} e^{i \xi \cdot u} d u 
\]
is uniformly bounded by $2 \epsilon_N^{-n\alpha} $ for $x\in [0,\epsilon_N^{-\alpha}]^n$ and $\xi\in\R^n$. 

Moreover, since $\psi_{N,\lambda}(0) = \psi_\lambda(0) =1$, using the conditions  \eqref{ass2}--\eqref{ass3},
\[
\big| \psi_{N,\lambda}(i \xi)-1 \big| 
\le  |\xi| \big( C  e^{|\xi|^\varkappa} + \underset{N\to\infty}{o(1)} \big) .
\]
uniformly for $\xi\in\R^n$ with $|\xi| \le  \epsilon_N^{-\eta}$ and $\lambda\in {A_N}$ 
(this estimate relies on the fact that  $\psi_{N,\lambda}$ is analytic in $\mathcal{S}^n$).
Hence, if we assume that $\eta > \beta$  (so that we can pick $\kappa>0$ with $\eta > \beta+\kappa$), we obtain
\[
\big|\Delta_N^\pm(x)\big|
\le C  \epsilon_N^{-n\alpha} \int_{\{ |\xi| \le  \epsilon_N^{-\eta}\}} |\xi | \big(e^{|\xi|^\varkappa}+ \underset{N\to\infty}{o(1)}  \big)e^{-|\xi|^2/2\epsilon_N^2}  d \xi
\]
where we used the uniform bound for $ \Gamma_N^\pm$, combined with the fact that $\| \widehat\phi\|_{\infty,\R^n} <\infty$.

This yields, 
\[
\max_{x\in [0,\epsilon_N^{-\alpha}]^n} \big|\Delta_N^\pm(x)\big|
\le C  \epsilon_N^{1+n(1-\alpha)} \bigg(  \int_{\{ |\xi| \le  \epsilon_N^{-1-\eta}\}} 
|\xi | e^{\epsilon_N^\varkappa|\xi|^\varkappa-|\xi|^2/2}  d\xi + \underset{N\to\infty}{o(1)}  \bigg)
\]
and provided that $\eta<\frac{2}{\varkappa-2}$ (if $\varkappa=2$, there is no constraint; otherwise we can always reduce $(\eta,\alpha)$), the RHS integral is $O(1)$ as $N\to\infty$.

Hence, if $\alpha< \frac 1n \wedge \frac{\eta}{n-1} \wedge \frac{2}{(\varkappa-2)(n-1)}$ (so that we can choose $\eta>\beta >  \alpha(n-1))$, we conclude  by Lemma~\ref{lem:Feller} that 
\[\begin{aligned}
\max_{x\in [0,\epsilon_N^{-\alpha}]^n} \big|  F_N(x)  -G_N(x)   \big| 
& \le \max_{x\in [0,\epsilon_N^{-\alpha}]^n} \big|\Delta_N^\pm(x)\big|
+ o(\epsilon_N^n) \\
&\le  o(\epsilon_N^n) .
\end{aligned}\]
This completes the proof of Proposition~\ref{prop:approx}. 
\end{proof}

\section{Consequence of the Assumption \ref{ass:meso}}

Our main Assumption \ref{ass:meso} imply that $\X_N$ is really an approximation to the limiting Gaussian log-correlated field $\X$ down to arbitrary mesoscopic scales; 

\begin{lemma}\label{lem:approx} 
Under the Assumption \ref{ass:meso}, 
for any compact $K\subset \Omega$ and (small) $\eta>0$, it holds uniformly for $(x,y)\in K$ with $|x-y| \ge N^{\eta-1}$,  
\[
{\rm Cov}\big[ \X_N(x),\X_N(y)\big] = C_\X(x,y)\big(1+\underset{N\to\infty}{o(1)} \big) .
\]
Moreover, if one assumes that \eqref{PsiN} holds uniformly for $\xi\in \C^q$ with $|\xi_j|\le c$ for some $c>0$, then for any test function $g\in\mathcal{C}^1_c(\Omega)$ and $q\in\N$,
\[
\E_N\big[\langle \X_N , g  \rangle^q\big]  \to \E\big[\langle \X , g  \rangle^q\big]
\qquad\text{as $N\to\infty$.}
\]
In particular, $\X_N \to \X$ as a random Schwartz distribution on $\Omega$. 
\end{lemma}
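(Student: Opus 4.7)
My plan is to derive both assertions from~\eqref{mom1} together with the uniform asymptotics~\eqref{PsiN} of Assumption~\ref{ass:meso}, promoted to statements about partial derivatives via Cauchy's integral formula on $\mathcal{D}_\infty^2$ (taking a small contour slightly inside the strip and using continuity at the boundary at zero if needed). For the covariance, I will take the logarithm of~\eqref{mom1} with $q=0$,
\[
\log \E_N\big[e^{\zeta_1 \X_N(x) + \zeta_2 \X_N(y)}\big]
= \log \Psi_N^{(\zeta_1,\zeta_2)}(x,y) + \tfrac{\zeta_1^2+\zeta_2^2}{2}\log N + \zeta_1\zeta_2\, C_\X(x,y),
\]
and differentiate once in each of $\zeta_1,\zeta_2$ at the origin. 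This expresses $\mathrm{Cov}[\X_N(x),\X_N(y)]$ as $\Psi_N^{(0,0)}(x,y)\,C_\X(x,y)$ plus an error built from the first and mixed second partial derivatives of $\Psi_N^{(\zeta_1,\zeta_2)}$ at $(0,0)$. Cauchy's formula lifts the uniform convergence~\eqref{PsiN} to uniform convergence of these derivatives, and since the limit $\Psi(\zeta_1,x)\Psi(\zeta_2,y)$ is a product its mixed second log-derivative vanishes, so the error is $o(1)$ uniformly for $|x-y|\geq N^{\eta-1}$. Combined with $\Psi_N^{(0,0)}(x,y) = 1+o(1)$ and $C_\X(x,y)\geq (1-\eta)\log N + O(1)$ in this mesoscopic range, the $o(1)$ is absorbed as a multiplicative $(1+o(1))$ factor.

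For the moment convergence, the plan is to establish uniform convergence of $\varphi_N(\zeta):=\E_N[e^{\zeta\langle \X_N,g\rangle}]$ to $\exp(\tfrac{\zeta^2}{2}\iint g(x)g(y)C_\X(x,y)\,dxdy)$ on a small complex disk around $\zeta=0$; moment convergence then follows by Cauchy's integral formula applied to $\varphi_N$, and distributional convergence $\X_N\to\X$ follows for arbitrary finite-dimensional linear combinations by Cram\'er--Wold. To access~\eqref{mom1}, I will approximate $g$ by a Riemann sum
\[
\mathrm{f}_h(u) := h^d\sum_j g(z_j)\,\rho_{h,z_j}(u),
\]
where $\{z_j\}$ is an $h$-grid covering $\mathrm{supp}(g)$; this is of the form in~\eqref{mom1} with $\xi_j = h^d g(z_j)$, uniformly of small modulus, so the strengthened~\eqref{PsiN} (valid for $|\xi_j|\leq c$) at $\zeta_1=\zeta_2=0$ yields
\[
\E_N\big[e^{\zeta\langle \X_N,\mathrm{f}_{h}\rangle}\big] \xrightarrow[N\to\infty]{} \exp\!\Big(\tfrac{\zeta^2}{2}\,\E\langle \X,\mathrm{f}_{h}\rangle^2\Big)
\]
uniformly on a disk in $\zeta$, and the right-hand side converges as $h\to 0$ to the announced limit. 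To swap $\mathrm{f}_h$ with $g$, the elementary inequality $|e^{\zeta(A+B)}-e^{\zeta A}|\leq |\zeta||B|e^{|\zeta|(|A|+|B|)}$ combined with Cauchy--Schwarz and the uniform exponential moment bounds for linear statistics of $\X_N$ (again from~\eqref{PsiN}) yields
\[
\big|\varphi_N(\zeta) - \E_N[e^{\zeta\langle\X_N,\mathrm{f}_h\rangle}]\big|
\leq C(\zeta)\,\E_N\big[\langle \X_N,g-\mathrm{f}_h\rangle^2\big]^{1/2}.
\]
The right-hand factor, written by Fubini as $\iint (g-\mathrm{f}_h)(x)(g-\mathrm{f}_h)(y)\,\mathrm{Cov}(\X_N(x),\X_N(y))\,dxdy$, tends as $N\to\infty$ to the corresponding Gaussian integral with $C_\X$, which vanishes as $h\to 0$.

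The delicate point, and the main expected obstacle, is the near-diagonal control of $\E_N[\langle \X_N, g-\mathrm{f}_h\rangle^2]$: Part~1's covariance asymptotics only apply for $|x-y|\geq N^{\eta-1}$, so the microscopic contribution must be handled separately, by combining the variance bound $\mathrm{Var}(\X_N(x))=\log N+O(1)$ (which follows from~\eqref{W1} by differentiating twice at $\zeta=0$) with the uniform smallness $\|g-\mathrm{f}_h\|_\infty=O(h)$ coming from $g\in\mathcal{C}^1_c$. One must also take care that the order of limits ($N\to\infty$ first, then $h\to 0$) is valid, which it is thanks to the uniformity in $N$ of both the Laplace convergence and the near-diagonal bound.
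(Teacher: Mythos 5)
Your treatment of the covariance claim is essentially the paper's: both pass from the uniform asymptotics \eqref{PsiN} to convergence of the first and mixed second partial derivatives of $\Psi_N^{(\zeta_1,\zeta_2)}$ at the origin via Cauchy's formula, and both exploit that the mixed log-derivative of the factorized limit $\Psi(\zeta_1,x)\Psi(\zeta_2,y)$ vanishes. One correction: you write $C_\X(x,y)\geq(1-\eta)\log N+O(1)$; this should be $\leq$, since $|x-y|\geq N^{\eta-1}$ gives $-\log|x-y|\leq(1-\eta)\log N$. The step does not actually need a bound on $C_\X$: what you get is an additive $o(1)$ error (and upgrading to a multiplicative $(1+o(1))$ would require $C_\X$ bounded away from zero, which fails for macroscopically separated $x,y$; the paper writes the same multiplicative form and glosses over this too). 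Also, $\Psi_N^{(0,0)}(x,y)=1$ exactly by normalization, not merely $1+o(1)$.

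Your treatment of the moment claim is a genuinely different route and I see two gaps. First, the Riemann sum $\mathrm{f}_h=h^d\sum_j g(z_j)\rho_{h,z_j}$ feeds into \eqref{mom1} with $q_h\asymp h^{-d}$ smoothing slots, and $q_h\to\infty$ as $h\to0$; but Assumption~\ref{ass:meso} and the strengthening hypothesized in the lemma are stated for a fixed $q$. The paper only ever invokes \eqref{PsiN} with $q$ equal to the moment order: it first establishes $\E_N[\prod_{j=1}^q\X_{N,\delta}(z_j)]\to\E[\prod_{j=1}^q\X_\delta(z_j)]$ locally uniformly in $z\in\Omega^q$, and only then integrates against $\prod_j g(z_j)\,dz_j$. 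Second, the Cauchy--Schwarz swap in your argument produces a factor $\E_N\big[e^{2|\zeta|(|A|+|B|)}\big]^{1/2}$ that requires a uniform-in-$N$ exponential moment bound for $\langle\X_N,g\rangle$; no such bound holds here, since $\E_N[e^{\beta\X_N(x)}]\sim\Psi(\beta,x)N^{\beta^2/2}$ and consequently $\E_N[e^{\lambda\langle\X_N,g\rangle}]$ grows polynomially in $N$, so the error term is not $o(1)$. The paper avoids both problems by comparing polynomial moments directly: it derives $\E_N[\X_N(x)^q]\leq C_q(1+\log N)^{q/2}$ from \eqref{W1}, bounds $\E_N[\langle\X_N,g\rangle^q]\leq C_q\|g\|_{L^1}^q(1+\log N)^{q/2}$ by Jensen, and exchanges $g$ with the single mollified approximant $g_\delta(x)=\langle\rho_{\delta,x},g\rangle$ at scale $\delta=N^{\eta-1}$, using $\|g-g_\delta\|_{L^1}=O(\delta)$ to kill the residual $(\log N)^{q/2}$ growth. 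I would suggest adopting that scheme: mollify rather than discretize, and work with polynomial moments rather than with the Laplace transform.
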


\begin{proof}
Recall the notation \eqref{mom1} and observe that 
\[
\E_N\big[ \X_N(x_1)\X_N(x_2)\big]
= \partial_{\zeta_1}\partial_{\zeta_2} \big(\E_N\big[ e^{ \zeta_1 \X_N(x_1) + \zeta_2 \X_N(x_2)}\big] \big)_{\zeta_1=\zeta_2=0} 
\]
where the RHS can equivalently be written using a double contour-integral formula (by analyticity of $\Psi_N^{(\zeta_1,\zeta_2)}(x_1,x_2)$). 
Hence, since the asymptotics \eqref{PsiN} are uniform in the relevant parameters, one obtains as $N\to\infty$, 
\[\begin{aligned}
&\E_N\big[ \X_N(x_1)\X_N(x_2)\big] \\
&= \partial_{\zeta_1}\partial_{\zeta_2}\Big(  \Psi(\zeta_1,x_1) \Psi(\zeta_2,x_2)
\exp\Big( \tfrac{\zeta_1^2+ \zeta_2^2}{2} \log N +\zeta_1\zeta_2 C_\X(x_1,x_2)\Big)\Big)_{\zeta_1=\zeta_2=0}  \big(1+{o(1)} \big) \\
&=  \Big( C_\X(x_1,x_2) \Psi(0,x_1) \Psi(0,x_2) + \Psi'(x_1) \Psi'(x_2) \Big)\big(1+{o(1)} \big)
\end{aligned}\]
where $\Psi'(x) = \partial_\zeta\big( \Psi(\zeta,x)\big)_{\zeta=0}$  and the error is uniform for  for $(x_1,x_2)\in K$ with $|x_1-x_2| \ge N^{\eta-1}$. 
A similar computation with $\zeta_2=0$ shows that $\E_N\big[ \X_N(x_1)\big] =  \Psi'(x_1)+{o(1)} $ as $N\to\infty$. Since $\Psi(0,x)=1$  for all $x\in\Omega$, this proves the first claim.

The same argument (taking $\partial_{\zeta_1}^q$ for $q\in2\N$ with $\zeta_2=\xi=0$) also shows that there are (non-decreasing) constants $C_q$ so that $\E_N\big[\X_N(x_1)^q\big] \le C_q (1+\log N)^{q/2}$ for $x_1\in K$. By Jensen's inequality, this implies that for any $g\in\mathcal{C}(K)$ and $q\in2\N$,
\begin{equation} \label{XNmombound}
\E_N \big[\langle \X_N ,g\rangle^q\big] \le \|g\|_{L^1}^{q-1} \int \E_N\big[\X_N(x_1)^q\big]|g(x)| dx
\le C_q  \|g\|_{L^1}^{q}  (1+\log N)^{q/2}. 
\end{equation}
For  $g\in\mathcal{C}_c^1(\Omega)$, let $g_\delta(x) = \langle \rho_{\delta,x}, g\rangle$ for $x\in\Omega$, $\delta\in(0,1]$ and observe that $\|g-g_\delta\|_{L^1} \le C_g \delta$ with $C_g \ge \|g\|_{L^1}$ 
Then, by H\"older's inequality and using \eqref{XNmombound} twice, 
\[ \begin{aligned}
\big| \E_N \big[\langle \X_N ,g\rangle^q\big]- \E_N \big[\langle \X_N ,g_\delta\rangle^q\big] \big|
& \le \sum_{\ell=1}^q {q \choose \ell}  \E_N \big[ |\langle \X_N ,g-g_\delta\rangle|^\ell  |\langle \X_N ,g\rangle|^{q-\ell}\big] \\ 
& \le C_g^q C_q^q  (1+\log N)^{q/2}\big( e^{q\delta} -1\big). 
\end{aligned}\]
In particular, choosing $\delta(N) = N^{-1+\eta}$ for some $\eta \in(0,1)$, we obtain 
\begin{equation} \label{exch}
 \E_N \big[\langle \X_N ,g\rangle^q\big] =  \E_N \big[\langle \X_N ,g_\delta\rangle^q\big] +o(1)\qquad\text{as $N\to\infty$.}
\end{equation}

We can now proceed to prove for the second claim starting from the fact that, choosing $\delta(N) = N^{-1+\eta}$ for some $\eta \in(0,1)$, one has for $\xi\in\C^q$, uniformly in a neighborhood of 0, and locally uniformly for  $z \in \Omega^q$, 
\[
\E_N\big[\exp\big({\textstyle \sum_{j=1}^q\xi_j \X_{N,\delta}(z_j)}\big)\big] = \exp\big(\tfrac12{\E\big(\textstyle \sum_{j=1}^q\xi_j \X_{\delta}(z_j)}\big)^2\big)  \big(1+{o(1)} \big) \qquad\text{as $N\to\infty$,}
\]
using that $\Psi_{N,\delta}^{(0,0,\xi)}(x_1, x_2; z) = 1+o(1)$ is independent of $x_1,x_2$ (cf.~Assumption \ref{ass:meso}). 
This implies that locally uniformly for $z \in \Omega^q$,  as $N\to\infty$,
\[
\E_N \big[{\textstyle \prod_{j=1}^q}  \X_{N,\delta}(z_j)\big] = \E \big[{\textstyle \prod_{j=1}^q} \X_{\delta}(z_j)\big]   \big(1+{o(1)} \big) .
\]
Hence, we can integrate this against $\prod_{j=1}^q g(z_j)$ to get 
\[\begin{aligned}
\E_N \big[\langle \X_N ,g_{\delta}\rangle^q\big]
&= \int  \E \big[{\textstyle \prod_{j=1}^q} \X_{\delta}(z_j)\big] \big(1+{o(1)} \big)   {\textstyle \prod_{j=1}^q g(z_j) dz_j} \\
& = \E \big[\langle \X ,g_{\delta}\rangle^q\big] +o(1) , 
\end{aligned}\]
as $N\to\infty$. Here, we used that $\X$ is a Gaussian log-correlated field, so that $\E \big[{\textstyle \prod_{j=1}^q} \X_{\delta}(z_j)\big] $ is bounded on the set $\{z\in K^q : |z_i-z_j|\ge \epsilon, \forall\, i\neq j\}$ by a constant depending only on $q,\epsilon$ and not on $\delta(N)$. 
It is also standard that for any  $g\in\mathcal{C}_c^1(\Omega)$ and $q\in\N$,  $\E \big[\langle \X ,g_{\delta}\rangle^q\big] \to  \E \big[\langle \X ,g\rangle^q\big]$  as $\delta\to0$. 
Thus, combining the previous asymptotics with \eqref{exch}, we conclude that 
\[
 \E_N \big[\langle \X_N ,g\rangle^q\big] =  \E_N \big[\langle \X ,g\rangle^q\big] +o(1)\qquad\text{as $N\to\infty$,}
 \]
which is the second claim.
\end{proof}

To conclude, we record the following consequence of Lemma~\ref{lem:1}. 

\begin{lemma}[F\'eray et al. (Section 4.2)] \label{lem:prob}
Under the Assumptions \ref{ass:meso}, it holds locally uniformly for $\gamma\in(0,\sqrt{2d})$ and $x\in \Omega$,  we have as $N\to\infty$,  
\begin{equation*} 
\P_N[\X_N(x) \ge \gamma \log N]  = \frac{\Psi(\gamma,x)}{\gamma \sqrt{2\pi \log N}} \exp\big(- \tfrac{\gamma^2}{2} \log N + \underset{N\to\infty}{o(1)} \big) . 
\end{equation*}
\end{lemma}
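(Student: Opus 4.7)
The strategy is a direct application of Lemma~\ref{lem:1} from Appendix~\ref{sec:1dapprox}, with the choice $\beta = \gamma$, $g=0$, and external parameter $\theta = (x,\gamma)$ ranging over a compact subset $A\subset \Omega\times(0,\sqrt{2d})$. The content of the proof then reduces to verifying the hypothesis~\eqref{ass1} on the characteristic function, which in turn is essentially encoded in Assumption~\ref{ass:meso}.

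First, I would introduce the tilted measure $d\P_{N,x,\gamma}/d\P_N := e^{\gamma\X_N(x)}/\E_N[e^{\gamma\X_N(x)}]$ and consider the rescaled variable $X_N := (\X_N(x) - \gamma \log N)/\sqrt{\log N}$. Under the further tilt by $e^{\beta\X_N(x)}$ with $\beta = \gamma$, the characteristic function of $X_N$ becomes
\[
\frac{\P_{N,x,\gamma}[e^{i\chi X_N + \gamma\X_N(x)}]}{\P_{N,x,\gamma}[e^{\gamma\X_N(x)}]}
= \frac{\Psi_N^{(\gamma + i\chi/\sigma_N)}(x)}{\Psi_N^{(\gamma)}(x)}\, e^{-\chi^2/2},
\]
where $\sigma_N = \sqrt{\log N}$ and we used \eqref{mom1}--\eqref{W1} together with the identity $(\gamma+i\chi/\sigma_N)^2 \log N - 2i\chi\gamma\sigma_N = 2\gamma^2 \log N - \chi^2$. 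By Assumption~\ref{ass:meso}(2), cf.~\eqref{W1}, the ratio of $\Psi_N$-functions converges to $\Psi(\gamma+z,x)/\Psi(\gamma,x)$ locally uniformly in $z \in \mathcal{S}$ and in $(x,\gamma)\in A$. Since $\Psi(\gamma,x)>0$ on $[0,\sqrt{2d}]\times K$ by Assumption~\ref{ass:meso}(2), this limit is a well-defined analytic function in a neighborhood of $0$, so hypothesis~\eqref{ass1} of Lemma~\ref{lem:1} holds with $\epsilon_N = 1/\sigma_N$ and the required local uniformity in $(x,\gamma)$.

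Applying Lemma~\ref{lem:1} with $g = 0$ and $\beta = \gamma$ then yields
\[
\P_N[\X_N(x) \ge \gamma \log N] = \frac{\E_N[e^{\gamma \X_N(x)}]\, N^{-\gamma^2}}{\gamma\sqrt{2\pi\log N}}\bigl(1 + o(1)\bigr),
\]
locally uniformly in $(x,\gamma) \in \Omega\times(0,\sqrt{2d})$. Substituting the asymptotics $\E_N[e^{\gamma\X_N(x)}] = \Psi(\gamma,x)\, N^{\gamma^2/2}(1+o(1))$ from~\eqref{W1} produces the stated formula. The only potential subtlety—and what I would view as the main thing to check carefully—is that the local uniformity in Assumption~\ref{ass:meso}(2) (namely for $\zeta_2 = 0$, $\xi=0$) is strong enough to make the hypothesis of Lemma~\ref{lem:1} hold uniformly in $(x,\gamma)$ over compact subsets of $\Omega\times(0,\sqrt{2d})$, rather than merely pointwise; this is however exactly the content of the ``locally uniform'' statement noted immediately after Assumption~\ref{ass:meso}.
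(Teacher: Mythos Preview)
Your approach is essentially identical to the paper's proof. One write-up slip: in the framework of Lemma~\ref{lem:1} with $\P_{N,\theta}=\P_N$ and $\beta=\gamma$, the characteristic function to check is $\E_{N,x,\gamma}[e^{i\chi X_N}]=\E_N[e^{i\chi X_N+\gamma\X_N(x)}]/\E_N[e^{\gamma\X_N(x)}]$, not the ``further tilt'' ratio $\P_{N,x,\gamma}[e^{i\chi X_N+\gamma\X_N(x)}]/\P_{N,x,\gamma}[e^{\gamma\X_N(x)}]$ you displayed (which would be a $2\gamma$-tilt); your right-hand side $\tfrac{\Psi_N^{(\gamma+i\chi/\sigma_N)}(x)}{\Psi_N^{(\gamma)}(x)}e^{-\chi^2/2}$ and the remainder of the argument are already the correct single-tilt computation, so this is purely notational.
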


\begin{proof}
Let  $X_N  = \frac{\X_N(x) - \gamma \log N}{\sqrt{\log N}}$.
Consider the characteristic function of $X_N$ under the biased measure $\P_{N,x,\gamma}$ given by 
$\frac{d \P_{N,x,\gamma}}{d \P_N} = \frac{e^{\gamma \X_N(x)}}{\E_N[e^{\gamma \X_N(x)}]}$. Recalling  \eqref{mom1} (with $\zeta_1=\gamma+ i\xi \epsilon_N$, $\zeta_2=\xi=0$ and $\epsilon_N=\frac{1}{\sqrt{\log N}}$), we can express this characteristic function as
\[
\E_{N,x,\gamma}\big[e^{i \xi X_N}\big] =   \psi_{N,x}(i \xi \epsilon_N) e^{-\xi^2/2} ,\qquad 
\psi_{N,x}(z) = \frac{\Psi_{N}^{(\gamma+z)}(x) }{\Psi_{N}^{(\gamma)}(x) } . 
\]
By  \eqref{W1}, the function $ \psi_{N,x}$ satisfies the condition \eqref{ass1} uniformly in $x$ in a compact subset of $\Omega$, for $\gamma \in (0,\sqrt{2d})$ and $z\in\mathcal{S} = \{z \in\C  :  |\Re z| < \mathcal W\}$. Hence, we can apply Lemma~\ref{lem:1}  with $\beta=\gamma$, this yields
\begin{equation} \label{prob3}
\P_N[\X_N(x) \ge \gamma \log N]  =\frac{\E_N[e^{\gamma \X_N(x)}] N^{-\gamma^2}}{\gamma\sqrt{2\pi \log N}} \big( 1+ \underset{N\to\infty}{o(1)} \big) .
\end{equation}
Using \eqref{W1} with $\zeta=\gamma$, this completes the proof with the required uniformity. 
\end{proof}

\begin{remark}
Let us emphasize that the proof only relies on  \eqref{W1} and that the asymptotics from Lemma~\ref{lem:prob} are restricted to $\gamma\in(0,\sqrt{2d})$ because of our choice of $\mathcal{D}_\infty$. 
\end{remark}

\section{Orthogonal polynomials and Riemann-Hilbert problems}
\label{sec:OP}

In this appendix we review the basic connection between orthogonal polynomials and Riemann-Hilbert problems.

Let $F\in L^1(\T)$ and recall the notation \eqref{eq:hs}.
In terms of $F$, we define a sequence of polynomials $(p_m)_{m\in\N}$ of increasing degree, for every $m\in\N$ and $z\in\C$,
\begin{equation}\label{eq:op}
p_m(z)=\frac{1}{\sqrt{D_m(F)D_{m+1}(F)}}\begin{vmatrix}
		\int_{0}^{2\pi}F(e^{i\theta})\frac{d\theta}{2\pi} & \cdots & \int_{0}^{2\pi}e^{im\theta}F(e^{i\theta})\frac{d\theta}{2\pi}\\
		\vdots & \ddots & \vdots \\
		\int_0^{2\pi}e^{-(m-1)\theta}F(e^{i\theta})\frac{d\theta}{2\pi} & \cdots & \int_0^{2\pi}e^{-(m-1)\theta}e^{im\theta}F(e^{i\theta})\frac{d\theta}{2\pi}\\
		1 & \cdots & z^m
	\end{vmatrix},
\end{equation}
where the branch of the square root is the principal one. 

We assume that $D_{m}(F)\neq 0$ for $m=0,...,N$,
then the leading coefficients of these polynomials are non-zero (for $m\le N$) and 
\[
p_m(z)= \chi_m z^m+O(z^{m-1}) , \qquad 
\chi_m^{-2}=\frac{D_{m+1}(F)}{D_m(F)} . 
\]

Moreover, if $F\ge 0$, these form a family of orthogonal polynomial. 
By multi-linearity of \eqref{eq:op}, for $0\leq j<m$, 
\[
\int_0^{2\pi} p_m(e^{i\theta})e^{-ij\theta}F(e^{i\theta})\frac{d\theta}{2\pi}
\]
is a determinant with two identical rows, so it must vanish, that is $p_m$ is orthogonal to monomials of lower order. 
By the same argument, 
\[
\int_0^{2\pi}p_m(e^{i\theta})e^{-im\theta}F(e^{i\theta})\frac{d\theta}{2\pi}=\frac{D_{m+1}(F)}{\sqrt{D_m(F)D_{m+1}(F)}}=\frac{1}{\chi_m} .
\]

To summarize, under the conditions $D_{m}(F)\neq 0$ for $m=0,...,N$,  we have for $0\leq j\leq m \le N$, 
\begin{equation}\label{eq:porto}
	\int_0^{2\pi}p_m(e^{i\theta})e^{-ij\theta}F(e^{i\theta})\frac{d\theta}{2\pi}=\frac{\boldsymbol{\delta}_{j,m}}{\chi_m}
\end{equation}
and by a telescopic argument (and a suitable interpretation for $j=0$), we have 
\begin{equation}\label{eq:toeppoly}
D_{N+1}(F)= {\textstyle \prod_{j=1}^{N}\chi_j^{-2}} .
\end{equation}
Conversely, if the polynomials $p_j$ exist for $j<N$, we can recover the Toeplitz determinant $D_{N}(F)$ from them. 
There is no general result which guarantees that they do exist, but we can argue that apart from a countable number of values of $\zeta_1,\zeta_2,\varphi$ they do exist, and then work from this.

In addition, one often introduces a family of dual polynomials. Again, if $D_{m}(F)\neq 0$ for $m=0,...,N$, we define for $m\le N$ and $z\in\C$, 
\begin{align}\label{eq:qop}
q_m(z)&=\frac{1}{\sqrt{D_m(F)D_{m+1}(F)}}\begin{vmatrix}
	\int_{0}^{2\pi}F(e^{i\theta})\frac{d\theta}{2\pi} & \cdots & \int_{0}^{2\pi}e^{i(m-1)\theta}F(e^{i\theta})\frac{d\theta}{2\pi} & 1\\
	\vdots & \ddots & \vdots & \vdots \\
	\int_0^{2\pi}e^{-i(m-1)\theta}F(e^{i\theta})\frac{d\theta}{2\pi} & \cdots & \int_0^{2\pi}e^{-i(m-1)\theta}e^{i(m-1)\theta}F(e^{i\theta})\frac{d\theta}{2\pi}& z^{m-1}\\
	\int_0^{2\pi}e^{-im\theta}F(e^{i\theta})\frac{d\theta}{2\pi}  & \cdots & \int_0^{2\pi}e^{-im\theta}e^{i(m-1)\theta}F(e^{i\theta})\frac{d\theta}{2\pi}& z^m
\end{vmatrix}.
\end{align}
One readily checks that $q_m(z)=\chi_m z^m +O(z^{m-1})$ and for $0\leq j\leq m\le N$, 
\begin{equation}\label{eq:qorto}
\int_0^{2\pi}q_m(e^{-i\theta})e^{ij\theta}F(e^{i\theta})\frac{d\theta}{2\pi}=\frac{\delta_{j,m}}{\chi_m}.
\end{equation}
Note that if $F$  is real valued (for us it may not be), then we have $q_m(z^{-1})=\overline{p_m(z)}$ for $z\in \T$. 

\medskip

To analyze the orthogonal polynomials asymptotically, we encode them into a Riemann-Hilbert problem. For $j\in\N$, if $p_j$ and $q_{j-1}$ exist, we define for $|z|\neq 1$ 
\begin{equation}\label{eq:Ydef}
	Y(z)=Y_{j}(z)=\begin{pmatrix}
		\frac{1}{\chi_j}p_j(z) & \frac{1}{\chi_j}\int_\T\frac{e^{-i(j-1)\theta}p_j(e^{i\theta})}{e^{i\theta}-z}F(\theta)\frac{d\theta}{2\pi}\\
		-\chi_{j-1}z^{j-1}q_{j-1}(z^{-1}) & -\chi_{j-1}\int_\T\frac{q_{j-1}(e^{-i\theta})}{e^{i\theta}-z}F(\theta)\frac{d\theta}{2\pi}
	\end{pmatrix}.
\end{equation}
The basic result in this business that everything builds on is the following result of Fokas, Its, and Kitaev \cite{FIK} (which is in a different setting, and has later been adapted to many different questions). See also \cite[Chatper 3]{Deift} for a proof in yet another setting -- the argument readily adapts to this case (once one replaces Sobolev-theory by Hölder-theory) and we omit the proof.

\begin{proposition}\label{pr:YRHP}
For $j\in\N$, if the polynomials $p_j,q_{j-1}$ exist $($i.e.~$D_m(F)\neq 0$ for $m\in\{j-1,j,j+1\}$$)$, then the matrix $Y$, \eqref{eq:Ydef}, is the unique solution to the  Problem~\ref{pb:Y}. 
\end{proposition}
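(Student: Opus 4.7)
The plan is to verify the four requirements (analyticity off $\U$, jump condition, asymptotics at infinity, and uniqueness) directly from the defining formula~\eqref{eq:Ydef} using only orthogonality of $p_j,q_{j-1}$ and the Sokhotski–Plemelj formula. Since $F$ is only assumed to be in $L^1(\T)$, some care is needed: the boundary values of the Cauchy integrals exist a.e.\ on $\U$ (and in $L^p$ for small $p>1$ after multiplying by a smooth function, by standard singular-integral theory), which is enough to make sense of the jump condition. This is the only analytic point where one replaces the Hölder theory used in \cite{Deift} by $L^p$-theory; the algebra is identical.

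First I would check analyticity: the $(1,1)$ and $(2,1)$ entries are polynomials in $z$, while the $(1,2)$ and $(2,2)$ entries are Cauchy-type integrals of an $L^1$ density on $\U$ and so define analytic functions on $\C\setminus\U$. The jump condition comes from the Plemelj formula: parametrizing $w=e^{i\theta}$ gives
\[
\int_0^{2\pi}\frac{g(\theta)}{e^{i\theta}-z}\frac{d\theta}{2\pi}=\frac{1}{2\pi i}\int_\U\frac{g(w)}{w}\frac{dw}{w-z},
\]
so boundary values from inside ($+$) and outside ($-$) satisfy $H_+(e^{i\theta})-H_-(e^{i\theta})=e^{-i\theta}g(\theta)$. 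Applied to column two of $Y$, this yields
\[
Y_{12,+}-Y_{12,-}=\frac{p_j(e^{i\theta})}{\chi_j}e^{-ij\theta}F(\theta)=Y_{11}(e^{i\theta})\,e^{-ij\theta}F(\theta),
\]
and analogously $Y_{22,+}-Y_{22,-}=Y_{21}(e^{i\theta})\,e^{-ij\theta}F(\theta)$, which is exactly the required jump with $n=j$.

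Next the large-$z$ asymptotics. By construction $Y_{11}(z)=\chi_j^{-1}p_j(z)=z^j+O(z^{j-1})$, and since $q_{j-1}$ has leading coefficient $\chi_{j-1}$, $Y_{21}(z)=-\chi_{j-1}z^{j-1}q_{j-1}(z^{-1})=O(z^{j-1})$. For the second column, expand $1/(e^{i\theta}-z)=-\sum_{k\geq 0}z^{-k-1}e^{ik\theta}$ valid for $|z|>1$ and insert into the integrals defining $Y_{12},Y_{22}$. The orthogonality relations \eqref{eq:porto}–\eqref{eq:qorto} make all terms with $k=0,\dots,j-1$ vanish in $Y_{12}$ (so $Y_{12}=O(z^{-j-1})$), and single out the $k=j-1$ term in $Y_{22}$ (contributing exactly $z^{-j}$, so $Y_{22}=z^{-j}+O(z^{-j-1})$). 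Combined, these four estimates give $Y(z)=(I+O(z^{-1}))z^{j\sigma_3}$.

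Finally, uniqueness is by a standard Liouville argument. If $\widetilde Y$ is another solution of Problem~\ref{pb:Y}, the jump matrix has determinant one, so $\det Y$ and $\det\widetilde Y$ have no jumps across $\U$ and tend to $1$ at infinity, hence are identically $1$; in particular $Y^{-1}$ is analytic on $\C\setminus\U$ with the same regularity as $Y$. Then $\widetilde Y Y^{-1}$ has no jump across $\U$ (the jump matrices cancel) and is bounded near $\U$, so it extends to an entire function equal to $I+O(z^{-1})$ at infinity, hence to $I$ by Liouville. The only mild technical point, which I view as the main thing to be careful about, is justifying that the boundary values of the Cauchy integrals exist in a strong enough sense to apply Plemelj pointwise a.e.\ when $F$ is merely integrable; this is handled via the $L^p$-boundedness of the Cauchy singular operator on $\U$, and replaces the Hölder-based argument in \cite[Chapter 3]{Deift}.
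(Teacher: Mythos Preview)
Your proof is correct and follows exactly the standard Fokas--Its--Kitaev verification that the paper points to (the paper in fact omits the proof entirely, citing \cite{FIK} and \cite[Chapter~3]{Deift}). Your only caveat---that Problem~\ref{pb:Y} as literally stated asks for \emph{continuous} boundary values, which for a merely $L^1$ symbol $F$ requires replacing H\"older theory by $L^p$-boundedness of the Cauchy projector---is well-placed; in the paper's actual applications the symbol \eqref{eq:Vt} is H\"older away from the Fisher--Hartwig points, so this is not an issue there.
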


By applying the Deift-Zhou steepest descent analysis, one can invert this statement and, under suitable conditions,  construct a function $Y$ which solves this problem if $j$ is sufficiently large. Then, one can argue that the polynomials $p_j$, $q_{j-1}$ have to exist. 

\medskip

While variants of Lemma~\ref{le:di2} (e.g. for Hankel determinants) have certainly appeared in the literature, we provide a brief proof this differential identity since we do not know of a perfect reference.

\begin{proof}[Proof of Lemma~\ref{le:di2}]
Using \eqref{eq:toeppoly}, a short calculation (some of whose details we are omitting) shows that   
\begin{align*}
\partial_t \log D_{N}(F)&=-2\sum_{j=0}^{N-1}\frac{\partial_t \chi_j(t)}{\chi_j(t)}\\
&=-2\sum_{j=0}^{N-1}\int_{0}^{2\pi}\partial_t (p_j(e^{i\theta},t))q_j(e^{-i\theta},t)F(e^{i\theta})\frac{d\theta}{2\pi}\\
&=-\int_0^{2\pi}\partial_t \sum_{j=0}^{N-1} p_j(e^{i\theta},t)q_j(e^{-i\theta},t) F(e^{i\theta})\frac{d\theta}{2\pi}\\
&=\int_0^{2\pi}\sum_{j=0}^{N-1} p_j(e^{i\theta},t)q_j(e^{-i\theta},t) \partial_t F(e^{i\theta})\frac{d\theta}{2\pi}
\end{align*}
where the polynomials are the ones orthogonal with respect to the weight $F(e^{i\theta})$. Also we note that $\partial_t F=V F$ so if we can relate the sum above to $Y$ in a suitable way, then we will be done.

The first step in this is to use the Christoffel-Darboux identity (see \cite[Lemma 2.3]{DIK} for a result that holds also for the  complex weights we need): for $z\neq 0$
\begin{align*}
\sum_{j=0}^{N-1}p_j(z,t)q_j(z^{-1},t)&=-N p_N(z,t)q_N(z^{-1},t)+z\left(q_N(z^{-1},t)p_N'(z,t)-p_N(z,t)\frac{d}{dz}q_N(z^{-1},t)\right).
\end{align*}

Let us on the other hand look at the object that we want to see: by \eqref{eq:Ydef} (and drop the $t$ from our notation for now)
\[
z^{-N}[Y(z)^{-1}\partial_z Y(z)]_{21}=z^{-N}\left(\frac{\chi_{N-1}}{\chi_N}z^{N-1}q_{N-1}(z^{-1})p_N'(z)-\frac{\chi_{N-1}}{\chi_N}p_N(z)\frac{d}{dz}(z^{N-1}q_{N-1}(z^{-1}))\right).
\]
The next step is to apply a recursion relation for the polynomials. More precisely \cite[Lemma 2.2, (2,4)]{DIK} says that 
\begin{equation}\label{eq:rec}
\chi_{N-1}z^{-1}q_{N-1}(z^{-1})=\chi_Nq_N(z^{-1})-q_N(0)z^{-N}p_N(z).
\end{equation}
This implies that 
\[
z^{-N}\frac{\chi_{N-1}}{\chi_N}z^{N-1}q_{N-1}(z^{-1})p_N'(z)=q_N(z^{-1})p_N'(z)-\frac{q_N(0)}{\chi_N}z^{-N}p_N(z)p_N'(z).
\]
Moreover, multiplying \eqref{eq:rec} by $z^N$ and differentiating, we see that 
\[
\frac{d}{dz}(\chi_{N-1}z^{N-1}q_{N-1}(z^{-1}))=\chi_N \frac{d}{dz}(z^N q_N(z^{-1}))-q_N(0)p_N'(z).
\]
We see that the $q_N(0)$-terms cancel, so these considerations lead to 
\begin{align*}
z^{-N}(Y(z)^{-1}\partial_z Y(z))_{21}&=q_N(z^{-1})p_N'(z)-z^{-N}p_N(z)\frac{d}{dz}(z^N q_N(z^{-1}))\\
&=-N z^{-1}p_N(z)q_N(z^{-1})+q_N(z^{-1})p_N'(z)-p_N(z)\frac{d}{dz}q_N(z^{-1})\\
&=z ^{-1}\sum_{j=0}^{N-1}p_j(z)q_j(z^{-1}).
\end{align*}
Noting that if we make the change of variables $z=e^{i\theta}$ in our integral, we have $z^{-1}\frac{dz}{2\pi i }= \frac{d\theta}{2\pi}$ so (reintroducing $t$ to our notation)
\begin{align*}
\partial_t \log D_N(F)&=\int_0^{2\pi}\sum_{j=0}^{N-1}p_j(e^{i\theta},t)q_j(e^{-i\theta},t)V(e^{i\theta})F(e^{i\theta})\frac{d\theta}{2\pi}\\
&=\int_{\T}z^{-1}\sum_{j=0}^{N-1}p_j(z,t)q_j(z^{-1},t)V(z)F(z)\frac{dz}{2\pi i}\\
&=\int_{\T}z^{-N}(Y(z,t)^{-1}\partial_z Y(z,t))_{21}V(z)F(z)\frac{dz}{2\pi i},
\end{align*}
which was the claim.
\end{proof}

\end{document}